\numberwithin{equation}{section}
\newtheorem{Thm}{Theorem}
\newtheorem{Th}{Theorem}[section]
\newtheorem{Lemma}[Th]{Lemma}
\newtheorem{Coro}[Th]{Corollary}
\newtheorem{Prop}[Th]{Proposition}
\newtheorem{Eg}[Th]{Example}
\newtheorem{Rmk}[Th]{Remark}
\newtheorem{Conj}[Th]{Conjecture}
\newtheorem*{Claim}{Claim}
\begin{document}

\def\q{{q}}
\def\z{{z}}
\def\Q{{\operatorname{Q}}}
\def\Z{{\operatorname{Z}}}
\let\oldnabla\nabla
\def\nabla{\oldnabla\!}
\def\Fl{\operatorname{\mathcal{F}\ell}}
\def\Gr{\operatorname{\mathcal{G}\it r}}
\def\pt{\operatorname{\mathsf{pt}}}
\def\Fun{\operatorname{\mathsf{Fun}}}
\def\S{\mathfrak{S}}

\def\kar{\expandafter\ar@[*0.75]@{->}}
\def\kkar{\expandafter\ar@<+1pt>@{-->}}
\def\kkto{\mathop{{\dashrightarrow}}\limits}
\def\kto{\mathop{{\longrightarrow}}\limits}

\newdir{|>}{*:(1,-0.5)@^{>}*:(1,+0.5)@_{>}}
\def\kar{\expandafter\ar@<+1pt>@{{-}{-}{|>}}}

\title{Pieri and Murnaghan--Nakayama type Rules\\ for Chern classes of Schubert Cells}

\author{Neil J.Y. Fan}
\address[Neil J.Y. Fan]{Department of Mathematics, 
Sichuan University, Chengdu, Sichuan, P.R. China}
\email{fan@scu.edu.cn}

\author{Peter L. Guo}
\address[Peter L. Guo]{Center for Combinatorics, LPMC, 
Nankai University, Tianjin 300071, P.R. China}
\email{lguo@nankai.edu.cn}

\author[R.~Xiong]{Rui Xiong}
\address[Rui Xiong]{Department of Mathematics and Statistics, University of Ottawa, 150 Louis-Pasteur, Ottawa, ON, K1N 6N5, Canada}
\email{rxion043@uottawa.ca}

\date{}

\maketitle


\begin{abstract}
We develop Pieri type as well as  Murnaghan--Nakayama  type formulas 
for equivariant Chern--Schwartz--MacPherson classes
 of Schubert cells in the classical flag variety.  
These formulas include as special cases  many previously known
multiplication  formulas for Chern--Schwartz--MacPherson  classes or Schubert classes.  We apply the equivariant  Murnaghan--Nakayama   formula  to the enumeration  of   rim hook tableaux. 
\end{abstract}

\setcounter{tocdepth}{1}
\tableofcontents

\section{Introduction}

The Chern--Schwartz--MacPherson (CSM) classes,
constructed explicitly by MacPherson \cite{MacPherson} in order to resolve  a conjecture of Deligne and Grothendieck  \cite{Sullivan},
are one way to extend the Chern classes of complex manifolds to complex varieties (possibly singular or noncompact).
Their equivariant setting was developed by Ohmoto \cite{Ohmoto}. 
The structure of (equivariant) CSM classes for Schubert cells in flag varieties has received much attention in recent years, and  evolved into a  rich area of research, see for example \cite{AM0,AM,AMSS17, HpGr,Jones,Kumar, Lee, QGQC,Su2,Su3}. Such CSM classes also draw special interests from geometric representation theory due to their  intimate connections with characteristic cycles of Verma $\mathcal{D}$-modules over flag varieties, as well as to stable envelopes for the cotangent bundles of flag manifolds introduced  by Maulik and Okounkov \cite{QGQC}, see   \cite{AMSS17,RV18,Su}.

In this work, we shall focus on the 
torus equivariant CSM classes $c^T_{\mathrm{SM}}(Y(w)^{\circ})$ for Schubert 
cells $Y(w)^{\circ}$ in the classical flag variety $\Fl(n)$, where $w$ varies over permutations in the symmetric group $\S_n$.
A notable  feature is that the lowest degree component of  $c^T_{\mathrm{SM}}(Y(w)^{\circ})$ 
recovers the equivariant Schubert class $[Y(w)]_T$. 
Our goal is to establish Pieri type formulas as well as Murnaghan--Nakayama (MN) type formulas for $c^T_{\mathrm{SM}}(Y(w)^{\circ})$.

The Chevalley formula for $c^T_{\mathrm{SM}}(Y(w)^{\circ})$ multiplied by divisors was derived by combining the work of Aluffi, Mihalcea, Sch\"urmann and Su \cite{AMSS17}, and Su \cite{Su}.
When the divisor corresponds to the first Chern class of the tautological bundle over $\Fl(n)$, the formula may be described in terms of certain one step walks in the  Bruhat graph of $\S_n$, see Mihalcea, Naruse and Su \cite[Theorem 4.2]{MNS}. 
A natural and desirable extension of the Chevalley formula is then to develop Pieri or MN type formulas for $c^T_{\mathrm{SM}}(Y(w)^{\circ})$. Towards this direction,  we prove
\begin{itemize}
\item a Pieri formula for multiplying $c^T_{\mathrm{SM}}(Y(w)^{\circ})$ by the $r$-th Chern class  or Segre class  of the tautological bundle over  $\Fl(n)$ (more generally by a Schur polynomial of hook shape), see Theorem \ref{LLLL-1};   

\vspace{5pt}

\item a Pieri formula for multiplying $c^T_{\mathrm{SM}}(Y(w)^{\circ})$ by an equivariant Schubert class of a Grassmannian permutation associated to a one row/column partition (more generally to a hook shape), see Theorem \ref{LUUU-1};  

\vspace{5pt}

\item a MN formula for multiplying $c^T_{\mathrm{SM}}(Y(w)^{\circ})$ by the
Chern character of the tautological bundle { over  $\Fl(n)$ }(equivalently, by a power sum symmetric polynomial), see Theorem \ref{UUU-1}.
\end{itemize}

Although one could  compute the above multiplications according to the splitting principle by iterating  the Chevalley formula, there would be no explicit  formulas given for the structure constants because of the occurring of cancellations.
{Our Pieri formulas are formulated in terms of increasing and decreasing paths in the Bruhat graph of $\S_n$ with respect to a specific labeling on the edges, which manifestly imply  the positivity of the structure constants. 
We propose general positivity  conjectures in  Section \ref{CCC_I}, and discuss their relations to Kumar's recent   conjectures \cite{Kumar}. 
{In particular, we may apply our Pieri formula to
show that the CSM class  of the Richardson cell
is monomial-positive, proving  a weaker form of Kumar's conjecture \cite[Conjecture B]{Kumar} in type $A$, see Theorem \ref{MonomialPositive}.  }
}



Each of the above formulas vastly generalizes the Chevalley formula  for $c^T_{\mathrm{SM}}(Y(w)^{\circ})$. 
Both   Pieri formulas for $c^T_{\mathrm{SM}}(Y(w)^{\circ})$ 
specialize  to the Pieri formula for nonequivariant Schubert classes $[Y(w)]$   by Sottile \cite{Sottile1}, while
the Pieri formula in Theoerm \ref{LUUU-1} may reduce  to the Pieri formula for equivariant
Schubert classes $[Y(w)]_T$   by Robinson \cite{Robinson} (see also  Li, Ravikumar, Sottile and Yang \cite{LSY}).
The MN formula in Theorem \ref{UUU-1}
is a remarkable generalization from the  MN rule for nonequivariant Schubert classes  $[Y(w)]$ due to Morrison and Sottile \cite{MS2} to equivariant CSM classes    $c^T_{\mathrm{SM}}(Y(w)^{\circ})$. 
This in particular leads to  a MN rule for   equivariant Schubert classes, see Corollary \ref{LRrulehookshapeSchubert}.

The proofs of Theorems \ref{LLLL-1}, \ref{LUUU-1}, and \ref{UUU-1} are mainly combinatorial. In the proof of Theorem \ref{LLLL-1} or Theorem \ref{UUU-1}, 
one of the key ingredients we developed  is the
Rigidity Theorem  (see Theorem \ref{noneqimplieEqhook}, Theorem \ref{noneqimplieEqhookMN}), which means   that the structure constants in Theorem \ref{LLLL-1} or Theorem \ref{UUU-1}  can be
controlled by the structure constants in their  nonequivariant situations. So, to accomplish  Theorem \ref{LLLL-1} or Theorem \ref{UUU-1}, our strategy is to first establish their nonequivariant versions and then employ the Rigidity Theorem. {This philosophy has appeared for example in the study of  equivariant quantum cohomology  by Braverman,  Maulik and  Okounkov \cite{BMO}, and $K$-theoretic stable envelopes by Okounkov \cite[\textsection 2.4]{LectureK}.} 

{

Starting from the work of Ikeda and Naruse \cite{INaruse}, it was gradually realized that equivariant Chevalley type formulas could  be utilized  to deduce  hook formulas, see   Naruse  \cite{Narusee} and the recent work of  Morales, Pak and Panova \cite{Pak-4} and Mihalcea, Naruse and Su \cite{MNS}. 
We generalize this idea to higher degrees by viewing the equivariant MN formula as a higher degree analogue of the equivariant Chevalley formula. 
More concretely, we employ our equivariant MN formula to realize the number of standard  rim hook tableaux as a coefficient of the Laurant expansion relating to the localization of equivariant Schubert classes, see Theorem \ref{RimHooktableaux}. 
In this framework, we reveal the enumeration  formulas
for standard  rim hook tableaux due to Alexandersson, Pfannerer, Rubey  and  Uhlin \cite{APRU} and  Fomin and  Lulov \cite{FL} in   a relatively uniform manner. }

This paper is arranged as follows. Section \ref{UP} contains descriptions of the main theorems.  In Section \ref{Sec-2}, we give an overview of the background of CSM classes. In Section \ref{Secc-3}, we prove the Rigidity Theorem for Theorem \ref{LLLL-1}. The parallel idea is used
in Section \ref{Sect5} to establish the   Rigidity Theorem for Theorem  \ref{UUU-1}.
 In Section \ref{Sect4}, we finish the proofs of 
  Theorems \ref{LLLL-1} and  \ref{LUUU-1}. Section \ref{Sect5} is devoted to a proof of  Theorem \ref{UUU-1}. We investigate the Pieri as well as MN formulas over Grassmannians in Section \ref{GGG_I}. 
In Section \ref{SECT7}, we illustrate  how the MN formula over Grassmannians can be  applied to the enumeration of   rim hook tableaux. Some positivity conjectures  are discussed  in Section  \ref{CCC_I}.
  

\subsection*{Acknowledgement}
We are grateful to Changzheng Li, Leonardo  Mihalcea,  Changjian Su, Kirill Zainoulline, and Paul Zinn-Justin for valuable discussions and suggestions. This work was supported by the National Natural Science Foundation of China (11971250, 12071320).
R.X. acknowledges the partial support from the NSERC Discovery grant RGPIN-2015-04469, Canada.

\section{Main Results}\label{UP}

In this section, we give detailed descriptions of our main results: two Pieri formulas and a MN formula for equivariant CSM classes of Schubert 
cells   in the classical flag variety. 
Throughout this paper, let $n$ be a fixed positive integer, and  $k$ be an integer belonging to the set $[n]:=\{1,2,\ldots, n\}$. 

Let $\lambda=(\lambda_1,\lambda_2,\ldots, \lambda_k)$ be a  partition, namely, $\lambda_1\geq \lambda_2\geq \cdots\geq \lambda_k\geq 0$. We do not distinguish $\lambda$ with its Young diagram, a left-justified array with $\lambda_i$ boxes in row $i$.
Let $s_{\lambda}(x_1,\ldots,x_k)$ be the Schur polynomial associated to   $\lambda$, see Subsection \ref{BBBBB} for  definition. 
When $\lambda$ has exactly  one column (resp., one row) with $r$ boxes,  $s_{\lambda}(x_1,\ldots,x_k)$ is  the elementary symmetric polynomial $e_r(x_1,\ldots,x_k)$  (resp., the complete homogeneous symmetric polynomial $h_r(x_1,\ldots,x_k)$). Note that $e_r$ and  $h_r$ may serve as, up to a sign, representatives of the $r$-th Chern class and Segre class  of the $k$-th tautological bundle  over  $\Fl(n)$, respectively, see Fulton \cite[\textsection 14.6]{Fulton}. On the other hand, they are representatives of Schubert classes associated to the Grassmannian permutations corresponding respectively to the one column and one row partition.  These two ways of understanding lead to two different  generalizations of the classical  Pieri rule to equivariant  CSM classes, which are dealt with  in Theorem \ref{LLLL-1} and Theorem \ref{LUUU-1}, respectively. As mentioned in Introduction, 
our   formulas are  valid for the multiplication  by a Schur polynomial or an equivariant Schubert class associated to a partition of hook shape. 

Theorem \ref{LLLL-1} and Theorem \ref{LUUU-1} are closely related to paths in the {\it Bruhat graph}  on $\S_n$. 
As usual, write $t_{ab}$ $(1\leq a<b\leq n)$ for the transpositions in $\S_n$, and $\ell(w)$ for the length of $w\in \S_n$.
Note that $\ell(w)$ equals the number of inversion pairs (namely, $(w(a),w(b))$ with $1\leq a<b\leq n$ and $w(a)>w(b)$) of $w$.
The { Bruhat graph} on $\S_n$ is a directed graph whose vertices are permutations in $\S_n$ such that there is a directed edge from $u$ to $w$, denoted $u\longrightarrow w$, if $w=ut_{ab}$ for some  $t_{ab}$ and $\ell(w)\geq \ell(u)+1$ (equivalently, $u(a)<u(b)$) \cite{BB,Dyer}. For our purpose, we often label the edges  in the Bruhat graph by writing 
\[
u\stackrel{\tau}\longrightarrow w, \ \  \text{if} \  u\longrightarrow w, \  w=ut_{ab},\ \text{and} \ \tau=u(a).
\]   
An edge $u\stackrel{\tau}\longrightarrow w$ with $w=u t_{ab}$
is called a {\it $k$-edge} if $a\leq k<b$. The {\it $k$-Bruhat graph} on $\S_n$ 
is the subgraph induced by all $k$-edges. 
Figure \ref{fig:EgS3} illustrates the $1$-Bruhat graph and the $2$-Bruhat graph on $\S_3$, where we use dashed arrows to emphasize the edges $u\stackrel{\tau}\longrightarrow w$ with $\ell(w)>\ell(u)+1$. 

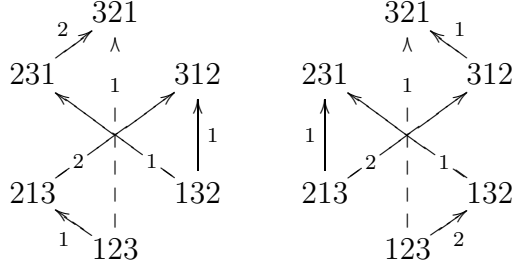
\begin{figure}[h]\label{fig:EgS3}\center
$$
\xymatrix@C=0.55pc{
\ar@{}[rr]|{\displaystyle\rule[-0.5pc]{0pc}{1.5pc}321}="321"
\phantom{123}&\phantom{123}&\phantom{123}\\
\ar@{}[u]|{\displaystyle\rule[-0.5pc]{0pc}{1.5pc}231}="231"
\ar@{}[d]|{\displaystyle\rule[-0.5pc]{0pc}{1.5pc}213}="213"&\rule{0pc}{2pc}&
\ar@{}[u]|{\displaystyle\rule[-0.5pc]{0pc}{1.5pc}312}="312"
\ar@{}[d]|{\displaystyle\rule[-0.5pc]{0pc}{1.5pc}132}="132"\\
\ar@{}[rr]|{\displaystyle\rule[-0.5pc]{0pc}{1.5pc}123}="123"&&
\ar"123";"213"^{1}
\ar"213";"312"|<<<<{\,\,2\,\,}
\ar"132";"231"|<<<<{\,\,1\,\,}
\ar"132";"312"_{1}
\ar"231";"321"^{2}
\ar@{-->}"123";"321"|>>>>>>{\rule[-0.5pc]{0pc}{1.5pc}1}
}\qquad
\xymatrix@C=0.55pc{
\ar@{}[rr]|{\displaystyle\rule[-0.5pc]{0pc}{1.5pc}321}="321"
\phantom{123}&\phantom{123}&\phantom{123}\\
\ar@{}[u]|{\displaystyle\rule[-0.5pc]{0pc}{1.5pc}231}="231"
\ar@{}[d]|{\displaystyle\rule[-0.5pc]{0pc}{1.5pc}213}="213"&\rule{0pc}{2pc}&
\ar@{}[u]|{\displaystyle\rule[-0.5pc]{0pc}{1.5pc}312}="312"
\ar@{}[d]|{\displaystyle\rule[-0.5pc]{0pc}{1.5pc}132}="132"\\
\ar@{}[rr]|{\displaystyle\rule[-0.5pc]{0pc}{1.5pc}123}="123"&&
\ar"123";"132"_{2}
\ar"213";"312"|<<<<{\,\,2\,\,}
\ar"132";"231"|<<<<{\,\,1\,\,}
\ar"213";"231"^{1}
\ar"312";"321"_{1}
\ar@{-->}"123";"321"|>>>>>>{\rule[-0.5pc]{0pc}{1.5pc}1}
}$$
\caption{The $1$-Bruhat graph and the $2$-Bruhat graph on $\S_3$}
\end{figure}

A path $\gamma$ of length $m$ from $u$ to $w$ means a sequence of edges 
\[\gamma\colon\ \  u
\stackrel{\tau_1}\longrightarrow w_1
\stackrel{\tau_2}\longrightarrow w_2\stackrel{\tau_3}\longrightarrow \cdots \stackrel{\tau_m}\longrightarrow w_m=w.\]
We say that $\gamma$ is {\it increasing} (resp., {\it decreasing}) if $\tau_1<\tau_2<\cdots<\tau_m$ (resp., $\tau_1>\tau_2>\cdots>\tau_m$), and   is {\it peakless}    if there exists $1\leq i\leq m$
such that 
$\tau_1>\cdots>\tau_i<\cdots<\tau_m$.
We use $\mathrm{in}(\gamma)=m-i$   (resp., $\mathrm{de}(\gamma)=i-1$) to denote one less than the length of the increasing  (resp., decreasing) segment of $\gamma$. Clearly, a peakless path is increasing (resp., decreasing)  if $\mathrm{de}(\gamma)=0$ 
  (resp., $\mathrm{in}(\gamma)=0$).

Define the {\it extended $k$-Bruhat order}, denoted $\leq_k$, on $\S_n$ as the order generated by all $k$-edges, that is, $u\leq_k w$ whenever there is a path in the $k$-Bruhat graph from $u$ to $w$. 
If restricting the $k$-edges to the edges in the Hasse diagram of the Bruhat order (namely, edges $u\longrightarrow w$ with $\ell(w)=\ell(u)+1$), then the extended $k$-Bruhat order reduces to the well-studied  ordinary $k$-Bruhat order \cite{LRS, BS, Sottile1}.

For $u, w\in \S_n$ and a subset $A\subseteq [n]$,  we adopt the following notation 
\begin{align}
uA &:=\{u(i)\colon i\in A\},\nonumber\\[3pt]
\Delta_A(u, w) &:=\{u(i)\colon i\in A\}  \setminus \{u(i)\colon u(i)\neq w(i)\} ,\label{deltaa}\\[3pt]
\Sigma_A(u, w) & :=\{u(i)\colon i\in A\}  \cup \{u(i)\colon u(i)\neq w(i)\}. 
\label{sigmaa}
\end{align}
When $A=[k]$, we  simply use $\Sigma_k(u, w)$ and 
$\Delta_k(u, w)$  to represent $\Sigma_{[k]}(u, w)$ and 
$\Delta_{[k]}(u, w)$, respectively. 
Moreover, assuming that $A=\{a_1<a_2<\cdots<a_m\}$ and $f(x_1,\ldots, x_m)$ is any given polynomial, we denote by $f(x_A)$   the polynomial obtained
from $f(x_1,\ldots, x_m)$ by substituting $x_i$ with $x_{a_i}$ for $1\leq i\leq m$. With this notation, the Schur polynomial $s_{\lambda}(x_1,\ldots,x_k)$  
can be briefly written as $s_{\lambda}(x_{[k]})$.

A hook shape partition  with arm length $\alpha$ and leg length $\beta$ will be denoted $\Gamma=(\alpha+1,1^\beta)$, that is,   $\Gamma$ has  one row with $\alpha+1$ boxes and one column with $\beta+1$ boxes. 

\begin{Thm}[see Theorem \ref{eqPierirulehook}]\label{LLLL-1}
Let $u\in \S_n$, and   $\Gamma=(1+\alpha,1^{\beta})$ be a hook shape. Then we have  
\[
c^T_{\mathrm{SM}}(Y(u)^\circ)\cdot s_{\Gamma}(x_{[k]}) 
=s_{\Gamma}( t_{u[k]})\cdot c^T_{\mathrm{SM}}(Y(u)^\circ)+
\sum_{u<_k w\in \S_n} c_{u,\Gamma}^w( t)\cdot c_{\mathrm{SM}}^T(Y(w)^\circ),
\]
where 
\begin{equation*}\label{eq:equivhook-12}
c_{u,\Gamma}^w( t)=
\sum_{\gamma}h_{\alpha-\mathrm{in}(\gamma)}( t_{\Sigma_k(u, w)})\cdot e_{\beta-\mathrm{de}(\gamma)}( t_{\Delta_k(u, w)})
\end{equation*}
with the sum taken over all peakless paths  from $u$ to $w$ in the extended $k$-Bruhat order.  
\end{Thm}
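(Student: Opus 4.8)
The plan is to follow the two-step strategy advertised in the introduction: first establish the nonequivariant specialization of the Pieri formula, and then promote it to the full equivariant statement via the Rigidity Theorem (Theorem \ref{noneqimplieEqhook}). For the nonequivariant version, the target is the identity $c_{\mathrm{SM}}(Y(u)^\circ)\cdot s_\Gamma(x_{[k]})=\sum_w a_{u,\Gamma}^w\, c_{\mathrm{SM}}(Y(w)^\circ)$ where $a_{u,\Gamma}^w$ counts peakless paths $\gamma$ from $u$ to $w$ in the extended $k$-Bruhat graph with $\mathrm{in}(\gamma)=\alpha$ and $\mathrm{de}(\gamma)=\beta$ (i.e.\ the specialization of $c_{u,\Gamma}^w(t)$ at $t=0$, where only the top-degree terms $h_0\cdot e_0$ survive, forcing the path to have the right total length $\alpha+\beta+1$). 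The natural way to get there is to iterate the Chevalley/Monk-type formula for $c_{\mathrm{SM}}(Y(u)^\circ)$ multiplied by $h_1=e_1$ (the degree-one piece), and then pass from powers of $e_1$ to the hook Schur polynomials $s_\Gamma$. Concretely, I would use the Jacobi--Trudi-type expression writing $s_{(1+\alpha,1^\beta)}$ in terms of $e_i$'s and $h_j$'s, or more efficiently exploit the fact that hook Schur polynomials are (signed) sums over the subalgebra generated by $e_r$ and $h_s$; combined with the known Pieri rules for $e_r$ and $h_r$ acting on CSM classes (the ``one column'' and ``one row'' cases, which themselves come from iterating Chevalley and collecting the surviving non-cancelling terms indexed by increasing, resp.\ decreasing, chains), this yields the peakless-path description after a chain-concatenation argument: a decreasing segment followed by an increasing segment is exactly a peakless path, and $\mathrm{de}$, $\mathrm{in}$ record the lengths of the two segments.

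The heart of the combinatorial argument is showing that when one multiplies out the iterated Chevalley formula, the only monomials in the $t$-variables that appear with nonzero coefficient are precisely $h_{\alpha-\mathrm{in}(\gamma)}(t_{\Sigma_k(u,w)})\cdot e_{\beta-\mathrm{de}(\gamma)}(t_{\Delta_k(u,w)})$ summed over peakless $\gamma$, and that there are no cancellations among these. For this I would set up a sign-reversing involution on the ``bad'' paths (those that are not peakless, i.e.\ have an interior peak $\tau_{i-1}<\tau_i>\tau_{i+1}$): swapping the order in which two consecutive commuting-type steps are applied, or more precisely using a local rewriting at the peak, should pair up terms of opposite sign, exactly as in Sottile's original proof of the Pieri rule for Schubert classes \cite{Sottile1} and in Morrison--Sottile \cite{MS2}. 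The labels $\tau=u(a)$ on the edges are designed to make this involution clean; tracking how $\Sigma_k$ and $\Delta_k$ change along an edge (the element $u(a)$ entering or the pair being ``frozen'') is what produces the $h$ and $e$ factors over the correct index sets.

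For the equivariant upgrade, I would invoke the Rigidity Theorem: the equivariant structure constant $c_{u,\Gamma}^w(t)$ is a polynomial in the $t_i$ of degree $|\Gamma|-\ell(w)+\ell(u)$, it is determined by its behaviour under the GKM/localization restrictions, and its ``leading'' (lowest-codimension, top $t$-degree) part is forced by a combination of (i) the nonequivariant formula just proven, which pins down the $t=0$ specialization for $w$ with $\ell(w)=\ell(u)+|\Gamma|$, and (ii) the known equivariant Chevalley formula together with an induction on $\ell(w)$. The Rigidity Theorem packages exactly this: it says the equivariant constants are the unique lift of the nonequivariant ones compatible with the $e_1$-Chevalley action, so it suffices to check that the proposed closed form $\sum_\gamma h_{\alpha-\mathrm{in}(\gamma)}(t_{\Sigma_k})\,e_{\beta-\mathrm{de}(\gamma)}(t_{\Delta_k})$ satisfies the same recursion — a finite verification reducing to a path-surgery identity (appending one more edge to $\gamma$, and matching the change in the $h$- and $e$-factors against the terms produced by multiplying by the Chevalley formula). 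I expect the main obstacle to be precisely this last compatibility check: one must show that the recursion satisfied by the closed formula under $\cdot\, e_1$ matches the equivariant Chevalley recursion term-by-term, which requires a careful case analysis of how a peakless path can be extended by a $k$-edge (at the increasing end, at the decreasing end, or creating/filling in a step), and how each case redistributes elements between $\Sigma_k(u,w)$ and $\Delta_k(u,w)$ and between the $h$- and $e$-degrees — including the ``stationary'' term $s_\Gamma(t_{u[k]})\cdot c^T_{\mathrm{SM}}(Y(u)^\circ)$, which corresponds to the empty path and must be handled as the base case of the induction.
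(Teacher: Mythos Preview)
Your high-level two-step outline (nonequivariant formula first, then promote via Rigidity) matches the paper exactly. But your description of \emph{what the Rigidity Theorem says and how it is used} is wrong, and the alternative mechanism you sketch instead is both unnecessary and unverified.

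The Rigidity Theorem (Theorem~\ref{noneqimplieEqhook}) is \emph{not} a uniqueness/characterization statement asserting that equivariant constants are the unique lift compatible with the $e_1$-Chevalley recursion. It is a direct closed formula: for $w\neq u$,
\[
c_{u,(1+\alpha,1^\beta)}^w(t)=\sum_{\substack{\alpha'\leq\alpha\\\beta'\leq\beta}} c_{u,(1+\alpha',1^{\beta'})}^w\cdot h_{\alpha-\alpha'}(t_{\Sigma_k(u,w)})\cdot e_{\beta-\beta'}(t_{\Delta_k(u,w)}),
\]
proved by a generating-function argument. One packages all hooks at once via $E(\q,\z,x_A)=\frac{1}{\q+\z}\big(\Q(x_A)/\Z(x_A)-1\big)$, computes that the skew operator $\mathcal{T}_{w/u}$ (from the Leibniz expansion of $\mathcal{T}_w$) sends $E$ into $\frac{1}{\q+\z}\mathbb{Z}[\q,\z]\cdot \Q(x_{\Delta_k})/\Z(x_{\Sigma_k})$, and then localizes at the identity. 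The factorization $c_u^w(\q,\z,t)=c_u^w(\q,\z,0)\cdot \Q(t_{\Delta_k})/\Z(t_{\Sigma_k})$ drops out immediately. There is no Chevalley recursion to check, no induction on $\ell(w)$, and no path-surgery compatibility to verify. Once the nonequivariant coefficients are known for \emph{all} hooks $\Gamma'\subseteq\Gamma$ (not just $\Gamma$ itself), the equivariant formula for $\Gamma$ is obtained by plugging into the displayed identity and reindexing the double sum as a single sum over peakless paths.

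For the nonequivariant step, your proposed sign-reversing involution on non-peakless paths is not how the paper proceeds and is also vaguer than it needs to be. The paper first proves the $e_r$ and $h_r$ cases separately (Theorem~\ref{CSMPieriRuleforeh}), appealing to Liu's computation of $\mathcal{T}_{w/u}(e_r)$, and reformulates them as sums over decreasing and increasing paths respectively (Lemma~\ref{erelation3}). The hook case then follows by a short induction on $\beta$ using the classical Pieri identity $h_{\alpha+1}\cdot e_\beta=s_{(1+\alpha,1^\beta)}+s_{(2+\alpha,1^{\beta-1})}$: multiplying by $e_\beta$ then $h_{\alpha+1}$ gives paths that are decreasing-then-increasing, and splitting on whether $\tau_\beta<\tau_{\beta+1}$ or $\tau_\beta>\tau_{\beta+1}$ separates the two hook contributions cleanly. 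No cancellation or involution is needed.
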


Letting $\alpha=0$ or $\beta=0$,
Theorem \ref{LLLL-1} reduces to a Pieri formula for equivariant CSM classes, see Corollary \ref{Pieri-CSM-1}. 
Particularly, if $\alpha$ and $\beta$ are both  $0$,
then Theorem \ref{LLLL-1} specifies  to  the Chevalley formula
\cite[Theorem 4.2]{MNS}. On the other hand, if taking the lowest degree component of $c^T_{\mathrm{SM}}(Y(u)^\circ)$, then Theorem \ref{LLLL-1} becomes an expansion for equivariant Schubert  classes, see Corollary \ref{EqSchubertPieri}.

Theorem \ref{LUUU-1}  replaces the Schur polynomial of a hook shape in Theorem \ref{LLLL-1} by an equivariant Schubert class of a Grassmannian permutation of hook shape. 
For a partition $\lambda=(\lambda_1,\ldots, \lambda_k)$
inside the $k\times (n-k)$ rectangle, let $w_{\lambda}$ denote the Grassmannian permutation in $\S_n$ associated to   $\lambda$ with descent at position $k$. That is, the values  of $w_\lambda$ at the first $k$ positions are determined by 
$\lambda_i=w(k-i+1)-(k-i+1)$ for $1\leq i\leq k$. For example, $w_\lambda=13524$ for the partition  $\lambda=(2,1,0)$. 


\begin{Thm}[see Theorem \ref{LRruleforhookshapeCSM}]\label{LUUU-1}
Let $u\in \S_n$, and let $\Gamma=(1+\alpha,1^{\beta})$ be a hook shape inside the $k\times (n-k)$ rectangle.  Then we have
\[
c_{\mathrm{SM}}^T(Y(u)^\circ)\cdot [Y(w_{\Gamma})]_T 
= [Y(w_{\Gamma})]_T|_{u}\cdot c_{\mathrm{SM}}^T(Y(u)^\circ)
+ \sum_{u<_k w\in \S_n} \mathfrak{c}_{u,\Gamma}^w( t)\cdot c_{\mathrm{SM}}^T(Y(w)^\circ),
\]
where 
\begin{equation*}\label{eq:LRruleforhookshapeCSM-II}
\mathfrak{c}_{u,\Gamma}^w( t) = \sum_{\gamma}\sum_{
\begin{subarray}{c}
\alpha_1+\alpha_2=\alpha-\mathrm{in}(\gamma)\\
\beta_1+\beta_2=\beta-\mathrm{de}(\gamma)
\end{subarray}} 
(-1)^{\alpha_2+\beta_2}\cdot
h_{\alpha_1}( t_{\Sigma_k(u, w)})\cdot e_{\beta_1}( t_{\Delta_k(u, w)})\cdot
e_{\alpha_2}( t_{[k+\alpha]})\cdot h_{\beta_2}( t_{[k-\beta]})
\end{equation*}
with the first  sum over all  peakless paths  from $u$ to $w$ in the extended $k$-Bruhat order. 
Here,  $[Y(w_{\Gamma})]_T|_{u}$ means the localization of $[Y(w_{\Gamma})]_T$ at $u$, as will be defined  in Section \ref{Sec-2}.

\end{Thm}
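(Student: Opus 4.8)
The plan is to derive Theorem~\ref{LUUU-1} from Theorem~\ref{LLLL-1} by expanding $[Y(w_{\Gamma})]_T$ in the ``Chern class'' basis and then applying Theorem~\ref{LLLL-1} term by term. Since $w_{\Gamma}$ is a Grassmannian permutation with descent at $k$, the class $[Y(w_{\Gamma})]_T$ is pulled back from $\Gr(k,n)$, hence is a $\mathbb{Z}[t]$-linear combination of the classes $s_\mu(x_{[k]})$ with $\mu$ inside the $k\times(n-k)$ rectangle. As any partition contained in a hook is again a hook (or empty), only the sub-hooks $(1+\alpha_1,1^{\beta_1})$ with $0\le\alpha_1\le\alpha$ and $0\le\beta_1\le\beta$, together with the empty partition, can occur. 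The core of the proof is to pin down these coefficients explicitly.

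\smallskip
\noindent\textbf{The change-of-basis lemma.} The key identity I would establish is
\[
[Y(w_{\Gamma})]_T=\sum_{\alpha_1=0}^{\alpha}\sum_{\beta_1=0}^{\beta}(-1)^{(\alpha-\alpha_1)+(\beta-\beta_1)}\,e_{\alpha-\alpha_1}(t_{[k+\alpha]})\,h_{\beta-\beta_1}(t_{[k-\beta]})\,s_{(1+\alpha_1,1^{\beta_1})}(x_{[k]})\;+\;C(t),
\]
where $C(t)\in\mathbb{Z}[t]$ is a pure scalar term. To prove it I would use that $[Y(w_{\Gamma})]_T$ is represented by the Grassmannian double Schubert polynomial of $w_{\Gamma}$, that is, by the factorial Schur polynomial of hook shape in the Chern roots $x_{[k]}$ (with the standard convention for the sign and ordering of the equivariant parameters). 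Applying the Jacobi--Trudi expansion of this hook-shaped factorial Schur polynomial writes it as an alternating sum of products of a one-row and a one-column factorial Schur polynomial with shifted $t$-sequences; expanding each of these one-row and one-column factorial Schur polynomials in the ordinary complete and elementary symmetric polynomials of $x_{[k]}$ via their flagged-tableau description and collecting terms yields the displayed coefficients, the ranges $[k+\alpha]$ and $[k-\beta]$ being exactly the flag bounds attached to the last box of the arm and of the leg of $\Gamma$. The scalar $C(t)$ is then forced by evaluating both sides at the fixed point $u$ (substituting $x_i\mapsto t_{u(i)}$): the left side becomes $[Y(w_{\Gamma})]_T|_{u}$ and must match the classical localization formula for Grassmannian Schubert classes, so its explicit value is never needed below. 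Note that this lemma is a statement entirely internal to $H^*_T(\Gr(k,n))$.

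\smallskip
\noindent\textbf{Assembling the formula.} Multiplying the change-of-basis identity by $c_{\mathrm{SM}}^T(Y(u)^\circ)$ and applying Theorem~\ref{LLLL-1} to each summand $s_{(1+\alpha_1,1^{\beta_1})}(x_{[k]})\cdot c_{\mathrm{SM}}^T(Y(u)^\circ)$, the coefficient of $c_{\mathrm{SM}}^T(Y(w)^\circ)$ for $u<_k w$ becomes
\[
\sum_{\alpha_1=0}^{\alpha}\sum_{\beta_1=0}^{\beta}(-1)^{(\alpha-\alpha_1)+(\beta-\beta_1)}e_{\alpha-\alpha_1}(t_{[k+\alpha]})\,h_{\beta-\beta_1}(t_{[k-\beta]})\sum_{\gamma}h_{\alpha_1-\mathrm{in}(\gamma)}(t_{\Sigma_k(u,w)})\,e_{\beta_1-\mathrm{de}(\gamma)}(t_{\Delta_k(u,w)}),
\]
and writing $\alpha_2=\alpha-\alpha_1$, $\beta_2=\beta-\beta_1$ and taking $\alpha_1-\mathrm{in}(\gamma)$, $\beta_1-\mathrm{de}(\gamma)$ as the new summation indices (so that these now run over $\alpha_1+\alpha_2=\alpha-\mathrm{in}(\gamma)$ and $\beta_1+\beta_2=\beta-\mathrm{de}(\gamma)$) turns this expression into exactly $\mathfrak{c}_{u,\Gamma}^w(t)$. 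For the diagonal term, the scalars $s_{(1+\alpha_1,1^{\beta_1})}(t_{u[k]})$ produced by Theorem~\ref{LLLL-1}, together with $C(t)$, recombine by the change-of-basis identity evaluated at $u$ into $[Y(w_{\Gamma})]_T|_{u}$, as required. As a consistency check, the subsum $\alpha_2=\beta_2=0$ of $\mathfrak{c}_{u,\Gamma}^w$ is precisely the structure constant $c_{u,\Gamma}^w$ of Theorem~\ref{LLLL-1}. The main obstacle is the change-of-basis lemma itself: establishing it with the precise ranges $[k+\alpha]$, $[k-\beta]$ and the correct signs $(-1)^{\alpha_2+\beta_2}$ requires a careful manipulation of the factorial Jacobi--Trudi identity together with the flagged-tableau expansions of one-row and one-column factorial Schur polynomials; this is routine but delicate, and everything after it is bookkeeping.
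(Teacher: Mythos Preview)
Your overall strategy matches the paper's exactly: expand $[Y(w_\Gamma)]_T$ as a $\mathbb{Z}[t]$-linear combination of the ordinary Schur polynomials $s_{(1+\alpha',1^{\beta'})}(x_{[k]})$ for sub-hooks (plus a scalar), then apply Theorem~\ref{LLLL-1} term by term and reassemble. Your ``change-of-basis lemma'' is precisely the paper's Lemma~\ref{GiambelliLemma}, with $C(t)=\mathfrak{S}_{w_\Gamma^{-1}}(-t)$, and the assembly step is identical to the paper's proof of Theorem~\ref{LRruleforhookshapeCSM}.

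The only point of difference is the proof of the change-of-basis lemma itself. You propose a factorial Jacobi--Trudi expansion followed by flagged-tableau expansions of the one-row and one-column pieces; the paper instead applies the Giambelli-type identity $\mathfrak{S}_w(x,t)=\sum_{w=v^{-1}u,\ \ell(w)=\ell(u)+\ell(v)}\mathfrak{S}_u(x)\,\mathfrak{S}_v(-t)$ (Proposition~\ref{PropertiesofSchubertpolynomials}\eqref{itiii}) to $w=w_\Gamma$ and reads off the factorization directly from the reduced word $w_\Gamma=s_{k-\beta}\cdots s_{k-1}\,s_{k+\alpha}\cdots s_{k+1}s_k$: the subwords $u$ that occur are exactly the $w_{\Gamma'}$ for sub-hooks $\Gamma'$ (plus $u=\mathrm{id}$, giving the scalar), and the complementary $v$ factors as a product of two Grassmannian permutations whose Schubert polynomials are $e_{\alpha-\alpha'}(x_{[k+\alpha]})$ and $h_{\beta-\beta'}(x_{[k-\beta]})$. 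This yields the ranges $[k+\alpha]$, $[k-\beta]$ and the signs with no computation, whereas your route, while viable, leaves the tracking of shifted $t$-sequences through the factorial Jacobi--Trudi determinant as genuine work rather than bookkeeping.
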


Setting $\alpha=0$ or $\beta=0$, we obtain our second Pieri formula for equivariant CSM classes, and in this case the expression 
for $\mathfrak{c}_{u,\Gamma}^w( t)$ could be dramatically  simplified to the localization of Schubert classes, see Theorem \ref{LRruleforehCSM}. 
Taking the lowest degree part in Theorem \ref{LUUU-1} recovers the Pieri formula for equivariant Schubert classes obtained in \cite{Robinson} using an algebraic approach (see \cite{LSY} for a geometric proof), see Remark \ref{REM-I}.

The classical MN formula computes the product of  a Schur polynomial by a power sum symmetric polynomial, which arises naturally in the computation of irreducible characters of $\S_n$, see Sagan \cite{Sagan} or Stanley \cite[Chapter 7]{stanley2}. 
Its nonequivariant Schubert generalization was found by Morrison and Sottile \cite{MS2}. 
Geometrically, a power sum symmetric polynomial can be viewed as a component of Chern characters of tautological bundles up to a scalar, see Subsection \ref{BBBBB}. 

We vastly lift the MN  formula of Morrison and Sottile from nonequivariant Schubert classes to equivariant CSM classes. 

\begin{Thm}[see Theorem \ref{MNforequivariantCSMSchur}]\label{UUU-1}
Let $u\in \S_n$. For $r\geq 1$,  
\[
c_{\mathrm{SM}}^T(Y(u)^\circ)\cdot p_{r}(x_{[k]})
=p_{r}( t_{u[k]})\cdot c_{\mathrm{SM}}^T(Y(u)^\circ)
+ \sum_{\eta\in \S_n} d_{u,r}^{u\eta}( t)\cdot  c_{\mathrm{SM}}^T(Y(u\eta)^\circ),
\]
where the sum runs over $(r'+1)$-cycles $\eta\in \S_n$
with $1\leq r'\leq r$ such that $u\leq_k u\eta$ in the extended 
$k$-Bruhat order, and
\[
d_{u,r}^{u\eta}( t) = (-1)^{\operatorname{ht}_k(\eta)}\cdot h_{r-r'}( t_{uM(\eta)}).
\]
Here,    $M(\eta)= \{1\leq i\leq n\colon \eta(i)\neq i\}$ is  the set of 
non-fixed points of $\eta$, and 
$
\operatorname{ht}_k(\eta)= \texttt{\#}\{i\leq k\colon   i\in M(\eta)\}-1.
$
\end{Thm}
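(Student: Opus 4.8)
\textbf{Proof proposal for Theorem \ref{UUU-1}.}

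The plan is to follow the same two-stage strategy advertised in the introduction for Theorem \ref{LLLL-1}: first establish the \emph{nonequivariant} Murnaghan--Nakayama formula for CSM classes, and then promote it to the equivariant statement by invoking the Rigidity Theorem (Theorem \ref{noneqimplieEqhookMN}) stated in Section \ref{Sect5}. For the nonequivariant half, I would start from the known MN rule of Morrison and Sottile \cite{MS2} for ordinary Schubert classes $[Y(w)]$ and upgrade it to $c_{\mathrm{SM}}(Y(u)^\circ)$. The natural route is to iterate the Chevalley/Chevalley-type formula for CSM classes (combining \cite{AMSS17} and \cite{Su}, as recalled in the introduction) and to recognize that $p_r(x_{[k]})$ is, up to scalar, a component of the Chern character of the $k$-th tautological bundle; the telescoping that produces a power sum out of repeated divisor multiplications is exactly what makes rim hooks (i.e. $(r'+1)$-cycles $\eta$ whose support meets $[k]$) appear, with the sign $(-1)^{\operatorname{ht}_k(\eta)}$ recording the vertical extent of the rim hook inside $[k]$. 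The new feature compared to \cite{MS2} is the extra complete-homogeneous factor $h_{r-r'}(t_{uM(\eta)})$, which in the nonequivariant limit degenerates; one must track which CSM-degree corrections survive and check that they reorganize precisely into $h_{r-r'}$ evaluated at the coordinates indexed by the non-fixed points of $\eta$.

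Concretely, the key steps in order are: (1) reduce to the nonequivariant identity, relegating all $t$-dependence to the Rigidity Theorem, which asserts that the structure constants $d_{u,r}^{u\eta}(t)$ are determined by their lowest-degree (nonequivariant) part together with the shape of the support; (2) set up the combinatorial model — show that paths in the $k$-Bruhat graph whose edge sequence realizes a given cycle $\eta$ are in bijection with the standard data attached to a rim hook of size $r'$, and identify $\operatorname{ht}_k(\eta)$ with the number of rows of that rim hook lying in the first $k$ rows minus one; (3) run the induction on $r$ (or on the length of the rim hook), peeling off one box at a time using the Chevalley-type formula, and carefully collect the resulting terms, separating the "shape-changing" contributions (which build $\eta$) from the "scalar" contributions (which build $h_{r-r'}$); (4) verify the boundary/degenerate cases $r'=r$ and $r'=1$ against the Chevalley formula \cite[Theorem 4.2]{MNS} and against \cite{MS2}; and (5) assemble the pieces and apply Rigidity to conclude the full equivariant statement, and in particular deduce Corollary \ref{LRrulehookshapeSchubert} by taking lowest-degree components.

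The main obstacle I expect is step (3): controlling the cancellations that occur when iterating the Chevalley-type formula. Iterating divisor multiplication naively produces a large alternating sum over walks in the Bruhat graph, and the content of the theorem is that almost everything cancels, leaving only cyclic $\eta$ with a clean sign and a single $h_{r-r'}$ factor. Making this cancellation manifest — rather than verifying it after the fact — will require either a sign-reversing involution on the non-cyclic walks, or an a priori argument (via the Rigidity Theorem) that the answer \emph{must} be supported on cycles because the nonequivariant MN rule of \cite{MS2} already is, leaving only the determination of the multiplicities $h_{r-r'}(t_{uM(\eta)})$. I would pursue the latter: use \cite{MS2} to pin down the support, use Rigidity to reduce the coefficient computation to a localization/degree count, and finish by a direct evaluation of that localization, which is where the $h_{r-r'}$ factor and the exponent $\operatorname{ht}_k(\eta)$ emerge. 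The secondary subtlety is bookkeeping the labeling convention $\tau = u(a)$ on edges so that "the cycle $\eta$ with $u \leq_k u\eta$" is matched with the correct rim-hook orientation; getting this convention consistent with Figure \ref{fig:EgS3} and with the definitions of $\Sigma_k, \Delta_k$ will need care but is routine.
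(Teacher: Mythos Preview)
Your two-stage architecture --- establish the nonequivariant CSM MN formula first, then promote via the Rigidity Theorem~\ref{noneqimplieEqhookMN} --- matches the paper's, and you are right that Rigidity is what manufactures the factor $h_{r-r'}(t_{uM(\eta)})$ out of the nonequivariant coefficients.

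The gap is in your plan for the nonequivariant half. Your fallback, ``use \cite{MS2} to pin down the support,'' does not work: \cite{MS2} is a statement about Schubert classes $[Y(u)]$, so its support consists of cycles $\eta$ with $u\leq_k u\eta$ in the \emph{ordinary} $k$-Bruhat order (equivalently $\ell(u\eta)=\ell(u)+r'$). The nonequivariant CSM MN formula (Theorem~\ref{noneqMNruleCSM}) is supported on the strictly larger set of cycles with $u\leq_k u\eta$ in the \emph{extended} $k$-Bruhat order; those extra terms (the dashed edges) are invisible to \cite{MS2}, and Rigidity requires as input the full family of nonequivariant CSM coefficients $d_{u,r'}^w$, not merely their Schubert shadow. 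So neither iterating Chevalley (which you rightly flag as cancellation-laden) nor bootstrapping from \cite{MS2} closes the argument. Also, $(-1)^{\operatorname{ht}_k(\eta)}$ is already present in the nonequivariant formula and is not produced by Rigidity.

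The paper's route for the nonequivariant step is different from either option you sketch. It writes $p_r=\sum_{\alpha+\beta+1=r}(-1)^\beta s_{(1+\alpha,1^\beta)}$ and applies the already-proved hook formula for CSM classes (Theorem~\ref{NoneqPierirulehook-R}) to get $d_{u,r}^w=\sum_\gamma(-1)^{\mathrm{de}(\gamma)}$ over unimodal paths of length $r$ from $u$ to $w$. That the support lies on cycles is then shown by an operator argument rather than a combinatorial cancellation: acting by $\partial_{a_1b_1}\cdots\partial_{a_mb_m}$ on the generating series $p(x_A)=\sum_{a\in A} x_a/(1-zx_a)$ forces $t_{a_1b_1}\cdots t_{a_mb_m}$ to be an $(m{+}1)$-cycle whenever the result is nonzero (the Claim in the proof of Theorem~\ref{QQQ_I}). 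The remaining work is Theorem~\ref{mutiplicitylemma}: for each cycle $\eta$ with $u\leq_k u\eta$ there is a \emph{unique} unimodal path from $u$ to $u\eta$, of length $|M(\eta)|-1$ and with $\mathrm{de}(\gamma)=\operatorname{ht}_k(\eta)$, which converts the signed path-count into the single term $(-1)^{\operatorname{ht}_k(\eta)}$. That uniqueness proof (Lemmas~\ref{extkBruhatorder}--\ref{mutiplicityllemma2-2}) is the technical heart and is an inductive construction of the path, not a sign-reversing involution on iterated Chevalley walks.
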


Taking the lowest degree component in Theorem \ref{UUU-1} gives a MN formula for equivariant Schubert classes, see Corollary \ref{LRrulehookshapeSchubert}, which will be applied in  Section \ref{SECT7} to the enumeration of standard rim hook tableaux. If further specializing all   $t_i=0$, then we are led to the MN formula for Schubert classes    by Morrison and Sottile \cite{MS2}.

\section{Chern--Schwartz--MacPherson classes}\label{Sec-2}
 
 In this section, we shall briefly review the   geometric background   of Chern--Schwartz--MacPherson classes of Schubert cells in flag varieties. Some 
 properties required in this paper are also included.

\subsection{Chern--Schwartz--MacPherson Classes}

For a complex variety $X$,  denote by $\Fun(X)$ the space of {\it constructible functions} over $X$ with coefficients in $\mathbb{Q}$, namely,  the space of functions $f\colon X\to \mathbb{Q}$ which can be written as a finite sum 
\[
\sum c_W\mathbf{1}_W,
\]
where $c_W\in \mathbb{Q}$ and $\mathbf{1}_W$ is the characteristic function of a constructible subset $W$ of $X$.
It is well known that  any proper morphism $f\colon Y\to X$ induces a functorial pushforward $f_*\colon \Fun(Y)\to \Fun(X)$ given by 
\[
f_*\varphi(x) = \sum_{a\in \mathbb{Q}}\chi_c\big(f^{-1}(x)\cap \varphi^{-1}(a)\big),
\]
where $\chi_c$ is the Euler characteristic with compact  supports. 
It was conjectured by Deligne and Grothendieck \cite{Sullivan} and proved by MacPherson \cite{MacPherson} that there exists a linear map
\[
c_{\mathrm{SM}}\colon \Fun(X) \to H_*(X)
\]
from $\Fun(X)$ to the Borel--Moore homology of $X$ which is functorial in $X$ (with respect to proper pushforward). 
That is, for any proper morphism $f\colon Y\to X$,   
the following diagram commutes 
\[
\xymatrix{
\Fun(Y)\ar[d]_{f_*}\ar[r]^{c_{\mathrm{SM}}}& H_*(Y)\ar[d]^{f_*}\\
\Fun(X)\ar[r]^{c_{\mathrm{SM}}}& H_*(X),}
\]
where the right $f_*$ is the usual proper pushforward of Borel--Moore homology. 
For any constructible subset $W\subseteq X$,  define the {\it Chern--Schwartz--MacPherson class} (CSM class for abbreviation) to be 
\[
c_{\mathrm{SM}}(W) = c_{\mathrm{SM}}(\mathbf{1}_W). 
\]
Actually, the CSM class is the unique natural transformation characterized by the following property. For $X$ smooth,
\[
c_{\mathrm{SM}}(X) =  c_1(\mathscr{T}_X)\frown [X] \in H_\bullet(X),
\]
where  $\mathscr{T}_X$ is the tangent bundle of $X$,  {$c_1(\mathscr{T}_X)$ denotes the first  Chern class of $\mathscr{T}_X$}, the symbol $\frown$ stands for the cap product, and $[X]$ is the fundamental class of $X$.

The equivariant setting of  CSM classes was developed  by Ohmoto \cite{Ohmoto}. Let $T$ be a torus, and $X$ be a $T$-variety. 
For any $T$-invariant constructible subset $W\subseteq X$, denote by $c_{\mathrm{SM}}^T(W)$   its corresponding equivariant CSM class. 

In this paper, we are only concerned with the case when $X$ is smooth so that the (equivariant) cohomology can be naturally identified with (equivariant) Borel--Moore homology of $X$ under Poincar\'e duality. { Hence one may think of $c_{\mathrm{SM}}(W)$ and $c_{\mathrm{SM}}^T(W)$   as cohomology classes in $H_T^\bullet(X)$.}

\subsection{Flag Varieties}\label{AAAC}

The  {\it flag variety} $\Fl(n)$ is the variety of (complete) flags of $\mathbb{C}^n$, namely,  chains of subspaces  
\[
0=V_0\subseteq V_1\subseteq \cdots \subseteq V_n=\mathbb{C}^n
\]
with $\dim V_i=i$ for  $0\leq i\leq n$. 
For   $1\leq i\leq n$, define the {\it $i$-th tautological bundle} $\mathcal{V}_i$ over $\Fl(n)$ to be the vector bundle whose fiber at a flag $(V_\bullet)\in \Fl(n)$ is $V_i$. 

Let $T$ be the subgroup of diagonal matrices of $GL_n$. 
There is a natural action of $T$ on $\Fl(n)$. 
For $1\leq i\leq n$,  denote by $\pt=\operatorname{Spec} \mathbb{C}$   a point with trivial $T$-action. 
Let 
\[
 t_i = c_1(\mathbb{C}_{-t_i}) \in H^2_T(\pt)
\]
be the first Chern class of the equivariant bundle $\mathbb{C}_{-t_i}$, {the one-dimensional representation corresponding to the character sending $\operatorname{diag}(t_1,\ldots,t_n)\in T$ to $t_i^{-1}$. }
By  Borel \cite{Borel}, there is an isomorphism
\[
H_T^\bullet(\pt)\cong \mathbb{Q}[ t_1,\ldots, t_n].
\]
Note that $H_T^\bullet(\Fl(n))$ is a module over $H_T^\bullet(\pt)$.
For $1\leq i\leq n$, let 
\[
x_i=c_1^T((\mathcal{V}_i/\mathcal{V}_{i-1})^\vee)\in H_T^2(\Fl(n))
\]
be the first equivariant Chern class of  the dual bundle of the quotient bundle $\mathcal{V}_{i}/\mathcal{V}_{i-1}$. 
%
The following seminal  result is due to Borel \cite{Borel}, see also Anderson and Fulton \cite[\textsection 10.6]{AF}. 

\begin{Th}[Borel \cite{Borel}]\label{Boreliso}
We have 
\begin{equation*}\label{eq:Borelisoeq}
H_T^\bullet(\Fl(n)) \cong \frac{\mathbb{Q}[x_1,\ldots,x_n, t_1,\ldots, t_n]}{\left<f(x)-f( t): f\in \Lambda_n\right>}
\end{equation*}
where $\Lambda_n$ is the ring of symmetric polynomials in $n$ variables. Moreover,  
\begin{equation*}\label{eq:Borelisononeq}
H^\bullet(\Fl(n)) \cong \frac{\mathbb{Q}[x_1,\ldots,x_n]}{\left<f(x)-f(0): f\in \Lambda_n\right>}
\end{equation*}
with the forgetful map $\epsilon: H_T^\bullet(\Fl(n))\to H^\bullet(\Fl(n))$ given by 
\begin{equation*}\label{eq:augmentmap}
\epsilon \big(f(x, t)\big)= f(x,0). 
\end{equation*}
\end{Th}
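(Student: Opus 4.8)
The plan is to compute $H^\bullet_T(\Fl(n))$ by presenting $\Fl(n)$ as an iterated tower of projectivized vector bundles, carried out $T$-equivariantly, and then invoking the equivariant projective bundle theorem at each stage. One starts from the trivial rank-$n$ bundle $\mathbb{C}^n$ over a point, equipped with the standard $T$-action (from $\operatorname{diag}(t_1,\dots,t_n)\cdot e_i=t_i e_i$); projectivizes to get $\mathbb{P}(\mathbb{C}^n)$ with its tautological sub-line-bundle and rank-$(n-1)$ quotient; projectivizes the quotient; and continues. After $n-1$ steps the total space is canonically $\Fl(n)$, with the successive tautological line bundles pulling back to $\mathcal{V}_1,\,\mathcal{V}_2/\mathcal{V}_1,\dots,\mathcal{V}_n/\mathcal{V}_{n-1}$; every bundle and map in the tower is $T$-equivariant because the whole construction is functorial in the initial representation $\mathbb{C}^n$.

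Applying the equivariant projective bundle (Leray--Hirsch) formula (see \cite[\textsection 10.6]{AF} or \cite[\textsection 14.6]{Fulton}) at each stage would show by induction that $H^\bullet_T(\Fl(n))$ is a free module over $H^\bullet_T(\pt)=\mathbb{Q}[t_1,\dots,t_n]$, generated as a ring by $x_1,\dots,x_n$, with the staircase monomials $x_1^{a_1}\cdots x_n^{a_n}$, $0\le a_i\le n-i$, forming a $\mathbb{Q}[t]$-basis; in particular it is free of rank $n!$. To pin down the relations I would use the tautological filtration $0=\mathcal{V}_0\subset\cdots\subset\mathcal{V}_n=\mathbb{C}^n$: comparing the equivariant total Chern class of $(\mathbb{C}^n)^\vee$ computed from this filtration, namely $\prod_i(1+x_i)$, against its pullback from the point, namely $\prod_i(1+t_i)$, gives $e_j(x)=e_j(t)$ for all $j$, i.e. $f(x)=f(t)$ for every symmetric $f\in\Lambda_n$. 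This produces a well-defined surjective ring homomorphism $\Phi$ from the abstract quotient $R=\mathbb{Q}[x,t]/\langle f(x)-f(t):f\in\Lambda_n\rangle$ onto $H^\bullet_T(\Fl(n))$.

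It then remains to compare ranks. The ring $R$ is itself free over $\mathbb{Q}[t]$ on the same staircase monomials: this is the classical fact that $\mathbb{Q}[x_1,\dots,x_n]$ is free of rank $n!$ over its subring of symmetric polynomials, with staircase-monomial basis (seen, for instance, through the lexicographic Gr\"obner basis of $\langle e_1(x)-c_1,\dots,e_n(x)-c_n\rangle$), base-changed along $e_j\mapsto e_j(t)$. Hence $\Phi$ is a surjection of finite free $\mathbb{Q}[t]$-modules of equal rank carrying a basis to a basis, so it is an isomorphism, which proves the first presentation. For the nonequivariant statement, $\Fl(n)$ carries a $T$-stable cell decomposition by Schubert cells, hence is equivariantly formal, so $H^\bullet(\Fl(n))=H^\bullet_T(\Fl(n))\otimes_{\mathbb{Q}[t]}\mathbb{Q}$ with the forgetful map $\epsilon$ realized as reduction modulo $(t_1,\dots,t_n)$; this immediately gives $H^\bullet(\Fl(n))\cong\mathbb{Q}[x]/\langle e_j(x):j\ge 1\rangle=\mathbb{Q}[x]/\langle f(x)-f(0):f\in\Lambda_n\rangle$ together with $\epsilon(f(x,t))=f(x,0)$. (Alternatively one reruns the tower argument with ordinary cohomology, or appeals to Borel's fibration $G/T\to BT\to BG$ with $H^\bullet(BG)=H^\bullet(BT)^W$.)

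I expect the main obstacle to be this last rank comparison — equivalently, the assertion that the relations $e_j(x)=e_j(t)$ already exhaust all relations, so that $R$ has exactly rank $n!$. That is where genuine input enters (the commutative algebra, or Gr\"obner-basis combinatorics, of symmetric functions). Everything else is the routine if somewhat lengthy bookkeeping of Chern classes up the projective bundle tower, together with care over the sign conventions for the $T$-weights on $\mathbb{C}^n$ forced by the normalization $t_i=c_1(\mathbb{C}_{-t_i})$.
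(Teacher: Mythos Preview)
The paper does not prove this theorem at all: it is stated as a background result with citations to Borel \cite{Borel} and Anderson--Fulton \cite[\textsection 10.6]{AF}, and no argument is given. Your proposal is the standard iterated-projective-bundle argument, which is essentially the proof presented in the cited reference \cite[\textsection 10.6]{AF}, so your approach is correct and aligns with what the paper points to.
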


The $T$-fixed points of $\Fl(n)$ are in bijection with the symmetric group $\S_n$. Precisely, for   $w\in \S_n$, the flag \[\phi_w=((\phi_w)_1,\ldots,(\phi_w)_n)\] is fixed by $T$, where 
$
(\phi_w)_i = \operatorname{span}(\mathbf{e}_{w(1)},\ldots,\mathbf{e}_{w(i)})
$
and $\mathbf{e}_i$'s are the standard basis of $\mathbb{C}^n$. 
Denote the {\it localization map} at $w$ by
\begin{equation*}
-|_{w}: H_T^\bullet(\Fl(n)) \to  H_T^\bullet(\phi_w)
\cong \mathbb{Q}[ t_1,\ldots, t_n].
\end{equation*}
Under the isomorphism in Theorem \ref{Boreliso}, it is easy to see  that
\begin{equation}
f(x, t)|_{w} = f(w t, t), 
\end{equation}
where $wt$ means $(t_{w(1)}, t_{w(2)}, \ldots, t_{w(n)})$ (so $f(w t, t)$ is obtained from $f(x, t)$ by replacing $x_i$ with $t_{w(i)}$).

For   $w\in\S_n$, define the  {\it Schubert cell} $Y(w)^\circ$ to be the constructible subset of flags $V_\bullet$ such that for   $0\leq i,j\leq n$,
\[
\dim(V_i\cap V^0_j)=\texttt{\#}\{(a,b): a\leq i,\,b\leq j,\,w(a)+b=n+1\},
\]
where $V^0_\bullet$ is the reversed standard flag, i.e., for  $1\leq i\leq n$, 
\[
V^0_i = \operatorname{span}(\mathbf{e}_{n},\ldots,\mathbf{e}_{n-i+1}).
\]
Notice that $Y(w)^\circ$ is $T$-invariant and $\phi_w$ is the unique $T$-fixed point over it.
In this paper, we shall concentrate on  the (equivariant) CSM classes of Schubert cells 
\begin{align*}
c_{\mathrm{SM}}^T(Y(w)^\circ) \in H_T^\bullet(\Fl(n)),\ \ \ \ \ 
c_{\mathrm{SM}}(Y(w)^\circ)  = \epsilon\big(c_{\mathrm{SM}}^T(Y(w)^\circ)\big) \in H^\bullet(\Fl(n)), 
\end{align*}
and  simply call them (equivariant) CSM classes. 
The {\it Schubert variety} $Y(w)$ is the closure of $Y(w)^\circ$, and   is a closed $T$-subvariety of $\Fl(n)$. So one can define the  {\it  (equivariant) Schubert class} to be
its fundamental class
\begin{align*}
[Y(w)]_T\in H_T^\bullet(\Fl(n)), \ \ \ \ \ 
[Y(w)] = \epsilon\big([Y(w)]_T\big)\in H^\bullet(\Fl(n)).
\end{align*}

\subsection{Characteristic Classes}\label{BBBBB}
 
For any $T$-equivariant vector bundle $\mathcal{V}$ of rank $k$ over a $T$-variety $X$, there is a  homomorphism from the ring $\Lambda_k$ of symmetric polynomials in $k$ variables to $H_T^\bullet(X)$,
defined by sending the $r$-th elementary symmetric polynomial 
\[e_r(x_{[k]})=\sum_{1\leq i_1<i_2<\cdots<i_r\leq k}x_{i_1}x_{i_2}\cdots x_{i_r}\]
to  the $r$-th $T$-equivariant Chern class $c^T_r(\mathcal{V})$.
Denote by $\mathcal{V}^\vee$   the dual of $\mathcal{V}$. 
For $r\geq 1$, the $r$-th $T$-equivariant Segre class of $\mathcal{V}^\vee$ is  the image of the {$r$-th complete homogeneous symmetric polynomial} 
\[
h_r(x_{[k]})=\sum_{1\leq i_1\leq i_2\leq \cdots\leq i_r\leq k}x_{i_1}x_{i_2}\cdots x_{i_r},
\]  
\cite{AF,Fulton}. After suitable completion, we have a well-defined $T$-equivariant Chern character
\begin{equation*}
\operatorname{ch}^T(\mathcal{V}) = k + \frac{1}{1!}p_1+\frac{1}{2!}p_2+\frac{1}{3!}p_3+\cdots,
\end{equation*}
where $p_r$ is the image of the  $r$-th power  sum   symmetric polynomial  
\[p_r(x_{[k]})=x_1^r+x_2^r+\cdots+x_k^r.\]

Recall that $\mathcal{V}_k$ is the $k$-th tautological bundle  over $\Fl(n)$. Denote its dual bundle by $\mathcal{V}_k^\vee$. 
By the defining properties of Chern classes, we have 
\begin{equation*}
c^T_r(\mathcal{V}_k^\vee) = e_r(x_{[k]}).
\end{equation*}
In particular, $h_r(x_{[k]})$ is the $r$-th $T$-equivariant Segre class of $\mathcal{V}_k$, and  $p_r(x_{[k]})$ appears as a summand of $T$-equivariant Chern character up to a constant. 


For a partition $\lambda=(\lambda_1,\ldots,\lambda_k)$, the associated  {\it Schur polynomial} $s_{\lambda}(x_{[k]})$ is defined as 
\[
s_{\lambda}(x_{[k]}) = \frac{\det\big(x_i^{\lambda_j+j-1}\big)_{1\leq i,j\leq k}}{\prod_{1\leq i<j\leq k}(x_i-x_j)}.
\]
Schur polynomials can be interpreted as the classes of degenerate loci of generic rational sections of $\mathcal{V}$, see \cite{Manivel,KL} and \cite[Chapter 14]{Fulton}.
When  $\lambda$ has exactly one row or one column with $r$ boxes, there hold the following relations 
\[ h_r(x_{[k]})=s_{(r)}(x_{[k]}) \ \ \ \ \text{and}\ \ \ \ 
e_r(x_{[k]})=s_{(1^r)}(x_{[k]}).
\]

\subsection{Schubert Classes and Schubert Polynomials}

The (equivariant) Schubert classes admit a remarkable choice of  polynomial representatives called  {\it (double) Schubert polynomials} as introduced by Lascoux and  Sch\"utzenberger \cite{LS}, see also \cite{Ma,Manivel}. 
For $1\leq i\leq n-1$,  the {\it BGG Demazure operator}   $\partial_{i}$   \cite{BGG} acts  on $\mathbb{Q}[x]=\mathbb{Q}[x_1,\ldots,x_n]$  by letting
\begin{equation*}
\partial_i f = \frac{f-s_if}{x_i-x_{i+1}},
\end{equation*}
where $s_i=t_{i\, i+1}$ is the simple transposition, and $s_i f$ is obtained from $f$ by  swapping  $x_i$ and $x_{i+1}$. 
The double Schubert polynomials $\mathfrak{S}_{w}(x, t)$ for $w\in \S_n$ can be defined recursively by
\begin{align*}
\mathfrak{S}_{w_0}(x, t) = \prod_{i+j\leq n}(x_i- t_j),\qquad 
\partial_i \mathfrak{S}_{w}(x, t)
=\begin{cases}
\mathfrak{S}_{ws_i}(x, t), & \ell(ws_i)<\ell(w),\\
0, & \ell(ws_i)>\ell(w),\\
\end{cases}
\end{align*}
where $w_0=n\cdots 2 1$ refers to the longest permutation of $\S_n$. 
Setting all $t_i=0$ in $\mathfrak{S}_{w}(x, t)$ gives the single Schubert polynomial  
$\mathfrak{S}_w(x)=\mathfrak{S}_{w}(x,0)$. 
For combinatorial models of (double) Schubert polynomials, 
see for example \cite{FandS,KnMi,LLS, MiSt}. 
As  the original motivation, the equivariant Schubert class $[Y(w)]_T$ is represented by $\mathfrak{S}_w(x, t)$ under the Borel isomorphism in Theorem \ref{Boreliso},  see for example \cite[Theorem 6.4]{AF}. 

We collect some properties concerning Schubert polynomials,
which will be used in the proof of Theorem \ref{LUUU-1}
in Section \ref{Sect4}. 

\begin{Prop}\label{PropertiesofSchubertpolynomials}
\begin{enumerate}[\rm(i)]
\item\label{itii}  For a Grassmannian permutation $w_{\lambda}$ with descent at $k$, the single Schubert polynomial $\mathfrak{S}_{w_{\lambda}}(x)$ coincides with the Schur polynomial $s_{\lambda}(x_{[k]})$. 

\item\label{itiii}  There holds the following {Giambelli formula} 
\[
\mathfrak{S}_{w}(x, t) = \sum_{\begin{subarray}{c}
w= v^{-1}  u\\
\ell(w)=\ell(u)+\ell(v)
\end{subarray}}
\mathfrak{S}_{u}(x)
\mathfrak{S}_{v}(- t).
\]

\item\label{itiv}  For $u, v\in \S_n$ with $M(u)\subseteq [k]$ and $M(v)\cap [k]=\emptyset$, 
\[\mathfrak{S}_{uv}(x)=\mathfrak{S}_{u}(x)\mathfrak{S}_{v}(x).\]
Here, $M(w)$ denotes the set of non-fixed points of $w\in \S_n$ as defined in \eqref{MMM}. 
\end{enumerate}
\end{Prop}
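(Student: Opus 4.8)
The statement to prove is Proposition~\ref{PropertiesofSchubertpolynomials}, which collects three standard facts about (double) Schubert polynomials. I will address the three parts in turn.

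\medskip

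\noindent\textbf{Part (i).} The plan is to recall that a Grassmannian permutation $w_\lambda$ with unique descent at $k$ is a dominant-free, ``$132$-avoiding''-type permutation for which the Schubert polynomial is known to be a flagged Schur polynomial, and in the Grassmannian case the flag is trivial so it collapses to the ordinary Schur polynomial $s_\lambda(x_{[k]})$. Concretely, I would verify the claim by downward induction on $\ell(w_\lambda)$ using the defining Demazure-operator recursion: starting from the case $\lambda$ filling the $k\times(n-k)$ rectangle (where $w_\lambda = w_0^{(k)}$ times a shift and $\mathfrak{S}_{w_\lambda}$ is a product of the $x_i$ which matches $s_\lambda$), one checks that applying $\partial_i$ for $i\neq k$ to $s_\lambda(x_{[k]})$ either kills it (when $i\geq k+1$ or the two relevant rows of $\lambda$ are equal) or produces exactly $s_{\lambda'}(x_{[k]})$ for the partition $\lambda'$ obtained by removing a box, matching $\mathfrak{S}_{w_\lambda s_i}$. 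Alternatively, and more cleanly, I would simply cite the classical result of Lascoux--Sch\"utzenberger (see \cite{Ma,Manivel}); since the paper already references \cite{Ma,Manivel} this is legitimate.

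\medskip

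\noindent\textbf{Part (ii).} This is the double-to-single Giambelli/factorization formula. The plan is to start from the Cauchy-type identity for double Schubert polynomials, namely the expansion
\[
\mathfrak{S}_w(x,t) \;=\; \sum_{\substack{w=v^{-1}u\\ \ell(w)=\ell(u)+\ell(v)}} \mathfrak{S}_u(x)\,\mathfrak{S}_v(-t),
\]
which follows from the stable/Cauchy identity $\mathfrak{S}_{w_0}(x,t)=\prod_{i+j\le n}(x_i-t_j)=\sum_u \mathfrak{S}_u(x)\mathfrak{S}_{u^{-1}w_0}(-t)$ together with compatibility of the $\partial_i$ operators acting on the $x$-variables. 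I would present it by invoking the known Cauchy formula for double Schubert polynomials (this is standard, e.g.\ \cite{Ma,Manivel}), then note that applying the $x$-Demazure operators $\partial_i$ to both sides and matching the recursion characterizing $\mathfrak{S}_w(x,t)$ pins down the formula; the reduced-word/length-additivity condition $\ell(w)=\ell(u)+\ell(v)$ is exactly what survives. So essentially this part is a citation plus a one-line consistency check.

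\medskip

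\noindent\textbf{Part (iii).} Here $u$ has all non-fixed points in $[k]$ and $v$ has all non-fixed points outside $[k]$, so $u$ and $v$ are ``disjoint'' permutations acting on complementary blocks; in particular $\ell(uv)=\ell(u)+\ell(v)$ and every reduced word of $uv$ shuffles a reduced word of $u$ (using only $s_i$, $i<k$) with one of $v$ (using only $s_i$, $i>k$, or more precisely $i\geq k$ is irrelevant since $v$ fixes $k$ and $k+1$ only if $\ldots$—I should be careful, but the disjointness of supports gives commuting generators). The plan: since $u$ uses only simple transpositions $s_i$ with $i\le k-1$ and $v$ uses only $s_j$ with $j\ge k+1$ (the hypotheses $M(u)\subseteq[k]$ and $M(v)\cap[k]=\emptyset$ force this), these two sets of generators commute, so $uv$ has a reduced word that is the concatenation, and the transition from $\mathfrak{S}_{uv}$ down to $\mathfrak{S}_e=1$ via $\partial$'s factors through the two independent chains. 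Formally I would argue by induction on $\ell(v)$: if $v=v's_j$ with $j\geq k+1$ and $\ell(v')<\ell(v)$, then $\partial_j(\mathfrak{S}_u(x)\mathfrak{S}_{v'}(x)) = \mathfrak{S}_u(x)\,\partial_j\mathfrak{S}_{v'}(x)$ because $\mathfrak{S}_u(x)$ only involves $x_1,\dots,x_k$ and is therefore symmetric in—no, fixed by—$s_j$ for $j\geq k+1$, and $\mathfrak{S}_u(x)$ pulls out of $\partial_j$; combined with $\partial_j \mathfrak{S}_{uv'}(x)=\mathfrak{S}_{uv'\!s_j}(x)=\mathfrak{S}_{uv}(x)$ (using $\ell(uv)=\ell(uv')+1$, which holds by disjoint supports) the inductive step closes, with base case $v=e$ being trivial.

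\medskip

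\noindent\textbf{Main obstacle.} None of the three parts is genuinely hard—they are all in the classical literature on Schubert polynomials—so the real work is bookkeeping: in (iii) one must carefully justify that the hypotheses on non-fixed points imply the reduced words of $u$ and $v$ use disjoint (hence commuting) sets of simple reflections, and that $\ell(uv)=\ell(u)+\ell(v)$, which is where a careless argument could slip. I expect to handle this cleanly via the inversion-set description of length, since $\mathrm{Inv}(uv)$ is the disjoint union $\mathrm{Inv}(u)\sqcup\mathrm{Inv}(v)$ under the disjoint-support hypothesis.
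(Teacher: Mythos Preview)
The paper does not actually prove this proposition; immediately after the statement it simply records that parts (i), (ii), (iii) can be found in Macdonald's \emph{Notes on Schubert Polynomials} \cite{Ma} at (4.8), pp.~87--88, and (4.6) respectively. So your proposal already goes well beyond what the paper does, and for (i) and (ii) your sketches (Demazure-operator induction for the Grassmannian case; the Cauchy-type identity for the double-to-single expansion) are standard and correct---indeed you yourself note that a citation to \cite{Ma,Manivel} would suffice, which is exactly the paper's route.

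For part (iii), however, your inductive step is miswired. You take $v=v's_j$ with $\ell(v')<\ell(v)$ and then assert $\partial_j\mathfrak{S}_{uv'}=\mathfrak{S}_{uv's_j}=\mathfrak{S}_{uv}$. But $\partial_j$ \emph{lowers} length: $\partial_j\mathfrak{S}_w=\mathfrak{S}_{ws_j}$ only when $\ell(ws_j)<\ell(w)$, whereas here $\ell(uv)=\ell(uv')+1>\ell(uv')$, so in fact $\partial_j\mathfrak{S}_{uv'}=0$ and likewise $\partial_j\mathfrak{S}_{v'}=0$; the displayed computation collapses to $0=0$. If you instead apply $\partial_j$ to the desired identity $\mathfrak{S}_{uv}=\mathfrak{S}_u\mathfrak{S}_v$ (choosing $j\geq k+1$ with $\ell(vs_j)<\ell(v)$), both sides do reduce to the inductively known $\mathfrak{S}_u\mathfrak{S}_{vs_j}$, but this only shows the difference lies in $\ker\partial_j$ for that single $j$, which does not force it to vanish. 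A correct argument runs the induction \emph{downward} from the top (take $u$ and $v$ to be the longest elements of $\S_{[k]}$ and $\S_{[n]\setminus[k]}$, where $\mathfrak{S}_{uv}=\prod_{i+j\leq n}x_i$ factors appropriately, and then descend via $\partial_i$ for $i\leq k-1$ and $\partial_j$ for $j\geq k+1$ separately), or alternatively appeals to the pipe-dream/RC-graph model where the factorization is visible. Since the paper is content with citing \cite[(4.6)]{Ma}, that would be the path of least resistance here as well.
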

 
The above statements   in  \eqref{itii},  \eqref{itiii}  and \eqref{itiv}
can be found in \cite[(4.8)]{Ma}, \cite[pp 87--88]{Ma} and  \cite[(4.6)]{Ma}, respectively.


\subsection{Properties of CSM Classes}

The computation of (equivariant) CSM classes has received attention  in a series of work,
see for example \cite{AM0,AM,AMSS17}.  
For $1\leq i\leq n-1$, the {\it (nonhomogeneous) Demazure--Lusztig type operator} is defined as 
\begin{equation*}
\mathcal{T}_i=-s_i+\partial_i. 
\end{equation*}
It can be directly checked that
\begin{align*}
\mathcal{T}_i^2&=\operatorname{id},\\[5pt] 
\mathcal{T}_i\mathcal{T}_j&=\mathcal{T}_j\mathcal{T}_i,\quad |i-j|\geq 2,\\[5pt]
\mathcal{T}_i\mathcal{T}_{i+1}\mathcal{T}_i&=\mathcal{T}_{i+1}\mathcal{T}_i\mathcal{T}_{i+1}. 
\end{align*}
So,   for  $w\in \S_n$, one may define unambiguously that  
\[\mathcal{T}_w = \mathcal{T}_{i_1}\mathcal{T}_{i_2}\cdots \mathcal{T}_{i_\ell},\]
where   $s_{i_1}s_{i_2}\cdots s_{i_\ell}$ is  any  decomposition (not necessarily reduced) of $w$.
The operators $\mathcal{T}_i$'s as well as $x_i$'s generate the {  degenerate affine Hecke algebra}  introduced by Lusztig \cite{Lusztig} in his study of the representation theory of affine Hecke algebras. 

Under the Borel isomorphism, both $\partial_i$ and $\mathcal{T}_i$ are well defined over $H_T^\bullet(\Fl(n))$ by operating on the variables $x_1,\ldots,x_n$.
A crucial property we will use is the following. 

\begin{Th}[\cite{AM,AMSS17}]\label{recrusionCSM}
Let   $w\in \S_n$. For any $1\leq i \leq n-1$, 
\begin{equation*}
\mathcal{T}_i\big(c_{\mathrm{SM}}^T(Y(w)^\circ)\big)=c_{\mathrm{SM}}^T(Y(ws_i)^\circ). 
\end{equation*}
Therefore, for $u\in \S_n$, we have 
\begin{equation*}
\mathcal{T}_u\big(c_{\mathrm{SM}}^T(Y(w)^\circ)\big)=c_{\mathrm{SM}}^T(Y(wu^{-1})^\circ). 
\end{equation*}
\end{Th}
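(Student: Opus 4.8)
The plan is to prove the one-step recursion $\mathcal{T}_i\big(c^T_{\mathrm{SM}}(Y(w)^\circ)\big)=c^T_{\mathrm{SM}}(Y(ws_i)^\circ)$ for a single simple reflection, and then deduce everything else formally. Write $c_w:=c^T_{\mathrm{SM}}(Y(w)^\circ)$. It suffices to treat the case $\ell(ws_i)=\ell(w)+1$: applying $\mathcal{T}_i$ to both sides and using $\mathcal{T}_i^2=\mathrm{id}$ gives $\mathcal{T}_i c_{ws_i}=c_w$, which is the recursion for the pair $\{ws_i,w\}$ in the other direction, so the length-decreasing case follows. Once the one-step recursion is known, the second assertion is immediate: the operator $\mathcal{T}_u$ is unambiguously defined through any (not necessarily reduced) word for $u$, whence $\mathcal{T}_u\mathcal{T}_v=\mathcal{T}_{uv}$ by concatenation of words, and iterating $\mathcal{T}_{i_\ell},\dots,\mathcal{T}_{i_1}$ along a word $u=s_{i_1}\cdots s_{i_\ell}$ sends $c_w$ to $c_{ws_{i_\ell}\cdots s_{i_1}}=c_{wu^{-1}}$.

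For the one-step recursion I would use the $T$-equivariant projection $\pi_i\colon\Fl(n)\to\Fl_i(n)$ forgetting the subspace $V_i$; it is a smooth proper $\mathbb{P}^1$-bundle, with fiber $\mathbb{P}(V_{i+1}/V_{i-1})$. The first input is the standard description of how Schubert cells sit over $\pi_i$ when $\ell(ws_i)=\ell(w)+1$: the image $U:=\pi_i(Y(w)^\circ)$ is a cell in $\Fl_i(n)$, the restriction $\pi_i|_{Y(w)^\circ}\colon Y(w)^\circ\to U$ is an isomorphism, and $\pi_i^{-1}(U)=Y(w)^\circ\sqcup Y(ws_i)^\circ$, where $Y(w)^\circ$ is a section of the $\mathbb{P}^1$-bundle $\pi_i^{-1}(U)\to U$ and $Y(ws_i)^\circ$ is its complementary $\mathbb{A}^1$-bundle. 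This comes directly from the incidence-condition definition of Schubert cells. Since $\chi_c(\mathsf{pt})=\chi_c(\mathbb{A}^1)=1$, pushing forward constructible functions gives $\pi_{i*}\mathbf{1}_{Y(w)^\circ}=\mathbf{1}_U=\pi_{i*}\mathbf{1}_{Y(ws_i)^\circ}$, and as constructible functions $\mathbf{1}_{\pi_i^{-1}(U)}=\pi_i^*\mathbf{1}_U$.

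The second input is the behaviour of equivariant CSM classes under $\pi_i$. Functoriality under proper pushforward (Ohmoto) gives $\pi_{i*}c_w=c^T_{\mathrm{SM}}(U)$. The Verdier--Riemann--Roch formula for CSM classes under the smooth morphism $\pi_i$ gives $c^T_{\mathrm{SM}}(\pi_i^*\mathbf{1}_U)=c^T(\mathscr{T}_{\pi_i})\frown\pi_i^*c^T_{\mathrm{SM}}(U)$, where $\mathscr{T}_{\pi_i}$ is the relative tangent line bundle. Combining this with additivity of $c^T_{\mathrm{SM}}$ on $\pi_i^{-1}(U)=Y(w)^\circ\sqcup Y(ws_i)^\circ$ and with $c^T_{\mathrm{SM}}(U)=\pi_{i*}c_w$, and identifying $\frown$ with the cup product via Poincar\'e duality on the smooth variety $\Fl(n)$, yields $c_w+c_{ws_i}=\big(1+c_1^T(\mathscr{T}_{\pi_i})\big)\cdot\pi_i^*\pi_{i*}c_w$ in $H_T^\bullet(\Fl(n))$. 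A direct computation shows $\mathscr{T}_{\pi_i}\cong(\mathcal{V}_i/\mathcal{V}_{i-1})^\vee\otimes(\mathcal{V}_{i+1}/\mathcal{V}_i)$, so $c_1^T(\mathscr{T}_{\pi_i})=x_i-x_{i+1}$, while the classical fiberwise computation on $\pi_i$ gives $\pi_i^*\pi_{i*}=\partial_i$ on $H_T^\bullet(\Fl(n))$. Substituting and using $(x_i-x_{i+1})\partial_i=1-s_i$ turns the identity into $c_w+c_{ws_i}=\partial_i c_w+c_w-s_i c_w$, i.e. $c_{ws_i}=(\partial_i-s_i)c_w=\mathcal{T}_i c_w$, exactly as claimed.

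The main obstacle is the relative-CSM step: one must either invoke the (equivariant) compatibility of $c^T_{\mathrm{SM}}$ with smooth pullback twisted by the relative tangent bundle, or reprove it in this special case by realizing $Y(ws_i)^\circ$ as the total space of a line bundle over $U$ and computing its CSM class by hand; correctly tracking the equivariant twist and the sign convention relating $\pi_i^*\pi_{i*}$ to $\partial_i$ is the delicate point. The combinatorial description of $\pi_i^{-1}(U)$ and all the remaining manipulations in $H_T^\bullet(\Fl(n))$ via the Borel presentation are routine.
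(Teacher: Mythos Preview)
The paper does not give its own proof of this statement: it is quoted verbatim as a result of Aluffi--Mihalcea and Aluffi--Mihalcea--Sch\"urmann--Su, with citation \cite{AM,AMSS17}, and is then used as a black box throughout. Your outline is essentially the argument appearing in those cited references (in particular the $\mathbb{P}^1$-bundle argument of \cite{AM}): reduce to the length-increasing case via $\mathcal{T}_i^2=\mathrm{id}$, use the decomposition $\pi_i^{-1}(U)=Y(w)^\circ\sqcup Y(ws_i)^\circ$, combine additivity and functoriality of $c_{\mathrm{SM}}^T$ with the Verdier--Riemann--Roch identity for the smooth morphism $\pi_i$, and then translate $\pi_i^*\pi_{i*}$ and $c_1^T(\mathscr{T}_{\pi_i})$ into $\partial_i$ and $x_i-x_{i+1}$ to recover $\mathcal{T}_i$. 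The algebraic manipulations you record, including $(x_i-x_{i+1})\partial_i=1-s_i$ and the identification $c_1^T(\mathscr{T}_{\pi_i})=x_i-x_{i+1}$, are correct under the paper's conventions, so your sketch is a faithful reconstruction of the original proof rather than a genuinely new route.
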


Taking advantage of the above recurrence relation, a number of properties of CSM classes have been established   \cite{AM,AMSS17}. 
Here we list some that we need in this paper.  

\begin{Prop}[\cite{AM,AMSS17}]
\label{propofCSM} 

\begin{enumerate}[\rm(i)]

\item \label{propofCSMi}
The CSM classes $c_{\mathrm{SM}}^T(Y(w)^\circ)$ form an $\mathbb{F}$-basis of $H_T^\bullet(\Fl(n))_{\mathbb{F}}$, where $\mathbb{F}=\mathbb{Q}(t_1,\ldots,t_n)$ is the fraction field of $H_T^\bullet(\pt)=\mathbb{Q}[t_1,\ldots,t_n]$.

\vspace{5pt}

\item 
The Schubert class $[Y(w)]_T$ is the lowest degree term of   $c^T_{\mathrm{SM}}(Y(w)^\circ)$.

 \vspace{5pt}

\item \label{propofCSMiii}
We have 
\[
\left.c_{\mathrm{SM}}^T(Y(w)^\circ)\right|_{\operatorname{id}} = 
\begin{cases}
{\prod\limits_{i<j} (1+ t_i- t_j)}, & w = \operatorname{id},\\[7pt]
0, & \text{otherwise}.
\end{cases}
\]
\end{enumerate}
\end{Prop}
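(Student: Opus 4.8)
The plan is to derive all three statements from the recursion of Theorem~\ref{recrusionCSM}, the identification of the equivariant Schubert class $[Y(w)]_T$ with the double Schubert polynomial $\mathfrak{S}_w(x,t)$ under Theorem~\ref{Boreliso}, and the basic normalization, additivity and support properties of (equivariant) CSM classes. I would establish (ii) first, then (iii), then (i), since (ii) feeds into the later parts.

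\emph{Part (ii).} I argue by downward induction on $\ell(w)$. For the base case $w=w_0$ one checks directly from the defining incidence conditions that $Y(w_0)^\circ=Y(w_0)=\{\phi_{w_0}\}$ is a single reduced (hence smooth) $T$-fixed point, so $c_{\mathrm{SM}}^T(Y(w_0)^\circ)$ is the class of that point, which is $[Y(w_0)]_T$ and is represented by the \emph{homogeneous} polynomial $\mathfrak{S}_{w_0}(x,t)=\prod_{i+j\le n}(x_i-t_j)$; in particular it has no terms of degree below $\ell(w_0)$. For the inductive step, take $w$ with a descent at $i$, so that $\ell(ws_i)=\ell(w)-1$; then Theorem~\ref{recrusionCSM} gives $c_{\mathrm{SM}}^T(Y(ws_i)^\circ)=\mathcal{T}_i\big(c_{\mathrm{SM}}^T(Y(w)^\circ)\big)=(-s_i+\partial_i)\big(c_{\mathrm{SM}}^T(Y(w)^\circ)\big)$. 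Writing $c_{\mathrm{SM}}^T(Y(w)^\circ)=\mathfrak{S}_w(x,t)+(\text{terms of degree}\ge \ell(w)+1)$ by the inductive hypothesis, and using that $s_i$ preserves degree while $\partial_i$ lowers it by one, the unique term of minimal degree on the right is $\partial_i\mathfrak{S}_w(x,t)=\mathfrak{S}_{ws_i}(x,t)$ (the defining recursion for double Schubert polynomials, valid since $\ell(ws_i)<\ell(w)$), which represents $[Y(ws_i)]_T$; every other contribution has degree $\ge \ell(w)=\ell(ws_i)+1$. Since any $v\ne w_0$ can be written $v=ws_i$ with $\ell(w)=\ell(v)+1$, this proves (ii).

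\emph{Part (iii).} Since $\mathbf{1}_{Y(w)^\circ}$ is supported on $Y(w)$ and $c_{\mathrm{SM}}$ is natural with respect to the closed embedding $Y(w)\hookrightarrow\Fl(n)$, the class $c_{\mathrm{SM}}^T(Y(w)^\circ)$ lies in the image of the pushforward from $H_T^\bullet(Y(w))$ and hence restricts to $0$ on $\Fl(n)\setminus Y(w)$. As $\phi_{\operatorname{id}}$ lies in the open cell $Y(\operatorname{id})^\circ$, it belongs to $Y(w)$ only for $w=\operatorname{id}$, so $c_{\mathrm{SM}}^T(Y(w)^\circ)|_{\operatorname{id}}=0$ whenever $w\ne\operatorname{id}$. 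For $w=\operatorname{id}$, the additivity of CSM classes applied to $\sum_w\mathbf{1}_{Y(w)^\circ}=\mathbf{1}_{\Fl(n)}$, together with the normalization for the smooth variety $\Fl(n)$, gives $\sum_w c_{\mathrm{SM}}^T(Y(w)^\circ)=c^T(\mathscr{T}_{\Fl(n)})\frown[\Fl(n)]$. Localizing at $\operatorname{id}$ and using the previous sentence, $c_{\mathrm{SM}}^T(Y(\operatorname{id})^\circ)|_{\operatorname{id}}$ equals the localization of the total equivariant Chern class of $\mathscr{T}_{\Fl(n)}$ at $\phi_{\operatorname{id}}$, i.e.\ the product of $1+(\text{weight})$ over the $T$-weights of the tangent space $T_{\phi_{\operatorname{id}}}\Fl(n)$; with the conventions of Section~\ref{Sec-2} these weights are $\{t_i-t_j:1\le i<j\le n\}$ (equivalently $c^T(\mathscr{T}_{\Fl(n)})=\prod_{i<j}(1+x_i-x_j)$ in the Borel presentation), giving $\prod_{i<j}(1+t_i-t_j)$.

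\emph{Part (i).} By the localization theorem the map $\alpha\mapsto(\alpha|_{\phi_v})_{v\in\S_n}$ is an isomorphism $H_T^\bullet(\Fl(n))_{\mathbb{F}}\xrightarrow{\ \sim\ }\mathbb{F}^{\S_n}$, so it suffices to show the $n!$ classes $c_{\mathrm{SM}}^T(Y(w)^\circ)$ are $\mathbb{F}$-linearly independent. Suppose $\sum_w a_w\,c_{\mathrm{SM}}^T(Y(w)^\circ)=0$ with not all $a_w=0$, choose $v$ minimal in Bruhat order with $a_v\ne 0$, and localize at $\phi_v$: by the support property from part (iii) only the terms with $w\le v$ survive, and by minimality those with $w<v$ have $a_w=0$, leaving $a_v\cdot c_{\mathrm{SM}}^T(Y(v)^\circ)|_{\phi_v}=0$. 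By part (ii) the minimal-degree component of $c_{\mathrm{SM}}^T(Y(v)^\circ)|_{\phi_v}$ is $[Y(v)]_T|_{\phi_v}=\mathfrak{S}_v(vt,t)$, the classical self-localization of a Schubert class, a nonzero product of linear forms $t_i-t_j$; hence $c_{\mathrm{SM}}^T(Y(v)^\circ)|_{\phi_v}\ne 0$ and $a_v=0$, a contradiction. The main obstacle is not conceptual but lies in the convention bookkeeping of part (iii): one must pin down the $T$-weights of $T_{\phi_{\operatorname{id}}}\Fl(n)$ consistently with the sign conventions for $t_i$ and the Borel presentation of Theorem~\ref{Boreliso}, and one must invoke the equivariant (rather than merely the nonequivariant) additivity and normalization of CSM classes in the sense of Ohmoto \cite{Ohmoto}; the degree count in (ii) and the triangularity in (i) are then routine.
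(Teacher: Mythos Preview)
Your argument is correct in all three parts. The paper does not actually supply a proof of this proposition; it simply records the statement with a citation to \cite{AM,AMSS17}. So there is no ``paper's own proof'' to compare against---you have filled in what the paper leaves to the references.

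Your approach is the natural one from first principles, and each step checks out against the paper's conventions. For (ii), the downward induction from $w_0$ via $\mathcal{T}_i=-s_i+\partial_i$ and the degree-lowering property of $\partial_i$ is exactly the right mechanism; the Schubert polynomial recursion $\partial_i\mathfrak{S}_w=\mathfrak{S}_{ws_i}$ pins down the leading term. For (iii), the support argument is standard, and your identification $c^T(\mathscr{T}_{\Fl(n)})=\prod_{i<j}(1+x_i-x_j)$ is consistent with the paper's sign convention $x_i=c_1^T((\mathcal{V}_i/\mathcal{V}_{i-1})^\vee)$, since then $c_1^T(\mathcal{L}_i^\vee\otimes\mathcal{L}_j)=x_i-x_j$ and localizing at $\operatorname{id}$ sends $x_i\mapsto t_i$. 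For (i), the triangularity-plus-localization argument is clean; note only that the phrase ``by the support property from part (iii)'' really invokes the general fact that $c_{\mathrm{SM}}^T(Y(w)^\circ)$ is supported on $Y(w)$ and $\phi_v\in Y(w)\iff w\le v$, which you established en route to (iii) but only stated there for $v=\operatorname{id}$. This is a cosmetic point, not a gap.
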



\begin{Rmk}\label{RmkNoPolyCSM}
Schubert polynomials are stable in the sense that  they are independent of $n$, and can be used to compute the structure constants of Schubert classes, see \cite[\textsection 10.10.2 and \textsection 10.10.4]{AF}. 
However, there have been no known polynomial representatives  for CSM classes with the stability property. 
Stability is a necessary condition to lift the structure constants in the multiplication of CSM classes to the level of polynomials.
Liu \cite{Liu} introduced the notion of twisted Schubert polynomials, which represent CSM classes up to a sign, and showed that this family of polynomials enjoy many interesting combinatorial properties. However, as pointed out in \cite{Liu}, twisted Schubert polynomials 
depend on $n$, and so are not stable.

\end{Rmk}

\section{Rigidity Theorem}\label{Secc-3}

\def\Cminus{\Delta}
\def\Cplus{\Sigma}

In this section, we establish  the Rigidity Theorem for the proof of Theorem  \ref{LLLL-1}, which  bridges the gap between  the Pieri  formulas  for nonequivariant  and equivariant CSM classes.  
 The analogous   idea will be used in  Section \ref{Sect5}
 to establish the   Rigidity Theorem (see Theorem \ref{noneqimplieEqhookMN}) required in the proof of  Theorem  \ref{UUU-1}.

Let $u\in \S_n$, and   $\Gamma=(\alpha+1,1^{\beta})$  be a hook shape. For a subset $A$ of  $[n]$, suppose that 
\[
c_{\mathrm{SM}}(Y(u)^\circ)\cdot s_{\Gamma}(x_{A}) = 
\sum_{w\in \S_n} c_{u,\Gamma}^w\cdot  c_{\mathrm{SM}}(Y(w)^\circ).
\]
Although the coefficients $c_{u,\Gamma}^w$ depend  on 
$A$, we simplify the notation  since there will cause no confusion 
from the context.

\begin{Th}[Rigidity Theorem]\label{noneqimplieEqhook}
Let $u\in \S_n $,    $\Gamma=(\alpha+1,1^{\beta})$ and $A\subseteq[n]$. Suppose that 
\begin{equation*}
c_{\mathrm{SM}}^T(Y(u)^\circ)\cdot s_{\Gamma}(x_{A}) = 
\sum_{w\in \S_n} c_{u,\Gamma}^w( t)\cdot  c_{\mathrm{SM}}^T(Y(w)^\circ).
\end{equation*} 
Then we have
\begin{equation}\label{X-1}
c_{u,\Gamma}^w( t)=\begin{cases}
s_{\Gamma}( t_{uA}), & w=u,\\[8pt]
\displaystyle
\sum_{\begin{subarray}{c}
\alpha'\leq \alpha,\
\beta'\leq \beta\\
\Gamma'=(1+\alpha',1^{\beta'})
\end{subarray}}c_{u,\Gamma'}^w \cdot h_{\alpha-\alpha'}( t_{\Cplus_A(u, w)})\cdot e_{\beta-\beta'}( t_{\Cminus_A(u, w)}) , &w\neq u. 
\end{cases}
\end{equation}
(See \eqref{deltaa} and \eqref{sigmaa} for the definitions of  $\Delta_A(u, w)$ and $\Sigma_A(u, w)$.)
\end{Th}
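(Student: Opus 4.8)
The plan is to deduce the equivariant structure constants from the nonequivariant ones by exploiting two features: first, the triangularity of the CSM-class multiplication with respect to the extended $k$-Bruhat (or more precisely the Bruhat) order together with the localization data, and second, a degeneration/specialization argument relating the variables $x_A$ to the equivariant parameters $t$. Concretely, I would argue by induction on the Bruhat order of $w$ (for fixed $u$), or equivalently on $\ell(w)-\ell(u)$, using that $c_{\mathrm{SM}}^T(Y(u)^\circ)\cdot s_\Gamma(x_A)$ expands in the $c_{\mathrm{SM}}^T(Y(w)^\circ)$-basis (Proposition~\ref{propofCSM}(i)) with only $w\geq u$ appearing, and that the coefficient $c_{u,\Gamma}^w(t)$ is a polynomial in $t$ whose lowest-degree behaviour is governed by the nonequivariant product. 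The base case $w=u$ is handled by localizing both sides at $\phi_u$: since $c_{\mathrm{SM}}^T(Y(w)^\circ)|_u=0$ unless $w\leq u$ (and the only such $w$ in the support is $w=u$), and since $s_\Gamma(x_A)|_u=s_\Gamma(t_{uA})$ by the localization formula $f(x,t)|_u=f(ut,t)$, we get $c_{u,\Gamma}^u(t)=s_\Gamma(t_{uA})$ directly.

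For $w\neq u$, the key step is a \emph{recursion in $\Gamma$}. The idea is to write $s_\Gamma(x_A)$ using the standard identity that peels off the last variable or a border strip: for a hook $\Gamma=(\alpha+1,1^\beta)$ one has an expansion of $s_\Gamma(x_A)$ in terms of $s_{\Gamma'}(x_{A'})$ for smaller hooks $\Gamma'$ on a subset $A'\subseteq A$ with one fewer element, with coefficients that are products of $h$'s and $e$'s in the remaining variable. More precisely I expect to use something like $s_\Gamma(x_A) = \sum_{\alpha'\leq\alpha,\ \beta'\leq\beta} x_a^{\,(\alpha-\alpha')+(\beta-\beta')}\,(\pm)\,s_{\Gamma'}(x_{A\setminus\{a\}})$-type relations, iterated, so that $s_\Gamma(x_A)$ is a sum over smaller hooks of $s_{\Gamma'}(x_B)\cdot h_{\alpha-\alpha'}(x_{C})\cdot e_{\beta-\beta'}(x_{D})$ with $B,C,D$ built from $A$. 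Multiplying the induction hypothesis for the smaller hooks by these auxiliary symmetric functions and collecting the coefficient of $c_{\mathrm{SM}}^T(Y(w)^\circ)$, one must show the only surviving contribution to $c_{u,\Gamma}^w(t)$ is the one where the extra $h$ and $e$ factors are evaluated at $t_{\Sigma_A(u,w)}$ and $t_{\Delta_A(u,w)}$ respectively. This requires understanding how $\Sigma_A$ and $\Delta_A$ transform when $A$ loses an element and when one passes through an intermediate vertex $v$ with $u\leq v\leq w$ — i.e.\ the "chain rule" $\Sigma_A(u,w)$, $\Delta_A(u,w)$ behave well under composition of $k$-edges — which is exactly the combinatorial content I would isolate as a lemma.

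The main obstacle I anticipate is precisely this last bookkeeping: matching the variable-sets that appear when one iterates the nonequivariant product and substitutes $x\mapsto t$ against the intrinsically defined sets $\Sigma_A(u,w)$ and $\Delta_A(u,w)$, and checking that no spurious cross-terms survive. The cleanest route is probably to localize the entire identity at each $\phi_v$ for $u\leq v\leq w$ and compare, using that localization is a ring homomorphism and that $c_{\mathrm{SM}}^T(Y(w)^\circ)|_v$ is known (nonzero only for $v\geq w$, and for $v=w$ with an explicit value); this converts the problem into a finite linear system over $\mathbb{F}=\mathbb{Q}(t_1,\dots,t_n)$ whose invertibility follows from the triangularity, pinning down $c_{u,\Gamma}^w(t)$ uniquely. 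An alternative, if the localization computation of $s_\Gamma(x_A)|_v$ is unwieldy for general $v$, is to argue degree-by-degree: the top-degree part of $c_{u,\Gamma}^w(t)$ is forced by the nonequivariant coefficient $c_{u,\Gamma}^w$ (set $t=0$), and the lower-degree corrections are forced inductively by subtracting off the contributions already identified — this is the "rigidity" philosophy, that the whole polynomial in $t$ is rigidly determined once its nonequivariant shadow and the triangular structure are known.
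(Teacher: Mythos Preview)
Your $w=u$ case via localization at $\phi_u$ is fine and matches the paper's conclusion (the paper instead observes $\mathcal{T}_{u/u}=u$, but your argument works too).

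For $w\neq u$, however, there is a genuine gap: none of the three mechanisms you sketch can actually produce the specific sets $\Sigma_A(u,w)$ and $\Delta_A(u,w)$. The ``recursion in $\Gamma$'' you describe---expanding $s_\Gamma(x_A)$ as a sum of $s_{\Gamma'}(x_{A'})\cdot h(\cdot)\cdot e(\cdot)$---does not exist in a form where the auxiliary variable sets depend on $u$ and $w$; any such identity is purely in $x_A$ and has no way to ``see'' the permutations. Localizing at intermediate $\phi_v$ does give a triangular linear system that determines $c_{u,\Gamma}^w(t)$ uniquely, but uniqueness is not the issue: you need to \emph{verify} that the specific expression in \eqref{X-1} satisfies that system, and this verification is exactly as hard as the theorem. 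The degree-by-degree idea is underdetermined: knowing $c_{u,\Gamma}^w(0)=c_{u,\Gamma}^w$ and triangularity does not pin down the higher-degree terms in $t$.

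What the paper does is fundamentally different and is where the ``rigidity'' actually lives. It packages all hooks into the generating function $E(q,z,x_A)=\frac{1}{q+z}\bigl(\frac{\prod_{a\in A}(1+qx_a)}{\prod_{a\in A}(1-zx_a)}-1\bigr)$, proves (Lemma~\ref{PP-1}) that $c_{u,\alpha}^w(t)=\mathcal{T}_{w/u}(\alpha)|_{x_i=t_i}$ for a certain skew Demazure--Lusztig operator, and then computes directly (Lemmas~\ref{QVrelation1}--\ref{NonimplieEquLemma}) that $\mathcal{T}_{w/u}E(q,z,x_A)$ lies in $\frac{1}{q+z}\mathbb{Z}[q,z]\cdot\frac{Q(x_{\Delta_A(u,w)})}{Z(x_{\Sigma_A(u,w)})}$. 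The entire $x$-dependence after applying the operator is captured by this single rational function; localizing at $x_i=t_i$ and setting $t=0$ identifies the $\mathbb{Z}[q,z]$-factor as the generating function of the nonequivariant constants. The sets $\Sigma_A$ and $\Delta_A$ arise not from any chain rule along Bruhat paths, but from the elementary calculation of $\partial_{ab}$ on $\frac{Q(x_A)}{Z(x_B)}$ together with the observation that the composite of Demazure operators tracks exactly the non-fixed-point set of $u^{-1}w$. This operator calculation is the missing ingredient in your proposal.
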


If we set all $ t_i=0$ in \eqref{X-1}, then the summands  on the right-hand side are nonzero only in  the case $\alpha'=\alpha$ and $\beta'=\beta$, and hence we are led to the expected relationship
\[c_{u,\Gamma}^w(0)=c_{u,\Gamma}^w.\]  
A combinatorial description of  $c_{u,\Gamma}^w$ will be 
given in Theorem \ref{NoneqPierirulehook}, which  combined with Theorem \ref{noneqimplieEqhook}  allows   us to reach a proof of  Theorem  \ref{LLLL-1} (see   Theorem \ref{eqPierirulehook}).

The remaining of this section is devoted to a proof of 
Theorem \ref{noneqimplieEqhook}. We first  investigate the action of the   Demazure operators on a specific family of polynomial quotients related to the generating  function of Schur polynomials of hook shapes.

\subsection{Demazure Operators and Polynomials $\Q$ and $\Z$}\label{OI-1}

For a subset $A\subseteq [n]$, consider 
\begin{equation*}
\Q(x_A) = \prod_{a\in A} (1+\q x_a)\quad \text{and}\quad
\Z(x_A) = \prod_{a\in A} (1-\z x_a), 
\end{equation*}
which are regarded as elements in   
the ring of formal power  series in $q, z$ over   $\mathbb{Q}[x]$. Here, when $A=\varnothing$,
we set $\Q(x_A)=\Z(x_A)=1$. 
Clearly,
\[\Q(x_A)=\sum_{r\geq 0} \q^r  e_r(x_A)
\ \ \ \ \text{and}\ \ \ \ \frac{1}{\Z(x_A)}=\sum_{r\geq 0} z^r  h_r(x_A).\]
Note that  $e_0(x_A)=h_0(x_A)=1$ for any $A\subseteq [n]$, and $e_r(x_\varnothing)=h_r(x_\varnothing)=0$ for $r\geq 1$.


\begin{Lemma}\label{OWW-1}
For $A\subseteq [n]$, 
let
\[E(\q,\z,x_A)=\frac{1}{\q+\z}\left(\frac{\Q(x_A)}{\Z(x_A)}-1\right).\]
Then we have
\[E(\q,\z,x_A)=\sum_{\alpha,\beta\geq 0} \z^\alpha\q^\beta s_{(1+\alpha,1^{\beta})}(x_A).\]
\end{Lemma}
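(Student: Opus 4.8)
The plan is to establish the generating-function identity
\[
E(\q,\z,x_A)=\sum_{\alpha,\beta\geq 0} \z^\alpha\q^\beta s_{(1+\alpha,1^{\beta})}(x_A)
\]
by recognizing the right-hand side as an instance of the classical Jacobi--Trudi / dual Cauchy expansion for hook Schur polynomials. The key classical fact I would invoke is the well-known expansion of a hook Schur function,
\[
s_{(1+\alpha,1^{\beta})}(x_A)=\sum_{i=0}^{\alpha}(-1)^{\,\alpha-i}\,h_{1+i+\beta}(x_A)\,e_{\alpha-i}(x_A),
\]
or equivalently $s_{(1+\alpha,1^\beta)} = h_{\alpha+1}e_\beta - h_{\alpha+2}e_{\beta-1}+\cdots$, which follows from the two-row Jacobi--Trudi determinant (or from the fact that hooks are the only shapes that are simultaneously "almost a row" and "almost a column"). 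Summing this against $\z^\alpha \q^\beta$ and reorganizing the double sum is then a manipulation of formal power series in $\q,\z$ over $\mathbb{Q}[x]$.

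First I would record the two product expansions already noted in the text, namely $\Q(x_A)=\sum_{r\geq0}\q^r e_r(x_A)$ and $1/\Z(x_A)=\sum_{r\geq0}\z^r h_r(x_A)$, so that
\[
\frac{\Q(x_A)}{\Z(x_A)}=\Big(\sum_{r\geq0}\q^r e_r(x_A)\Big)\Big(\sum_{s\geq0}\z^s h_s(x_A)\Big)=\sum_{r,s\geq0}\q^r\z^s\,e_r(x_A)h_s(x_A).
\]
Subtracting the constant term $e_0h_0=1$ (the $(r,s)=(0,0)$ term) removes the $1$, and the surviving sum runs over $(r,s)\neq(0,0)$; every such monomial $\q^r\z^s$ with $r+s\geq 1$ is divisible by $\q+\z$ in the formal sense used here, and I would make the division explicit via the telescoping identity $e_rh_s=\sum_{?}(\q+\z)(\cdots)$ — concretely, using $h_s e_r + h_{s-1}e_{r+1} = $ (something) is the wrong grouping; instead the clean route is to divide by $\q+\z$ after substituting the hook expansion. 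So the cleaner second step is: plug the hook-Schur expansion into $\sum_{\alpha,\beta}\z^\alpha\q^\beta s_{(1+\alpha,1^\beta)}(x_A)$, swap the order of summation to sum over $e_j(x_A)$ and $h_m(x_A)$ with appropriate signs, and verify term-by-term in each bidegree of $\q,\z$ that the result equals $\frac{1}{\q+\z}\big(\frac{\Q(x_A)}{\Z(x_A)}-1\big)$; both sides are rational in $\q,\z$ with denominator $\q+\z$, so it suffices to check that $(\q+\z)\cdot(\text{RHS sum}) = \frac{\Q(x_A)}{\Z(x_A)}-1$ as honest power series, which reduces the claim to the polynomial identity $\sum_{\alpha,\beta}(\z^{\alpha+1}\q^\beta+\z^\alpha\q^{\beta+1})s_{(1+\alpha,1^\beta)} = \sum_{r+s\geq1}\q^r\z^s e_r h_s$.

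The main obstacle — really the only nontrivial point — is this last bookkeeping: matching coefficients of a fixed monomial $\q^r\z^s$ on the two sides, which amounts to showing $e_r(x_A)h_s(x_A) = \sum_{\text{hooks}} s_{(1+\alpha,1^\beta)}(x_A)$ over the hooks contributing at that bidegree, i.e. a Pieri-type collapse. I expect this to follow either from the Jacobi--Trudi hook expansion above combined with a short induction, or directly from the dual Pieri rule $e_r\cdot h_s = \sum s_\lambda$ over $\lambda$ obtained by adding a vertical strip of size $r$ to the row $(s)$ — the only such $\lambda$ are the hooks $(s+1,1^{r-1})$ and the row $(s+r)$ and related near-hooks, so the telescoping with the $\q+\z$ factor cancels everything except the hook terms. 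I would present this as a one-paragraph verification rather than a long computation, citing the Jacobi--Trudi identity and the dual Pieri rule as standard (e.g. Macdonald, Stanley \cite{stanley2}). Once the generating-function identity is in hand, nothing further is needed; the lemma is an identity in $\mathbb{Q}[x][[\q,\z]]$ and the Demazure-operator considerations promised in Subsection~\ref{OI-1} are only invoked later, not here.
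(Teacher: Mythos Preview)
Your proposal eventually lands on the same idea as the paper: multiply through by $\q+\z$ and reduce the identity to the Pieri computation of $h_s(x_A)\,e_r(x_A)$. The paper does this in two lines: expand $\dfrac{\Q(x_A)}{\Z(x_A)}-1=\sum_{(r,s)\neq(0,0)}\z^r\q^s\,h_r e_s$, apply the dual Pieri rule $h_r e_s = s_{(r+1,1^{s-1})}+s_{(r,1^s)}$, reindex, and factor out $\q+\z$.

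Two remarks on your write-up. First, the detour through the alternating Jacobi--Trudi expansion of a hook is unnecessary and you correctly abandon it; the proof really is just Pieri plus a telescoping reindexing. Second, your statement of the Pieri outcome is wrong: the vertical $r$-strips on the one-row shape $(s)$ give exactly the two hooks $(s+1,1^{r-1})$ and $(s,1^r)$, \emph{not} ``the row $(s+r)$'' (a vertical strip of size $r\geq 2$ cannot lie in a single row). With that correction, matching the coefficient of $\q^r\z^s$ on both sides of
\[
(\q+\z)\sum_{\alpha,\beta\geq 0}\z^\alpha\q^\beta s_{(1+\alpha,1^\beta)}(x_A)\;=\;\sum_{(r,s)\neq(0,0)}\q^r\z^s\,e_r(x_A)h_s(x_A)
\]
gives $s_{(s,1^r)}+s_{(s+1,1^{r-1})}=e_r h_s$, which is exactly Pieri, and you are done. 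No ``near-hooks'' appear and no further cancellation is needed.
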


\begin{proof}
Notice that
\[
E(\q,\z,x_A)= \frac{1}{\q+\z}\left(\left(\sum_{r=0}^\infty \z^r h_r(x_A)\right)\left(\sum_{s=0}^\infty \q^s e_s(x_A)\right)-1\right).
\]
Applying  the classical Pieri  rule to the multiplication of 
$h_r(x_A)=s_{(r)}(x_A)$ by $e_s(x_A)$ (see   \cite[\textsection 7.15]{stanley2}), the right-hand side 
becomes 
\[
 \frac{1}{\q+\z}\sum_{r,s\geq 0} \z^{r}\q^s 
\left(s_{(1+r,1^{s-1})}(x_A) + s_{(r,1^{s})}(x_A)\right)
= \sum_{\alpha,\beta\geq 0} \z^\alpha\q^\beta s_{(1+\alpha,1^{\beta})}(x_A),
\]
where we used the assumption  that $ s_{(1+r,1^{-1})}= s_{(0,1^s)}= 0$ for $r, s\geq 0$. 
\end{proof}
 
Another advantage we consider $E(\q,\z,x_A)$ is that 
as $q$ tends to $-z$, $E(\q,\z,x_A)$ becomes  the generating function of power sum symmetric polynomials. 
This allows us to invoke the results established  in this section to 
prove the Rigidity  Theorem for power sum symmetric polynomials as given  in Theorem \ref{noneqimplieEqhookMN}.

We extend the Demazure operators  defined on polynomials in Section \ref{Sec-2} to rational functions. 
For two distinct integers  $a,b\in [n] $ and a rational function $f$, let
\begin{equation}\label{eq:demazuredef}
\partial_{ab}f = \frac{f-t_{ab}f}{x_a-x_b},
\end{equation}
where   $t_{ab} f$ is obtained from $f$ by interchanging $x_a$ and $x_b$.  When $f$ is symmetric in $x_a$ and $x_b$, 
it is easily seen  that $\partial_{ab}(f)=0$, and  for any polynomial $g$,
\[\partial_{ab}(fg)=f\,\partial_{ab}g. \]

\begin{Lemma}\label{QVrelation1}
For   distinct  $a,b\in [n]$ and two subsets $A\subseteq B$ of $[n]$, we have 
\begin{equation}\label{eq:QVrelation1}
\partial_{ab} \frac{\Q(x_A)}{\Z(x_B)} 
= \big(\delta_{a\in A}\q-\delta_{b\in A}\q
-\delta_{a\notin B}\z + \delta_{b\notin B}\z\big)
\frac{\Q(x_{A\setminus \{a,b\}})}{\Z(x_{B\cup \{a,b\}})},
\end{equation}
where $\delta$ is the Kronecker delta, that is,  $\delta_\lozenge$ is $1$ if the condition $\lozenge$ is satisfied and   $0$ otherwise. 
\end{Lemma}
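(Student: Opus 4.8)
The plan is to prove the identity \eqref{eq:QVrelation1} by a direct computation, exploiting the multiplicativity of $\Q$ and $\Z$ and the reduction of the Demazure operator $\partial_{ab}$ to a single-variable difference quotient. First I would split the product $\Q(x_A)/\Z(x_B)$ into the part that genuinely involves $x_a$ and $x_b$ and the part that does not: write $\Q(x_A)=\Q(x_{A\setminus\{a,b\}})\cdot\Q(x_{A\cap\{a,b\}})$ and similarly $1/\Z(x_B)=1/\Z(x_{B\setminus\{a,b\}})\cdot 1/\Z(x_{B\cap\{a,b\}})$. Since $\partial_{ab}(fg)=f\,\partial_{ab}g$ whenever $f$ is symmetric in $x_a,x_b$ (and both $\Q(x_{A\setminus\{a,b\}})$ and $1/\Z(x_{B\setminus\{a,b\}})$ are literally independent of $x_a,x_b$, hence trivially symmetric), the problem reduces to computing $\partial_{ab}$ applied to the single factor
\[
R(x_a,x_b)=\frac{\Q(x_{A\cap\{a,b\}})}{\Z(x_{B\cap\{a,b\}})}.
\]

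Next I would enumerate the finitely many cases for how $a$ and $b$ sit relative to $A\subseteq B$. Because $A\subseteq B$, each of $a,b$ is in one of the three states: in $A$ (hence in $B$), in $B\setminus A$, or outside $B$; so there are at most $3\times 3=9$ cases, and by the antisymmetry $\partial_{ba}=-\partial_{ab}$ and the symmetry of the claimed right-hand side under swapping $a\leftrightarrow b$ (which negates the bracket), it suffices to check roughly half of them. In each case $R(x_a,x_b)$ is an explicit product of at most two linear factors among $(1+\q x_a),(1+\q x_b),(1-\z x_a)^{-1},(1-\z x_b)^{-1}$, and $\partial_{ab}R=\bigl(R-t_{ab}R\bigr)/(x_a-x_b)$ is a routine simplification. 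For instance, when $a,b\in A$ one has $R=(1+\q x_a)(1+\q x_b)/[(1-\z x_a)(1-\z x_b)]$, and clearing denominators shows $\partial_{ab}R$ equals $(\q+\z)$ times $1/[(1-\z x_a)(1-\z x_b)]$ up to the sign bookkeeping, which matches $(\delta_{a\in A}\q-\delta_{b\in A}\q-\delta_{a\notin B}\z+\delta_{b\notin B}\z)=\q-\q-0+0$... — wait, that particular case actually gives bracket $0$, consistent with $R$ being symmetric; the nontrivial cases are exactly those where $a,b$ occupy different states. I would therefore focus the written proof on the asymmetric cases, e.g. $a\in A,\ b\notin B$, where $R=(1+\q x_a)/(1-\z x_b)$ and a short calculation yields $\partial_{ab}R=(\q+\z)/[(1-\z x_a)(1-\z x_b)]\cdot(1+\q x_b)$... — i.e. exactly the coefficient $\q+\z$ times $\Q(x_{\{a,b\}\cap(A\setminus\{a,b\})})/\Z(x_{\{a,b\}})$, matching the claimed bracket value $\q-0-0+\z$.

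After treating the single-variable factor, I would reassemble: multiply back by the symmetric factor $\Q(x_{A\setminus\{a,b\}})/\Z(x_{B\setminus\{a,b\}})$, and observe that this factor together with the output $1/[(1-\z x_a)(1-\z x_b)]$ (and possibly an $(1+\q x)$ surviving numerator) recombines precisely into $\Q(x_{A\setminus\{a,b\}})/\Z(x_{B\cup\{a,b\}})$, since $B\cup\{a,b\}=(B\setminus\{a,b\})\sqcup\{a,b\}$ and $A\setminus\{a,b\}$ absorbs whatever numerator factors remain. The bracket coefficient $\delta_{a\in A}\q-\delta_{b\in A}\q-\delta_{a\notin B}\z+\delta_{b\notin B}\z$ is then verified case-by-case to agree with the scalar produced in each instance.

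The main obstacle I anticipate is purely bookkeeping: keeping the signs and the membership indicators straight across the case analysis, and making sure that in cases where $a$ or $b$ lies in $B\setminus A$ (so it contributes a $\Z$-factor in the denominator but no $\Q$-factor in the numerator) the formula still collapses correctly — here all four Kronecker deltas can vanish and one must confirm $R$ is genuinely symmetric. There is no conceptual difficulty beyond that; the identity is essentially the statement that $\partial_{ab}$ acts as a derivation-like operator detecting the "defect" of $\Q(x_A)/\Z(x_B)$ from being symmetric in $x_a,x_b$, and the defect is linear, accounting for the simple additive bracket. A clean way to organize the writing is to first record the two atomic computations $\partial_{ab}(1+\q x_a)=\q$ and $\partial_{ab}\bigl((1-\z x_a)^{-1}\bigr)=\z\,(1-\z x_a)^{-1}(1-\z x_b)^{-1}$, then derive the product case from the Leibniz-type rule $\partial_{ab}(fg)=(\partial_{ab}f)\,g+(t_{ab}f)\,(\partial_{ab}g)$, which immediately produces the sum of four terms with the stated signs.
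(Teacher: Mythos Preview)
Your strategy is exactly the paper's: factor out the part of $\Q(x_A)/\Z(x_B)$ not involving $x_a,x_b$, note that the three ``diagonal'' cases ($a,b$ both in $A$, both in $B\setminus A$, or both outside $B$) give zero by symmetry, and then compute the three genuinely asymmetric cases directly. The paper handles precisely the cases $(a\in A,\ b\in B\setminus A)$, $(a\in A,\ b\notin B)$, and $(a\in B\setminus A,\ b\notin B)$, with the others following from $\partial_{ab}=-\partial_{ba}$.

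Your worked example for the case $a\in A,\ b\notin B$ contains a slip: since $a\in A\subseteq B$, the factor $1-\z x_a$ belongs to the denominator, so the reduced piece is $R=(1+\q x_a)/(1-\z x_a)$, not $(1+\q x_a)/(1-\z x_b)$. With the correct $R$ one gets $\partial_{ab}R=(\q+\z)/[(1-\z x_a)(1-\z x_b)]$ on the nose, with no surviving $(1+\q x_b)$ factor; this is what makes the numerator recombine cleanly into $\Q(x_{A\setminus\{a,b\}})$. Once that is corrected, your argument and the paper's are the same.
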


\begin{proof}
If any one  of the following conditions is satisfied:
\[a,b\in A,\qquad 
a,b\in B\setminus A,\qquad \text{or}\ \ 
a,b\in [n]\setminus B, 
\]
then $\frac{\Q(x_A)}{\Z(x_B)} $ is symmetric in $x_a$ and $x_b$, and in these cases both sides of \eqref{eq:QVrelation1} are zero.
Noticing that $\partial_{ab}=-\partial_{ba}$, it remains  to verify 
the following three cases. 

Case 1. $a\in A$ and $b\in B\setminus A$. In this case, 
\begin{align*}
 \partial_{ab} \frac{\Q(x_A)}{\Z(x_B)}&=
\frac{\Q(x_{A\setminus \{a\}})}{\Z(x_{B\setminus \{a,b\}})}\, \partial_{ab}\frac{1+\q x_a}{(1-\z x_a)(1-\z x_b)}\\[5pt]
&=\frac{\Q(x_{A\setminus \{a\}})}{\Z(x_{B\setminus \{a,b\}})}\,\frac{\q}{(1-\z x_a)(1-\z x_b)},
\end{align*}
which, along with the fact $A\setminus \{a\}=A\setminus \{a, b\}$, agrees with \eqref{eq:QVrelation1}.

Case 2. $a\in A$ and $b\in [n]\setminus B$. 
In this case,
\begin{align*}
 \partial_{ab} \frac{\Q(x_A)}{\Z(x_B)}=
\frac{\Q(x_{A\setminus \{a\}})}{\Z(x_{B\setminus \{a\}})}\, \partial_{ab}\frac{1+\q x_a}{(1-\z x_a)}=\frac{\Q(x_{A\setminus \{a\}})}{\Z(x_{B\setminus \{a\}})}\,\frac{\q+\z}{(1-\z x_a)(1-\z x_b)},
\end{align*}
which is  the same as \eqref{eq:QVrelation1}.

Case 3.  $a\in B\setminus A$ and $b\in [n]\setminus B$. In this case,
\begin{align*}
 \partial_{ab} \frac{\Q(x_A)}{\Z(x_B)}=
\frac{\Q(x_{A})}{\Z(x_{B\setminus \{a\}})}\, \partial_{ab}\frac{1}{(1-\z x_a)}=\frac{\Q(x_{A})}{\Z(x_{B\setminus \{a\}})}\,\frac{\z}{(1-\z x_a)(1-\z x_b)},
\end{align*}
which also coincides with  \eqref{eq:QVrelation1}.
\end{proof}


For   $w\in \S_n$, let 
\begin{align}\label{MMM}
M(w) = \{1\leq i\leq n\colon w(i)\neq i\} 
\end{align} 
represent  the set of non-fixed points of $w$.

\begin{Lemma}\label{nonfixinglemma}
For $m\geq 0$, assume that 
$w=t_{a_1b_1}\cdots t_{a_mb_m}\in \S_n.$
If 
\begin{equation}\label{XX-1}
\partial_{a_1b_1}\cdots \partial_{a_mb_m} \frac{\Q(x_A)}{\Z(x_B)} \neq 0
\end{equation}
for some subsets $A\subseteq B$ of $[n]$, then
\begin{equation}\label{XX-2}
M(w) = \{a_1,b_1,\ldots,a_m,b_m\}.
\end{equation}
\end{Lemma}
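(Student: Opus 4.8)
The plan is to prove the contrapositive-flavored statement by induction on $m$, tracking how each Demazure operator $\partial_{a_ib_i}$ modifies the pair of subsets indexing the rational function $\Q(x_{A'})/\Z(x_{B'})$. The base case $m=0$ is immediate: if $w=\operatorname{id}$, then $M(w)=\varnothing=\{a_1,b_1,\ldots,a_mb_m\}$ vacuously. For the inductive step, suppose the claim holds for products of $m-1$ transpositions. The key observation comes from Lemma \ref{QVrelation1}: applying $\partial_{a_mb_m}$ to $\Q(x_A)/\Z(x_B)$ produces (up to a scalar in $\q,\z$) the function $\Q(x_{A\setminus\{a_m,b_m\}})/\Z(x_{B\cup\{a_m,b_m\}})$, and crucially this output is \emph{nonzero} only when $a_m,b_m$ are separated by the partition $A\subseteq B$ in one of the three ways enumerated in the proof of Lemma \ref{QVrelation1} (i.e. $\{a_m,b_m\}$ is not entirely inside $A$, entirely inside $B\setminus A$, or entirely outside $B$). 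Moreover, after this step the new sets $A_1 = A\setminus\{a_m,b_m\}$ and $B_1 = B\cup\{a_m,b_m\}$ satisfy $A_1\subseteq B_1$, so the induction hypothesis applies to $w' = t_{a_1b_1}\cdots t_{a_{m-1}b_{m-1}}$ acting on $\Q(x_{A_1})/\Z(x_{B_1})$.

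First I would set $A_1 = A\setminus\{a_m,b_m\}$, $B_1 = B\cup\{a_m,b_m\}$, and note that nonvanishing of the full expression in \eqref{XX-1} forces nonvanishing of $\partial_{a_1b_1}\cdots\partial_{a_{m-1}b_{m-1}}\,\Q(x_{A_1})/\Z(x_{B_1})$, so by the induction hypothesis $M(w') = \{a_1,b_1,\ldots,a_{m-1},b_{m-1}\}$. Next, I would argue that $a_m,b_m\notin M(w')$: indeed, since $\partial_{a_mb_m}$ acting on the original function is nonzero, $a_m$ and $b_m$ lie on opposite sides of one of the three cuts determined by $A\subseteq B$; but $a_m,b_m\in B_1$ and $a_m,b_m\notin B_1\setminus A_1$ (as neither is in $A_1$ by construction), so the only way the preceding operators $\partial_{a_ib_i}$ for $i<m$ could have been applied nontrivially is if none of them involve $a_m$ or $b_m$ — wait, this needs care: I would instead observe directly that if $a_m$ (say) appeared among $\{a_1,b_1,\ldots,a_{m-1},b_{m-1}\}$, then in the reduced function $\Q(x_{A_1})/\Z(x_{B_1})$ the variable $x_{a_m}$ occurs in the denominator factor $(1-\z x_{a_m})$ since $a_m\in B_1$, and one checks via the three cases of Lemma \ref{QVrelation1} that the surviving operator involving $a_m$ would then move $a_m$ out of $B_1$, contradicting $a_m\in B_1$ being forced at every subsequent stage; the cleanest route is to show by downward tracking that the final index set always contains $B\cup\{a_m,b_m\}$ minus indices removed, and $a_m$ can only be removed by an operator $\partial_{a_ma_i}$ or $\partial_{a_ib_m}$, hence if no such operator exists among the first $m-1$ then $a_m,b_m$ are genuinely fixed by $w'$.

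Then I would conclude: $w = w'\,t_{a_mb_m}$ with $a_m,b_m\notin M(w')$, so $w$ acts as the transposition $t_{a_mb_m}$ on $\{a_m,b_m\}$ and agrees with $w'$ elsewhere, giving $M(w) = M(w')\sqcup\{a_m,b_m\} = \{a_1,b_1,\ldots,a_m,b_m\}$, which is \eqref{XX-2}. The main obstacle is the middle step — rigorously justifying that $a_m,b_m$ are fixed points of $w'$ rather than merely asserting it — because the operators $\partial_{a_ib_i}$ do not commute in general and one must carefully follow how the index sets $A,B$ evolve under the whole composition. I expect the right bookkeeping device is to prove a slightly stronger statement by induction: namely that $\partial_{a_1b_1}\cdots\partial_{a_mb_m}\,\Q(x_A)/\Z(x_B)$, when nonzero, equals a nonzero scalar (a monomial in $\q+\z$, $\q$, $\z$) times $\Q(x_{A''})/\Z(x_{B''})$ where $A'' = A\setminus M(w)$ and $B'' = B\cup M(w)$, from which \eqref{XX-2} follows by comparing which variables can possibly appear. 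This strengthened form makes the induction self-feeding and sidesteps the noncommutativity issue, since at each stage we only need Lemma \ref{QVrelation1} applied once.
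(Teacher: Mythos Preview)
Your argument has a genuine gap at step 3: the intermediate claim $a_m, b_m \notin M(w')$ is simply false in general, even under the nonvanishing hypothesis. Take $m=2$ with $a_1=a_2=a$ and $b_1\neq b_2$ distinct from $a$. Then $w'=t_{ab_1}$ has $M(w')=\{a,b_1\}$, so $a_m=a\in M(w')$. Yet the composition $\partial_{ab_1}\partial_{ab_2}\,\Q(x_A)/\Z(x_B)$ can be nonzero (e.g.\ if $a\in A$, $b_1\in A$, $b_2\notin B$). Here $w=t_{ab_1}t_{ab_2}$ is a $3$-cycle with $M(w)=\{a,b_1,b_2\}=\{a_1,b_1,a_2,b_2\}$, so the lemma holds, but your disjoint-union conclusion $M(w)=M(w')\sqcup\{a_m,b_m\}$ fails. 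The correct relation is the ordinary union $M(w)=M(w')\cup\{a_m,b_m\}$, and that equality is exactly what needs proof.

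Your proposed strengthening does not rescue the induction. Iterating Lemma~\ref{QVrelation1} already gives that the output, when nonzero, is a scalar times $\Q(x_{A\setminus S})/\Z(x_{B\cup S})$ with $S=\{a_1,b_1,\ldots,a_m,b_m\}$; the content of the lemma is that $S=M(w)$, and this cannot be read off from the shape of the output alone. If you try to prove by induction that the output has index sets $A\setminus M(w)$ and $B\cup M(w)$, the inductive step gives you $A_1\setminus M(w')$ and $B_1\cup M(w')$, and matching these to $A\setminus M(w)$ and $B\cup M(w)$ again requires $M(w)=M(w')\cup\{a_m,b_m\}$---the very thing in question. The paper avoids this circularity by arguing contrapositively: assuming some $x\in\{a_1,b_1,\ldots,a_m,b_m\}$ is fixed by $w$, it locates the smallest index $i$ for which the tail permutation $w_i=t_{a_{i+1}b_{i+1}}\cdots t_{a_mb_m}$ moves $x$, shows that $\{a_i,b_i\}=\{x,w_i(x)\}\subseteq M(w_i)$, and then uses the induction hypothesis on $w_i$ together with Lemma~\ref{QVrelation1} to conclude that $\partial_{a_ib_i}$ annihilates the intermediate expression, contradicting \eqref{XX-1}. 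The key idea you are missing is this passage to the \emph{first} place where the fixed point becomes unfixed, which guarantees that both indices of the offending transposition lie in the already-processed set.
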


\begin{proof}
It is obvious that $M(w)\subseteq \{a_1,b_1,\ldots,a_m,b_m\}$. We next prove the reverse inclusion 
by induction on $m$. When $m=0$, $w$ is the identity permutation,  both sides of \eqref{XX-2} are  empty, and we are done. 

Now consider the case  $m>0$. 
Suppose  to the contrary that $w(x)=x$ for some $x\in \{a_1,b_1,\ldots,a_m,b_m\}$. 
For $0\leq j\leq m-1$, let $w_j=t_{a_{j+1}b_{j+1}}\cdots t_{a_mb_m}$.  
Let $i$ be the smallest $j$ such that $w_j(x) \neq x$. 
Clearly, we have $i>0$.  Since $w_{i-1}(x)=t_{a_i b_i}w_i(x)=x$ and $w_i(x)\neq x$, we obtain  that
\begin{equation}\label{XX-7}
\{a_i,b_i\}=\{x,w_{i}(x)\}.    
\end{equation} 
On the other hand, by \eqref{XX-1},  we have
\[\partial_{a_{i+1}b_{i+1}}\cdots \partial_{a_mb_m} \frac{\Q(x_A)}{\Z(x_B)}\neq 0,\]
and so it follows by induction that  
\begin{equation*}\label{XX-77}
M(w_i)=\{a_{i+1}, b_{i+1},\ldots, a_m, b_m\},
\end{equation*}
which along with Lemma \ref{QVrelation1} implies that 
\begin{equation}\label{XX-777}
\partial_{a_{i+1}b_{i+1}}\cdots \partial_{a_mb_m} \frac{\Q(x_A)}{\Z(x_B)}\in \mathbb{Z}[\q,\z]\frac{\Q(x_{A\setminus M(w_i)})}{\Z(x_{B\cup M(w_i)})}. 
\end{equation}
Since $w_i(x)\neq x$, we  see that $w_i(w_i(x))\neq w_i(x)$. Thus  both $x$ and $w_{i}(x)$  belong to $M(w_i)$. 
In view of  \eqref{XX-7}, both $a_i$ and $b_i$ belong to $M(w_i)$.
Consequently, 
\[\partial_{a_i b_i}\frac{\Q(x_{A\setminus M(w_i)})}{\Z(x_{B\cup M(w_i)})}=\frac{\Q(x_{A\setminus M(w_i)})}{\Z(x_{B\cup M(w_i)\setminus\{a_i,  b_i\}})} \partial_{a_i b_i} \frac{1}{(1-zx_{a_i})(1-zx_{b_i})}=0,\] 
which together  with \eqref{XX-777} would yield 
\[\partial_{a_{i}b_{i}}\partial_{a_{i+1}b_{i+1}}\cdots \partial_{a_mb_m} \frac{\Q(x_A)}{\Z(x_B)}=0,\]
contrary to the assumption in \eqref{XX-1}. This completes the proof. 
\end{proof}





\subsection{Demazure--Lusztig Operators and Proof of Theorem \ref{noneqimplieEqhook}}

For  $w\in \S_n$, fix a reduced word of $w$:
\begin{equation*}\label{eq:reducedwordforw}
w= s_{i_1}\cdots s_{i_\ell}. 
\end{equation*}
For any subset $J\subseteq [\ell]$, define
\begin{align}\label{w_J}
w_J=\prod_{j \in J} s_{i_j} 
\quad \text{and}\quad
\nabla_J = \prod_{j=1}^\ell\begin{cases}
s_{i_j}, & j\in J,\\[5pt]
\partial_{i_j},& j \notin J,
\end{cases}
\end{align}
where the factors in the product are multiplied  from left to right as $j$ increases.  
By direct calculation,    $\mathcal{T}_i=-s_i+\partial_i$ satisfies the  Leibniz rule:
\begin{equation}\label{eq:Leibnizrule}
\mathcal{T}_i(fg)= \mathcal{T}_i f\cdot s_ig + f\cdot\partial_i g.
\end{equation}
It should be noticed  that the twisted operator $T_i=s_i+\partial_i$, which has been used by Liu  \cite{Liu} to define twisted Schubert polynomials,   possesses analogous  properties to $\mathcal{T}_i$ \cite[Proposition 3.3]{Liu}.
Iterating \eqref{eq:Leibnizrule}, it is not hard to check that 
\begin{equation}\label{X-11}
\mathcal{T}_w(fg) = \sum_{J\subseteq [\ell]} \mathcal{T}_{w_J}f \cdot \nabla_J g
=\sum_{v\in \S_n}  \mathcal{T}_vf \cdot \sum_{\begin{subarray}{c}{J\subseteq [\ell]}\\{w_J=v}\end{subarray}} \nabla_{J}g. 
\end{equation}
For $v, w\in \S_n$,   define the {skew operator $\mathcal{T}_{w/ v}$}  as 
\begin{equation}\label{eq:skewoperators}
\mathcal{T}_{w/ v} = \sum_{\begin{subarray}{c}J\subseteq [\ell]\\ w_J=v\end{subarray}}\nabla_J. 
\end{equation}
Note that since  $v$ appears as a subword in the reduced  word of $w$,  $\mathcal{T}_{w/v}$ is zero unless $v\leq w$ in the Bruhat order. With the notation in \eqref{eq:skewoperators},  \eqref{X-11} can be rewritten as
\begin{equation}\label{X-111}
\mathcal{T}_w(fg)=\sum_{v\in \S_n} \mathcal{T}_vf\cdot  \mathcal{T}_{w/ v} g. 
\end{equation}

Let $\alpha$ be a class in $H_T^\bullet(\Fl(n))$. 
By Proposition \ref{propofCSM}, one has the following expansion  
\begin{equation*}
\alpha = \sum_{w\in \S_n} c^w_\alpha( t)\cdot  c_{\mathrm{SM}}^T(Y(w)^\circ),
\end{equation*}
where the coefficients  $c^w_\alpha( t)$ are rational functions in $t$.

\begin{Lemma} 
We have
\begin{equation}\label{eq:inversion formula}
c^w_\alpha( t) = \frac{1}{\prod_{1\leq i<j\leq n}(1+ t_i- t_j)}\mathcal{T}_w(\alpha)\big|_{\operatorname{id}}.
\end{equation}
 \end{Lemma}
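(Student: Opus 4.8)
The plan is to apply the operator $\mathcal{T}_w$ to the given $\mathbb{F}$-linear expansion of $\alpha$ and then localize at the identity, using the two facts already recorded just above: the recursion $\mathcal{T}_u\big(c_{\mathrm{SM}}^T(Y(v)^\circ)\big)=c_{\mathrm{SM}}^T(Y(vu^{-1})^\circ)$ from Theorem \ref{recrusionCSM}, and the localization formula for CSM classes at $\operatorname{id}$ from Proposition \ref{propofCSM}(iii). Recall that the expansion $\alpha=\sum_{v\in\S_n}c^v_\alpha(t)\,c_{\mathrm{SM}}^T(Y(v)^\circ)$ makes sense and has unique coefficients $c^v_\alpha(t)\in\mathbb{F}$ because the CSM classes form an $\mathbb{F}$-basis of $H_T^\bullet(\Fl(n))_{\mathbb{F}}$ (Proposition \ref{propofCSM}(i)).

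First I would note that each $\mathcal{T}_i$, and hence each $\mathcal{T}_w$, acts on $H_T^\bullet(\Fl(n))$ only through the variables $x_1,\dots,x_n$ (as remarked after the definition of $\mathcal{T}_w$, this is well defined independently of the chosen decomposition of $w$). In particular $\mathcal{T}_w$ is $\mathbb{F}$-linear, so it commutes with multiplication by the rational functions $c^v_\alpha(t)$. Applying $\mathcal{T}_w$ to the expansion and invoking Theorem \ref{recrusionCSM} with $u=w$ (so that the class indexed by $v$ is sent to the one indexed by $vw^{-1}$) gives
\[
\mathcal{T}_w(\alpha)=\sum_{v\in\S_n}c^v_\alpha(t)\,c_{\mathrm{SM}}^T(Y(vw^{-1})^\circ),
\]
a finite sum in $H_T^\bullet(\Fl(n))_{\mathbb{F}}$.

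Next I would restrict both sides to the $T$-fixed point $\operatorname{id}$. Since localization is $\mathbb{F}$-linear it commutes with the finite sum, and by Proposition \ref{propofCSM}(iii) the localization $c_{\mathrm{SM}}^T(Y(vw^{-1})^\circ)\big|_{\operatorname{id}}$ vanishes unless $vw^{-1}=\operatorname{id}$, i.e. $v=w$, in which case it equals $\prod_{1\le i<j\le n}(1+t_i-t_j)$. Hence every term of the sum dies except the one indexed by $v=w$, yielding
\[
\mathcal{T}_w(\alpha)\big|_{\operatorname{id}}=c^w_\alpha(t)\cdot\prod_{1\le i<j\le n}(1+t_i-t_j).
\]
Dividing by the nonzero polynomial $\prod_{1\le i<j\le n}(1+t_i-t_j)$ gives the asserted formula \eqref{eq:inversion formula}. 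There is no substantial obstacle here; the only points meriting a word of care are the $\mathbb{F}$-linearity of $\mathcal{T}_w$ (so that the scalars $c^v_\alpha(t)$ pass through) and the fact that $\mathcal{T}_w$ is well defined on cohomology, both of which have already been established in the text.
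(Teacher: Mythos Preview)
Your proof is correct and follows exactly the approach the paper uses: apply $\mathcal{T}_w$ to the CSM expansion of $\alpha$, use Theorem \ref{recrusionCSM} to reindex the basis classes, then localize at $\operatorname{id}$ and invoke Proposition \ref{propofCSM}(iii) so that only the $v=w$ term survives. The paper compresses this into a single sentence, whereas you have written out the details carefully, including the point about $\mathbb{F}$-linearity of $\mathcal{T}_w$.
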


\begin{proof}
 By Theorem \ref{recrusionCSM} and Proposition \ref{propofCSM},   $c^w_\alpha( t)$ could be computed   by applying the operator
$\mathcal{T}_w$ to $\alpha$ and then invoking   the localization map $|_{\operatorname{id}}$.
\end{proof}

This combined with \eqref{X-111} leads to the following important observation.

\begin{Lemma}\label{PP-1}
Suppose that for $u\in \S_n$  and a class  $\alpha\in H_T^\bullet(\Fl(n))$,
\begin{equation*}
c_{\mathrm{SM}}^T(Y(u)^\circ)\cdot \alpha = \sum_{w\in \S_n} c^w_{u,\alpha}( t)\cdot  c_{\mathrm{SM}}^T(Y(w)^\circ). 
\end{equation*}
Then we have
\begin{align}
c_{u,\alpha}^w( t) = \left.\mathcal{T}_{w/ u} (\alpha) \right|_{\operatorname{id}}.
\end{align}
\end{Lemma}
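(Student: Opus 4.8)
The plan is to derive Lemma \ref{PP-1} directly from the two ingredients already in hand: the inversion formula \eqref{eq:inversion formula} and the skew-operator identity \eqref{X-111}. First I would apply $\mathcal{T}_w$ to both sides of the given expansion $c_{\mathrm{SM}}^T(Y(u)^\circ)\cdot \alpha = \sum_{w'} c^{w'}_{u,\alpha}(t)\cdot c_{\mathrm{SM}}^T(Y(w')^\circ)$, and then restrict via $|_{\operatorname{id}}$. On the right-hand side, \eqref{eq:inversion formula} tells us that $\frac{1}{\prod_{i<j}(1+t_i-t_j)}\mathcal{T}_w\big(c_{\mathrm{SM}}^T(Y(w')^\circ)\big)\big|_{\operatorname{id}}$ is the coefficient of $c_{\mathrm{SM}}^T(Y(w)^\circ)$ in the CSM expansion of $c_{\mathrm{SM}}^T(Y(w')^\circ)$, which is simply $\delta_{w=w'}$ (the CSM classes form a basis by Proposition \ref{propofCSM}\eqref{propofCSMi}); so the right-hand side collapses to $\prod_{i<j}(1+t_i-t_j)\cdot c_{u,\alpha}^w(t)$. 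It is cleaner, though, to view the right-hand side as already being in the required form and simply invoke \eqref{eq:inversion formula} once.

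Concretely, the cleanest route: set $\beta = c_{\mathrm{SM}}^T(Y(u)^\circ)\cdot \alpha$, so that $c^w_\beta(t) = c^w_{u,\alpha}(t)$ by definition. Apply \eqref{eq:inversion formula} to $\beta$:
\[
c_{u,\alpha}^w(t) = \frac{1}{\prod_{1\leq i<j\leq n}(1+t_i-t_j)}\,\mathcal{T}_w\big(c_{\mathrm{SM}}^T(Y(u)^\circ)\cdot \alpha\big)\big|_{\operatorname{id}}.
\]
Now expand $\mathcal{T}_w\big(c_{\mathrm{SM}}^T(Y(u)^\circ)\cdot \alpha\big)$ using the Leibniz-type identity \eqref{X-111}, treating $f = c_{\mathrm{SM}}^T(Y(u)^\circ)$ and $g = \alpha$:
\[
\mathcal{T}_w\big(c_{\mathrm{SM}}^T(Y(u)^\circ)\cdot \alpha\big) = \sum_{v\in \S_n} \mathcal{T}_v\big(c_{\mathrm{SM}}^T(Y(u)^\circ)\big)\cdot \mathcal{T}_{w/v}(\alpha) = \sum_{v\in \S_n} c_{\mathrm{SM}}^T(Y(uv^{-1})^\circ)\cdot \mathcal{T}_{w/v}(\alpha),
\]
where the last equality is Theorem \ref{recrusionCSM}. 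Restricting at $\operatorname{id}$ and using Proposition \ref{propofCSM}\eqref{propofCSMiii}, the term $c_{\mathrm{SM}}^T(Y(uv^{-1})^\circ)\big|_{\operatorname{id}}$ vanishes unless $uv^{-1} = \operatorname{id}$, i.e. $v = u$, in which case it equals $\prod_{i<j}(1+t_i-t_j)$. Hence only the $v=u$ term survives, giving $c_{u,\alpha}^w(t) = \mathcal{T}_{w/u}(\alpha)\big|_{\operatorname{id}}$, as claimed.

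The only subtlety — and the one place to be careful — is justifying that the product rule \eqref{X-111} and the restriction $|_{\operatorname{id}}$ interact correctly with the fact that $\mathcal{T}_{w/v}(\alpha)$ is generally a rational (not polynomial) expression in the $t_i$; but since every step takes place inside $H_T^\bullet(\Fl(n))_{\mathbb{F}}$ over the fraction field $\mathbb{F}=\mathbb{Q}(t_1,\ldots,t_n)$, and since $\prod_{i<j}(1+t_i-t_j)$ is a unit there, the manipulation is legitimate. I do not anticipate a genuine obstacle here: the lemma is essentially a formal consequence of \eqref{eq:inversion formula}, \eqref{X-111}, Theorem \ref{recrusionCSM}, and the localization computation in Proposition \ref{propofCSM}\eqref{propofCSMiii}, and the proof is a two- or three-line assembly of these facts.
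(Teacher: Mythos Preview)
Your proposal is correct and follows essentially the same route as the paper's proof: apply \eqref{eq:inversion formula} to $\beta=c_{\mathrm{SM}}^T(Y(u)^\circ)\cdot\alpha$, expand $\mathcal{T}_w(\beta)$ via \eqref{X-111}, use Theorem \ref{recrusionCSM} to rewrite $\mathcal{T}_v\big(c_{\mathrm{SM}}^T(Y(u)^\circ)\big)$ as $c_{\mathrm{SM}}^T(Y(uv^{-1})^\circ)$, and then invoke Proposition \ref{propofCSM}\eqref{propofCSMiii} to kill all terms except $v=u$. The only difference is that you add a brief remark on working over the fraction field $\mathbb{F}$, which the paper leaves implicit.
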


 \begin{proof}

Replacing $\alpha$ by $c_{\mathrm{SM}}^T(Y(u)^\circ)\cdot \alpha$   in \eqref{eq:inversion formula} gives 
\begin{equation}\label{X-X1}
c^w_{u,\alpha}( t) = \frac{1}{\prod_{1\leq i<j\leq n}(1+ t_i- t_j)}\mathcal{T}_w \left.(c_{\mathrm{SM}}^T(Y(u)^\circ)\cdot \alpha) \right|_{\operatorname{id}}. 
\end{equation}
Plugging \eqref{X-111} into \eqref{X-X1} and   using Theorem \ref{recrusionCSM}  and  Proposition 
\ref{propofCSM} , we deduce that
\begin{align*} 
c^w_{u,\alpha}( t)& = \frac{1}{\prod_{1\leq i<j\leq n}(1+ t_i- t_j)}\sum_{v\in \S_n} \left.\mathcal{T}_v(c_{\mathrm{SM}}^T(Y(u)^\circ)\right|_{\operatorname{id}}\cdot  \left.\mathcal{T}_{w/ v} (\alpha) \right|_{\operatorname{id}} \\[5pt]
& = \frac{1}{\prod_{1\leq i<j\leq n}(1+ t_i- t_j)}\sum_{v\in \S_n} \left.c_{\mathrm{SM}}^T(Y(uv^{-1})^\circ)\right|_{\operatorname{id}}\cdot  \mathcal{T}_{w/ v} (\alpha) \big|_{\operatorname{id}}\\[5pt]
& = \frac{1}{\prod_{1\leq i<j\leq n}(1+ t_i- t_j)}  \left.c_{\mathrm{SM}}^T(Y(\operatorname{id})^\circ)\right|_{\operatorname{id}}\cdot  \left.\mathcal{T}_{w/ u} (\alpha) \right|_{\operatorname{id}}\\[5pt]
&=\left.\mathcal{T}_{w/ u} (\alpha) \right|_{\operatorname{id}},
\end{align*}
as required.
\end{proof}


The last lemma concerns the action of   skew operators on the generating function $E(\q,\z,x_A)$ of Schur polynomials of hook shapes as defined in Lemma \ref{OWW-1}. 

\begin{Lemma}\label{NonimplieEquLemma}
For $u, w\in \S_n$  with $u\neq w$, we have
\begin{equation}\label{O-1}
\mathcal{T}_{w/ u} E(\q,\z,x_A) \in \frac{1}{\q+\z}\mathbb{Z}[\q,\z] \frac{\Q(x_{\Cminus_A(u, w)})}{\Z(x_{\Cplus_A(u, w)})}.
\end{equation}
\end{Lemma}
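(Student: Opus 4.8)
The plan is to prove Lemma \ref{NonimplieEquLemma} by inducting on the length of a reduced word for $w$ and expanding the skew operator $\mathcal{T}_{w/u}$ according to its definition \eqref{eq:skewoperators} as a sum of products of the operators $s_{i_j}$ and $\partial_{i_j}$. The starting point is the observation that, up to the harmless prefactor $\frac{1}{\q+\z}$, the function $E(\q,\z,x_A)$ is essentially $\frac{\Q(x_A)}{\Z(x_A)}$, so what really needs to be controlled is the effect of a sequence of Demazure operators $\partial_{a_jb_j}$ (interspersed with simple transpositions $s_{i_j}$) applied to a quotient of the form $\frac{\Q(x_C)}{\Z(x_D)}$ with $C\subseteq D$. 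Lemma \ref{QVrelation1} tells us that a single $\partial_{ab}$ preserves the shape of such a quotient: it returns an element of $\mathbb{Z}[\q,\z]\cdot\frac{\Q(x_{C\setminus\{a,b\}})}{\Z(x_{D\cup\{a,b\}})}$. The transpositions $s_{i_j}$ merely relabel variables and hence permute the index sets $C$ and $D$ by the corresponding simple transposition. So the whole computation amounts to bookkeeping of how the index sets evolve.

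First I would set up the induction: write $w = s_{i_1}\cdots s_{i_\ell}$ reduced and split off the first letter. Using \eqref{eq:skewoperators} one gets a recursion expressing $\mathcal{T}_{w/u}$ in terms of $\mathcal{T}_{w'/u}$ and $\mathcal{T}_{w'/(s_{i_1}u)}$ (or an analogous "peel from the right" version, whichever meshes better with the Leibniz-rule bookkeeping of \eqref{X-11}), where $w' = s_{i_2}\cdots s_{i_\ell}$. For each subword $J$ contributing to $\mathcal{T}_{w/u}$ with $w_J = u$, the associated operator $\nabla_J$ is a product in which exactly the positions outside $J$ carry a $\partial$, and these $\partial$'s, read right to left, correspond to a factorization $u^{-1}w = t_{a_1b_1}\cdots t_{a_mb_m}$ with $m = \ell - \ell(u)$ counted with the subword length. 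Applying Lemma \ref{QVrelation1} iteratively, the result of $\nabla_J$ acting on $\frac{\Q(x_A)}{\Z(x_A)}$ lies in $\mathbb{Z}[\q,\z]\cdot\frac{\Q(x_{A'})}{\Z(x_{A''})}$ where $A'$ is obtained from $A$ by repeatedly deleting $\{a_j,b_j\}$ (after the appropriate relabelings from the $s$'s) and $A''$ by repeatedly adjoining $\{a_j,b_j\}$. The content of the lemma is then the identification of the stable sets: $A' = \Delta_A(u,w)$ and $A'' = \Sigma_A(u,w)$ in the notation of \eqref{deltaa} and \eqref{sigmaa}.

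The key to that identification is Lemma \ref{nonfixinglemma}: whenever $\nabla_J$ does not annihilate the quotient, the transpositions appearing in the factorization of $u^{-1}w$ must have their supports union up to $M(u^{-1}w)$, with no wasted letters. Tracking through the relabelings performed by the $s_{i_j}$'s — each of which conjugates and shifts the running index sets — one sees that "the set of $u(i)$ with $u(i)\neq w(i)$" is exactly the set of values removed from (resp. added to) the $\Q$-index (resp. $\Z$-index), which is precisely $\Delta_A(u,w)$ and $\Sigma_A(u,w)$ by their definitions, since we also have to intersect with, resp. union with, $uA$ coming from the initial index set $A$. Summing over all subwords $J$ keeps us in the same one-dimensional $\mathbb{Z}[\q,\z]$-module (each term lands in $\mathbb{Z}[\q,\z]\cdot\frac{\Q(x_{\Delta_A(u,w)})}{\Z(x_{\Sigma_A(u,w)})}$ after the deletions/insertions stabilize — here it is important that deleting or adjoining an element that is already absent or already present does nothing, so all the subwords contribute to the same module), and the prefactor $\frac{1}{\q+\z}$ from $E$ is carried along untouched, giving \eqref{O-1}.

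The main obstacle I anticipate is the careful bookkeeping of the index sets under the interleaved action of the relabeling operators $s_{i_j}$ and the Demazure operators $\partial_{i_j}$: one must verify that the "removed" set stabilizes to $\Delta_A(u,w)$ and the "added" set to $\Sigma_A(u,w)$ independently of which subword $J$ is chosen, and that possible cancellations among different $J$'s never escape the rank-one module $\mathbb{Z}[\q,\z]\cdot\frac{\Q(x_{\Delta_A(u,w)})}{\Z(x_{\Sigma_A(u,w)})}$. Making Lemma \ref{nonfixinglemma} do this work cleanly — so that every nonvanishing term has its transposition supports exactly filling $M(u^{-1}w)$ and the resulting sets therefore coincide — is where the real care is needed; the rest is a routine induction built on Lemma \ref{QVrelation1}.
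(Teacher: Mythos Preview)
Your proposal is correct and rests on the same two ingredients the paper uses (Lemma \ref{QVrelation1} and Lemma \ref{nonfixinglemma}), but you have organized the argument as an induction with interleaved bookkeeping, whereas the paper avoids that entirely by one algebraic move. Namely, the paper uses the conjugation relation $v\,\partial_{ab}=\partial_{v(a)\,v(b)}\,v$ to slide every $s_{i_j}$ in $\nabla_J$ past the $\partial$'s, obtaining
\[
\nabla_J \;=\; u\,\partial_{a_1b_1}\cdots\partial_{a_rb_r},
\qquad
w \;=\; u\,t_{a_1b_1}\cdots t_{a_rb_r}.
\]
Now Lemma \ref{nonfixinglemma} applies verbatim to the pure string $\partial_{a_1b_1}\cdots\partial_{a_rb_r}$ acting on $\frac{\Q(x_A)}{\Z(x_A)}$: any nonvanishing term has $\{a_1,b_1,\ldots,a_r,b_r\}=M(u^{-1}w)$, so by Lemma \ref{QVrelation1} it lands in $\mathbb{Z}[\q,\z]\cdot\frac{\Q(x_{A\setminus M(u^{-1}w)})}{\Z(x_{A\cup M(u^{-1}w)})}$, and the final application of $u$ turns the index sets into $\Delta_A(u,w)$ and $\Sigma_A(u,w)$ in one stroke. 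The constant term $-\tfrac{1}{\q+\z}$ in $E$ is killed because $u\neq w$ forces $r\geq 1$. This eliminates precisely the obstacle you flagged: there is no need to track how the index sets evolve through interleaved $s$'s and $\partial$'s, and the independence of the target module from the choice of subword $J$ is immediate rather than something to be verified inductively. Your route would work, but the conjugation trick is the cleaner way to package the same idea.
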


\begin{proof}
For any $v\in \S_n$, it is easy to check that 
\begin{equation*}\label{conjofpartialab}
v\partial_{ab} = \partial_{v(a)v(b)}v. 
\end{equation*}
Hence, for  $J\subseteq [\ell]$ such that  $w_J=u$,
we can interchange the Demazure operators appearing in $\nabla_J$ defined in \eqref{w_J} one by one to the rightmost side  of $w_J$, and so  we may  assume that $\nabla_J$ takes the  form
\begin{equation*}\label{eq:assumewuss}
\nabla_J=u\partial_{a_1b_1}\cdots \partial_{a_rb_r}.
\end{equation*}
On the other hand, since  $vt_{ab} = t_{v(a)v(b)}v$ for any $v\in \S_n$, we can use exactly the same procedure  with  $\nabla_J$ to deduce that
\[
w= u t_{a_1b_1}\cdots t_{a_rb_r}, 
\]
or equivalently,
\[u^{-1} w=t_{a_1b_1}\cdots t_{a_rb_r}.\]
Combining  Lemma \ref{QVrelation1} and Lemma \ref{nonfixinglemma}, we obtain that
\begin{equation}\label{OO-2}
\partial_{a_1b_1}\cdots \partial_{a_rb_r} \frac{\Q(x_A)}{\Z(x_A)} \in \mathbb{Z}[\q,\z] \frac{\Q(x_{A\setminus M(u^{-1} w)})}{\Z(x_{A\cup M(u^{-1} w)})}.
\end{equation}
Since $u\neq w$, we have $r\geq 1$ and so 
\[\partial_{a_1b_1}\cdots \partial_{a_rb_r} \frac{1}{\q+\z}=0, \]
which together with \eqref{OO-2} leads to
\begin{align*}
\partial_{a_1b_1}\cdots \partial_{a_rb_r} E(\q,\z,x_A) 
&=\partial_{a_1b_1}\cdots \partial_{a_rb_r}\frac{1}{\q+\z}
\frac{\Q(x_A)}{\Z(x_A)}-\partial_{a_1b_1}\cdots \partial_{a_rb_r}\frac{1}{\q+\z}\\[5pt]
&\in \frac{1}{\q+\z}\mathbb{Z}[\q,\z] \frac{\Q(x_{A\setminus M(u^{-1} w)})}{\Z(x_{A\cup M(u^{-1} w)})}.
\end{align*}
Therefore, 
\begin{align*}
\nabla_J  E(\q,\z,x_A) &= u\partial_{a_1b_1}\cdots \partial_{a_rb_r}E(\q,\z,x_A) \\[5pt]
&\in   \frac{1}{\q+\z}\mathbb{Z}[\q,\z] \frac{\Q(x_{u(A\setminus M(u^{-1} w))})}{\Z(x_{u(A\cup M(u^{-1} w))})} \\[5pt]
&=
\frac{1}{\q+\z}\mathbb{Z}[\q,\z] \frac{\Q(x_{\Cminus_A(u, w)})}{\Z(x_{\Cplus_A(u, w)})},
\end{align*}
yielding \eqref{O-1}.
\end{proof}

We are finally in a position to complete the proof of   Theorem \ref{noneqimplieEqhook}.

\begin{proof}[Proof of Theorem \ref{noneqimplieEqhook}]
Recall that \begin{equation*}\label{X-3}
c_{\mathrm{SM}}^T(Y(u)^\circ)\cdot s_{\Gamma}(x_{A}) = 
\sum_{w\in \S_n} c_{u,\Gamma}^w( t)\cdot  c_{\mathrm{SM}}^T(Y(w)^\circ).
\end{equation*} 
Let
\begin{align}\label{cuwt}
c_u^w(\q,\z, t) = \sum_{\begin{subarray}{c}{\alpha,\beta\ge0}\\{\Gamma=(1+\alpha,1^\beta)}\end{subarray}} \z^\alpha\q^\beta \cdot c_{u,\Gamma}^w( t).
\end{align}
Then by Lemma \ref{OWW-1},
\begin{align*}
c_{\mathrm{SM}}^T(Y(u)^\circ)\cdot E(\q,\z,x_{A})
&=c_{\mathrm{SM}}^T(Y(u)^\circ)\cdot \sum_{\alpha,\beta\geq 0} \z^\alpha\q^\beta \cdot s_{(1+\alpha,1^\beta)}(x_A)\\[5pt]
&=\sum_{w\in \S_n} c_u^w(\q,\z, t)\cdot c_{\mathrm{SM}}^T(Y(w)^\circ).  
\end{align*}
By Lemma \ref{PP-1}, we obtain that
\begin{equation}\label{RRRW}
c_u^w(\q,\z, t) = \mathcal{T}_{w/ u} E(\q,\z,x_{A})\big|_{x_i= t_i}. 
\end{equation}
If $w=u$, then the skew operator $\mathcal{T}_{u/u}$ is nothing but $u$ itself, and so  
\[
c_u^u(\q,\z, t)=E(\q,\z, t_{uA})=\sum_{\alpha,\beta\geq 0} \z^{\alpha}\q^\beta \cdot s_{(1+\alpha,1^\beta)}(t_{uA}),
\]
which gives $c_{u,\Gamma}^u( t)=s_{(1+\alpha,1^\beta)}(t_{uA})$.
If $w\neq u$, then, by \eqref{RRRW} and Lemma \ref{NonimplieEquLemma}, we see that 
\[c_u^w(\q,\z, t)=f(q,z) \frac{\Q( t_{\Cminus_A(u,w)})}{\Z( t_{\Cplus_A(u,w)})},\]
where $f(q,z) \in \frac{1}{\q+\z}\mathbb{Z}[\q,\z].$
Setting all $t_i=0$ on both sides, we obtain that $c_u^w(\q,\z,0)=f(q,z)$, and hence,  
\begin{align}
c_u^w(\q,\z, t) &= c_u^w(\q,\z,0)\cdot  
\sum_{r,s\geq 0} \z^{r}\q^s \cdot h_r( t_{\Cplus_A(u, w)})\cdot e_s( t_{\Cminus_A(u, w)})\nonumber\\
&=\left(\sum_{\begin{subarray}{c}\alpha',\beta'\ge0\\ \Gamma=(1+\alpha',1^{\beta'})\end{subarray}} \z^{\alpha'}\q^{\beta'} \cdot c_{u,\Gamma}^w(0)\right)\cdot \sum_{r,s\geq 0} \z^r\q^s \cdot h_r( t_{\Cplus_A(u, w)})\cdot e_s( t_{\Cminus_A(u, w)}).\label{cuwtt}
\end{align}
Comparing  the coefficients of $\z^\alpha\q^\beta$  in \eqref{cuwt} and \eqref{cuwtt}, we are led to \eqref{X-1}.
\end{proof}

\section{Pieri Type Rules}\label{Sect4}

In this section, we will prove  Theorems \ref{LLLL-1} and \ref{LUUU-1}.
To this end, we first establish the expansion formula for multiplying  a nonequivariant CSM class  by a Schur polynomial of   hook shape, see Theorem \ref{NoneqPierirulehook}. Combining  Theorem \ref{NoneqPierirulehook} with the Rigidity Theorem \ref{noneqimplieEqhook} gives a proof  of Theorem  \ref{LLLL-1}. Based on Theorem  \ref{LLLL-1} and properties of double Schubert polynomials, we 
finish the proof of Theorem \ref{LUUU-1}.

\subsection{Nonequivariant Case}

We begin by proving   a Pieri formula  for nonequivariant CSM classes. 
The proof relies on the property of the action of the skew operators $\mathcal{T}_{w/u}$   on $e_{r}(x_{[k]})$ or $h_{r}(x_{[k]})$, which has been investigated  by Liu \cite{Liu}.

\begin{Th}[CSM Pieri formula]\label{CSMPieriRuleforeh}
Let $u\in \S_n$. 
We have the following identities in $H^\bullet(\Fl(n)):$
\begin{enumerate}[\rm(i)]
\item   \label{thm411}

For $r\geq 0$, 
\begin{equation}\label{CMSeP}
c_{\mathrm{SM}}(Y(u)^\circ)\cdot e_{r}(x_{[k]}) = \sum_{w\in \S_n} c_{\mathrm{SM}}(Y(w)^\circ),
\end{equation}
where the sum ranges over all $w\in \S_n$ such that there is path
\[
u\longrightarrow
ut_{a_1b_1}
\longrightarrow
ut_{a_1b_1}t_{a_2b_2}\longrightarrow\cdots\longrightarrow w=ut_{a_1b_1}\cdots t_{a_rb_r}
\]
 from $u$ to $w$ in the extended  $k$-Bruhat order and $a_1,a_2,\ldots,a_r$ are distinct.

\item  \label{thm412}
For $r\geq 0$, 
\begin{equation}\label{CMShP}
c_{\mathrm{SM}}(Y(u)^\circ)\cdot h_{r}(x_{[k]}) = \sum_{w\in \S_n} c_{\mathrm{SM}}(Y(w)^\circ),
\end{equation}
where the sum ranges over all $w$ as in \eqref{thm411}, except that now the 
integers $b_1,\ldots, b_r$ are distinct.
\end{enumerate}

\end{Th}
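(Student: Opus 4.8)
The plan is to prove Theorem~\ref{CSMPieriRuleforeh} by iterating the Chevalley-type recursion encoded in the Demazure--Lusztig operators $\mathcal{T}_i$, and then reading off the combinatorics via the skew operators $\mathcal{T}_{w/u}$ acting on $e_r(x_{[k]})$ and $h_r(x_{[k]})$. The starting point is the formula of Lemma~\ref{PP-1} (specialized to the nonequivariant setting, or its $t_i=0$ version): if $c_{\mathrm{SM}}(Y(u)^\circ)\cdot\alpha=\sum_w c^w_{u,\alpha}\cdot c_{\mathrm{SM}}(Y(w)^\circ)$, then $c^w_{u,\alpha}=\mathcal{T}_{w/u}(\alpha)\big|_{\mathrm{id}}$ with all $t_i$ set to $0$. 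So the entire problem reduces to computing $\mathcal{T}_{w/u}\big(e_r(x_{[k]})\big)$ and $\mathcal{T}_{w/u}\big(h_r(x_{[k]})\big)$, evaluated at $x_i=0$. Here I would invoke the work of Liu~\cite{Liu} on the action of skew operators on $e_r$ and $h_r$, as the paper itself signals. Concretely, one expands $\mathcal{T}_{w/u}=\sum_{J:\,w_J=u}\nabla_J$ as in~\eqref{eq:skewoperators}, and since each $\nabla_J$ is (up to moving the permutation factor $u$ to one side, using $v\partial_{ab}=\partial_{v(a)v(b)}v$) a composition $u\partial_{a_1b_1}\cdots\partial_{a_rb_r}$ with $u^{-1}w=t_{a_1b_1}\cdots t_{a_rb_r}$, one is left with understanding iterated Demazure operators applied to a single symmetric polynomial.

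The key computational input is the action of a single $\partial_{ab}$ on $e_r(x_{[k]})$ and $h_r(x_{[k]})$. Using the generating functions $\Q(x_{[k]})=\prod_{a\le k}(1+qx_a)$ and $1/\Z(x_{[k]})=\sum_r z^r h_r(x_{[k]})$ from Subsection~\ref{OI-1} together with Lemma~\ref{QVrelation1}, one sees that $\partial_{ab}$ applied to $e_r(x_{[k]})$ (the coefficient-extraction from $\Q$) produces $e_{r-1}$ of a modified variable set precisely when exactly one of $a,b$ lies in $[k]$ — this is the $k$-edge condition $a\le k<b$ — and that iterating $\partial_{a_1b_1}\cdots\partial_{a_rb_r}$ on $e_r(x_{[k]})$ lowers the degree by one at each genuine step, yielding a nonzero constant (equal to $1$, after setting $x_i=0$) exactly when each step is a $k$-edge and the "left endpoints'' $a_1,\dots,a_r$ of the $k$-edges are pairwise distinct (since the variable set $[k]$ loses the element $u(a_j)$ at step $j$, and a repeated left endpoint would force a $\partial_{ab}$ of something symmetric in $x_a,x_b$, killing it, cf.\ the argument in Lemma~\ref{nonfixinglemma}). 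The dual computation with $\Z$ gives the parallel statement for $h_r(x_{[k]})$ with the roles of "arm'' and "leg'' interchanged, so that distinctness is required on the right endpoints $b_1,\dots,b_r$ instead. Thus $\mathcal{T}_{w/u}(e_r(x_{[k]}))\big|_{\mathrm{id},\,t=0}$ counts the number of length-$r$ paths $u\to ut_{a_1b_1}\to\cdots\to w$ in the extended $k$-Bruhat order with $a_1,\dots,a_r$ distinct, and similarly for $h_r$ with $b_1,\dots,b_r$ distinct — which is exactly the claimed formula. One should also check the bookkeeping for the "diagonal'' term $w=u$: then $\mathcal{T}_{u/u}=u$ and the evaluation gives $e_r(0)=\delta_{r,0}$ (resp.\ $h_r(0)=\delta_{r,0}$), consistent with the empty path.

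I expect the main obstacle to be the precise combinatorial translation between "$\nabla_J$ is nonzero after evaluation'' and "there exists a path of $k$-edges with distinct left (resp.\ right) endpoints'', because a priori the expansion $\mathcal{T}_{w/u}=\sum_{J:w_J=u}\nabla_J$ ranges over subwords of a \emph{fixed reduced word} of $w$, whereas the target formula is phrased intrinsically in terms of paths in the $k$-Bruhat graph from $u$ to $w$; one must argue that these two indexings match up with multiplicity, i.e.\ that each path corresponds to exactly one surviving term. The cleanest route is probably to bypass the reduced-word expansion entirely: compute $c_{\mathrm{SM}}(Y(u)^\circ)\cdot e_r(x_{[k]})$ by the splitting principle, writing $e_r(x_{[k]})$ as an iterated sum/symmetrization and applying the Chevalley formula $r$ times (Theorem~\cite[Theorem 4.2]{MNS} for the divisor case, or rather its building blocks), then tracking cancellations; but since the statement is advertised as relying on Liu's skew-operator computation, I would instead lean on~\cite{Liu} for the single-operator action and assemble the iterated action by induction on $r$, carefully maintaining the invariant "after $j$ steps the relevant variable subset is $u[k]$ with $j$ of the $u(a_i)$'s removed'' to get distinctness for free. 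The sign-positivity (all coefficients equal to $+1$, no alternating signs) is automatic here precisely because $e_r$ and $h_r$ are the "pure hook ends'' where the generating-function identities produce monomial coefficients $\pm$ that collapse to $1$ after the $x_i=0$ evaluation — this is the nonequivariant simplification that makes the theorem clean, and the general hook-shape case is then recovered by combining this with the Rigidity Theorem~\ref{noneqimplieEqhook} as the paper indicates.
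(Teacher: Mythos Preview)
Your proposal is essentially correct and follows the same route as the paper: reduce via Lemma~\ref{PP-1} to computing $\mathcal{T}_{w/u}\big(e_r(x_{[k]})\big)\big|_{x_i=0}$, then invoke Liu's result on skew operators. The paper's proof is much terser because it cites \cite[Theorem~3.10]{Liu} as a black box: that theorem gives $u^{-1}\mathcal{T}_{w/u}\big(e_r(x_{[k]})\big)=e_{r-|A|}(x_{[k]\setminus A})$ when $w$ is ``legal'' (and $0$ otherwise), so the evaluation at $x_i=0$ is immediate and equals $\delta_{|A|,r}$.

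The obstacle you flag --- matching subwords $J$ of a fixed reduced word of $w$ with intrinsic paths in the $k$-Bruhat graph --- is a genuine issue if you try to compute $\sum_{J:w_J=u}\nabla_J(e_r)$ term by term via the $\Q,\Z$ machinery of Subsection~\ref{OI-1}. That machinery is what the paper uses for the Rigidity Theorem, \emph{not} for this proof; here Liu's theorem bypasses the multiplicity question entirely by producing a closed-form polynomial for the whole sum. In particular, your phrasing that the evaluation ``counts the number of length-$r$ paths'' is slightly off: Liu's theorem shows the value is exactly $e_{r-m}(0)=\delta_{m,r}\in\{0,1\}$, not a path-count, and the statement of the theorem is indexed by $w$ (existence of a path), not by paths. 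That these agree is a consequence of Liu's result, not something your term-by-term analysis would establish without extra work. If you lean on \cite{Liu} as you indicate, you are fine; if you attempt the inductive $\Q/\Z$ route instead, you would need to supply that multiplicity argument.
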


\begin{proof}
We only give a proof of \eqref{CMSeP}, and the arguments  for 
\eqref{CMShP} are similar. 
By Lemma \ref{PP-1}, we need to show that for $w\in \S_n$,
\begin{equation}\label{eq:erelation4}
 \mathcal{T}_{w/u}\big(e_r(x_{[k]})\big)\big|_{x_i=0}\neq 0   
\end{equation}
if and only if $w$ satisfies the  conditions in \eqref{thm411},
and in this case, the nonzero value in \eqref{eq:erelation4}  is exactly equal to $1$.

Set $\tilde{\partial}_{w/u}=u^{-1}\,\mathcal{T}_{w/u}$.
We say that $w$ is legal if 
 $w$   satisfies the conditions in \eqref{thm411}, except 
that we replace $r$ by any nonnegative  integer $m$. 
If $w$ is not legal,
then it follows from \cite[Theorem 3.10]{Liu} that 
$\tilde{\partial}_{w/u}\big(e_r(x_{[k]})\big)=0$.
If $w$   is legal, then \cite[Theorem 3.10]{Liu} gives 
\[\tilde{\partial}_{w/u}\big(e_r(x_{[k]})\big)=e_{r-|A|}(x_{[k]\setminus A}),\]
where $A=\{a_1,\ldots, a_m\}$. 
So, 
\[\mathcal{T}_{w/u}\big(e_r(x_{[k]})\big)=u\,\tilde{\partial}_{w/u}=e_{r-|A|}(x_{u([k]\setminus A)}),\]
implying that    
the value in \eqref{eq:erelation4} vanishes if $m\neq r$, and 
equals 1 if $m=r$. This completes the proof. 
\end{proof}

The  permutations appearing in Theorem \ref{CSMPieriRuleforeh} can be alternatively characterized  
in terms of  increasing or decreasing    paths in the extended 
$k$-Bruhat order.

\begin{Lemma}\label{erelation3}
We have the following  statements.
\begin{itemize}
\item[(1)] A permutation $w\in \S_n$ satisfies the conditions in 
\eqref{thm411} of Theorem \ref{CSMPieriRuleforeh} if and only if there exists a   decreasing  path of length $r$ from $u$ to $w$ in the extended $k$-Bruhat order. Moreover, the decreasing path is unique. 

\vspace{5pt}
  
 \item[(2)]    
 A permutation $w\in \S_n$ satisfies the conditions in 
\eqref{thm412} of Theorem \ref{CSMPieriRuleforeh} if and only if there exists an  increasing  path  of length $r$ from $u$ to $w$ in the extended $k$-Bruhat order.  Moreover, the increasing path is unique. 
\end{itemize}

\end{Lemma}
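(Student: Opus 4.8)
The plan is to prove the two characterizations by translating the chain conditions of Theorem \ref{CSMPieriRuleforeh} into a greedy construction of the path labels $\tau_i$, establishing existence and uniqueness simultaneously. First I would set up notation: write $w = u t_{a_1 b_1}\cdots t_{a_r b_r}$ as in \eqref{thm411}, so each intermediate permutation $w_i = u t_{a_1 b_1}\cdots t_{a_i b_i}$ satisfies $w_{i-1} \longrightarrow w_i$ in the extended $k$-Bruhat order via a $k$-edge ($a_i \le k < b_i$), and the edge label is $\tau_i = w_{i-1}(a_i)$. The key observation for part (1) is that each transposition $t_{a_i b_i}$ acts on the \emph{position} $a_i$, which lies in $[k]$, and distinctness of the $a_i$ means these positions are never revisited; whereas for part (2) the condition is that the values $b_i$ (lying outside $[k]$) are distinct, which by a symmetric argument forces the $\tau_i$ to be monotone in the opposite direction.

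The main work is the following bookkeeping lemma, which I would prove by tracking how the one-line notation of $w_{i-1}$ evolves. Suppose $a_1,\dots,a_r \in [k]$ are distinct. When we apply $t_{a_i b_i}$ to $w_{i-1}$, the value $\tau_i = w_{i-1}(a_i)$ moves from position $a_i$ (inside $[k]$) to position $b_i$ (outside $[k]$), and since $a_i$ is never used again as a source position, $\tau_i$ stays outside $[k]$ thereafter --- it can only be moved by a later transposition $t_{a_j b_j}$ with $b_j = $ (current position of $\tau_i$), which would move it to position $a_j \le k$, contradicting that it is outside $[k]$ and never returns. So I would argue that the value $\tau_i$ sits at position $a_i$ in $w_{i-1}$, and one checks that the edge-ascending condition $w_{i-1}(a_i) < w_{i-1}(b_i)$ together with the requirement that the path stays in the $k$-Bruhat graph, combined with distinctness of sources, pins down $\tau_1 > \tau_2 > \cdots > \tau_r$. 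Conversely, given a decreasing path of length $r$ from $u$, I would read off the sequence of transpositions and verify that the source positions $a_i$ must be distinct: if $a_i = a_j$ for $i<j$ with $j$ minimal, then $\tau_j = w_{j-1}(a_j) = w_{j-1}(a_i)$, but after step $i$ the position $a_i$ holds the value $w_{i-1}(b_i) > w_{i-1}(a_i) = \tau_i$, and since nothing with source $a_i$ intervenes, $w_{j-1}(a_i) \ge w_i(a_i) > \tau_i$ after accounting for any later transpositions targeting $a_i$ --- one shows $\tau_j > \tau_i$, contradicting the decreasing hypothesis.

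For uniqueness, the point is that a decreasing path is determined by its set of labels: given $w$ with a decreasing path $u \stackrel{\tau_1}\to \cdots \stackrel{\tau_r}\to w$, the transposition at each step is forced because $\tau_i$ must be the \emph{largest} value among $\{w_{i-1}(a) : a \le k\}$ that can still be ``pushed out'' to reach $w$ --- more precisely, I would argue that $\Sigma_k(u,w) \setminus \Delta_k(u,w)$ (the set of values $w_{i-1}(a_i)$ that get displaced from $[k]$) is determined by $u$ and $w$, namely it equals $u[k]\setminus w[k]$ in the relevant sense, and the decreasing order on these labels then uniquely orders the steps. Part (2) is entirely parallel with the roles of sources/positions inside $[k]$ replaced by targets/values outside $[k]$, and increasing in place of decreasing; I would simply remark that the argument is obtained by the evident symmetry (or by passing to a dual statement about $w^{-1}$ or the complementary index set) rather than repeating it. The main obstacle I anticipate is getting the monotonicity direction correct and rigorously justifying that the displaced values never re-enter the window $[k]$ --- this requires a careful induction on the step index tracking positions of individual values, and it is easy to conflate ``distinct sources'' with ``distinct labels,'' so I would be careful to prove the equivalence rather than assume it.
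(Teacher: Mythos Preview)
Your converse direction (a decreasing path has distinct sources $a_i$) and your uniqueness sketch are on the right track and close to the paper's argument. The genuine gap is in the forward direction: you assert that distinctness of the $a_i$ ``pins down $\tau_1 > \tau_2 > \cdots > \tau_r$'' for the \emph{given} path, and this is false. Take $n=4$, $k=2$, $u=1234$ and the path
\[
1234 \ \stackrel{1}{\longrightarrow}\ 3214 \ \stackrel{2}{\longrightarrow}\ 3412,
\]
which has distinct sources $a_1=1$, $a_2=2$ but \emph{increasing} labels $\tau_1=1<\tau_2=2$. Your tracking argument correctly shows $\tau_i = u(a_i)$ when the $a_i$ are distinct (since position $a_i$ is untouched before step $i$), but nothing forces $u(a_1),\ldots,u(a_r)$ to be decreasing in the order the transpositions were applied. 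The claim that the displaced value $\tau_i$ ``never returns'' to $[k]$ is also not true in general and in any case does not yield monotonicity of the labels.

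What is actually needed is a \emph{reordering} step. The paper proves a local commutation claim: if consecutive steps
\[
v \ \stackrel{\tau}{\longrightarrow}\ v t_{a_1 b_1}\ \stackrel{\sigma}{\longrightarrow}\ v t_{a_1 b_1} t_{a_2 b_2}
\]
satisfy $a_1\neq a_2$ and $\tau<\sigma$, then one checks $b_1\neq b_2$ (since $\tau = vt_{a_1b_1}(b_1)$ while $\sigma = vt_{a_1b_1}(a_2) < vt_{a_1b_1}(b_2)$), so $t_{a_1b_1}$ and $t_{a_2b_2}$ commute and the two steps may be swapped to $v \stackrel{\sigma}{\to} v t_{a_2b_2} \stackrel{\tau}{\to} v t_{a_2b_2} t_{a_1b_1}$, still a valid path in the extended $k$-Bruhat order with the same endpoints. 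Iterating this bubble-sort produces a decreasing path. Your ``greedy construction'' instinct could be salvaged, but it must \emph{build a new path} (e.g., at each stage choose the source with the largest value $u(a)$ among those with $u(a)<w(a)$) rather than analyze the labels of the original one; as written, the proposal conflates ``there exists a decreasing path from $u$ to $w$'' with ``the given path is decreasing''.
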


\begin{proof}
We only give a proof of the  statement in (1), and a similar analysis applies to the statement in (2). 
Let us first verify  the necessity.  Suppose that 
$w$ satisfies the conditions in 
\eqref{thm411} of Theorem \ref{CSMPieriRuleforeh}. 
The existence of a decreasing path is implied by the following easily checked  claim.

\begin{Claim} 
Assume that there is a path
\[
v
\stackrel{\tau}\longrightarrow
vt_{a_1b_1}
\stackrel{\sigma}\longrightarrow
vt_{a_1b_1}t_{a_2b_2}=v'
\]
from $v$ to $v'$ in the extended $k$-Bruhat order
with $a_1\neq a_2$ and $\tau<\sigma$. Then there is
an alternative  path from $v$ to $v'$:
\[
v 
\stackrel{\sigma}\longrightarrow
vt_{a_2b_2}
\stackrel{\tau}\longrightarrow
vt_{a_2b_2}t_{a_1b_1}=v'.
\]
\end{Claim}
In fact, 
since $$\tau=v(a_1)=vt_{a_1b_1}(b_1)<\sigma = vt_{a_1b_1}(a_2)<vt_{a_1b_1}(b_2),$$
we have $b_1\neq b_2$. So, $t_{a_1b_1}$ and $t_{a_2b_2}$ commute, and the claim follows. 
Given $u$ and $w$ as in \eqref{thm411} of Theorem  \ref{CSMPieriRuleforeh}, we have a
path 
\[
u
\stackrel{\sigma_1}\longrightarrow ut_{a_1b_1} 
\stackrel{\sigma_2}\longrightarrow\cdots 
\stackrel{\sigma_r}\longrightarrow ut_{a_1b_1}\cdots  t_{a_rb_r}=w
\]
from  $u$ to $w$
in the extended $k$-Bruhat order.
If this path is not decreasing, we can iterate the above Claim and eventually obtain a decreasing path 
of length $r$
from $u$ to $w$  in the extended $k$-Bruhat order.

We  next
prove the sufficiency.
Let 
\[
\gamma'\colon u=w_0'
\stackrel{\tau_1'}\longrightarrow w_1' 
\stackrel{\tau_2'}\longrightarrow\cdots 
\stackrel{\tau_r'}\longrightarrow w_{r}'=w
\]
be a decreasing path 
of length $r$
from $u$ to $w$  in the extended $k$-Bruhat order. 
Assume that for $1\leq i\leq r$, 
\[w_i'=ut_{a_1' b_1'}\cdots t_{a_i' b_i'}.\]
To conclude the sufficiency, we show that the integers 
$a_1', a_2', \ldots, a_r'$ are distinct. We first explain that $a_1'\neq a_2'$. Notice that $\tau_1'=u(a_1')<w_1'(a_1')$
and $w_1'(j)=u(j)$ for $j\in [k]\setminus \{a_1'\}$. 
Since $w_1'(a_2')=\tau_2'<\tau_1'$, we have $ w_1'(a_2')< w_1'(a_1')$. This implies that $a_2'\neq a_1'$. Moreover,  we see that 
\[w_2'(a_1')=w_1'(a_1')>u(a_1'),\  w_2'(a_2')=w_1'(b_2')>w_1'(a_2')=u(a_2'),\]
and 
 $w_2'(j)=u(j)$ for $j\in [k]\setminus \{a_1', a_2'\}$.

We proceed to check that $a_3'$ is different from $a_1'$ and $a_2'$.  Since $w_2'(a_1')>u(a_1')=\tau_1'$, $w_2'(a_2')>u(a_2')=\tau_2'$ and $w_2'(a_3')=\tau_3'<\tau_2'<\tau_1'$, we obtain that $w_2'(a_3')$ is smaller than $w_2'(a_1')$ and $w_2'(a_2')$, and so $a_3'\neq a_1', a_2'$. Moreover, we have 
\[w_3'(a_1')=w_2'(a_1')>u(a_1'),\  w_3'(a_2')=w_2'(a_2')=w_1'(b_2')>u(a_2'),\  w_3'(a_3')>u(a_3'),\]
and  $w_3'(j)=u(j)$ for $j\in [k]\setminus \{a_1', a_2', a_3'\}$.
Using a similar analysis, we can deduce  that for $i=4,\ldots, r$, $a_i'$ is 
different from $a_1',\ldots,  a_{i-1}'$, and this verifies  the sufficiency.

Finally, we show that the decreasing path $\gamma'$ is unique. Since $a_1',\ldots,  a_{r}'$ are distinct, we know that for $1\leq i\leq r$,
$\tau_i=u(a_i')$,
or equivalently, $a_i'=u^{-1}(\tau_i)$. 
On the other hand, since $u(b'_1)=w_1'(a_1')=w(a_1')$, we see that $b_1'$ is uniquely located. Similarly, since $w_1'(b'_2)=w_2'(a_2')=w(a_2')$ and $w_1'=ut_{a_1'b_1'}$, we see that $b_2'$ is uniquely located. Iterating the same process, we can eventually obtain that $b_1',b_2',\ldots, b_r'$ are  uniquely determined. This verifies the uniqueness.
\end{proof}

\begin{Rmk}\label{REM-1}
Let $u, w\in \S_n$. Suppose that there is a (unique) decreasing (resp., increasing) path $\gamma$
from $u$ to $w$ in the extended $k$-Bruhat order. By the proof of Lemma \ref{erelation3}, the $a_i$'s (resp., $b_i$'s) are distinct, thus the length of $\gamma$ is equal to
\[\texttt{\#}\, [k]\cap M(u^{-1}w) \ \ \text{(resp., $\texttt{\#}\, ([n]\setminus [k])\cap M(u^{-1}w)$)}.\]
\end{Rmk}

 For a path $\gamma$ in the extended $k$-Bruhat order, we  use  $\mathrm{end}(\gamma)$ to denote the endpoint permutation  of 
 $\gamma$.
By Lemma \ref{erelation3}, we can reformulate  Theorem \ref{CSMPieriRuleforeh}   as follows.

\begin{Coro}[CSM Pieri Formula]\label{REW}
Let $u\in \S_n$. 
We have the following identities in $H^\bullet(\Fl(n))$:
\begin{enumerate}[\rm(i)]
\item \label{xjlabl}
For $r\geq 0$, 
\begin{equation}\label{CMSePpath}
c_{\mathrm{SM}}(Y(u)^\circ)\cdot e_{r}(x_{[k]}) = \sum_{\gamma} c_{\mathrm{SM}}(Y(\mathrm{end}(\gamma))^\circ),
\end{equation}
where the sum ranges  over all   decreasing paths of length $r$ starting at $u$ 
in the extended $k$-Bruhat order. 

\item \label{xjlabl2}
For $r\geq 0$, 
\begin{equation}\label{CMShPpath}
c_{\mathrm{SM}}(Y(u)^\circ)\cdot h_{r}(x_{[k]}) = \sum_{\gamma} c_{\mathrm{SM}}(Y(\mathrm{end}(\gamma))^\circ),
\end{equation}
where the sum ranges  over all   increasing paths of length $r$ starting at $u$ 
in the extended $k$-Bruhat order.
\end{enumerate}
\end{Coro}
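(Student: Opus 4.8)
The plan is to obtain Corollary \ref{REW} as a direct reindexing of Theorem \ref{CSMPieriRuleforeh} through the path characterization supplied by Lemma \ref{erelation3}, so that no new computation is needed. First I would recall that Theorem \ref{CSMPieriRuleforeh}\eqref{thm411} writes $c_{\mathrm{SM}}(Y(u)^\circ)\cdot e_r(x_{[k]})$ as $\sum_w c_{\mathrm{SM}}(Y(w)^\circ)$, where $w$ ranges over exactly those permutations admitting a chain $u\to ut_{a_1b_1}\to\cdots\to w=ut_{a_1b_1}\cdots t_{a_rb_r}$ of $k$-edges in the extended $k$-Bruhat order with $a_1,\dots,a_r$ pairwise distinct, and where each such $w$ occurs with coefficient exactly $1$ (the multiplicity-one statement being part of the proof of that theorem, via $\mathcal{T}_{w/u}(e_r)|_{x_i=0}=1$). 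Then I would invoke Lemma \ref{erelation3}(1): a permutation $w$ satisfies those conditions if and only if there is a decreasing path of length $r$ from $u$ to $w$ in the extended $k$-Bruhat order, and moreover such a decreasing path is unique.

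The key observation is that these two facts together say precisely that $\gamma\mapsto\mathrm{end}(\gamma)$ is a bijection from the set of decreasing paths of length $r$ starting at $u$ onto the index set of Theorem \ref{CSMPieriRuleforeh}\eqref{thm411}: surjectivity is the "if" direction of Lemma \ref{erelation3}(1), injectivity is its uniqueness clause. Pushing the sum forward along this bijection turns $\sum_w c_{\mathrm{SM}}(Y(w)^\circ)$ into $\sum_\gamma c_{\mathrm{SM}}(Y(\mathrm{end}(\gamma))^\circ)$ with neither a lost nor a duplicated term, which is exactly \eqref{CMSePpath}. Identity \eqref{CMShPpath} is proved verbatim, replacing Theorem \ref{CSMPieriRuleforeh}\eqref{thm411} by \eqref{thm412}, distinctness of the $a_i$ by distinctness of the $b_i$, and "decreasing" by "increasing" via Lemma \ref{erelation3}(2).

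There is essentially no obstacle here beyond bookkeeping; the one point that requires care is that the coefficients in Theorem \ref{CSMPieriRuleforeh} are all equal to $1$, so that after reindexing we genuinely get a sum over paths with unit coefficients rather than a weighted count — if a given endpoint $w$ could be reached by several decreasing paths of length $r$, one would have to record that multiplicity, but the uniqueness half of Lemma \ref{erelation3} rules this out. Hence the only thing to state explicitly in the write-up is the bijection and the matching of coefficients, and the corollary follows.
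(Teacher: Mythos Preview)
Your proposal is correct and follows exactly the route the paper takes: the paper simply states that Corollary~\ref{REW} is the reformulation of Theorem~\ref{CSMPieriRuleforeh} obtained via Lemma~\ref{erelation3}, and your write-up spells out precisely this reindexing, including the point that uniqueness of the decreasing/increasing path ensures the coefficients remain~$1$.
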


We can now exhibit the expansion formula   for the multiplication of a CSM class by a Schur polynomial  of hook shape. 

\begin{Th}\label{NoneqPierirulehook}
For a hook shape partition $\Gamma=(\alpha+1,1^{\beta})$, we have the following identity in $H^\bullet(\Fl(n)):$
\begin{equation}\label{eq:CSMhook}
c_{\mathrm{SM}}(Y(u)^\circ)\cdot s_{\Gamma}(x_{[k]}) = \sum_{\gamma} c_{\mathrm{SM}}(Y(\mathrm{end}(\gamma))^\circ),
\end{equation}
where the sum runs  over all peakless paths $\gamma$  starting at  $u$   in the extended $k$-Bruhat order with $\mathrm{in}(\gamma)=\alpha$ and $\mathrm{de}(\gamma)=\beta$. 
\end{Th}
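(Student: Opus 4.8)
The plan is to reduce the hook-shape statement to the already-established single-row and single-column cases (Corollary \ref{REW}) by way of the generating function identity in Lemma \ref{OWW-1}. Recall that $E(\q,\z,x_{[k]}) = \frac{1}{\q+\z}\bigl(\tfrac{\Q(x_{[k]})}{\Z(x_{[k]})}-1\bigr) = \sum_{\alpha,\beta\geq 0}\z^\alpha\q^\beta s_{(1+\alpha,1^\beta)}(x_{[k]})$, and that $\Q(x_{[k]})/\Z(x_{[k]}) = \bigl(\sum_s \q^s e_s(x_{[k]})\bigr)\bigl(\sum_r \z^r h_r(x_{[k]})\bigr)$. So multiplying $c_{\mathrm{SM}}(Y(u)^\circ)$ by $E(\q,\z,x_{[k]})$ amounts, up to the harmless scalar factor $\frac{1}{\q+\z}$ and the subtraction of the constant term, to first multiplying by $e_s(x_{[k]})$ and then by $h_r(x_{[k]})$ — two operations whose effect on CSM classes is governed exactly by Corollary \ref{REW}.

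First I would apply Corollary \ref{REW}\eqref{xjlabl}: $c_{\mathrm{SM}}(Y(u)^\circ)\cdot e_s(x_{[k]}) = \sum_{\delta} c_{\mathrm{SM}}(Y(\mathrm{end}(\delta))^\circ)$ over decreasing paths $\delta$ of length $s$ starting at $u$. Then to each resulting endpoint I apply Corollary \ref{REW}\eqref{xjlabl2} to multiply by $h_r(x_{[k]})$, producing an increasing path $\iota$ of length $r$ starting where $\delta$ ended. Concatenating $\delta$ (decreasing, the labels strictly decreasing) followed by $\iota$ (increasing, labels strictly increasing) yields a path whose label sequence first strictly decreases and then strictly increases — that is, a peakless path $\gamma$ with $\mathrm{de}(\gamma) = s-1$ and $\mathrm{in}(\gamma) = r-1$ (using the convention that a single edge counts as both the decreasing and the increasing segment at the "peak"; one must be careful with the boundary bookkeeping when $r=0$ or $s=0$, which is where the $s_{(r,1^{-1})}=s_{(0,1^s)}=0$ conventions in Lemma \ref{OWW-1} come into play). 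The key combinatorial point I must verify is that this concatenation operation is a \emph{bijection}: every peakless path $\gamma$ starting at $u$ factors uniquely as a decreasing initial segment followed by an increasing final segment through the well-defined "valley" vertex, and conversely every such concatenation is again a legitimate path in the extended $k$-Bruhat order. Uniqueness of the decreasing/increasing pieces is already supplied by Lemma \ref{erelation3}; what remains is to check that the factorization of a peakless path through its minimum-label vertex recovers exactly those two pieces, and that no collisions occur among the intermediate labels (the $a_i$'s in the decreasing half and the $b_i$'s in the increasing half live in disjoint index ranges relative to $k$, cf. Remark \ref{REM-1}, so the concatenation is automatically a valid single path).

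Concretely, I would extract the coefficient of $\z^\alpha\q^\beta$ on both sides of $c_{\mathrm{SM}}(Y(u)^\circ)\cdot E(\q,\z,x_{[k]}) = \sum_w \bigl(\text{something}\bigr) c_{\mathrm{SM}}(Y(w)^\circ)$. On the left, using Lemma \ref{OWW-1}, the coefficient is $c_{\mathrm{SM}}(Y(u)^\circ)\cdot s_{(1+\alpha,1^\beta)}(x_{[k]})$. On the right, expanding $E$ as $\frac{1}{\q+\z}\sum_{r,s\ge 0}\z^r\q^s\bigl(h_r e_s\bigr) - \frac{1}{\q+\z}$, applying the two halves of Corollary \ref{REW} and then the classical Pieri rule repackaging from the proof of Lemma \ref{OWW-1} (which converts $\sum \z^r\q^s h_r e_s$ divided by $\q+\z$ into $\sum \z^\alpha\q^\beta$ indexed by hooks), the coefficient of $\z^\alpha\q^\beta$ becomes $\sum_\gamma c_{\mathrm{SM}}(Y(\mathrm{end}(\gamma))^\circ)$ summed over peakless paths with $\mathrm{in}(\gamma)=\alpha$, $\mathrm{de}(\gamma)=\beta$. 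Matching the two gives \eqref{eq:CSMhook}.

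The main obstacle I anticipate is purely bookkeeping at the boundary: making the "$\mathrm{in}(\gamma)=\alpha$, $\mathrm{de}(\gamma)=\beta$" accounting line up precisely with the exponents produced by the $\frac{1}{\q+\z}$ denominator and the Pieri-rule telescoping of Lemma \ref{OWW-1}, especially the edge cases $\alpha=0$ (pure column, decreasing path) and $\beta=0$ (pure row, increasing path) where the "peak" edge is shared and one of the two segments is empty. Getting the off-by-one conventions for $\mathrm{in}$ and $\mathrm{de}$ (defined as \emph{one less than} the length of the respective monotone segment) consistent with "length $r$" and "length $s$" in Corollary \ref{REW} is the only genuinely delicate part; the rest is a mechanical coefficient comparison once the concatenation bijection is in hand.
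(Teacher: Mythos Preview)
Your generating-function route is close to correct, but it contains a real error at the combinatorial heart of the argument. You assert that concatenation of a decreasing path $\delta$ of length $s$ with an increasing path $\iota$ of length $r$ gives a \emph{bijection} onto peakless paths, with $\mathrm{de}(\gamma)=s-1$ and $\mathrm{in}(\gamma)=r-1$. Neither claim is right. What actually happens is that each peakless path $\gamma$ of positive length arises from \emph{exactly two} such concatenations, because the valley edge can sit in either the decreasing or the increasing segment; correspondingly the pair $(\mathrm{de}(\gamma),\mathrm{in}(\gamma))$ is either $(s,r-1)$ or $(s-1,r)$, not $(s-1,r-1)$. The ingredient you are missing, and which the paper supplies explicitly, is that the junction labels are never equal: after the decreasing part one has $\tau_s=w_s(b_s)$ with $b_s>k$, while the first label of the increasing part is $\tau_{s+1}=w_s(a_{s+1})$ with $a_{s+1}\le k$, so $\tau_s\neq\tau_{s+1}$. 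Once you have this, the contribution of a peakless $\gamma$ with $(\mathrm{de},\mathrm{in})=(d,i)$ to $\sum_{r,s}\q^s\z^r e_s h_r$ is $\q^d\z^{i+1}+\q^{d+1}\z^i=(\q+\z)\q^d\z^i$, and \emph{this} is the path-level fact that lets you divide by $\q+\z$ in $E$; invoking Lemma~\ref{OWW-1} alone does not do it for you, since that lemma is an identity of symmetric polynomials, not of paths.

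With that correction your argument goes through and is essentially a generating-function repackaging of the paper's proof. The paper instead argues by induction on $\beta$: it computes $c_{\mathrm{SM}}(Y(u)^\circ)\cdot e_\beta(x_{[k]})\cdot h_{\alpha+1}(x_{[k]})$ via Corollary~\ref{REW} as a sum over the same concatenations, uses the $\tau_\beta\neq\tau_{\beta+1}$ observation to split into the two cases $\tau_\beta\lessgtr\tau_{\beta+1}$, identifies the case $\tau_\beta<\tau_{\beta+1}$ with $c_{\mathrm{SM}}(Y(u)^\circ)\cdot s_{(2+\alpha,1^{\beta-1})}(x_{[k]})$ by the inductive hypothesis, and subtracts using $h_{\alpha+1}e_\beta=s_{(1+\alpha,1^\beta)}+s_{(2+\alpha,1^{\beta-1})}$. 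Your approach does all hooks simultaneously, while the paper's peels off one at a time; both rest on the same two-to-one splitting.
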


\begin{proof}
We make induction on $\beta$.
The case $\beta=0$ is nothing but \eqref{xjlabl2} of Corollary \ref{REW}. 
Now we  assume that $\beta\geq 1$.
By Corollary \ref{REW}, we see that
\[
c_{\mathrm{SM}}(Y(u)^\circ)\cdot e_\beta(x_{[k]})\cdot h_{\alpha+1}(x_{[k]}) = \sum_{\gamma'} c_{\mathrm{SM}}(Y(w)^\circ),
\]
where the sum is taken over all paths 
\[
\gamma'\colon u =w_0
\stackrel{\tau_1}\longrightarrow w_1 
\longrightarrow\cdots 
\stackrel{\tau_{\alpha+\beta+1}}\longrightarrow w_{\alpha+\beta+1}=w
\]
 in the extended $k$-Bruhat order such that 
\[
\tau_1> \cdots >\tau_{\beta} \ \ \text{and} \ \ \tau_{\beta+1}<\cdots < \tau_{\alpha+\beta+1}.
\]
Since $\tau_\beta=w_\beta(i)$ 
for some $i>k$ and $\tau_{\beta+1}=w_\beta(j)$ for some $j\leq k$, we have $\tau_\beta\neq \tau_{\beta+1}$.
Thus  either $\tau_\beta>\tau_{\beta+1}$ or $\tau_\beta<\tau_{\beta+1}$.

If $\tau_\beta<\tau_{\beta+1}$, then $\gamma'$ is exactly a peakless path with $\mathrm{in}(\gamma')=\alpha+1$ and $\mathrm{de}(\gamma')=\beta-1$. On the other hand, by induction, 
\[
c_{\mathrm{SM}}(Y(u)^\circ)\cdot {s_{(2+\alpha,1^{\beta-1})}}(x_{[k]})
= \sum_{\gamma''} c_{\mathrm{SM}}(Y(\mathrm{end}(\gamma''))^\circ),
\]
where the sum is taken  over all peakless paths $\gamma''$ with $\mathrm{in}(\gamma'')=\alpha+1$ and $\mathrm{de}(\gamma'')=\beta-1$. 
Therefore,  
\begin{align*}
&c_{\mathrm{SM}}(Y(u)^\circ)\cdot h_{\alpha+1}(x_{[k]})\cdot e_\beta(x_{[k]})
- c_{\mathrm{SM}}(Y(u)^\circ)\cdot {s_{(2+\alpha,1^{\beta-1})}}(x_{[k]})\\[5pt]
&=c_{\mathrm{SM}}(Y(u)^\circ)\cdot ( h_{\alpha+1}(x_{[k]})\cdot e_\beta(x_{[k]}) 
-  {s_{(2+\alpha,1^{\beta-1})}}(x_{[k]}))\\[5pt]
&=\sum_{\gamma} c_{\mathrm{SM}}(Y(\mathrm{end}(\gamma))^\circ),
\end{align*}
where the sum runs  over all peakless paths $\gamma$  starting at  $u$   in the extended $k$-Bruhat order with $\mathrm{in}(\gamma)=\alpha$ and $\mathrm{de}(\gamma)=\beta$. 
We arrive at  \eqref{eq:CSMhook} by noticing the identity 
\[h_{\alpha+1}(x_{[k]})\cdot e_\beta(x_{[k]})=s_{(1+\alpha,1^{\beta})}(x_{[k]})
+{s_{(2+\alpha,1^{\beta-1})}}(x_{[k]}),\]
which  follows from  the classical Pieri rule for Schur polynomials.
\end{proof}

We may alternatively use $h_{\alpha+1}(x_{[k]})\cdot e_\beta(x_{[k]})$ instead of $e_\beta(x_{[k]})\cdot h_{\alpha+1}(x_{[k]})$ in the proof of Theorem \ref{NoneqPierirulehook}, which leads to a dual version of Theorem \ref{NoneqPierirulehook}.
We say that a path
\[
\gamma\colon  u =w_0
\stackrel{\tau_1}\longrightarrow w_1 
\stackrel{\tau_2}\longrightarrow\cdots 
\stackrel{\tau_m}\longrightarrow w_{m}=w
\]
in the extended $k$-Bruhat order is {\it unimodal} if 
$\tau_1< \cdots <\tau_{i}>\cdots > \tau_{m}$  for some $1\leq i\leq m$. Analogously, we use  $\mathrm{in}(\gamma)=i-1$  (resp., $\mathrm{de}(\gamma)=m-i$) to denote one less than the length of the increasing  (resp., decreasing) segment of $\gamma$.

\begin{Th}\label{NoneqPierirulehook-R}
For a hook shape partition $\Gamma=(1+\alpha,1^{\beta})$, we have the following identity in $H^\bullet(\Fl(n))$:
\[
c_{\mathrm{SM}}(Y(u)^\circ)\cdot s_{\Gamma}(x_{[k]}) = \sum_{\gamma} c_{\mathrm{SM}}(Y(\mathrm{end}(\gamma))^\circ),
\]
where the sum runs  over all unimodal  paths $\gamma$ starting at $u$  in the extended $k$-Bruhat order with $\mathrm{in}(\gamma)=\alpha$ and $\mathrm{de}(\gamma)=\beta$.
\end{Th}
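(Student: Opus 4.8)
The plan is to follow the proof of Theorem~\ref{NoneqPierirulehook} line by line, but to expand the product in the opposite order, i.e.\ to work with $c_{\mathrm{SM}}(Y(u)^\circ)\cdot h_{\alpha+1}(x_{[k]})\cdot e_{\beta}(x_{[k]})$ in place of $c_{\mathrm{SM}}(Y(u)^\circ)\cdot e_{\beta}(x_{[k]})\cdot h_{\alpha+1}(x_{[k]})$, and to induct on $\beta$. When $\beta=0$ one has $s_{(1+\alpha)}(x_{[k]})=h_{\alpha+1}(x_{[k]})$, so the asserted identity is exactly \eqref{xjlabl2} of Corollary~\ref{REW}: an increasing path of length $\alpha+1$ starting at $u$ is the same thing as a unimodal path $\gamma$ starting at $u$ with $\mathrm{in}(\gamma)=\alpha$ and $\mathrm{de}(\gamma)=0$.

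For the inductive step assume $\beta\ge 1$. First apply \eqref{xjlabl2} of Corollary~\ref{REW} to the factor $h_{\alpha+1}(x_{[k]})$, then \eqref{xjlabl} of Corollary~\ref{REW} to the factor $e_{\beta}(x_{[k]})$. Since, by Lemma~\ref{erelation3}, between any two vertices the increasing path of a prescribed length and the decreasing path of a prescribed length are each unique, concatenation is a multiplicity-free bijection and one obtains
\[
c_{\mathrm{SM}}(Y(u)^\circ)\cdot h_{\alpha+1}(x_{[k]})\cdot e_{\beta}(x_{[k]})
=\sum_{\gamma'} c_{\mathrm{SM}}(Y(\mathrm{end}(\gamma'))^\circ),
\]
the sum over all length-$(\alpha+\beta+1)$ paths $\gamma'$ starting at $u$ whose first $\alpha+1$ edges satisfy $\tau_1<\cdots<\tau_{\alpha+1}$ and whose last $\beta$ edges satisfy $\tau_{\alpha+2}>\cdots>\tau_{\alpha+\beta+1}$. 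The crucial local observation, dual to the one in the proof of Theorem~\ref{NoneqPierirulehook}, is that $\tau_{\alpha+1}\ne\tau_{\alpha+2}$: every edge in the extended $k$-Bruhat graph is a $k$-edge, so the $(\alpha+1)$-st edge carries the value $\tau_{\alpha+1}$ to a position $>k$ of $w_{\alpha+1}$, whence $\tau_{\alpha+1}=w_{\alpha+1}(i)$ for some $i>k$, while $\tau_{\alpha+2}=w_{\alpha+1}(j)$ for some $j\le k$, and a permutation is injective. Consequently each such $\gamma'$ is unimodal: if $\tau_{\alpha+1}>\tau_{\alpha+2}$ its peak is at position $\alpha+1$, so $\mathrm{in}(\gamma')=\alpha$ and $\mathrm{de}(\gamma')=\beta$; if $\tau_{\alpha+1}<\tau_{\alpha+2}$ its peak is at position $\alpha+2$, so $\mathrm{in}(\gamma')=\alpha+1$ and $\mathrm{de}(\gamma')=\beta-1$. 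Conversely, cutting a unimodal path after its $(\alpha+1)$-st edge shows that every unimodal path with $(\mathrm{in},\mathrm{de})$ equal to $(\alpha,\beta)$ or to $(\alpha+1,\beta-1)$ arises from exactly one $\gamma'$.

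Hence the right-hand side above equals $\sum_{\gamma} c_{\mathrm{SM}}(Y(\mathrm{end}(\gamma))^\circ)$ over unimodal $\gamma$ from $u$ with $\mathrm{in}(\gamma)=\alpha$ and $\mathrm{de}(\gamma)=\beta$, plus the analogous sum over unimodal $\gamma$ with $\mathrm{in}(\gamma)=\alpha+1$ and $\mathrm{de}(\gamma)=\beta-1$; by the induction hypothesis the latter sum is $c_{\mathrm{SM}}(Y(u)^\circ)\cdot s_{(2+\alpha,1^{\beta-1})}(x_{[k]})$. Subtracting it from both sides and invoking the classical Pieri rule in the form $h_{\alpha+1}(x_{[k]})\,e_{\beta}(x_{[k]})=s_{(1+\alpha,1^{\beta})}(x_{[k]})+s_{(2+\alpha,1^{\beta-1})}(x_{[k]})$ yields the claimed expansion of $c_{\mathrm{SM}}(Y(u)^\circ)\cdot s_{(1+\alpha,1^{\beta})}(x_{[k]})$.

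The only part that needs genuine care --- and it is entirely parallel to the corresponding point in the proof of Theorem~\ref{NoneqPierirulehook} --- is the combinatorial bookkeeping in the second paragraph: checking that ``cut after the $(\alpha+1)$-st edge'' really is a bijection onto the two families of unimodal paths, using the uniqueness in Lemma~\ref{erelation3} to rule out multiplicities, and that $\tau_{\alpha+1}\ne\tau_{\alpha+2}$ forces each path produced by the product into precisely one of the two unimodal shapes. I do not foresee any obstacle beyond this.
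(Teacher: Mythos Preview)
Your proposal is correct and follows exactly the approach the paper indicates: dualize the proof of Theorem~\ref{NoneqPierirulehook} by expanding $h_{\alpha+1}(x_{[k]})\cdot e_{\beta}(x_{[k]})$ in that order, induct on $\beta$, and use the same local observation $\tau_{\alpha+1}\neq\tau_{\alpha+2}$ together with the classical Pieri identity to split off the $s_{(2+\alpha,1^{\beta-1})}$ piece. The invocation of Lemma~\ref{erelation3} for the multiplicity-free concatenation is harmless but not strictly needed, since Corollary~\ref{REW} is already stated as a sum over paths.
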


If we take the lowest degree component in  Theorems \ref{NoneqPierirulehook} or  \ref{NoneqPierirulehook-R}, then we recover the  formula for the product 
$[Y(u)]\cdot s_{\Gamma}(x_{[k]})$ due to Sottile  \cite[Theorem 8]{Sottile1}, where in this case the sum ranges over  peakless or unimodal  paths  in the ordinary $k$-Bruhat order.

 Theorems  \ref{NoneqPierirulehook} and   \ref{NoneqPierirulehook-R} together lead to  the following equidistribution.  

\begin{Coro}\label{peaklessieunimodal}
For   permutations $u,w\in \S_n$ and nonnegative integers $\alpha$ and $\beta$, 
the number of peakless paths from $u$ to $w$ 
in the extended $k$-Bruhat order 
with $\mathrm{in}(\gamma)=\alpha$ and $\mathrm{de}(\gamma)=\beta$ 
is equal to the number of unimodal paths from $u$ to $w$ 
in the extended $k$-Bruhat order 
with $\mathrm{in}(\gamma)=\alpha$ and $\mathrm{de}(\gamma)=\beta$.
\end{Coro}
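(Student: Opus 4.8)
The plan is to deduce Corollary \ref{peaklessieunimodal} directly from the two expansion formulas for $c_{\mathrm{SM}}(Y(u)^\circ)\cdot s_{\Gamma}(x_{[k]})$ established in Theorems \ref{NoneqPierirulehook} and \ref{NoneqPierirulehook-R}. The key point is that the CSM classes $c_{\mathrm{SM}}(Y(w)^\circ)$ form a basis of $H^\bullet(\Fl(n))$, so the coefficient of $c_{\mathrm{SM}}(Y(w)^\circ)$ in the product $c_{\mathrm{SM}}(Y(u)^\circ)\cdot s_{\Gamma}(x_{[k]})$ is well defined and independent of the combinatorial model used to compute it.

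First I would fix $u, w \in \S_n$ and a hook shape $\Gamma = (1+\alpha, 1^\beta)$. Theorem \ref{NoneqPierirulehook} expresses the coefficient of $c_{\mathrm{SM}}(Y(w)^\circ)$ in $c_{\mathrm{SM}}(Y(u)^\circ)\cdot s_{\Gamma}(x_{[k]})$ as the number of peakless paths $\gamma$ from $u$ to $w$ in the extended $k$-Bruhat order with $\mathrm{in}(\gamma) = \alpha$ and $\mathrm{de}(\gamma) = \beta$ (here $\mathrm{end}(\gamma) = w$ is exactly the condition that $\gamma$ terminates at $w$). Dually, Theorem \ref{NoneqPierirulehook-R} expresses the same coefficient as the number of unimodal paths from $u$ to $w$ with the same statistics $\mathrm{in} = \alpha$, $\mathrm{de} = \beta$. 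Since both numbers compute the coefficient of the same basis element $c_{\mathrm{SM}}(Y(w)^\circ)$ in the same class, they must be equal, which is precisely the asserted equidistribution. Concretely, subtract the two expansions: the difference is zero in $H^\bullet(\Fl(n))$, and expanding in the CSM basis forces every coefficient to vanish, giving the bijective count for each $w$ separately.

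There is essentially no obstacle here — the corollary is a formal consequence of having two combinatorially distinct but algebraically identical formulas, together with the basis property from Proposition \ref{propofCSM}(\ref{propofCSMi}) (or its nonequivariant analogue). The only minor care needed is to confirm that the statistics $\mathrm{in}(\gamma)$ and $\mathrm{de}(\gamma)$ are defined consistently for peakless and unimodal paths so that "same $\alpha, \beta$" means the same thing on both sides; this is clear from the definitions, since in both cases $\mathrm{in}(\gamma)$ is one less than the length of the increasing run and $\mathrm{de}(\gamma)$ is one less than the length of the decreasing run. I would write the proof in two short sentences along exactly these lines.

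\begin{proof}
By Theorem \ref{NoneqPierirulehook}, the coefficient of $c_{\mathrm{SM}}(Y(w)^\circ)$ in the expansion of $c_{\mathrm{SM}}(Y(u)^\circ)\cdot s_{\Gamma}(x_{[k]})$ with $\Gamma=(1+\alpha,1^{\beta})$ equals the number of peakless paths $\gamma$ from $u$ to $w$ in the extended $k$-Bruhat order with $\mathrm{in}(\gamma)=\alpha$ and $\mathrm{de}(\gamma)=\beta$. On the other hand, by Theorem \ref{NoneqPierirulehook-R}, the same coefficient equals the number of unimodal paths $\gamma$ from $u$ to $w$ in the extended $k$-Bruhat order with $\mathrm{in}(\gamma)=\alpha$ and $\mathrm{de}(\gamma)=\beta$. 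Since the classes $c_{\mathrm{SM}}(Y(w)^\circ)$, $w\in\S_n$, form a basis of $H^\bullet(\Fl(n))$, these two counts coincide, as desired.
\end{proof}
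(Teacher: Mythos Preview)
Your proof is correct and follows exactly the approach indicated in the paper: the corollary is stated immediately after Theorems \ref{NoneqPierirulehook} and \ref{NoneqPierirulehook-R} with the sentence ``Theorems \ref{NoneqPierirulehook} and \ref{NoneqPierirulehook-R} together lead to the following equidistribution,'' and your argument is precisely the unpacking of that sentence via the basis property of CSM classes.
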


\subsection{Equivariant Case I: Theorem  \ref{LLLL-1}}

\begin{Th}[=Theorem  \ref{LLLL-1}]\label{eqPierirulehook}
For $u\in \S_n$ and a hook shape   $\Gamma=(1+\alpha,1^{\beta})$, we have the following identity in $H_T^\bullet(\Fl(n))$:
\[
c^T_{\mathrm{SM}}(Y(u)^\circ)\cdot s_{\Gamma}(x_{[k]}) 
=s_{\Gamma}( t_{u[k]})\cdot c^T_{\mathrm{SM}}(Y(u)^\circ)+
\sum_{u\neq w\in \S_n} c_{u,\Gamma}^w( t)\cdot c_{\mathrm{SM}}^T(Y(w)^\circ),
\]
where 
\begin{equation}\label{eq:equivhook}
c_{u,\Gamma}^w( t)=
\sum_{\gamma}h_{\alpha-\mathrm{in}(\gamma)}( t_{\Cplus_k(u, w)})\cdot 
e_{\beta-\mathrm{de}(\gamma)}( t_{\Cminus_k(u, w)})
\end{equation}
with the sum taken over all peakless paths from $u$ to $w$ in 
the extended $k$-Bruhat order. 
\end{Th}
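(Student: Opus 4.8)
The plan is to combine the nonequivariant Pieri formula for CSM classes (Theorem~\ref{NoneqPierirulehook}) with the Rigidity Theorem~\ref{noneqimplieEqhook}, exactly along the lines announced in the introduction. By Theorem~\ref{noneqimplieEqhook} applied with $A=[k]$, the equivariant structure constant $c_{u,\Gamma}^w(t)$ is completely determined by the \emph{nonequivariant} constants $c_{u,\Gamma'}^w$ for hook shapes $\Gamma'=(1+\alpha',1^{\beta'})$ with $\alpha'\le\alpha$, $\beta'\le\beta$, together with the combinatorially explicit factors $h_{\alpha-\alpha'}(t_{\Sigma_k(u,w)})$ and $e_{\beta-\beta'}(t_{\Delta_k(u,w)})$. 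So the task reduces to reading off $c_{u,\Gamma'}^w$ from Theorem~\ref{NoneqPierirulehook}: it is the number of peakless paths $\gamma$ from $u$ to $w$ in the extended $k$-Bruhat order with $\mathrm{in}(\gamma)=\alpha'$ and $\mathrm{de}(\gamma)=\beta'$.

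Concretely, I would first dispose of the diagonal term $w=u$: the Rigidity Theorem gives $c_{u,\Gamma}^u(t)=s_\Gamma(t_{u[k]})$ directly, matching the claimed formula. For $w\neq u$, substitute the nonequivariant values into \eqref{X-1}:
\[
c_{u,\Gamma}^w(t)=\sum_{\alpha'\le\alpha,\ \beta'\le\beta}
\Big(\#\{\gamma\colon u\to w,\ \mathrm{in}(\gamma)=\alpha',\ \mathrm{de}(\gamma)=\beta'\}\Big)
\cdot h_{\alpha-\alpha'}(t_{\Sigma_k(u,w)})\cdot e_{\beta-\beta'}(t_{\Delta_k(u,w)}).
\]
Now reorganize the double sum by summing over the peakless paths $\gamma$ themselves rather than over the pair $(\alpha',\beta')$: each peakless path $\gamma$ from $u$ to $w$ contributes to exactly one term, namely the one with $\alpha'=\mathrm{in}(\gamma)$ and $\beta'=\mathrm{de}(\gamma)$, and these satisfy $\alpha'\le\alpha$, $\beta'\le\beta$ precisely when $\gamma$ contributes. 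Hence
\[
c_{u,\Gamma}^w(t)=\sum_{\gamma}h_{\alpha-\mathrm{in}(\gamma)}(t_{\Sigma_k(u,w)})\cdot e_{\beta-\mathrm{de}(\gamma)}(t_{\Delta_k(u,w)}),
\]
which is exactly \eqref{eq:equivhook}. One should also note that a path with $\mathrm{in}(\gamma)>\alpha$ or $\mathrm{de}(\gamma)>\beta$ makes $h_{\alpha-\mathrm{in}(\gamma)}$ or $e_{\beta-\mathrm{de}(\gamma)}$ vanish (negative index), so extending the sum to all peakless paths from $u$ to $w$ introduces no extra terms; this matches the convention already used and makes the statement in Theorem~\ref{LLLL-1} (summing over all of $u<_k w$, with no restriction on path length) legitimate.

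There is essentially no hard step remaining, since the two heavy inputs — the nonequivariant Pieri rule and the Rigidity Theorem — are already proved; the only thing to be careful about is the bookkeeping of the index bounds and the passage from summing over $(\alpha',\beta')$ to summing over paths, which I would write out explicitly as above. If one wants, one can add the remark that the peakless condition $\tau_1>\cdots>\tau_i<\cdots<\tau_m$ combined with the definitions $\mathrm{in}(\gamma)=m-i$, $\mathrm{de}(\gamma)=i-1$ forces $\mathrm{in}(\gamma)+\mathrm{de}(\gamma)=m-1=\ell(\gamma)-1$, so the grading is consistent: the term indexed by $\gamma$ sits in degree $\ell(\gamma)+(\alpha-\mathrm{in}(\gamma))+(\beta-\mathrm{de}(\gamma))=\alpha+\beta+1=|\Gamma|$ relative to the lowest-degree part, as it must. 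This degree check, while not logically necessary, is a useful sanity check to include.
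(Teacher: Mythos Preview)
Your proposal is correct and follows exactly the paper's approach: the paper's proof of Theorem~\ref{eqPierirulehook} is the single sentence ``This immediately follows from Theorem~\ref{NoneqPierirulehook} and the Rigidity Theorem~\ref{noneqimplieEqhook},'' and you have simply written out the bookkeeping that this sentence leaves implicit (the $w=u$ term, the substitution of the path counts into~\eqref{X-1}, and the reindexing from $(\alpha',\beta')$ to $\gamma$).
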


\begin{proof}
This  immediately follows from Theorem \ref{NoneqPierirulehook} and the Rigidity Theorem \ref{noneqimplieEqhook}. 
\end{proof}

Restricting $\alpha=0$ or $\beta=0$ in  Theorem \ref{eqPierirulehook}, we obtain the following Pieri  formula for equivariant CSM classes.

\begin{Coro}[Equivariant CSM Pieri Formula I]\label{Pieri-CSM-1}
Let $u\in \S_n$. 
We have the following identities in $H^\bullet(\Fl(n))$:
\begin{itemize}
    \item [(1)] For $r\geq 1$, 
\[
c^T_{\mathrm{SM}}(Y(u)^\circ)\cdot e_{r}(x_{[k]}) 
=
\sum_{\gamma} e_{r-\ell(\gamma)}(t_{\Cminus_k(u, w)})\cdot  c_{\mathrm{SM}}^T(Y(\mathrm{end}(\gamma))^\circ),
\] 
where the sum is over all  decreasing paths  from $u$ to $w$ in 
the extended $k$-Bruhat order. Here, $\ell(\gamma)$
denotes the length of $\gamma$.
    
    \item [(2)]
    For $r\geq 1$, 
\[
c^T_{\mathrm{SM}}(Y(u)^\circ)\cdot h_{r}(x_{[k]}) 
=\sum_{\gamma} h_{r-\ell(\gamma)}( t_{\Sigma_k(u, w)})\cdot  c_{\mathrm{SM}}^T(Y(\mathrm{end}(\gamma))^\circ),
\]    
where the sum is over all  increasing paths  from $u$ to $w$ in the extended $k$-Bruhat order.
    
\end{itemize}
\end{Coro}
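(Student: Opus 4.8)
The plan is to obtain both identities as specializations of Theorem~\ref{eqPierirulehook}. For part~(1) I would take $\Gamma$ to be the one-column hook $(1^r)$, i.e.\ $\alpha=0$ and $\beta=r-1$, so that $s_\Gamma(x_{[k]})=e_r(x_{[k]})$; for part~(2) I would take $\Gamma$ to be the one-row hook $(r)$, i.e.\ $\alpha=r-1$ and $\beta=0$, so that $s_\Gamma(x_{[k]})=h_r(x_{[k]})$. The two cases are mirror images of each other, so I would write out (1) in detail and merely indicate the change for (2).

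First I would simplify, for $w\neq u$, the structure constant $c_{u,\Gamma}^w(t)=\sum_\gamma h_{\alpha-\mathrm{in}(\gamma)}(t_{\Sigma_k(u,w)})\,e_{\beta-\mathrm{de}(\gamma)}(t_{\Delta_k(u,w)})$ of Theorem~\ref{eqPierirulehook} under the substitution $\alpha=0$. Since $h_m=0$ for $m<0$ and $h_0=1$, the only peakless paths that survive are those with $\mathrm{in}(\gamma)=0$, which are exactly the decreasing paths from $u$ to $w$ (and such a path, when it exists, is unique by Lemma~\ref{erelation3}, which keeps the bookkeeping transparent, though this is not logically needed). For a decreasing path the decreasing segment is the whole path, so $\mathrm{de}(\gamma)=\ell(\gamma)-1$; hence $\beta-\mathrm{de}(\gamma)=(r-1)-(\ell(\gamma)-1)=r-\ell(\gamma)$ and the coefficient collapses to $e_{r-\ell(\gamma)}(t_{\Delta_k(u,w)})$. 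Symmetrically, setting $\beta=0$ in part~(2) and using $e_m=0$ for $m<0$ and $e_0=1$ retains only the increasing paths ($\mathrm{de}(\gamma)=0$), for which $\mathrm{in}(\gamma)=\ell(\gamma)-1$, and the coefficient becomes $h_{r-\ell(\gamma)}(t_{\Sigma_k(u,w)})$.

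Second I would fold the diagonal term $s_\Gamma(t_{u[k]})\cdot c_{\mathrm{SM}}^T(Y(u)^\circ)$ of Theorem~\ref{eqPierirulehook} into the path sum as the trivial (length-zero) path at $u$. For this it suffices to note, straight from the definitions \eqref{deltaa} and \eqref{sigmaa}, that $\Delta_k(u,u)=\Sigma_k(u,u)=u[k]$, since $\{u(i):u(i)\neq w(i)\}=\varnothing$ when $w=u$; then $e_r(t_{u[k]})$ and $h_r(t_{u[k]})$ are precisely the $w=u$, $\ell(\gamma)=0$ instances of the coefficients obtained above. Assembling the diagonal term with the $w\neq u$ contributions then yields the stated identities, where the sum now ranges over all decreasing (resp.\ increasing) paths starting at $u$, the trivial path included.

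The argument is pure bookkeeping, and I foresee no real obstacle. The one point that needs care is the translation between the hook data $(\alpha,\beta)$ and the path statistics $(\mathrm{in}(\gamma),\mathrm{de}(\gamma))$: one must check that a nontrivial monotone path of length $\ell$ satisfies $\mathrm{de}(\gamma)=\ell-1$ (resp.\ $\mathrm{in}(\gamma)=\ell-1$), and one must observe that although the trivial path has $\mathrm{in}(\gamma)=\mathrm{de}(\gamma)=0$ rather than $-1$, it is supplied separately by the diagonal term and nonetheless fits the uniform coefficient $e_{r-\ell(\gamma)}$ (resp.\ $h_{r-\ell(\gamma)}$) once one uses $\Delta_k(u,u)=\Sigma_k(u,u)=u[k]$.
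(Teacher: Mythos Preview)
Your proposal is correct and follows exactly the approach the paper indicates: specialize Theorem~\ref{eqPierirulehook} to $\alpha=0$ (resp.\ $\beta=0$), use $h_{-\mathrm{in}(\gamma)}$ (resp.\ $e_{-\mathrm{de}(\gamma)}$) to cut down to decreasing (resp.\ increasing) paths, compute $\mathrm{de}(\gamma)=\ell(\gamma)-1$ (resp.\ $\mathrm{in}(\gamma)=\ell(\gamma)-1$), and absorb the diagonal term as the trivial path via $\Delta_k(u,u)=\Sigma_k(u,u)=u[k]$. The paper gives no details beyond ``Restricting $\alpha=0$ or $\beta=0$ in Theorem~\ref{eqPierirulehook}'', so your write-up is simply the natural unpacking of that one line.
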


Taking the lowest degree component of $c^T_{\mathrm{SM}}(Y(u)^\circ)$, we obtain  a formula for multiplying  an equivariant Schubert class 
by a Schur polynomial of hook shape. 

\begin{Coro}\label{EqSchubertPieri}
For a hook shape partition $\Gamma=(1+\alpha,1^{\beta})$, we have the following identity in $H_T^\bullet(\Fl(n))$:
\begin{equation}
[Y(u)]_T\cdot s_{\Gamma}(x_{[k]}) 
=s_{\Gamma}( t_{u[k]})\cdot [Y(u)]_T+
\sum_{u\neq w\in \S_n} \overline{c}_{u,\Gamma}^w( t)\cdot [Y(w)]_T,
\end{equation}
where $\overline{c}_{u,\Gamma}^w( t)$ has the same expression as $c_{u,\Gamma}^w( t)$ in \eqref{eq:equivhook}, except that now  the peakless paths are restricted to be in the ordinary $k$-Bruhat order. 
\end{Coro}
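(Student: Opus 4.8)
The plan is to compare the homogeneous components of lowest degree on the two sides of the identity in Theorem~\ref{eqPierirulehook}. Recall that $H_T^\bullet(\Fl(n))$ is a graded ring (the defining relations $f(x)-f(t)$, $f\in\Lambda_n$, are homogeneous), that by Proposition~\ref{propofCSM} the minimal-degree component of $c_{\mathrm{SM}}^T(Y(w)^\circ)$ is $[Y(w)]_T$, and that $[Y(w)]_T$ is represented by the double Schubert polynomial $\mathfrak{S}_w(x,t)$, which is homogeneous of degree $\ell(w)$. Writing $|\Gamma|=\alpha+\beta+1$, the product $c_{\mathrm{SM}}^T(Y(u)^\circ)\cdot s_\Gamma(x_{[k]})$ therefore has no component in degree $<\ell(u)+|\Gamma|$, and its component in degree exactly $\ell(u)+|\Gamma|$ is $[Y(u)]_T\cdot s_\Gamma(x_{[k]})$ --- the left-hand side of the asserted formula. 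So it suffices to compute the degree-$(\ell(u)+|\Gamma|)$ component of the right-hand side of Theorem~\ref{eqPierirulehook}.

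The term $s_\Gamma(t_{u[k]})\cdot c_{\mathrm{SM}}^T(Y(u)^\circ)$ contributes $s_\Gamma(t_{u[k]})\cdot[Y(u)]_T$, since $s_\Gamma(t_{u[k]})$ is homogeneous of degree $|\Gamma|$ in $t$. For the remaining sum I would argue path-by-path. Fix $w$ with $u\neq w$ and a peakless path $\gamma\colon u\to w$ of length $m$ in the extended $k$-Bruhat order; the corresponding summand of $c_{u,\Gamma}^w(t)$, namely $h_{\alpha-\mathrm{in}(\gamma)}(t_{\Sigma_k(u,w)})\cdot e_{\beta-\mathrm{de}(\gamma)}(t_{\Delta_k(u,w)})$, is homogeneous of degree $(\alpha-\mathrm{in}(\gamma))+(\beta-\mathrm{de}(\gamma))=|\Gamma|-m$, using $\mathrm{in}(\gamma)+\mathrm{de}(\gamma)=m-1$. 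Multiplying it by the degree-$(\ell(w)+j)$ piece of $c_{\mathrm{SM}}^T(Y(w)^\circ)$ ($j\ge0$) gives something of degree $|\Gamma|-m+\ell(w)+j$, which equals $\ell(u)+|\Gamma|$ exactly when $j=m-(\ell(w)-\ell(u))$. But every edge of the Bruhat graph raises the length by at least one, and by exactly one precisely for covering relations, so $\ell(w)-\ell(u)\ge m$; hence $j\le0$, forcing $j=0$ and $\ell(w)-\ell(u)=m$, and the latter holds if and only if every edge of $\gamma$ is a covering relation, i.e.\ $\gamma$ lies in the ordinary $k$-Bruhat order. Thus the surviving contributions to degree $\ell(u)+|\Gamma|$ are precisely $h_{\alpha-\mathrm{in}(\gamma)}(t_{\Sigma_k(u,w)})\cdot e_{\beta-\mathrm{de}(\gamma)}(t_{\Delta_k(u,w)})\cdot[Y(w)]_T$, one for each peakless path $\gamma$ from $u$ to $w$ in the ordinary $k$-Bruhat order, and their sum over such $\gamma$ is $\overline{c}_{u,\Gamma}^w(t)\cdot[Y(w)]_T$.

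Assembling the two sides then yields Corollary~\ref{EqSchubertPieri}. The one point that needs genuine care is that $c_{u,\Gamma}^w(t)$ is in general not homogeneous: distinct peakless paths from $u$ to $w$ in the extended $k$-Bruhat order can have different lengths, so one may not simply isolate ``the top part of the coefficient'', but must instead extract the relevant degree summand-by-summand, using that each path-indexed summand is itself homogeneous of degree $|\Gamma|-\ell(\gamma)$ and invoking the length inequality $\ell(w)-\ell(u)\ge\ell(\gamma)$ together with its equality case. This interplay between the grading and the sum over paths is the crux; the rest is routine bookkeeping. One could alternatively degree-truncate the nonequivariant formula of Theorem~\ref{NoneqPierirulehook} and rerun the rigidity argument for Schubert classes, but truncating the equivariant identity directly is the shortest route.
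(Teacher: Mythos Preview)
Your argument is correct and follows exactly the approach the paper indicates: the corollary is obtained by extracting the lowest-degree component from Theorem~\ref{eqPierirulehook}, and the degree count you carry out (using $\ell(w)-\ell(u)\ge m$ for any length-$m$ path in the Bruhat graph, with equality precisely when each edge is a cover) is the computation the paper leaves implicit. The paper states only that one ``takes the lowest degree component,'' so your detailed bookkeeping --- in particular the observation that one must work path-by-path because $c_{u,\Gamma}^w(t)$ need not be homogeneous --- simply fills in what the paper omits.
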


We give an example to illustrate Theorem \ref{eqPierirulehook} and Corollary \ref{EqSchubertPieri}.

\begin{Eg}\label{SSSCC}
Let $u=23154$ and $k=2$. Choose $\alpha=1$ and $\beta=1$, i.e., $\Gamma=(2,1)$. 
For simplicity,  denote \[\zeta_{w}=c^T_{\mathrm{SM}}(Y(w)^\circ)\quad \text{and} \quad \sigma_w=[Y(w)]_T.\]
Table \ref{tab:csmschub23154} lists all the peakless paths $\gamma$ in the $k$-Bruhat graph of $\S_5$ starting from $u=23154$ with $\mathrm{in}(\gamma)\leq 1$ and $\mathrm{de}(\gamma)\leq 1$, where we use dashed arrows to distinguish the $k$-edges $u\stackrel{\tau}\kto w$ with $\ell(w)>\ell(u)+1$. See also Figure  \ref{itervl} for an illustration.
\begin{table}[hhh]\center
\begin{tabular}{c|c}\hline
peakless paths & coefficients\\\hline
$23154 \stackrel{2}\kkto 53124$&
    $e_1(t_{\{3\}})h_1(t_{\{1,3,5\}})=
    t_2t_3 + t_3^2 + t_3t_5$\\
$23154 \stackrel{2}\kkto 43152$&
    $e_1(t_{\{3\}})h_1(t_{\{2,3,4\}})=
    t_2t_3 + t_3^2 + t_3t_4$\\
$23154 \stackrel{3}\kto 25134$&
    $e_1(t_{\{2\}})h_1(t_{\{2,3,5\}})=
    t_2^2 + t_2t_3 + t_2t_5$\\
$23154 \stackrel{3}\kto 24153$&
    $e_1(t_{\{2\}})h_1(t_{\{2,3,4\}})=
    t_2^2 + t_2t_3 + t_2t_4$\\
$23154 \stackrel{2}\kkto 53124 \stackrel{3}\kto 54123$&
    $e_1(t_{\varnothing})h_0(t_{\{2,3,5,4\}})=
    0$\\
$23154 \stackrel{2}\kkto 43152 \stackrel{4}\kto 53142$&
    $e_1(t_{\{3\}})h_0(t_{\{2,3,5,4\}})=
    t_3$\\
$23154 \stackrel{2}\kkto 43152 \stackrel{3}\kto 45132$&
    $e_1(t_{\varnothing})h_0(t_{\{2,3,5,4\}})=
    0$\\
$23154 \stackrel{3}\kto 25134 \stackrel{2}\kto 35124$&
    $e_0(t_{\varnothing})h_1(t_{\{2,3,5\}})=
    t_2 + t_3 + t_5$\\
$23154 \stackrel{3}\kto 25134 \stackrel{2}\kkto 45132$&
    $e_0(t_{\varnothing})h_1(t_{\{2,3,5,4\}})=
    t_2 + t_3 + t_4 + t_5$\\
$23154 \stackrel{3}\kto 24153 \stackrel{2}\kkto 54123$&
    $e_0(t_{\varnothing})h_1(t_{\{2,3,5,4\}})=
    t_2 + t_3 + t_4 + t_5$\\
$23154 \stackrel{3}\kto 24153 \stackrel{2}\kto 34152$&
    $e_0(t_{\varnothing})h_1(t_{\{2,3,4\}})=
    t_2 + t_3 + t_4$\\
$23154 \stackrel{3}\kto 24153 \stackrel{4}\kto 25143$&
    $e_1(t_{\{2\}})h_0(t_{\{2,3,5,4\}})=
    t_2$\\
$23154 \stackrel{3}\kto 25134 \stackrel{2}\kto 35124 \stackrel{3}\kto 45123$&
    $e_0(t_{\varnothing})h_0(t_{\{2,3,5,4\}})=
    1$\\
$23154 \stackrel{3}\kto 24153 \stackrel{2}\kto 34152 \stackrel{3}\kkto 54132$&
    $e_0(t_{\varnothing})h_0(t_{\{2,3,5,4\}})=
    1$\\
$23154 \stackrel{3}\kto 24153 \stackrel{2}\kto 34152 \stackrel{4}\kto 35142$&
    $e_0(t_{\varnothing})h_0(t_{\{2,3,5,4\}})=
    1$\\\hline
\end{tabular}
\caption{Computing $\zeta_{23154}\cdot s_{(2,1)}(x_1,x_2)$ and $\sigma_{23154}\cdot s_{(2,1)}(x_1,x_2)$}
\label{tab:csmschub23154}
\end{table}

By Theorem \ref{eqPierirulehook}, we have
\begin{align*}
\zeta_{23154} \cdot s_{(2,1)}(x_1,x_2)&= s_{(2,1)}(t_2,t_3)\cdot\zeta_{23154}
+(t_2t_3 + t_3^2 + t_3t_5)\cdot\zeta_{53124}\\
&\quad+(t_2t_3 + t_3^2 + t_3t_4)\cdot\zeta_{43152}
+(t_2^2 + t_2t_3 + t_2t_5)\cdot\zeta_{25134}\\
&\quad+(t_2^2 + t_2t_3 + t_2t_4)\cdot\zeta_{24153}
+t_3\cdot\zeta_{53142}
+(t_2 + t_3 + t_5)\cdot\zeta_{35124}\\
&\quad+(t_2 + t_3 + t_4 + t_5)\cdot\zeta_{45132}
+(t_2 + t_3 + t_4 + t_5)\cdot\zeta_{54123}\\
&\quad+(t_2 + t_3 + t_4)\cdot\zeta_{34152}
+t_2\cdot\zeta_{25143}
+\zeta_{45123}+\zeta_{54132}+\zeta_{35142}.
\end{align*}
By Corollary \ref{EqSchubertPieri}, all the paths in  Table \ref{tab:csmschub23154} with solid edges are what we need   in the expansion of  $\sigma_{23154} \cdot s_{(2,1)}(x_1,x_2)$, to wit, 
\begin{align*}
\sigma_{23154} \cdot s_{(2,1)}(x_1,x_2)
&= s_{(2,1)}(t_2,t_3)\cdot\sigma_{23154} 
+ (t_2^2 + t_2t_3 + t_2t_5)\cdot\sigma_{25134}\\
&\quad+(t_2^2 + t_2t_3 + t_2t_4)\cdot\sigma_{24153}
+(t_2 + t_3 + t_5)\cdot\sigma_{35124}\\
&\quad+(t_2 + t_3 + t_4)\cdot\sigma_{34152}+t_2\cdot\sigma_{25143}
+\sigma_{45123}+\sigma_{35142}.
\end{align*}
\end{Eg}

\begin{figure}[hhhh]
$$
\def\labelstyle#1{\scriptstyle\rule[-0.25pc]{0pc}{1pc}\,\,#1\,\,}
\xymatrix@!R=1.5pc@C=0pc{
&&&&{45132}&&&&\\&
{35142}
    \kar[urrr]|{{3}}&&&{45123}&&&{54132}\\
{25143}
    \kar[ur]|{{2}}
    \kar[urrrr]|{{2}}&&
{34152}
    \kar[ul]|>>>>>{{4}}
    \kkar[urrrrr]|>>>>>>>>>>>{{3}}&&
{35124}
    \kar[u]|>>>>>{{3}}&&
{53142}
    \kar[ur]|{{3}}&&{54123}\\&
{24153}
    \kar[ul]|{{4}}
    \kar[ur]|{{2}}
    \kkar[urrrrrrr]|{{2}}&&
{25134}
    \kar[ur]|{{2}}
    \kkar@/^1pc/[uuur]|<<<<<<<<<<<<{{2}}&&
{43152}
    \kar@/_1pc/[uuul]|<<<<<<<<<<<<<<{{2}}
    \kar[ur]|{{4}}&&
{53124}
    \kar[ur]|{{3}}\\
&&&&{23154}
    \kar[ulll]|{{3}}
    \kar[ul]|{{3}}
    \kkar[ur]|{{2}}
    \kkar[urrr]|{{2}}
}
$$
\caption{An illustration of Example \ref{SSSCC}.}
\label{itervl}
\end{figure}


\subsection{Equivariant Case II: Theorem \ref{LUUU-1}}

Recall  the Giambelli formula for Schubert polynomials in Proposition \ref{PropertiesofSchubertpolynomials}.
In the case when the permutation is  Grassmannian   corresponding to a hook shape,  the Giambelli formula has   an   explicit  expression. 

\begin{Lemma}\label{GiambelliLemma} 
For a hook shape $\Gamma=(1+\alpha,1^{\beta})$, assume that
the Grassmannian permutation $w_{\Gamma}$   belongs to $\S_n$ and has descent at position $k$.
We have
the following identity: 
\begin{equation}\label{eq:GiambelliLemma}
[Y(w_{\Gamma})]_T = \mathfrak{S}_{w_{\Gamma}^{-1}}(- t)+\sum_{
\begin{subarray}{c}
\Gamma'=(1+\alpha',1^{\beta'})\\
 \alpha'\leq \alpha,\,
 \beta'\leq \beta
\end{subarray}} s_{\Gamma'}(x_{[k]})\cdot e_{\alpha-\alpha'}(- t_{[k+\alpha]})\cdot h_{\beta-\beta'}(- t_{[k-\beta]}). 
\end{equation}
\end{Lemma}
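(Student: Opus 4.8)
The plan is to derive \eqref{eq:GiambelliLemma} from the general Giambelli formula in Proposition \ref{PropertiesofSchubertpolynomials}\eqref{itiii} by making the right-hand side completely explicit in the special case where the permutation is the Grassmannian permutation $w_\Gamma$ of hook shape with descent at $k$. First I would spell out the set of factorizations $w_\Gamma = v^{-1}u$ with $\ell(w_\Gamma) = \ell(u) + \ell(v)$. The key combinatorial observation is that a hook shape $\Gamma = (1+\alpha, 1^\beta)$ contains only the subpartitions obtained by truncating the arm and leg independently, so the "left factors" $u$ that occur are exactly the Grassmannian permutations $w_{\Gamma'}$ with $\Gamma' = (1+\alpha', 1^{\beta'})$, $\alpha' \le \alpha$, $\beta' \le \beta$, together with one extra "full" term coming from the factorization $w_\Gamma = w_\Gamma^{-1} \cdot \mathrm{id}$... actually $v = w_\Gamma$, $u = \mathrm{id}$, giving the $\mathfrak S_{w_\Gamma^{-1}}(-t)$ summand. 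For each such $u = w_{\Gamma'}$, I would use Proposition \ref{PropertiesofSchubertpolynomials}\eqref{itii} to identify $\mathfrak S_u(x) = s_{\Gamma'}(x_{[k]})$.

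The remaining work is to identify the complementary factor $\mathfrak S_v(-t)$, where $v = u^{-1} w_\Gamma$. Here I would argue that $v$ is again a permutation whose single Schubert polynomial is a product of an elementary and a complete homogeneous symmetric polynomial: removing $\alpha'$ boxes from the arm and $\beta'$ boxes from the leg corresponds to $v$ being (a shift of) the product of a cycle contributing the arm part and one contributing the leg part. Concretely, I expect $\mathfrak S_v(x)$ to factor, via Proposition \ref{PropertiesofSchubertpolynomials}\eqref{itiv}, as $\mathfrak S_{v_1}(x)\mathfrak S_{v_2}(x)$ where $v_1$ has non-fixed points in $[k+\alpha]$ and $v_2$ has non-fixed points disjoint from some initial segment, yielding $e_{\alpha - \alpha'}(x_{[k+\alpha]}) \cdot h_{\beta-\beta'}(x_{[k-\beta]})$; substituting $x \mapsto -t$ then gives the claimed $e_{\alpha-\alpha'}(-t_{[k+\alpha]}) \cdot h_{\beta-\beta'}(-t_{[k-\beta]})$. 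The indexing sets $[k+\alpha]$ and $[k-\beta]$ should emerge from tracking exactly which positions the relevant transpositions move when passing from $w_{\Gamma'}$ to $w_\Gamma$.

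I would organize the proof as: (1) enumerate the length-additive factorizations $w_\Gamma = v^{-1}u$, showing the left factors are precisely $\{\mathrm{id}\} \cup \{w_{\Gamma'} : \Gamma' \subseteq \Gamma \text{ a hook}\}$; (2) for the factorization with $u = \mathrm{id}$, note the contribution is $\mathfrak S_{w_\Gamma^{-1}}(-t)$; (3) for $u = w_{\Gamma'}$ with $\Gamma'$ a proper hook sub-shape, compute $v = u^{-1}w_\Gamma$ explicitly as a permutation and apply the factorization property \eqref{itiv} to get $\mathfrak S_v(x) = e_{\alpha-\alpha'}(x_{[k+\alpha]}) h_{\beta-\beta'}(x_{[k-\beta]})$; (4) assemble, using $\mathfrak S_u(x) = s_{\Gamma'}(x_{[k]})$ and the substitution $t \mapsto -t$ in the $v$-factor.

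The main obstacle I anticipate is step (3): carefully verifying that the right factor $v = w_{\Gamma'}^{-1} w_\Gamma$ is exactly the Grassmannian-type permutation whose Schubert polynomial is that particular product $e \cdot h$, and getting the variable sets $[k+\alpha]$ and $[k-\beta]$ exactly right rather than off by one. This requires a hands-on description of $w_\Gamma$ in one-line notation and a precise bookkeeping of how truncating the arm versus the leg of the hook translates into transpositions acting on positions $\le k$ versus $> k$; the arm truncation should involve positions up to $k+\alpha$ and the leg truncation positions below $k-\beta$, but confirming the length-additivity and the factorization hypothesis $M(v_1) \subseteq [k+\alpha]$, $M(v_2) \cap [\,\cdot\,] = \emptyset$ needed for \eqref{itiv} will take some care. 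A sanity check against the small example $\Gamma = (2,1)$, where $w_\Gamma = 13524 \in \S_5$ with $k = 2$, would confirm the indexing before writing the general argument.
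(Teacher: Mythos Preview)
Your proposal is correct and follows essentially the same approach as the paper: apply the Giambelli formula of Proposition \ref{PropertiesofSchubertpolynomials}\eqref{itiii}, identify the left factors $u$ as $\mathrm{id}$ together with the Grassmannian permutations $w_{\Gamma'}$ for sub-hooks $\Gamma'\subseteq\Gamma$, and then factor $\mathfrak S_v$ via \eqref{itiv} into an $e$-part and an $h$-part. The paper streamlines your step (3) by working with the explicit reduced word $w_\Gamma = s_{k-\beta}\cdots s_{k-1}\, s_{k+\alpha}\cdots s_{k+1}s_k$, from which the factorizations, the permutations $v_1,v_2$, and the variable sets $[k+\alpha]$, $[k-\beta]$ can be read off directly without one-line bookkeeping.
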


\begin{proof}
By  the Giambelli formula in   Proposition \ref{PropertiesofSchubertpolynomials}, 
\begin{align*} 
[Y(w_{\Gamma})]_T = \mathfrak{S}_{w_{\Gamma}}(x, t) = \sum_{\begin{subarray}{c}
w_{\Gamma}= v^{-1}  u\\
\ell(w_{\Gamma})=\ell(u)+\ell(v)
\end{subarray}}
\S_{u}(x)
\mathfrak{S}_{v}(- t).
\end{align*}
Since $w_{\Gamma}$ has descent at $k$, it is easily verified  that
\begin{equation}\label{99pp}
w_{\Gamma}=s_{k-\beta}\cdots s_{k-1}\,s_{k+\alpha}\cdots s_{k+1}s_k
\end{equation}
is a reduced word of $w_{\Gamma}$.
If $u=\operatorname{id}$, then $v^{-1}=w_\Gamma$, and in this case 
$\S_{u}(x)\S_{v}(- t)$ contributes the term $\mathfrak{S}_{w_{\Gamma}^{-1}}(- t)$.

We now consider the case when $u\neq\operatorname{id}$. By analyzing the reduced word in \eqref{99pp}, it is not hard to check that    $u$ is the product of a latter half (possibly  empty) of $s_{k-\beta}\cdots s_{k-1}$ and a latter half (cannot be empty)  of $s_{k+\alpha}\cdots s_{k+1} s_k$. That is,  there exist $0\le \alpha'\le \alpha$ and $0\le \beta'\le \beta$  such that 
\begin{equation}\label{00qq}
u = s_{k-\beta'}\cdots s_{k-1}\,s_{k+\alpha'}\cdots s_{k+1}s_k
\end{equation}
and
\begin{equation}\label{00qqq}
v^{-1}=s_{k-\beta}\cdots s_{k-\beta'-1}\,s_{k+\alpha}\cdots s_{k+\alpha'+1}.
\end{equation}
By \eqref{00qq}, we see that $u=w_{\Gamma'}$ where  $\Gamma'=(1+\alpha',1^{\beta'})$ is hook shape with  $0\le \alpha'\le \alpha$ and $0\le \beta'\le \beta$. 
By \eqref{itii} of Proposition \ref{PropertiesofSchubertpolynomials}, 
\[
\mathfrak{S}_{u}(x)= s_{\Gamma'}(x_{[k]}). 
\]

We still need to evaluate $\S_v(-t)$.
From \eqref{00qqq}, it follows that $v$
admits a factorization $v=v_1v_2$ with
\[v_1= s_{k+\alpha'+1}\cdots s_{k+\alpha},\qquad  v_2= s_{k-\beta'-1}\cdots s_{k-\beta},\] 
which clearly satisfy the condition in \eqref{itiv} of Theorem \ref{PropertiesofSchubertpolynomials}. 
Note that $v_1$ (resp., $v_2$) is a Grassmannian permutation with descent at  $k+\alpha$ (resp., $k-\beta$) corresponding to the one column partition $(1^{\alpha-\alpha'})$ (resp., the one row 
partition $(\beta-\beta')$),  whose Schubert polynomial is  $e_{\alpha-\alpha'}(x_{[k+\alpha]})$ (resp.,  $h_{\beta-\beta'}(x_{[k-\beta]})$). 
So we have 
\[
\mathfrak{S}_{v}(- t) =\mathfrak{S}_{v_1}(- t)\cdot \mathfrak{S}_{v_2}(- t)= e_{\alpha-\alpha'}(- t_{[k+\alpha]})\cdot  h_{\beta-\beta'}(- t_{[k-\beta]}). 
\]

Combining the above gives the desired identity in \eqref{eq:GiambelliLemma}.
\end{proof}

We are now ready  to complete the proof of Theorem  \ref{LUUU-1}.

\begin{Th}[=Theorem   \ref{LUUU-1}]\label{LRruleforhookshapeCSM}
Let $u\in \S_n$, and $\Gamma=(1+\alpha,1^{\beta})$ be a hook shape to which the corresponding permutation $w_\Gamma\in \S_n$  has descent at position $k$. Then we  have the following identity in $H_T^\bullet(\Fl(n))$:
\[
c_{\mathrm{SM}}^T(Y(u)^\circ)\cdot [Y(w_{\Gamma})]_T 
= [Y(w_{\Gamma})]_T|_{u}\cdot c_{\mathrm{SM}}^T(Y(u)^\circ)
+ \sum_{u\neq w\in \S_n} \mathfrak{c}_{u,\Gamma}^w( t)\cdot  c_{\mathrm{SM}}^T(Y(w)^\circ),
\]
where 
\begin{equation}\label{eq:LRruleforhookshapeCSM}
\mathfrak{c}_{u,\Gamma}^w( t) = \sum_{\gamma}\sum_{
\begin{subarray}{c}
\alpha_1+\alpha_2=\alpha-\mathrm{in}(\gamma)\\
\beta_1+\beta_2=\beta-\mathrm{de}(\gamma)
\end{subarray}} 
h_{\alpha_1}( t_{\Cplus_k(u, w)})\cdot e_{\beta_1}( t_{\Cminus_k(u, w)})\cdot 
e_{\alpha_2}(- t_{[k+\alpha]})\cdot h_{\beta_2}(- t_{[k-\beta]})
\end{equation}
with the sum over all  peakless paths from $u$ to $w$ in the extended $k$-Bruhat order. 
\end{Th}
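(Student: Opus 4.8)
The plan is to derive Theorem~\ref{LRruleforhookshapeCSM} from the already-established Pieri rule for Schur polynomials of hook shape (Theorem~\ref{eqPierirulehook}) together with the explicit Giambelli-type expansion in Lemma~\ref{GiambelliLemma}. The idea is that $[Y(w_\Gamma)]_T$ is, by Lemma~\ref{GiambelliLemma}, a constant term $\mathfrak{S}_{w_\Gamma^{-1}}(-t)$ plus an explicit $H_T^\bullet(\pt)$-linear combination of Schur polynomials $s_{\Gamma'}(x_{[k]})$ of hook shape $\Gamma'=(1+\alpha',1^{\beta'})$ with $\alpha'\le\alpha$, $\beta'\le\beta$; we multiply the identity by $c_{\mathrm{SM}}^T(Y(u)^\circ)$ and apply Theorem~\ref{eqPierirulehook} to each $s_{\Gamma'}(x_{[k]})$ term.

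First I would write, using Lemma~\ref{GiambelliLemma},
\[
c_{\mathrm{SM}}^T(Y(u)^\circ)\cdot[Y(w_\Gamma)]_T
=\mathfrak{S}_{w_\Gamma^{-1}}(-t)\cdot c_{\mathrm{SM}}^T(Y(u)^\circ)
+\sum_{\substack{\alpha'\le\alpha\\ \beta'\le\beta}} e_{\alpha-\alpha'}(-t_{[k+\alpha]})\,h_{\beta-\beta'}(-t_{[k-\beta]})\cdot \bigl(c_{\mathrm{SM}}^T(Y(u)^\circ)\cdot s_{\Gamma'}(x_{[k]})\bigr).
\]
Then I would substitute the right-hand side of Theorem~\ref{eqPierirulehook} for each product $c_{\mathrm{SM}}^T(Y(u)^\circ)\cdot s_{\Gamma'}(x_{[k]})$, getting a diagonal term $s_{\Gamma'}(t_{u[k]})\cdot c_{\mathrm{SM}}^T(Y(u)^\circ)$ plus a sum over $u<_k w$ of $c_{u,\Gamma'}^w(t)\cdot c_{\mathrm{SM}}^T(Y(w)^\circ)$, where $c_{u,\Gamma'}^w(t)=\sum_\gamma h_{\alpha'-\mathrm{in}(\gamma)}(t_{\Sigma_k(u,w)})\,e_{\beta'-\mathrm{de}(\gamma)}(t_{\Delta_k(u,w)})$.

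Next I would collect terms according to the basis $c_{\mathrm{SM}}^T(Y(w)^\circ)$. For the off-diagonal coefficient of $c_{\mathrm{SM}}^T(Y(w)^\circ)$ with $w\neq u$, the contribution is
\[
\sum_{\substack{\alpha'\le\alpha\\ \beta'\le\beta}} e_{\alpha-\alpha'}(-t_{[k+\alpha]})\,h_{\beta-\beta'}(-t_{[k-\beta]})\sum_\gamma h_{\alpha'-\mathrm{in}(\gamma)}(t_{\Sigma_k(u,w)})\,e_{\beta'-\mathrm{de}(\gamma)}(t_{\Delta_k(u,w)}).
\]
Reindexing by $\alpha_1=\alpha'-\mathrm{in}(\gamma)$, $\alpha_2=\alpha-\alpha'$ (so $\alpha_1+\alpha_2=\alpha-\mathrm{in}(\gamma)$) and similarly $\beta_1=\beta'-\mathrm{de}(\gamma)$, $\beta_2=\beta-\beta'$, this becomes exactly the claimed formula~\eqref{eq:LRruleforhookshapeCSM}; note that terms with $\alpha'<\mathrm{in}(\gamma)$ or $\beta'<\mathrm{de}(\gamma)$ vanish since $h_{<0}=e_{<0}=0$, consistent with requiring $\alpha_1,\beta_1\ge0$. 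The main obstacle, and the only genuinely nontrivial point, is the diagonal term: I must check that $\mathfrak{S}_{w_\Gamma^{-1}}(-t)+\sum_{\alpha'\le\alpha,\ \beta'\le\beta} s_{\Gamma'}(t_{u[k]})\,e_{\alpha-\alpha'}(-t_{[k+\alpha]})\,h_{\beta-\beta'}(-t_{[k-\beta]})$ equals $[Y(w_\Gamma)]_T|_u$. But this is immediate from Lemma~\ref{GiambelliLemma} itself: that lemma expresses $[Y(w_\Gamma)]_T$ as a polynomial in the $x_i$ and $t_j$, and applying the localization map $-|_u$, which by \eqref{eq:augmentmap}-type substitution replaces $x_i$ by $t_{u(i)}$ (i.e. $s_{\Gamma'}(x_{[k]})\mapsto s_{\Gamma'}(t_{u[k]})$) and fixes the $t_j$, yields precisely this expression. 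So the diagonal coefficient is $[Y(w_\Gamma)]_T|_u$ as stated, and collecting everything completes the proof.
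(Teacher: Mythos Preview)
Your proposal is correct and follows essentially the same approach as the paper: expand $[Y(w_\Gamma)]_T$ via Lemma~\ref{GiambelliLemma}, apply Theorem~\ref{eqPierirulehook} to each $s_{\Gamma'}(x_{[k]})$ factor, then collect the diagonal term by localizing the Giambelli identity at $u$ and reindex the off-diagonal terms to obtain~\eqref{eq:LRruleforhookshapeCSM}. The paper's proof carries out exactly these steps in the same order.
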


\begin{proof}
Replacing $[Y(w_{\Gamma})]_T$ by the right-hand side of \eqref{eq:GiambelliLemma}, we have
\begin{align*}
&c_{\mathrm{SM}}^T(Y(u)^\circ)\cdot [Y(w_{\Gamma})]_T\\[5pt]
&\ =c_{\mathrm{SM}}^T(Y(u)^\circ)\cdot \mathfrak{S}_{w_{\Gamma}^{-1}}(- t)\\[5pt]
&\ \ \ \quad+\sum_{
\begin{subarray}{c}
\Gamma'=(1+\alpha',1^{\beta'})\\
 \alpha'\leq \alpha,\,
 \beta'\leq \beta
\end{subarray}} c_{\mathrm{SM}}^T(Y(u)^\circ)\cdot s_{\Gamma'}(x_{[k]})\cdot e_{\alpha-\alpha'}(- t_{[k+\alpha]})\cdot h_{\beta-\beta'}(- t_{[k-\beta]}). 
\end{align*}
By Theorem \ref{eqPierirulehook},
the coefficient of $c_{\mathrm{SM}}^T(Y(u)^\circ)$ is 
\[
\mathfrak{S}_{w_\Gamma^{-1}}(- t)+\sum_{
\begin{subarray}{c}
\Gamma'=(1+\alpha',1^{\beta'})\\
 \alpha'\leq \alpha,\,
 \beta'\leq \beta
\end{subarray}} s_{\Gamma'}( t_{u[k]})\cdot e_{\alpha-\alpha'}(- t_{[k+\alpha])}\cdot h_{\beta-\beta'}(- t_{[k-\beta]}),
\]
which, according to  Lemma \ref{GiambelliLemma} and the definition of localization map, is precisely $[Y(w_{\Gamma})]_T|_{u}$.

For $w\neq u$, using Theorem  \ref{eqPierirulehook} again,  we obtain that
\begin{align*}
\mathfrak{c}_{u,\Gamma}^w( t)& = \sum_{
\begin{subarray}{c}
\Gamma'=(1+\alpha',1^{\beta'})\\
 \alpha'\leq \alpha,\,
 \beta'\leq \beta
\end{subarray}} c_{u,\Gamma'}^w( t)\cdot e_{\alpha-\alpha'}(- t_{[k+\alpha]})\cdot h_{\beta-\beta'}(- t_{[k-\beta]})\\
&=\sum_{
\begin{subarray}{c}
\Gamma'=(1+\alpha',1^{\beta'})\\
 \alpha'\leq \alpha,\,
 \beta'\leq \beta
\end{subarray}}\left(\sum_{\gamma}h_{\alpha'-\mathrm{in}(\gamma)}( t_{\Cplus_k(u, w)})\cdot 
e_{\beta'-\mathrm{de}(\gamma)}( t_{\Cminus_k(u, w)})\right)\\
&\hspace{3cm}\times e_{\alpha-\alpha'}(- t_{[k+\alpha]})\cdot h_{\beta-\beta'}(- t_{[k-\beta]}),
\end{align*}
which, after exchanging indices, coincides with \eqref{eq:LRruleforhookshapeCSM}.
\end{proof}

Taking the lowest degree part of   CSM classes leads to a formula for the multiplication of an equivariant Schubert class 
by an equivariant  Schubert class of hook shape.  

\begin{Coro}\label{LRrulehookshapeSchubert}
Adopting the notation in Theorem  \ref{LRruleforhookshapeCSM},
we  have
\begin{equation}
[Y(u)]_T\cdot [Y(w_{\Gamma})]_T 
= [Y(w_{\Gamma})]_T|_{u}\cdot [Y(u)]_T
+ \sum_{w} \overline{\mathfrak{c}}_{u,\Gamma}^w( t)\cdot  [Y(w)]_T,
\end{equation}
where $\overline{\mathfrak{c}}_{u,\Gamma}^w( t)$ has the same expression as $\mathfrak{c}_{u,\Gamma}^w( t)$ in 
\eqref{eq:LRruleforhookshapeCSM}, except that now we assume   the peakless paths are restricted to be in the ordinary 
$k$-Bruhat order. 
\end{Coro}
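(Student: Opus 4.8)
The plan is to derive the identity by extracting the lowest-degree component from the equivariant CSM identity of Theorem~\ref{LRruleforhookshapeCSM}, using that $[Y(w)]_T$ is the lowest-degree term of $c_{\mathrm{SM}}^T(Y(w)^\circ)$ (Proposition~\ref{propofCSM}). Write $d=\ell(u)+\ell(w_\Gamma)$, where $\ell(w_\Gamma)=\alpha+\beta+1$ is the number of boxes of the hook $\Gamma$. On the left-hand side, since $[Y(w_\Gamma)]_T$ is homogeneous of cohomological degree $2\ell(w_\Gamma)$, the product $c_{\mathrm{SM}}^T(Y(u)^\circ)\cdot[Y(w_\Gamma)]_T$ equals $[Y(u)]_T\cdot[Y(w_\Gamma)]_T$ plus terms of strictly higher degree, so its degree-$2d$ part is exactly $[Y(u)]_T\cdot[Y(w_\Gamma)]_T$. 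The work is then to identify the degree-$2d$ part of the right-hand side.

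For the first summand on the right, the localization map $-|_u$ preserves cohomological degree, so $[Y(w_\Gamma)]_T|_u$ is a homogeneous polynomial in $t$ of degree $2\ell(w_\Gamma)$; multiplying it by $c_{\mathrm{SM}}^T(Y(u)^\circ)$ therefore contributes $[Y(w_\Gamma)]_T|_u\cdot[Y(u)]_T$ in degree $2d$ and nothing lower. For the remaining summands, fix $w\neq u$ and a peakless path $\gamma$ from $u$ to $w$ in the extended $k$-Bruhat order of length $m=\mathrm{in}(\gamma)+\mathrm{de}(\gamma)+1$. Inspecting \eqref{eq:LRruleforhookshapeCSM}, the summand of $\mathfrak{c}_{u,\Gamma}^w(t)$ indexed by $\gamma$ is a polynomial in $t$ of cohomological degree $2\bigl(\alpha+\beta-\mathrm{in}(\gamma)-\mathrm{de}(\gamma)\bigr)=2(\alpha+\beta-m+1)$. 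Since every edge of the Bruhat graph raises the length by at least one, $m\le\ell(w)-\ell(u)$, so this degree is at least $2\bigl(\alpha+\beta-(\ell(w)-\ell(u))+1\bigr)$; adding the minimal degree $2\ell(w)$ of $c_{\mathrm{SM}}^T(Y(w)^\circ)$, every monomial of $\mathfrak{c}_{u,\Gamma}^w(t)\cdot c_{\mathrm{SM}}^T(Y(w)^\circ)$ lies in degree $\ge 2d$.

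It remains to read off the equality case. A term of $\mathfrak{c}_{u,\Gamma}^w(t)\cdot c_{\mathrm{SM}}^T(Y(w)^\circ)$ sits in degree exactly $2d$ if and only if (i) $m=\ell(w)-\ell(u)$, which forces every edge of $\gamma$ to raise the length by exactly one, i.e.\ $\gamma$ is a path in the \emph{ordinary} $k$-Bruhat order, and (ii) one keeps the lowest-degree piece $[Y(w)]_T$ of $c_{\mathrm{SM}}^T(Y(w)^\circ)$. Collecting the degree-$2d$ components of the two sides of Theorem~\ref{LRruleforhookshapeCSM} then produces precisely the asserted identity, with the coefficient of $[Y(w)]_T$ being the partial sum of $\mathfrak{c}_{u,\Gamma}^w(t)$ over those peakless paths that lie in the ordinary $k$-Bruhat order; by definition this partial sum is $\overline{\mathfrak{c}}_{u,\Gamma}^w(t)$.

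No genuine obstacle arises here: the argument is a routine degree count, entirely parallel to the passage from Theorem~\ref{eqPierirulehook} to Corollary~\ref{EqSchubertPieri}. The only points that need a moment's attention are the translation between polynomial degree in the $t$-variables and cohomological degree, and the remark that equality in $m\le\ell(w)-\ell(u)$ is exactly what reduces extended $k$-Bruhat paths to ordinary ones. As an alternative route one could bypass Theorem~\ref{LRruleforhookshapeCSM} and prove the corollary directly by substituting the Giambelli expansion of Lemma~\ref{GiambelliLemma} into Corollary~\ref{EqSchubertPieri}, mirroring the proof of Theorem~\ref{LRruleforhookshapeCSM} verbatim; both approaches give the same answer.
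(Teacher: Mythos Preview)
Your proof is correct and follows exactly the approach indicated in the paper: the corollary is obtained from Theorem~\ref{LRruleforhookshapeCSM} by taking the lowest degree component, and you have carefully spelled out the degree count that the paper leaves implicit. Your observation that the $t$-degree of the contribution from a peakless path $\gamma$ is precisely $\alpha+\beta-m+1$ (with $m$ the path length), and that equality in $m\le\ell(w)-\ell(u)$ forces the path into the ordinary $k$-Bruhat order, is exactly what is needed.
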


When $\alpha=0$ or $\beta=0$  in Theorem \ref{LRruleforhookshapeCSM},  we obtain our second Pieri   formula  for
equivariant CSM classes. In this case, the expression in \eqref{eq:LRruleforhookshapeCSM} for the structure constants could be further simplified. 
For $r\geq 0$, let
\begin{gather*}
c[k,r]=s_{k-r+1}\cdots s_k \ \ \ \text{and}\ \ \ 
c'[k,r]=s_{k+r-1}\cdots s_k.
\end{gather*}
Note that  $c[k,r]$ (resp., $c'[k,r]$) is the Grassmannian permutation corresponding to the one column partition $(1^r)$ (resp., the one row partition $(r)$) with descent at $k$.

\begin{Lemma}\label{Doubleeflemma} 
For  $r\geq 0$,  we have
\[
[Y(c[k,r])]_T
=\sum_{i+j=r}e_i(x_{[k]})\cdot h_j(- t_{[k-r+1]}) 
= \sum_{1\leq i_1<\cdots<i_r\leq k}\prod_{j=1}^r (x_{i_j}- t_{i_j-j+1})
\]
and 
\[
[Y(c'[k,r])]_T
= \sum_{i+j=r}h_i(x_{[k]})\cdot e_j(- t_{[k+r-1]}) 
= \sum_{1\leq i_1\leq \cdots\leq i_r\leq k}\prod_{j=1}^r (x_{i_j}- t_{i_j+j-1}). 
\]
\end{Lemma}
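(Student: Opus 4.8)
The plan is to establish the statements about $c[k,r]$ in full; those about $c'[k,r]$ then follow by an entirely parallel argument. Since the equivariant Schubert class $[Y(w)]_T$ is represented by the double Schubert polynomial $\mathfrak{S}_w(x,t)$ under the Borel isomorphism, it suffices to evaluate $\mathfrak{S}_{c[k,r]}(x,t)$. Observe that $c[k,r]=s_{k-r+1}\cdots s_k$ is exactly the Grassmannian permutation $w_\Gamma$ attached to the hook $\Gamma=(1,1^{r-1})$ with descent at $k$, that is, the case $\alpha=0$, $\beta=r-1$ of Lemma~\ref{GiambelliLemma}. Specializing that lemma, and using $s_{(1,1^{\beta'})}(x_{[k]})=e_{\beta'+1}(x_{[k]})$ together with $e_0(-t_{[k]})=1$, one gets
\[
[Y(c[k,r])]_T=\mathfrak{S}_{c[k,r]^{-1}}(-t)+\sum_{\beta'=0}^{r-1}e_{\beta'+1}(x_{[k]})\cdot h_{r-1-\beta'}(-t_{[k-r+1]}),
\]
and reindexing by $i=\beta'+1$, $j=r-i$ rewrites the sum as $\sum_{i+j=r,\ i\ge 1}e_i(x_{[k]})\cdot h_j(-t_{[k-r+1]})$.

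Next I would pin down the leftover term. Since each simple transposition is an involution, $c[k,r]^{-1}=(s_{k-r+1}\cdots s_k)^{-1}=s_ks_{k-1}\cdots s_{k-r+1}=c'[k-r+1,r]$, which is the Grassmannian permutation for the one-row partition $(r)$ with descent at $k-r+1$; hence by part (i) of Proposition~\ref{PropertiesofSchubertpolynomials} its single Schubert polynomial is $s_{(r)}(x_{[k-r+1]})=h_r(x_{[k-r+1]})$, so $\mathfrak{S}_{c[k,r]^{-1}}(-t)=h_r(-t_{[k-r+1]})=e_0(x_{[k]})\cdot h_r(-t_{[k-r+1]})$. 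Absorbing this as the $i=0$ term produces the first claimed identity $[Y(c[k,r])]_T=\sum_{i+j=r}e_i(x_{[k]})\cdot h_j(-t_{[k-r+1]})$.

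For the second identity I would compare the two sides through a common recursion. Put $Q_r^{(k)}=\sum_{i+j=r}e_i(x_{[k]})h_j(-t_{[k-r+1]})$ and $P_r^{(k)}=\sum_{1\le i_1<\cdots<i_r\le k}\prod_{j=1}^r(x_{i_j}-t_{i_j-j+1})$, and prove $P_r^{(k)}=Q_r^{(k)}$ by induction on $k+r$, with base cases $r=0$ (both sides $1$) and $k=0$, $r\ge1$ (both sides $0$). Splitting the strict chains according to whether $i_r\le k-1$ or $i_r=k$ gives immediately $P_r^{(k)}=P_r^{(k-1)}+(x_k-t_{k-r+1})\,P_{r-1}^{(k-1)}$. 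On the other side, inserting the Pascal recursions $e_i(x_{[k]})=e_i(x_{[k-1]})+x_k\,e_{i-1}(x_{[k-1]})$ and $h_j(-t_{[m]})=h_j(-t_{[m-1]})-t_m\,h_{j-1}(-t_{[m]})$ with $m=k-r+1$, and using the coincidence $(k-1)-(r-1)+1=k-r+1$, a short manipulation yields $Q_r^{(k)}=Q_r^{(k-1)}+(x_k-t_{k-r+1})\,Q_{r-1}^{(k-1)}$. Since $P$ and $Q$ satisfy the same recursion and the same initial data, they coincide.

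The one-row statements go the same way: $c'[k,r]$ is $w_{\Gamma'}$ for the hook $\Gamma'=(r)$ (now $\alpha=r-1$, $\beta=0$), Lemma~\ref{GiambelliLemma} gives $[Y(c'[k,r])]_T=\mathfrak{S}_{c'[k,r]^{-1}}(-t)+\sum_{i+j=r,\ i\ge1}h_i(x_{[k]})e_j(-t_{[k+r-1]})$, and $c'[k,r]^{-1}=s_ks_{k+1}\cdots s_{k+r-1}=c[k+r-1,r]$ is Grassmannian for $(1^r)$ with descent at $k+r-1$, so $\mathfrak{S}_{c'[k,r]^{-1}}(-t)=e_r(-t_{[k+r-1]})$ supplies the $i=0$ term; the product-form identity then follows from the analogous induction, this time with the recursion $S_r^{(k)}=S_r^{(k-1)}+(x_k-t_{k+r-1})\,S_{r-1}^{(k)}$ satisfied by both sides (obtained by splitting the weakly increasing chains according to whether $i_r=k$). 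I expect the only genuine obstacle to be bookkeeping: carrying out the Pascal-recursion computation for $Q_r^{(k)}$ and $S_r^{(k)}$ while correctly tracking how the index range of the $h$- (resp.\ $e$-) factor shifts with $k$; once the two recursions are matched, everything else is formal.
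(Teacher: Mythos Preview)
Your argument is correct. For the first equality in each display you follow exactly the paper's route: specialize Lemma~\ref{GiambelliLemma} to $\alpha=0$ (resp.\ $\beta=0$), and you correctly identify the leftover term $\mathfrak{S}_{w_\Gamma^{-1}}(-t)$ as $h_r(-t_{[k-r+1]})$ (resp.\ $e_r(-t_{[k+r-1]})$) via Proposition~\ref{PropertiesofSchubertpolynomials}\eqref{itii}, which supplies the missing $i=0$ summand. The paper's proof simply says this is the $\alpha=0$ or $\beta=0$ case of Lemma~\ref{GiambelliLemma} without writing out this identification, so here you are just filling in a detail the paper elides.

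Where you genuinely diverge is the second equality. The paper does not prove it at all but cites Molev \cite[Equations~(1.2) and~(1.3)]{Molev}. You instead give a self-contained double induction: the tableau side $P_r^{(k)}$ (resp.\ $S_r^{(k)}$) satisfies an obvious Pascal-type recursion from splitting on the largest index, and you verify that the convolution side $Q_r^{(k)}$ (resp.\ the $h$--$e$ convolution) satisfies the \emph{same} recursion by combining the standard one-variable-at-a-time recursions for $e_i$ and $h_j$; the key bookkeeping point is that the shift $(k-1)-(r-1)+1=k-r+1$ (resp.\ $(k-1)+r-1=k+r-2$ and $k+(r-1)-1=k+r-2$) makes the $t$-index ranges line up. I checked both recursions and they do match after the short manipulation you describe. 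Your approach has the advantage of being elementary and fully self-contained; the paper's citation is shorter but outsources the work.
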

\begin{proof}
The first equality in each expression is the special case  of $\alpha=0$ or $\beta=0$ in Lemma \ref{GiambelliLemma}. The second equality can be found in \cite[Equations (1.2) and (1.3)]{Molev}. 
\end{proof}

It turns out that the structure constants in our second Pieri formula  may be characterized by the localization of Schubert classes. 

\begin{Th}[Equivariant CSM Pieri Formula II]\label{LRruleforehCSM}
Let $u\in \S_n$, and let $c[k,r]$ and $c'[k,r]$
be permutations in $\S_n$ with descent at position $k$.
We  have the following identities in $H_T^\bullet(\Fl(n))$:
\begin{enumerate}[\rm(i)]
    \item \label{item:LRruleforehCSMi}
    For  $r\geq 0$,  
    \[c_{\mathrm{SM}}^T(Y(u)^\circ) \cdot [Y(c[k,r])]_T = \sum_{w\in \S_n}\mathfrak{c}^w_{u,c[k,r]}( t) \cdot  c_{\mathrm{SM}}^T(Y(w)^\circ),\]
    where the sum ranges  over all $w\in \S_n$ such that there exists a decreasing   path  of length $r' \leq r$  from $u$ to $w$ in the extended $k$-Bruhat order, and 
\begin{equation}\label{eq:frakcuw}
\mathfrak{c}^w_{u,c[k,r]}( t) 
=\left. [Y(c[k-r',r-r'])]_T\right|_{\delta(u, w)}.
\end{equation}
Here, $\delta(u, w)$ can be  taken as any permutation in $\S_n$ such that  the image set of $[k-r']$ is exactly 
$\Cminus_k(u, w)=\{u(i)\colon i\in [k]\}  \setminus \{u(i)\colon u(i)\neq w(i)\}$.

\item \label{item:LRruleforehCSMii} For  $r\geq 0$,  
\[c_{\mathrm{SM}}^T(Y(u)^\circ) \cdot [Y(c'[k,r])]_T = \sum_{w\in \S_n}\mathfrak{c}^w_{u,c'[k,r]}( t) \cdot c_{\mathrm{SM}}^T(Y(w)^\circ),\]
    where the sum ranges  over all $w\in \S_n$ such that there exists an increasing   path  of length $r' \leq r$ from $u$ to $w$ in the extended $k$-Bruhat order, and  \begin{equation}\label{eq:frakcuw-h}
        \mathfrak{c}^w_{u,c'[k,r]}( t) 
=\left. [Y(c'[k+r',r-r'])]_T\right|_{\sigma(u,w)}.
    \end{equation}
  Here, $\sigma(u, w)$ can be taken as any permutation 
 in $\S_n$ such that  the image set of $[k+r']$ is exactly 
$\Cplus_k(u, w)=\{u(i)\colon i\in [k]\}  \cup \{u(i)\colon u(i)\neq w(i)\}$.   
\end{enumerate}

\end{Th}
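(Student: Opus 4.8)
The plan is to read off Theorem~\ref{LRruleforehCSM} from the hook-shape formula of Theorem~\ref{LRruleforhookshapeCSM} by specializing to the two extreme cases $\alpha=0$ and $\beta=0$, and then to recognize the (greatly simplified) structure constant as a single localization of a Schubert class via the explicit Giambelli expansion in Lemma~\ref{Doubleeflemma}. Concretely, for part (i) one uses that the column hook $\Gamma=(1,1^{r-1})$ has Grassmannian permutation $w_{\Gamma}=c[k,r]$, so Theorem~\ref{LRruleforhookshapeCSM} applies with $\alpha=0$ and $\beta=r-1$; for part (ii) one uses that the row hook $\Gamma=(r)$ has $w_{\Gamma}=c'[k,r]$, and applies Theorem~\ref{LRruleforhookshapeCSM} with $\beta=0$ and $\alpha=r-1$.

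For part (i) I would first note that in the coefficient \eqref{eq:LRruleforhookshapeCSM} with $\alpha=0$ the inner constraint $\alpha_1+\alpha_2=-\mathrm{in}(\gamma)$ with $\alpha_1,\alpha_2\ge 0$ forces $\mathrm{in}(\gamma)=\alpha_1=\alpha_2=0$; thus only decreasing paths $\gamma$ contribute, and by Lemma~\ref{erelation3} there is at most one decreasing path from $u$ to $w$, whose length is $r'=\#([k]\cap M(u^{-1}w))$ by Remark~\ref{REM-1}, so that $\mathrm{de}(\gamma)=r'-1$. Since $e_0=h_0=1$ and $k+\alpha=k$, the coefficient collapses to
\[
\mathfrak{c}^w_{u,c[k,r]}(t)=\sum_{\beta_1+\beta_2=r-r'}e_{\beta_1}(t_{\Delta_k(u,w)})\cdot h_{\beta_2}(-t_{[k-r+1]}).
\]
The next step is to check that $\Delta_k(u,w)=u([k]\setminus M(u^{-1}w))$ has exactly $k-r'$ elements, so that it is the image of $[k-r']$ under some $\delta(u,w)\in\S_n$; then Lemma~\ref{Doubleeflemma} applied to $c[k-r',r-r']$ (descent at $k-r'$, parameter block $t_{[(k-r')-(r-r')+1]}=t_{[k-r+1]}$), together with the localization rule $f(x,t)|_v=f(vt,t)$, turns the right-hand side of the display into $[Y(c[k-r',r-r'])]_T|_{\delta(u,w)}$, which is \eqref{eq:frakcuw}. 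Finally the separate $w=u$ term $[Y(c[k,r])]_T|_u$ of Theorem~\ref{LRruleforhookshapeCSM} is exactly the $r'=0$ instance of this description, with $\delta(u,u)=u$, so the sum over all $w\in\S_n$ is accounted for.

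Part (ii) I expect to go through in complete parallel: specializing $\beta=0$, $\alpha=r-1$ in Theorem~\ref{LRruleforhookshapeCSM} now forces $\mathrm{de}(\gamma)=0$, i.e.\ only increasing paths survive (unique, of length $r'=\#(([n]\setminus[k])\cap M(u^{-1}w))$), and the same computation---using $|\Sigma_k(u,w)|=k+r'$ and the second identity of Lemma~\ref{Doubleeflemma}---yields \eqref{eq:frakcuw-h}. The proof is largely bookkeeping; the points that need genuine care are the two cardinality counts $|\Delta_k(u,w)|=k-r'$ and $|\Sigma_k(u,w)|=k+r'$ (both immediate from Remark~\ref{REM-1}) and the degenerate cases where $c[k-r',r-r']$ or $c'[k+r',r-r']$ shrinks---e.g.\ to the identity when $r'=k$ or $r'=r$---where one must confirm the conventions still return the value $1$; the standing hypothesis that $c[k,r]$ and $c'[k,r]$ lie in $\S_n$ guarantees that these smaller Grassmannian permutations are themselves well defined. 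The identification of the collapsed coefficient with one localized Schubert class is really the only content beyond Theorem~\ref{LRruleforhookshapeCSM} and Lemma~\ref{Doubleeflemma}, and so is where I would concentrate the write-up.
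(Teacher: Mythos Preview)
Your proposal is correct and follows essentially the same route as the paper's proof: specialize Theorem~\ref{LRruleforhookshapeCSM} to $\alpha=0$ (resp.\ $\beta=0$), observe that only decreasing (resp.\ increasing) paths survive, collapse the coefficient to the single sum $\sum_{\beta_1+\beta_2=r-r'}e_{\beta_1}(t_{\Delta_k(u,w)})h_{\beta_2}(-t_{[k-r+1]})$, and identify it with a localization via Lemma~\ref{Doubleeflemma} together with $\#\Delta_k(u,w)=k-r'$ from Remark~\ref{REM-1}. The paper makes one point slightly more explicit than you do---namely that $[Y(c[k-r',r-r'])]_T$ is symmetric in $x_1,\ldots,x_{k-r'}$, which is why \emph{any} choice of $\delta(u,w)$ with the prescribed image set gives the same localization---so you should state this symmetry in your write-up.
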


\begin{proof}
We give a proof of (\ref{item:LRruleforehCSMi}), and the arguments of (\ref{item:LRruleforehCSMii}) can be carried out similarly. 
Let   $w\in \S_n$ be such that there is a decreasing path from $u$ to $w$ of length $r' \leq r$ in the extended $k$-Bruhat order. Notice that $c[k,r]$ corresponds to the one column hook shape $(1+\alpha, 1^\beta)$ with $\alpha=0$ and $\beta=r-1$.  Hence, by Theorem \ref{LRruleforhookshapeCSM},
if $w=u$, then $r'=0$ and  the coefficient of $c_{\mathrm{SM}}^T(Y(u)^\circ)$ is
\begin{equation}\label{JH-1}
\mathfrak{c}_{u,c[k,r]}^{u}( t)=[Y(c[k,r])]_T|_{u},
\end{equation}
and if $w\neq u$, then $r'>0$ and  the coefficient of
$c_{\mathrm{SM}}^T(Y(w)^\circ)$ is \begin{equation}\label{JH-2}
\mathfrak{c}_{u,c[k,r]}^{w}( t)  = \sum_{\beta_1+\beta_2=r-r'} e_{\beta_1}( t_{\Cminus_k(u, w)})\cdot h_{\beta_2}(- t_{[k-r+1]}).
\end{equation}

On the other hand, by the first equality in  Lemma \ref{Doubleeflemma}, we have
\[
[Y(c[k-r',r-r'])]_T=\sum_{i+j=r-r'}e_i(x_{[k-r']})\cdot h_j(- t_{[k-r+1]}). 
\]
By Remark \ref{REM-1}, we know that $\texttt{\#}\Cminus_k(u, w)=k-r'$. 
Moreover,  it follows from  Lemma \ref{Doubleeflemma} that
$[Y(c[k-r',r-r'])]_T$ is symmetric in $x_1,\ldots,x_{k-r'}$. Therefore, for any   chosen permutation $\delta(u,w)\in\S_n$ such that  its image set of $[k-r']$ is 
$\Delta_k(u, w)$, the localization 
\[
\left.[Y(c[k-r',r-r'])]_T\right|_{\delta(u,w)}
\]
equals \eqref{JH-1} when $w=u$, and
equals \eqref{JH-2} when $w\neq u$. This concludes the proof.
\end{proof}

\begin{Rmk}\label{REM-I}
In the above proof, we only used the first equality in Lemma \ref{Doubleeflemma}. Applying the second equality, we can get an explicit formula for $\mathfrak{c}^w_{u,c[k,r]}( t)$. 
Precisely, suppose that 
\[\Delta_k(u,w)=\left\{\delta_1<\delta_2<\cdots<\delta_{k-r'}\right\}.\]
Take $\delta(u, w)$ as the permutation in $\S_n$
sending $i$ to $\delta_i$ for $1\leq i\leq k-r'$, and
fixing all the remaining elements in $[n]\setminus [k-r']$. By means  of the second equality in Lemma \ref{Doubleeflemma}, the coefficient $\mathfrak{c}^w_{u,c[k,r]}( t)$
in \eqref{eq:frakcuw} becomes 
\[
\mathfrak{c}^w_{u,c[k,r]}( t) 
= \sum_{1\leq i_1<\cdots<i_{r-r'}\leq k-r'}\prod_{j=1}^{r-r'} ( t_{\delta_{i_j}}- t_{i_j-j+1}).
\]
Similarly, if we assume 
that
\[
\Sigma_k(u,w)=\{\sigma_1<\sigma_2<\cdots<\sigma_{k+r'}\},
\]
then  the coefficient   $\mathfrak{c}^w_{u,c'[k,r]}( t)$ in \eqref{eq:frakcuw-h}  
can be read as
\[
\mathfrak{c}^w_{u,c'[k,r]}( t) 
= \sum_{1\leq i_1\leq \cdots\leq i_{r-r'}\leq k+r'}\prod_{j=1}^{r-r'} ( t_{\sigma_{i_j}}- t_{i_j+j-1}).
\]
If restricting the decreasing/increasing  path from $u$ to $w$ to be in the ordinary $k$-Bruhat order in Theorem \ref{LRruleforehCSM}, 
we recover the Pieri formula for equivariant Schubert classes, as established by Robinson \cite{Robinson}, see also Li,   Ravikumar,  Sottile  and  Yang  \cite{LSY}.  
\end{Rmk}

\section{Murnaghan--Nakayama Type Rules}\label{Sect5}

Our goal in this section is to establish the MN   formula  for   equivariant CSM classes, 
as described in Theorem \ref{UUU-1}.  
To do this, we begin by deducing a MN formula
for nonequivariant CSM classes in Theorem \ref{noneqMNruleCSM}. Then we derive  the  Rigidity Theorem for power sum symmetric functions in Theorem \ref{noneqimplieEqhookMN}, which together with Theorem \ref{noneqMNruleCSM}  
completes the proof of Theorem \ref{UUU-1}.

\subsection{Nonequivariant CSM MN Formula}

For a permutation $w\in \S_n$, define  its {\it $k$-height} to be
 one less than the number of non-fixed points of $w$ 
at the first $k$ positions, namely,
\begin{equation}\label{L-1}
\operatorname{ht}_k(w)= \texttt{\#}\{i\leq k\colon w(i)\neq i\}-1.
\end{equation}

Our MN formula for nonequivariant CSM classes 
can be stated as follows.

\begin{Th}[CSM MN Formula]\label{noneqMNruleCSM}
Let $u\in \S_n$. For $r\geq 1$, we have the following identity in $H^\bullet(\Fl(n))$:
\begin{gather}
c_{\mathrm{SM}}(Y(u)^\circ)\cdot p_{r}(x_{[k]})
=\sum_{\eta\in \S_n} (-1)^{\operatorname{ht}_k(\eta)}\cdot  c_{\mathrm{SM}}(Y(u\eta)^\circ),
\end{gather}
where the sum ranges over all $(r+1)$-cycles $\eta\in \S_n$ such that  $u\leq_k u\eta$ in the extended $k$-Bruhat order.
\end{Th}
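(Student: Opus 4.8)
The plan is to deduce the statement from the hook-shape Pieri formula of Theorem~\ref{NoneqPierirulehook} together with the classical expansion of a power sum into hook Schur polynomials, and then to regroup the resulting signed sum over peakless paths according to its endpoint.

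The first step is formal. By the classical Murnaghan--Nakayama rule applied to the empty partition---the border strips of size $r$ being exactly the hooks $(r-i,1^i)$, $0\le i\le r-1$, with height $i$---there is an identity of symmetric polynomials
\[
p_r(x_{[k]})=\sum_{i=0}^{r-1}(-1)^i\,s_{(r-i,1^i)}(x_{[k]})=\sum_{\alpha+\beta=r-1}(-1)^{\beta}\,s_{(1+\alpha,1^{\beta})}(x_{[k]}).
\]
Multiplying by $c_{\mathrm{SM}}(Y(u)^\circ)$, applying Theorem~\ref{NoneqPierirulehook} to each hook $\Gamma=(1+\alpha,1^{\beta})$, and noting that a peakless path $\gamma$ with $\mathrm{in}(\gamma)=\alpha$, $\mathrm{de}(\gamma)=\beta$ has length $\alpha+\beta+1=r$, we obtain
\[
c_{\mathrm{SM}}(Y(u)^\circ)\cdot p_r(x_{[k]})=\sum_{\gamma}(-1)^{\mathrm{de}(\gamma)}\,c_{\mathrm{SM}}\big(Y(\mathrm{end}(\gamma))^\circ\big),
\]
the sum ranging over all peakless paths $\gamma$ of length $r$ starting at $u$ in the extended $k$-Bruhat order.

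It then remains to collect terms by endpoint $w=u\eta$. The combinatorial identity to establish is
\[
\sum_{\substack{\gamma\colon u\to w\ \text{peakless}\\ \ell(\gamma)=r}}(-1)^{\mathrm{de}(\gamma)}=
\begin{cases}
(-1)^{\operatorname{ht}_k(\eta)}, & \eta=u^{-1}w\ \text{is an }(r+1)\text{-cycle},\\[4pt]
0, & \text{otherwise},
\end{cases}
\]
where in the first case $u\leq_k u\eta$ holds automatically since the path consists of $k$-edges. For the vanishing case the mechanism should be a sign-reversing involution on the peakless paths of length $r$ with a common endpoint: such a path records a factorization $u^{-1}w=t_{a_1b_1}\cdots t_{a_rb_r}$ into $k$-transpositions realizing strictly increasing lengths and with peakless labels $\tau_1>\cdots>\tau_i<\cdots<\tau_r$; when $\eta$ is not a single $(r+1)$-cycle one wants to exhibit a commuting block of these factors (coming, in the minimal case, from a cycle of $\eta$) that can be transported from one side of the valley to the other, changing $\mathrm{de}(\gamma)$ by exactly one while fixing $w$, and thereby pairing the contributions with opposite signs. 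When $\eta$ is a single $(r+1)$-cycle, an inductive bookkeeping argument in the spirit of Lemma~\ref{erelation3} and Remark~\ref{REM-1}---tracking how the label $\tau_j=w_{j-1}(a_j)$ and the partner $b_j$ are forced at each step---should show that the factorization into $k$-transpositions with increasing lengths, hence the peakless path, is unique, that its number of decreasing steps equals $\operatorname{ht}_k(\eta)$, and that conversely every $(r+1)$-cycle $\eta$ with $u\leq_k u\eta$ arises from exactly one such path; this yields the sign $(-1)^{\operatorname{ht}_k(\eta)}$ and completes the proof.

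I expect the combinatorial identity of the last paragraph to be the main obstacle, and within it the sign-reversing involution---especially making it uniform across $\eta$ that are products of several cycles and, more delicately, across paths whose recorded factorization is far from minimal (so that $u^{-1}w$ has reflection length much smaller than $r$, as already happens for chains through the dashed edges). Everything upstream is immediate. An alternative route would feed $p_r(x_{[k]})$ directly into the skew-operator formula of Lemma~\ref{PP-1} through the generating function $E(\q,\z,x_{[k]})$ of Lemma~\ref{OWW-1} specialized at $\q=-\z$---where it becomes $\sum_{r\ge1}\z^{r-1}p_r(x_{[k]})$---and apply Lemmas~\ref{QVrelation1} and~\ref{nonfixinglemma} to pin down the non-fixed points of $u^{-1}w$; this reduces to essentially the same bookkeeping.
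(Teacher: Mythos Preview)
Your first, formal step is exactly right and matches the paper. One small but consequential mismatch: the paper applies the \emph{unimodal} hook formula (Theorem~\ref{NoneqPierirulehook-R}), not the peakless one, so that the coefficient of $c_{\mathrm{SM}}(Y(w)^\circ)$ is the signed sum over unimodal paths of length $r$; all of the subsequent path analysis is carried out in the unimodal setting. By Corollary~\ref{peaklessieunimodal} the two sums agree, so this is not fatal, but the technical lemmas you would need to cite are written for unimodal paths.

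The genuine gap is in your second step. The paper does \emph{not} construct a sign-reversing involution for the non-cycle case; it uses precisely what you describe as the ``alternative route''. In Theorem~\ref{QQQ_I} the coefficient $d_{u,r}^w$ is interpreted via Lemma~\ref{PP-1} as $\mathcal{T}_{w/u}\big(p_r(x_{[k]})\big)\big|_{x_i=0}$, and the generating function $p(x_A)=\sum_{a\in A} x_a/(1-zx_a)$ (the $q\to -z$ specialization of $E$) is analyzed with Lemmas~\ref{QVrelation1} and~\ref{nonfixinglemma}: a short inductive argument on the iterated Demazure operators shows that nonvanishing forces $t_{a_1b_1}\cdots t_{a_mb_m}$ to be a single $(m{+}1)$-cycle. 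So your ``alternative'' is the paper's main mechanism, and the involution you flag as the main obstacle is simply avoided.

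For the cycle case your sketch points in the right direction but underestimates the work. The paper proves (Theorem~\ref{mutiplicitylemma}, supported by Lemmas~\ref{extkBruhatorder}--\ref{mutiplicityllemma2-2}) that when $\eta$ is an $(m{+}1)$-cycle with $u\leq_k u\eta$ there is a \emph{unique} unimodal path from $u$ to $u\eta$, it has length exactly $m$, and $\mathrm{de}(\gamma)=\operatorname{ht}_k(\eta)$. The key inductive step locates the edge carrying the minimum label $u(a)=\min\{u(i):i\in M(\eta)\}$ and shows, via a case split on whether $u(\eta^{-1}(a))\lessgtr u(\eta(a))$, that it must be either the first or the last edge, with the transposition $t_{a\,\eta^{-1}(a)}$ forced; removing it drops to an $m$-cycle and the induction proceeds. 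This uniqueness also handles your worry about ``far from minimal'' factorizations: since the unique unimodal path to an $(m{+}1)$-cycle has length $m$, there are no unimodal paths of length $r\neq m$ at all, so those endpoints contribute $0$ without any cancellation.
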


Theorem \ref{noneqMNruleCSM} is a direct  consequence of 
Theorem \ref{QQQ_I} and  Theorem  \ref{mutiplicitylemma}.

\begin{Th}\label{QQQ_I}
Let $u\in \S_n$. For $r\geq 1$, suppose that 
\begin{equation*}
c_{\mathrm{SM}}(Y(u)^\circ)\cdot p_r(x_{[k]})
=\sum_{w\in \S_n}d^w_{u, r}\cdot  c_{\mathrm{SM}}(Y(w)^\circ).
\end{equation*}
Then, 
\begin{equation}\label{eq:noneqMNruleCSMproof-R}
d^w_{u, r}
=\sum_{\gamma} (-1)^{\mathrm{de}(\gamma)}, 
\end{equation}
where the sum runs over all unimodal paths  $\gamma$  of length $r$ from $u$ to $w$ in the extended $k$-Bruhat order. 
Moreover, if $d^w_{u, r}\neq 0$, then  $ w=u\eta$ for some ($m+1$)-cycle $\eta$, where $m\geq 1$. 
\end{Th}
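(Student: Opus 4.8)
The plan is to combine the skew-operator formalism from Section~\ref{Secc-3} with the Leibniz rule and the explicit combinatorics of increasing/decreasing paths already developed in Section~\ref{Sect4}. By Lemma~\ref{PP-1}, the structure constant $d^w_{u,r}$ equals $\mathcal{T}_{w/u}\big(p_r(x_{[k]})\big)\big|_{x_i=0}$, so everything reduces to understanding how the skew operator $\mathcal{T}_{w/u}$ acts on the power sum $p_r(x_{[k]})$. The key algebraic input is the identity $p_r = \sum_{a+b=r-1}(-1)^b\, h_{a+1}(x_{[k]})\,e_b(x_{[k]})$ (the classical Murnaghan--Nakayama expansion of $p_r$ into hook Schur functions, $p_r = \sum_{i=0}^{r-1}(-1)^i s_{(r-i,1^i)}$). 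First I would apply $c_{\mathrm{SM}}(Y(u)^\circ)\cdot (-)$ to this identity and invoke Theorem~\ref{NoneqPierirulehook-R} (the unimodal-path version of the hook Pieri rule) to each term $s_{(r-i,1^i)}(x_{[k]})$; the sign $(-1)^i = (-1)^{\mathrm{de}(\gamma)}$ bookkeeps exactly, and summing over $i$ collates into $\sum_\gamma (-1)^{\mathrm{de}(\gamma)}$ over all unimodal paths $\gamma$ of length $r$ from $u$ to $w$, which is \eqref{eq:noneqMNruleCSMproof-R}.

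For the ``moreover'' clause, I would argue as follows. Suppose $d^w_{u,r}\neq 0$; then at least one unimodal path $\gamma$ of length $r$ from $u$ to $w$ exists. Write $\gamma$ as an increasing segment followed by a decreasing segment, so $\gamma$ is obtained by composing a sequence of transpositions; in the notation of Corollary~\ref{REW} and Remark~\ref{REM-1}, the labels along the increasing part have distinct $b_i$'s and the labels along the decreasing part have distinct $a_i$'s. The point is that $w=u\eta$ where $\eta = t_{a_1b_1}\cdots t_{a_rb_r}$ is a product of $r$ transpositions realizing a single unimodal cycle pattern. I would show directly that such an $\eta$ is necessarily a single cycle: tracking how the labels $\tau_1<\cdots<\tau_i>\cdots>\tau_r$ are transported by the successive transpositions, one sees that all the moved points $a_1,b_1,\dots,a_r,b_r$ lie in one orbit of $\eta$, and since $\eta$ is a product of $r$ transpositions its cycle type forces it to be an $(m+1)$-cycle with $m\le r$; since $\gamma$ has positive length and $w\neq u$ is possible only when $\eta\neq\mathrm{id}$, we get $m\geq 1$. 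Alternatively, and perhaps more cleanly, I would invoke Lemma~\ref{nonfixinglemma} (via the $q\to -z$ specialization of $E(q,z,x_A)$ into the power-sum generating function, as flagged in the remark after Lemma~\ref{OWW-1}): the nonvanishing of the relevant $\nabla_J$ acting on $\prod(1+qx_a)/\prod(1-zx_a)$ pins down $M(u^{-1}w)=\{a_1,b_1,\dots,a_r,b_r\}$, and a short direct check on unimodal patterns shows this forces $u^{-1}w$ to be a single cycle.

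The main obstacle I anticipate is the ``moreover'' part: proving that a unimodal path necessarily produces a \emph{single-cycle} $\eta$, rather than a product of several disjoint cycles. The subtlety is that while individual transpositions could a priori generate several cycles, the unimodality constraint $\tau_1<\cdots<\tau_i>\cdots>\tau_r$ on the edge labels is exactly what glues everything into one cycle — and making this precise requires carefully chasing the images $w_j(a_{j+1})$ through the path, much as in the sufficiency argument in the proof of Lemma~\ref{erelation3}. A clean way to organize this is to note that $M(u^{-1}w) = \{a_1,b_1,\dots,a_r,b_r\}$ by Lemma~\ref{nonfixinglemma}, so $|M(u^{-1}w)|\leq r+1$; combined with the fact that $u^{-1}w$ is a product of $r$ transpositions whose support has size at most $r+1$, and that a product of $r$ transpositions has cycle type with $\sum(\text{cycle lengths}-1)\equiv r$, the support of size $\le r+1$ forces a single cycle of length exactly $|M(u^{-1}w)| = m+1 \le r+1$. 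I would then need the standard observation that the length of $\gamma$ being $r$ together with unimodality forbids $m=0$ (which would mean $w=u$, contradicting positive length unless we are in the degenerate trivial case). I expect this parity-plus-support-size counting argument to be the cleanest route and the one that deserves the most care in the write-up.
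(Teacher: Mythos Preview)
Your derivation of \eqref{eq:noneqMNruleCSMproof-R} via the hook expansion $p_r=\sum_{\alpha+\beta=r-1}(-1)^\beta s_{(1+\alpha,1^\beta)}$ together with Theorem~\ref{NoneqPierirulehook-R} is exactly the paper's argument, and it is fine.

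The gap is in the ``moreover'' clause. Your approach (a) --- deducing that $u^{-1}w$ is a cycle from the existence of a single unimodal path --- is simply false. For $r=2$, take $n=4$, $k=2$, $u=1234$, and the increasing path $1234\stackrel{1}{\longrightarrow}3214\stackrel{2}{\longrightarrow}3412$: here $u^{-1}w=(1\,3)(2\,4)$ is a product of two disjoint transpositions, not a $3$-cycle. (Increasing paths only force the $b_i$'s to be distinct, not the $a_i$'s; see Lemma~\ref{erelation3}.) What actually happens is that there is also the decreasing path $1234\stackrel{2}{\longrightarrow}1432\stackrel{1}{\longrightarrow}3412$, and the two contributions $+1$ and $-1$ cancel to give $d^{3412}_{1234,2}=0$. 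So the cycle conclusion is a statement about the \emph{nonvanishing of the signed sum}, not about any individual path; no amount of tracking labels along one path will prove it. Your approach (b) has an unjustified step as well: Lemma~\ref{nonfixinglemma} (which concerns $\Q/\Z$, not $p$) yields $M(u^{-1}w)=\{a_1,b_1,\ldots,a_m,b_m\}$ with only the trivial bound $|M|\le 2m$; the inequality $|M|\le r+1$ that you assert does not follow, and your parity/support-size counting cannot distinguish a single $4$-cycle from two disjoint $2$-cycles.

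The paper's route is genuinely different and requires a new computation you have not made: one verifies directly that
\[
\partial_{ab}\,p(x_A)=\big(\delta_{a\in A}\delta_{b\notin A}-\delta_{b\in A}\delta_{a\notin A}\big)\cdot\frac{1}{(1-\z x_a)(1-\z x_b)},
\]
so after the \emph{first} Demazure operator one is in $1/\Z(x_{\{a,b\}})$. Then Lemma~\ref{QVrelation1} shows that each subsequent $\partial_{a_ib_i}$ applied to $1/\Z(x_B)$ is nonzero only when exactly one of $a_i,b_i$ lies outside $B$; inductively this forces $|\{a_1,b_1,\ldots,a_m,b_m\}|=m+1$ and makes $t_{a_1b_1}\cdots t_{a_mb_m}$ a single $(m+1)$-cycle. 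Since $d^w_{u,r}\neq 0$ implies some $\nabla_J(p)\neq 0$ in the skew-operator decomposition of $\mathcal{T}_{w/u}$, this nails the ``moreover'' clause. This first-step computation on $p(x_A)$ (rather than on $\Q/\Z$) is the missing ingredient in your proposal.
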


\begin{proof}
To prove \eqref{eq:noneqMNruleCSMproof-R},
we need the following expansion 
\begin{equation}\label{LLL-1}
p_r(x_{[k]})=\sum_{\alpha+\beta+1=r} (-1)^\beta s_{(1+\alpha,1^{\beta})}(x_{[k]}),   
\end{equation}
which is a special case of   \cite[Theorem 7.17.3]{stanley2}.
Applying the Pieri formula in Theorem \ref{NoneqPierirulehook-R}
to the right-hand side of \eqref{LLL-1}, we obtain that
\begin{equation*}\label{eq:noneqMNruleCSMproof}
c_{\mathrm{SM}}(Y(u)^\circ)\cdot p_r(x_{[k]})
=\sum_{\gamma} (-1)^{\mathrm{de}(\gamma)} c_{\mathrm{SM}}(Y(\mathrm{end}(\gamma))^\circ) 
\end{equation*}
with  $\gamma$  running  over all unimodal paths of length $r$
starting at $u$ in the extended $k$-Bruhat order, 
which leads to \eqref{eq:noneqMNruleCSMproof-R}.

Let $w\in \S_n$ be  such that $d^w_{u, r}\neq 0$.
In this case,    we see from \eqref{eq:noneqMNruleCSMproof-R}  that $\ell(w)>\ell(u)$.
By Lemma \ref{PP-1}, $d^w_{u, r}\neq 0$ is equivalent to 
\[
\mathcal{T}_{w/u}\left(p_r(x_{[k]}\right)\big|_{x_i=0}\neq 0.
\] 
We aim to show that $w=u\eta$ for some $(m+1)$-cycle $\eta$.

For a subset $A$ of $[n]$,    write  
\begin{equation}
p(x_A) =\sum_{a\in A}\frac{x_a}{1-\z x_a} = 
p_1(x_A) +\z p_2(x_A) +\cdots .
\end{equation}
Recall that 
\[
\mathcal{T}_{w/ u} = \sum_{\begin{subarray}{c}
J\subseteq [\ell]\\ w_J=u\end{subarray}}\nabla_J. 
\]
From the proof of Lemma \ref{NonimplieEquLemma},   it can be seen that  
\[
\nabla_J=u\,\partial_{a_1b_1}\cdots \partial_{a_mb_m},
\]
and moreover,
\begin{equation}\label{87uu}
w=u\,t_{a_1b_1}\cdots t_{a_mb_m}.
\end{equation}
Since $\ell(w)>\ell(u)$, we have $m\geq 1$. 

By direct computation,  it is routine to check that
\begin{equation}\label{FFF-4}
\partial_{ab}\,p(x_A) = 
(\delta_{a\in A}\delta_{b\notin A}-\delta_{b\in A}\delta_{a\notin A}) \cdot \frac{1}{(1-\z x_a)(1-\z x_{b})}.
\end{equation}
We have the following claim:

\begin{Claim}
For $m\geq 1$, if
\begin{equation}\label{eq:erelation2}
\partial_{a_1b_1}\cdots \partial_{a_mb_m} p(x_{A}) \neq 0,
\end{equation}
then the set $\{a_1, b_1,\ldots, a_m, b_m\}$ has cardinality $m+1$, and $t_{a_1b_1}\cdots t_{a_mb_m}$ forms an $(m+1)$-cycle on 
  $\{a_1, b_1,\ldots, a_m, b_m\}$. 
\end{Claim}

We verify the above Claim by induction on $m$. The case $m=1$ is obvious. Assume now that $m\geq 2$. 
By the assumption that $\partial_{a_1b_1}\cdots \partial_{a_mb_m} p(x_{A}) \neq 0$, we have  $\partial_{a_2b_2}\cdots \partial_{a_mb_m} p(x_{A})\neq 0$, and thus by induction,
the set $B=\{a_2, b_2,\ldots, a_m, b_m\}$ has cardinality $m$, and $t_{a_2b_2}\cdots t_{a_mb_m}$ forms an $m$-cycle on   $B$.
By \eqref{FFF-4} and Lemma \ref{QVrelation1}, $$\partial_{a_2b_2}\cdots \partial_{a_mb_m} p(x_{A})
\in \mathbb{Q}[\z]
\left(\partial_{a_2b_2}\cdots \partial_{a_{m-1}b_{m-1}} \frac{1}{\Z(x_{\{a_r,b_r\}})}\right)
\subseteq \mathbb{Q}[\z]\frac{1}{\Z(x_{B})}.$$
To ensure that $\partial_{a_1b_1}(\partial_{a_2b_2}\cdots \partial_{a_mb_m} p(x_{A}))\neq 0$, 
we necessarily have
$\partial_{a_1b_1}\frac{1}{\Z(x_{B})}\neq 0$, 
which requires  that
$\texttt{\#}\, \{a_1,b_1\}\cap B=1$ by Lemma \ref{QVrelation1}.
So  $\{a_1, b_1,\ldots, a_m, b_m\}$ contains $m+1$
elements. 
 Since $t_{a_2b_2}\cdots t_{a_mb_m}$ forms an  $m$-cycle on  $B$, it is easily checked that $t_{a_1b_1}\cdots t_{a_mb_m}$ forms  an $(m+1)$-cycle on  $\{a_1, b_1,\ldots, a_m, b_m\}$. 
 
Recall that if $d^w_{u, r}\neq 0$, then $\mathcal{T}_{w/ u}(p(x_A))\neq 0$, implying that $\partial_{a_1b_1}\cdots \partial_{a_mb_m} p(x_{A}) \neq 0$ for some $m\geq 1$. Together with the above Claim and \eqref{87uu}, there must exist some $m\geq 1$ such that 
$w=u\eta$ with $\eta=t_{a_1b_1}\cdots t_{a_mb_m}$
an $(m+1)$-cycle.
\end{proof}

\begin{Th}\label{mutiplicitylemma}
Let $u\in \S_n$, and    $\eta\in \S_n$ be an $(m+1)$-cycle ($m\geq 1$) such that $u\leq_k u\eta$ in the extended $k$-Bruhat order.
Then there exists a unique unimodal path $\gamma$  from $u$ to $u\eta$ in the extended $k$-Bruhat order. 
Moreover,  $\gamma$ is a path of length $m$ and with $\mathrm{de}(\gamma)=\operatorname{ht}_k(\eta)$. 
\end{Th}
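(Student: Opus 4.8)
The plan is to analyze the combinatorics of the factorization $\eta = t_{a_1b_1}\cdots t_{a_mb_m}$ of an $(m+1)$-cycle together with the $k$-Bruhat structure, and pin down a unique unimodal path by controlling the labels $\tau_i = w_{i-1}(a_i)$.

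\textbf{Step 1: Setting up the cycle on its support.} Write $M(\eta) = \{c_0, c_1, \ldots, c_m\}$ for the set of non-fixed points of $\eta$. First I would record that since $\eta$ is an $(m+1)$-cycle and $u \leq_k u\eta$, every edge in any path from $u$ to $u\eta$ in the extended $k$-Bruhat graph must be a $k$-edge, and by Remark \ref{REM-1}-type reasoning, the transpositions occurring in the factorization act precisely on $M(\eta)$, with the product being this single long cycle. So a path $\gamma\colon u = w_0 \to w_1 \to \cdots \to w_m = u\eta$ corresponds to a factorization $\eta = t_{a_1b_1}\cdots t_{a_mb_m}$ into $m$ transpositions on the $(m+1)$-element set $M(\eta)$, each step being a valid $k$-edge (so at step $i$, $w_{i-1}$ maps $a_i < b_i$ with $a_i \leq k < b_i$ to $\tau_i = w_{i-1}(a_i) < w_{i-1}(b_i)$, ascending the Bruhat graph).

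\textbf{Step 2: Existence and uniqueness of the unimodal path.} The key observation is that a unimodal path is determined by its sequence of labels $\tau_1 < \cdots < \tau_i > \cdots > \tau_m$. I would argue that the values $u(c)$ for $c \in M(\eta)$ split into those with index $c \le k$ (call the corresponding $u$-values the "low" slots, contributing to the descents/$e$-part) and $c > k$ (the "high" slots). Building the path greedily: at each stage the permutation $w_{i-1}$ agrees with $u$ outside the already-used positions, and the requirement that each transposition be a $k$-edge plus the unimodality constraint on $\{\tau_i\}$ forces, at each step, a unique choice of which pair $(a_i, b_i)$ to swap — essentially sorting/unsorting the cycle in a prescribed monotone order. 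This mirrors exactly the uniqueness argument in the proof of Lemma \ref{erelation3} (where the $a_i$'s, resp. $b_i$'s, for a decreasing, resp. increasing, path are forced), and I expect to adapt that bookkeeping: the increasing segment is governed by the distinctness of the high-side indices and the decreasing segment by the distinctness of the low-side indices. Existence then follows by exhibiting this canonical factorization of $\eta$ explicitly as a product of transpositions arranged unimodally.

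\textbf{Step 3: Computing $\mathrm{de}(\gamma) = \operatorname{ht}_k(\eta)$.} Once the canonical unimodal $\gamma$ is in hand, the length of its decreasing segment equals the number of steps whose transposition "originates" from a position $\le k$, which I would identify with $\#\{i \le k : i \in M(\eta)\} - 1 = \operatorname{ht}_k(\eta)$: indeed the cycle $\eta$ visits exactly $\operatorname{ht}_k(\eta)+1$ positions in $[k]$, and each descent step of the path corresponds bijectively to "merging in" one such position beyond the first. The length being $m$ is immediate since $\eta$ is a product of $m$ transpositions and the factorization is reduced in the relevant sense.

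\textbf{Main obstacle.} The delicate part will be Step 2 — proving uniqueness of the unimodal path and correctly matching the combinatorial constraint "$\{\tau_i\}$ is unimodal with prescribed $\mathrm{in}, \mathrm{de}$" to the structural constraint "each $t_{a_ib_i}$ is a $k$-edge for $w_{i-1}$". One must carefully verify that no unimodal factorization other than the canonical one can occur, which requires tracking how the partial products $w_i$ rearrange the values on $M(\eta)$ and checking that any deviation from the monotone order violates either the $k$-edge condition or the Bruhat-ascending condition at some step. I would handle this by an induction on $m$ paralleling the Claim-and-iteration structure already used for Lemma \ref{erelation3}, peeling off either the first edge of the increasing segment or the first edge of the decreasing segment depending on the position of $\tau_1$ relative to $\tau_m$.
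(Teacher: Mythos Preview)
Your overall strategy---induction on $m$, peeling off the edge carrying the minimum label---is the paper's, and you have correctly isolated the ``main obstacle.'' The genuine gap is that you have not found the \emph{intrinsic} criterion, depending only on $u$ and $\eta$, that decides whether the minimum-label edge sits at the front or the back of the unimodal path, and which transposition it carries. The paper supplies this as follows: if $a\in M(\eta)$ is the index with $u(a)$ minimal, then (Lemma~\ref{mutiplicityllemma1}) $u(a)$ is the minimum label of \emph{any} path from $u$ to $u\eta$; next (Lemmas~\ref{mutiplicityllemma3}--\ref{mutiplicityllemma2-2}) the edge with label $u(a)$ is forced to be $u\stackrel{u(a)}{\longrightarrow} ut_{a\,\eta^{-1}(a)}$ (the first edge) precisely when $u(\eta^{-1}(a))<u(\eta(a))$, and forced to be $u\eta\, t_{a\,\eta^{-1}(a)}\stackrel{u(a)}{\longrightarrow} u\eta$ (the last edge) precisely when $u(\eta^{-1}(a))>u(\eta(a))$. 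In either case the leftover $\eta'$ is an $m$-cycle, and one tracks $\operatorname{ht}_k$ through the induction: $\operatorname{ht}_k(\eta')=\operatorname{ht}_k(\eta)$ when peeling the first edge and $\operatorname{ht}_k(\eta')=\operatorname{ht}_k(\eta)-1$ when peeling the last. This bookkeeping is what yields $\mathrm{de}(\gamma)=\operatorname{ht}_k(\eta)$; your Step~3 bijection sketch (``each descent step merges in one low position beyond the first'') is not sharp enough on its own.

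Without the $u(\eta^{-1}(a))$ vs.\ $u(\eta(a))$ dichotomy your induction cannot start: for existence you must know which edge to lay down first, and for uniqueness you must rule out that one unimodal path begins with the minimum label while another ends with it. Your phrase ``depending on the position of $\tau_1$ relative to $\tau_m$'' presupposes a path already in hand, so it cannot serve as the deterministic criterion the induction requires. The analogy with Lemma~\ref{erelation3} is also looser than you suggest: there the path is fully monotone and the $a_i$'s (or $b_i$'s) are globally distinct, whereas here the interface between the increasing and decreasing segments is exactly where the subtlety lies.
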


We remark that an analogous  statement to Theorem  \ref{mutiplicitylemma} for the Grassmannian Bruhat order   appeared  in \cite[\textsection 6.2]{Sottile}.
The proof of Theorem \ref{mutiplicitylemma}
is quite technical, and will occupy the whole bulk of 
Subsection \ref{EEE-I}.

\subsection{Proof of Theorem \ref{mutiplicitylemma}}\label{EEE-I}

To prove Theorem \ref{mutiplicitylemma}, 
we need several  lemmas. The first lemma is a  nonrecursive criterion for the extended $k$-Bruhat order, which is quite useful in the comparison of two permutations in the extended $k$-Bruhat order. An analogous criterion for the ordinary $k$-Bruhat order has appeared in   \cite[Theorem A]{BS}.

\begin{Lemma}\label{extkBruhatorder}
For two permutations $u, w\in \S_n$, $u\leq_k w$ in the extended $k$-Bruhat order if and only if for any $a\leq k<b$, $u(a)\leq w(a)$ and $u(b)\geq w(b)$. 
\end{Lemma}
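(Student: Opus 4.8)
The plan is to prove Lemma~\ref{extkBruhatorder} by establishing the two implications separately, with the forward direction being routine and the reverse direction requiring the real work.

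\textbf{The forward direction.} Suppose $u \leq_k w$ in the extended $k$-Bruhat order, so there is a path $u \to u' \to \cdots \to w$ consisting of $k$-edges. It suffices to treat a single $k$-edge $v \stackrel{\tau}{\longrightarrow} vt_{ab}$ with $a \leq k < b$ and $v(a) < v(b)$, and then chain the inequalities along the path. For such an edge, the values of $vt_{ab}$ agree with those of $v$ except that position $a$ (which is $\leq k$) changes from $v(a)$ to $v(b) > v(a)$, and position $b$ (which is $> k$) changes from $v(b)$ to $v(a) < v(b)$. So at every position $i \leq k$ the value weakly increases, and at every position $j > k$ the value weakly decreases; this is exactly the monotonicity being claimed, and it is preserved under composition of such steps. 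Hence $u(a) \leq w(a)$ for $a \leq k$ and $u(b) \geq w(b)$ for $b > k$.

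\textbf{The reverse direction.} This is the main obstacle. Assume that $u(a) \leq w(a)$ for all $a \leq k$ and $u(b) \geq w(b)$ for all $b > k$, and that $u \neq w$; I want to produce a single $k$-edge $u \stackrel{\tau}{\longrightarrow} ut_{ab}$ such that $ut_{ab}$ still satisfies the same inequalities relative to $w$, and such that $ut_{ab}$ is ``closer'' to $w$ in some well-chosen measure (so that induction closes the argument). The natural monovariant is something like $\sum_{i \leq k}(w(i) - u(i))$, a nonnegative integer that is zero exactly when $u = w$ (using that the multiset of values on $[k]$ is then forced to coincide and the values off $[k]$ too). The key step is to locate a suitable pair $(a,b)$: since $u \neq w$ but the inequalities hold, there is some $a \leq k$ with $u(a) < w(a)$; among all such, choose one, and then I must find $b > k$ with $u(b) > u(a)$ so that $u \stackrel{u(a)}{\longrightarrow} ut_{ab}$ is a genuine $k$-edge, and moreover such that swapping does not violate $u t_{ab}(a) \leq w(a)$ or $ut_{ab}(b) \geq w(b)$. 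A careful choice — e.g. take $a \leq k$ with $u(a) < w(a)$ and $u(a)$ as large as possible, then take $b > k$ with $u(b)$ minimal subject to $u(b) > u(a)$ and $u(b) \le w(a)$ — should make these constraints automatic; verifying that such a $b$ exists is where I expect to spend the most effort, and it will hinge on a counting argument comparing how many values exceeding $u(a)$ sit in positions $\leq k$ versus where $w$ places them.

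\textbf{Assembling the induction.} Once the pair $(a,b)$ is found, set $u' = ut_{ab}$. Then $u \stackrel{u(a)}{\longrightarrow} u'$ is a $k$-edge, $u'$ still satisfies $u'(i) \leq w(i)$ for $i \leq k$ and $u'(j) \geq w(j)$ for $j > k$ (by the choice constraints), and the monovariant $\sum_{i \leq k}(w(i) - u'(i)) = \sum_{i \leq k}(w(i) - u(i)) - (u(b) - u(a))$ has strictly decreased because $u(b) > u(a)$. By induction on this quantity, $u' \leq_k w$, hence $u \leq_k w$. I will also need the base case: if the monovariant is zero then $u(i) = w(i)$ for all $i \leq k$, which forces $u = w$ since a permutation is determined by its values on $[k]$ together with the increasing arrangement of the remaining values — or more directly, the inequalities $u(b) \geq w(b)$ off $[k]$ combined with both being permutations of the same complementary value set force equality there too. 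This completes the proof.
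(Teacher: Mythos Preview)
Your forward-induction plan is a legitimate alternative to the paper's argument: the paper instead works backwards from $w$, finding $wt_{ab}\to w$ with $u$ still satisfying the inequalities relative to $wt_{ab}$, and inducting on $\ell(w)$; it selects $a\leq k$ with $u(a)$ \emph{minimal} subject to $u(a)<w(a)$, then $b>k$ with $w(b)<w(a)\leq u(b)$, and runs an infinite-descent argument to verify the remaining inequality $u(a)\leq w(b)$. Your monovariant $\sum_{i\leq k}(w(i)-u(i))$ is a perfectly good substitute for $\ell(w)$ in the dual direction.

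The gap is in your specific choice of $(a,b)$. For the forward step you actually need all of $w(b)\leq u(a)<u(b)\leq w(a)$, and your rule only controls the last two inequalities. Counterexample: $k=1$, $u=1234$, $w=4231$; the unique $a$ is $1$, minimising $u(b)$ subject to $1<u(b)\leq 4$ gives $b=2$, and then $(ut_{12})(2)=1<2=w(2)$. (Maximising $u(b)$ instead also fails: $k=1$, $u=132$, $w=321$ gives $a=1$, $b=2$, and again $(ut_{12})(2)=1<2=w(2)$.) In both cases the culprit is that your rule may land on a position $b$ with $w(b)>u(a)$. A working forward rule can be obtained by dualising the paper's: take $a\leq k$ with $w(a)$ maximal subject to $u(a)<w(a)$, then $b>k$ with $w(b)\leq u(a)<u(b)$, and prove existence of such $b$ and the bound $u(b)\leq w(a)$ by a descent argument. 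As written, though, the reverse direction is not yet complete.
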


\begin{proof}
The proof is  along a similar line to that of  \cite[Theorem A]{BS}. 
Denote by $\leq'_k$   the relation defined by the condition: for $a\leq k<b$, $u(a)\leq w(a)$ and $u(b)\geq w(b)$. Clearly, $\leq'_k$ is a partial order on $\S_n$. 
We need to verify that $\leq_k$ and $\leq'_k$ are the same partial order on $\S_n$.

Assume that $u\longrightarrow w$, that is, $w=ut_{ab}$ with $a\leq k<b$, and $u(a)<u(b)$. It is obvious  that $u\leq'_k w$. So, if $u\longrightarrow u_1\longrightarrow\cdots\longrightarrow w$, then $u\leq'_k u_1\leq'_k\cdots\leq'_k w$. This implies that if $u\leq_k w$, then $u\leq'_k w$.
 
Conversely, assume that $u\leq'_k w$. 
We use induction on $\ell(w)$ to prove $u\leq_k w$.
This is clear  in the case $\ell(w)=0$ since both $u$ and $w$ are the identity permutation. We now consider  $\ell(w)>0$ and $u\neq w$. 
To apply induction, we   show that there exist  $a\leq k<b$ such that   $wt_{ab}\longrightarrow w$ and  $u\leq_k' wt_{ab}$.

Choose the index  $a\leq k$ such that $u(a)$ is minimal 
subject to
$u(a)<w(a)$.
Such a choice exists since otherwise $u(i)\geq w(i)$ for all $i\leq k$ would lead to $u=w$. Once $a$ is chosen,  
locate any  index $b>k$ such that
\[
w(b)<w(a)\leq u(b).
\]
We explain that the above  choice of $b$ exists.
Suppose to the contrary that there does not exist such an index $b$. 
We claim that for $i\in [n]$, if $w(i)<w(a)$, then $u(i)<w(a)$. 
In fact, if $i>k$ and  $w(i)<w(a)$, then we must have $u(i)<w(a)$, since otherwise $u(i)\ge w(a)$ would imply the existence of an index $b$.  If $i\le k$ and $w(i)<w(a)$, then the claim clearly  holds since $u(i)\le w(i)<w(a)$.   By this claim, we see that 
\[\{u(i)\colon i\in [n], \   w(i)<w(a)\}=[w(a)-1]=\{1,2,\ldots, w(a)-1\},\]
which contradicts $u(a)<w(a)$ since $a\not\in \{i\in [n]\colon w(i)<w(a)\}$. 
So the assumption is false, that is, such an index $b$ always exists.


It is obvious that $wt_{ab}\longrightarrow w$ since $w(b)<w(a)$. 
We proceed to  show that $u\le_{k}' wt_{ab}$. By the choices of $a$ and $b$, it suffices to check that
\[ 
u(a)\leq w(b).
\]
Still, we use contradiction. Suppose otherwise that $w(b)<u(a)$. 
We shall construct  an infinite sequence $b_1,b_2,\ldots$ such that 
\[u(a)>u(b_1)>u(b_2)>\cdots,\]
which is absurd.

Let $b_1$ be the position such that  $u(b_1)=w(b).$  Then $u(a)>w(b)=u(b_1)$.  
Let   $b_2$  be such that   $u(b_2)=w(b_1).$
Since $u(a)>w(b)$ and $u(b)>w(b)$, it follows that $b_1\neq a, b$, and 
so $u(b_1)=w(b)\neq w(b_1)$. By the minimality of $u(a)$, it follows that $b_{1}>k$ and $u(b_1)>w(b_{1})=u(b_2).$ 
Choose the index  $b_3$ by letting 
$u(b_3)=w(b_2)$. 
Using the same analysis, we may deduce that $b_2>k$ and $u(b_2)>w(b_2)=u(b_3)$. Continuing this procedure,
we are led to 
$u(a)>u(b_1)>u(b_2)>\cdots$, a contradiction.


Applying   induction, we get 
$u\leq_k wt_{ab}$, which along with $wt_{ab}\leq_k w$ gives $u\leq_k w$.  
\end{proof}

\begin{Coro}\label{pathmovingset}
For $u, w\in \S_n$, assume that 
\[\gamma\colon u=w_0\longrightarrow w_1\longrightarrow \cdots \longrightarrow w_m=w\]
  is a path in the extended $k$-Bruhat order
with $w_i=w_{i-1}t_{a_ib_i}$ for $1\leq i\leq m$. 
Then 
\begin{equation}\label{EQ-1}
\{a_1,b_1,\ldots, a_m, b_m\} = M(u^{-1}w)=\{i\in [n]\colon u(i)\neq w(i)\}.
\end{equation}
Precisely,  by Lemma \ref{extkBruhatorder},
\begin{equation*}\label{EQ-2}
\{a_1,\ldots, a_m\}=\{a\in [n]\colon u(a)<w(a)\}
\end{equation*}
and 
\begin{equation*}\label{EQ-3}
\{b_1,\ldots, b_m\}=\{b\in [n]\colon u(b)>w(b)\}. 
\end{equation*}
\end{Coro}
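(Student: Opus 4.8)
The plan is to track the values $w_i(j)$ position by position along the path $\gamma$, exploiting that every edge is a $k$-edge, so that at step $i$ exactly one index $\le k$ (namely $a_i$) and one index $>k$ (namely $b_i$) have their values interchanged, with $a_i\le k<b_i$. The first thing I would record is the bookkeeping consequence of this: a fixed index $a\le k$ may occur among $a_1,\dots,a_m$ but never among $b_1,\dots,b_m$, and dually a fixed index $>k$ may occur among $b_1,\dots,b_m$ but never among $a_1,\dots,a_m$.

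Next I would fix a position $a\le k$ and observe that $w_i(a)=w_{i-1}(a)$ unless $a=a_i$, in which case $w_i(a)=w_{i-1}(b_i)>w_{i-1}(a_i)=w_{i-1}(a)$, the strict inequality being exactly the defining condition of the edge $w_{i-1}\longrightarrow w_i$. Hence $u(a)=w_0(a)\le w_1(a)\le\cdots\le w_m(a)=w(a)$ is nondecreasing and strictly increases precisely at the steps with $a_i=a$; so $u(a)<w(a)$ if $a$ appears among $a_1,\dots,a_m$, and $u(a)=w(a)$ otherwise. This gives $\{a_1,\dots,a_m\}=\{a\le k\colon u(a)<w(a)\}$, and the symmetric argument for a position $b>k$ (whose value sequence is nonincreasing) gives $\{b_1,\dots,b_m\}=\{b>k\colon u(b)>w(b)\}$.

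To conclude, I would invoke Lemma \ref{extkBruhatorder}: since $u\le_k w$, every $b>k$ satisfies $u(b)\ge w(b)$ and every $a\le k$ satisfies $u(a)\le w(a)$, so $\{a\colon u(a)<w(a)\}\subseteq[k]$ and $\{b\colon u(b)>w(b)\}\subseteq[n]\setminus[k]$, which promotes the two identities just obtained to the asserted ones ranging over all of $[n]$. Taking their union and using that $i\in M(u^{-1}w)$ exactly when $u(i)\ne w(i)$, i.e.\ when $u(i)<w(i)$ or $u(i)>w(i)$, yields $\{a_1,b_1,\dots,a_m,b_m\}=M(u^{-1}w)$. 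I do not expect a genuine obstacle here: the proof is short, it proceeds directly rather than by induction on $m$, and the only point requiring care is the bookkeeping remark that an index below $k$ is never one of the $b_i$ (and conversely), since that is precisely what makes each per-position value sequence monotone along $\gamma$.
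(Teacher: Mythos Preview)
Your proof is correct and follows essentially the same approach as the paper: both establish that for each position $a\le k$ the values $w_0(a),w_1(a),\dots,w_m(a)$ are nondecreasing (and dually nonincreasing for $b>k$), with strict increase exactly at the steps touching that position. The only cosmetic difference is that the paper obtains the inequalities $u(a_i)\le w_{i-1}(a_i)$ and $w_i(a_i)\le w(a_i)$ by applying Lemma~\ref{extkBruhatorder} directly to the sub-relations $u\le_k w_{i-1}$ and $w_i\le_k w$, whereas you unroll this step-by-step; your final invocation of Lemma~\ref{extkBruhatorder} is in fact redundant, since your monotonicity argument already shows $u(a)\le w(a)$ for $a\le k$ and $u(b)\ge w(b)$ for $b>k$.
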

\begin{proof}
It is clear that $M(u^{-1}w)\subseteq \{a_1,b_1,\ldots, a_m, b_m\}$.
It remains to check the reverse inclusion.
For   $1\leq i\leq m$, we have 
$u\leq_k w_{i-1}\longrightarrow w_{i}\leq_k w$. Combining  
this with Lemma \ref{extkBruhatorder} gives
\begin{align*}
&u(a_i)\leq w_{i-1}(a_i)<w_{i}(a_i)\leq w(a_i),\\[5pt]
&u(b_i)\geq w_{i-1}(b_i)>w_{i}(b_i)\geq w(b_i). 
\end{align*}
This implies  that both $a_i$ and $b_i$ are not fixed points of $u^{-1}w$, and thus they belong to $M(u^{-1}w)$, as required. 
\end{proof}

\begin{Lemma}\label{mutiplicityllemma1} 
For  $u\neq w\in \S_n$ and any path 
\[\gamma\colon u=w_0\stackrel{\tau_1}\longrightarrow w_1\stackrel{\tau_2}\longrightarrow \cdots \stackrel{\tau_m}\longrightarrow w_m=w\]
from $u$ to $w$ in the extended $k$-Bruhat order, we have
\begin{equation}\label{T-4}
\min\{u(i)\colon u(i)\neq w(i)\}=\min\{\tau_1,\ldots,\tau_m\}.
\end{equation}
Moreover, letting $a\in M(u^{-1}w)$ be the index exactly attaining the minimum value in \eqref{T-4}, we have $a\leq k$.
\end{Lemma}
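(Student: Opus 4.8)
## Proof Plan for Lemma~\ref{mutiplicityllemma1}

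The plan is to prove the two assertions together by tracking, along the path $\gamma$, \emph{where} the globally minimal displaced value of $u$ lives. Write $m_0 = \min\{u(i)\colon u(i)\neq w(i)\}$ and let $a$ be the unique index with $u(a) = m_0$. By Corollary~\ref{pathmovingset} we know $\{a_1,\dots,a_m\} = \{i\colon u(i)<w(i)\}$ and $\{b_1,\dots,b_m\} = \{i\colon u(i)>w(i)\}$, so $a$ is either some $a_j$ (if $u(a)<w(a)$) or some $b_j$ (if $u(a)>w(a)$). The first step is to rule out the latter: if $a = b_j$ for some $j$, then the partial path $u = w_0 \stackrel{\tau_1}\longrightarrow \cdots \stackrel{\tau_{j-1}}\longrightarrow w_{j-1}$ satisfies $w_{j-1}(a) = u(a)$ (since $a$ is not yet moved), and the edge $w_{j-1}\stackrel{\tau_j}\longrightarrow w_j = w_{j-1}t_{a_jb_j}$ with $b_j = a$ requires $w_{j-1}(a_j) < w_{j-1}(b_j) = w_{j-1}(a) = u(a) = m_0$; but $w_{j-1}(a_j) = \tau_j$ is a value of $u$ (in fact $\tau_j = w_{j-1}(a_j)$, and by Lemma~\ref{extkBruhatorder} applied to $u\leq_k w_{j-1}$ one gets $u(a_j)\leq w_{j-1}(a_j) = \tau_j < m_0$, contradicting minimality of $m_0$ among displaced values, since $a_j$ is displaced). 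Hence $a = a_j$ for some $j$, i.e. $u(a) < w(a)$.

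The second step is to show that this $a_j$ is precisely the one attaining $\min\{\tau_1,\dots,\tau_m\}$, and that $\tau_j = m_0$. Since $a = a_j$ and $a$ is not moved before step $j$, we have $w_{j-1}(a) = u(a) = m_0$, so $\tau_j = w_{j-1}(a_j) = m_0$. For any other step $i$, I claim $\tau_i \geq m_0$: indeed $\tau_i = w_{i-1}(a_i)$, and applying Lemma~\ref{extkBruhatorder} to $u\leq_k w_{i-1}$ gives $u(a_i)\leq w_{i-1}(a_i) = \tau_i$. Now $a_i$ is a displaced index of $u^{-1}w$ (by Corollary~\ref{pathmovingset}), so $u(a_i) \geq m_0$ by definition of $m_0$; combining, $\tau_i \geq u(a_i) \geq m_0$. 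This gives $\min\{\tau_1,\dots,\tau_m\} = m_0$, which is exactly \eqref{T-4}, and identifies $a$ as an index realizing the minimum on both sides simultaneously.

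The third step handles $a\leq k$. Since $a = a_j$ and $a_j$ is the "left" endpoint of the transposition $t_{a_jb_j}$ defining a $k$-edge $w_{j-1}\to w_j$, we have $a_j \leq k < b_j$ by the very definition of a $k$-edge. So $a \leq k$, completing the proof.

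I expect the only genuine subtlety to be the first step — ensuring that the globally minimal displaced value is \emph{raised} rather than \emph{lowered} along $\gamma$, i.e. that $a$ cannot be some $b_j$. The key leverage there is the nonrecursive criterion of Lemma~\ref{extkBruhatorder}: at every intermediate vertex $w_i$, the value sitting in a $b$-slot can only have \emph{decreased} from its value in $u$ and the value in an $a$-slot can only have \emph{increased}, so if $m_0$ started in a $b$-slot it would force the transposed partner's value to be $< m_0$ while still being a displaced value of $u$, contradicting minimality. Once that combinatorial sign-tracking is set up cleanly via Corollary~\ref{pathmovingset} and Lemma~\ref{extkBruhatorder}, the rest is bookkeeping.
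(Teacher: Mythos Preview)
Your proof is correct and uses the same two ingredients as the paper (Lemma~\ref{extkBruhatorder} and Corollary~\ref{pathmovingset}), with essentially the same core inequality $\tau_i = w_{i-1}(a_i) \geq u(a_i) \geq m_0$. The one organizational difference is how you obtain $a\leq k$: you first rule out $a\in\{b_1,\dots,b_m\}$ via a path-based contradiction and then read off $a=a_j\leq k$ from the definition of a $k$-edge, whereas the paper proves $a\leq k$ directly and path-independently (if $a>k$ then $u(a)\geq w(a)=u(u^{-1}w(a))$ with $u^{-1}w(a)\in M(u^{-1}w)$, forcing $u(a)=u(u^{-1}w(a))$ and hence $a=u^{-1}w(a)$, a contradiction). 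The paper's route is a touch slicker since it avoids tracking the first occurrence of $a$ along $\gamma$, but your argument is equally valid; just be explicit that in Steps~1 and~2 you take $j$ to be the \emph{first} index with $b_j=a$ (resp.\ $a_j=a$), so that ``$a$ is not yet moved'' is justified by the disjointness of $\{a_i\}$ and $\{b_i\}$ from Corollary~\ref{pathmovingset}.
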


{
\begin{proof}
Let $a$ be the index attaining the minimum value on the left-hand side of \eqref{T-4}. 
We first verify that $a\leq k$. Suppose to the contrary that 
$a>k$. By Lemma \ref{extkBruhatorder},
\begin{equation}\label{XXPPP-1}
u(a)\geq w(a)=u(u^{-1}w(a)).
\end{equation}
Since  $a$ is not a fixed point of $u^{-1}w$, $u^{-1}w(a)$ is also not a fixed point of $u^{-1}w$. So we have $u(a)\leq u(u^{-1}w(a))$, which together with \eqref{XXPPP-1} implies that $a=u^{-1}w(a)$, contrary to the fact that $u(a)\neq w(a)$. This concludes that $a\leq k$.

It remains to show that $u(a)=\min\{\tau_1,\ldots,\tau_m\}$. 
Let $i$ be the minimum index such that $u(a)\neq w_i(a)$. Assume that 
$$w_{i-1}\stackrel{\tau_i}\longrightarrow w_i=w_{i-1}t_{a_ib_i}$$
for some $a_i\leq k<b_i$. 
Since $i$ is minimum  and $a\leq k$, we have $a_i=a$ and $\tau_i=w_{i-1}(a)=u(a)$, meaning that $u(a)$ appears in the labels $\tau_1,\ldots, \tau_m$.
If there were some $j$ such that $\tau_{j}<u(a)$, say
$$w_{j-1}\stackrel{\tau_{j}}\longrightarrow w_{j}=w_{j-1}t_{a_jb_j} $$
for some $a_j\leq k<b_j$,  
we would deduce from Lemma \ref{extkBruhatorder} that  $u(a_j)\leq w_{j-1}(a_j)=\tau_{j}<u(a)$. 
However, it follows from  Corollary \ref{pathmovingset} that $a_j\in M(u^{-1}w)$,  which is contrary to the choice of $a$. This completes the proof.
\end{proof}
}



To give a proof of Theorem \ref{mutiplicitylemma}, we shall 
construct a unimodal path from $u$ to $u\eta$ in the extended $k$-Bruhat order. To do this, we first determine the edge with the minimum label (namely, the first or the last edge) in the unimodal path, as will be done in Lemmas \ref{mutiplicityllemma3} and 
\ref{mutiplicityllemma3-2}.
 
\begin{Lemma}\label{mutiplicityllemma3}
Let $u\in \S_n$, and $\eta\in \S_n$ be an $(m+1)$-cycle with $m\geq 2$ such that $u\leq_k u\eta$. Assume that $a\in M(\eta)$ is the index attaining the  minimum value $\min\{u(i)\colon i\in M(\eta)\}$.
We have the following equivalent statements:
\begin{equation}\label{I-1}
u(\eta^{-1}(a))<u(\eta(a))\iff u\stackrel{u(a)}\longrightarrow ut_{a\,\eta^{-1}(a)}\leq_k u\eta,
\end{equation}
where $a\leq k<\eta^{-1}(a)$.
Moreover, if we write $ut_{a\,\eta^{-1}(a)}=u'$ and $u\eta=u'\eta'$,   then $\eta'$ is an $m$-cycle with $\operatorname{ht}_k(\eta')=\operatorname{ht}_k(\eta)$, and 
$u'M(\eta')=uM(\eta)\setminus \{u(a)\}$. 
\end{Lemma}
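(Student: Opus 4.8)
The plan is to derive everything from the nonrecursive criterion for the extended $k$-Bruhat order (Lemma~\ref{extkBruhatorder}), once the positions of $a$ and $\eta^{-1}(a)$ and the relative sizes of $u(a)$, $u(\eta^{-1}(a))$, $u(\eta(a))$ have been pinned down. First I would record the preliminary facts. Since $\eta^{-1}(a)\in M(\eta)$ and $\eta^{-1}(a)\neq a$, minimality of $u(a)$ over $M(\eta)$ gives $u(a)<u(\eta^{-1}(a))$. Applying Lemma~\ref{mutiplicityllemma1} to $u$ and $u\eta$ yields $a\leq k$ (note that $u^{-1}(u\eta)=\eta$, so $a$ realises the minimum of $u$ over $M(\eta)$). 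The criterion of Lemma~\ref{extkBruhatorder} then forces $\eta^{-1}(a)>k$: were $\eta^{-1}(a)\leq k$, we would get $u(\eta^{-1}(a))\leq(u\eta)(\eta^{-1}(a))=u(a)$, contradicting $u(a)<u(\eta^{-1}(a))$. Hence $a\leq k<\eta^{-1}(a)$, and as $u(a)<u(\eta^{-1}(a))$, the step $u\stackrel{u(a)}{\longrightarrow}ut_{a\,\eta^{-1}(a)}$ is in every case a $k$-edge with label $u(a)$, independently of the hypothesis on the left of~\eqref{I-1}.

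For the equivalence~\eqref{I-1} it therefore remains to decide when $u'\leq_k u\eta$, where $b:=\eta^{-1}(a)$ and $u':=ut_{ab}$ differs from $u$ only in that $u'(a)=u(b)$ and $u'(b)=u(a)$. I would test the two families of inequalities in Lemma~\ref{extkBruhatorder} position by position. For $i\leq k$ with $i\neq a$, and for $j>k$ with $j\neq b$, the inequality needed for $u'\leq_k u\eta$ is identical to the one for $u\leq_k u\eta$, hence holds; at $j=b>k$ one has $u'(b)=u(a)=u(\eta(b))=(u\eta)(b)$, so the inequality holds with equality; thus the only non-automatic condition is at $i=a\leq k$, namely $u'(a)=u(\eta^{-1}(a))\leq u(\eta(a))=(u\eta)(a)$. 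Since $\eta$ has order $m+1\geq 3$, we have $\eta^{-1}(a)\neq\eta(a)$, so $u(\eta^{-1}(a))\neq u(\eta(a))$ and this condition is equivalent to the strict inequality $u(\eta^{-1}(a))<u(\eta(a))$. Combined with the previous paragraph this establishes~\eqref{I-1}.

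For the ``moreover'' part, $\eta'=(u')^{-1}u\eta=t_{ab}\,\eta$ with $b=\eta^{-1}(a)$. Writing $\eta$ as a cyclic word $(c_0\,c_1\,\cdots\,c_m)$ with $a=c_1$ (so $\eta^{-1}(a)=c_0$ and $\eta(a)=c_2$), a short computation gives $\eta'=(c_0\,c_1)(c_0\,c_1\,\cdots\,c_m)=(c_1\,c_2\,\cdots\,c_m)$, an $m$-cycle in which $c_0$ has become a fixed point; hence $M(\eta')=M(\eta)\setminus\{\eta^{-1}(a)\}$. Because $\eta^{-1}(a)>k$, deleting it leaves the number of non-fixed points among $\{1,\dots,k\}$ unchanged, so $\operatorname{ht}_k(\eta')=\operatorname{ht}_k(\eta)$; and since $u'$ only interchanges the values of $u$ at $c_0$ and $c_1$, one gets $u'M(\eta')=\{u'(c_1),\dots,u'(c_m)\}=\{u(c_0),u(c_2),\dots,u(c_m)\}=uM(\eta)\setminus\{u(c_1)\}=uM(\eta)\setminus\{u(a)\}$. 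None of these steps is deep; the one point that needs care is the cyclic bookkeeping---which element of $M(\eta)$ is absorbed, and the fact that it lies above $k$---so I expect the cycle-form computation of $t_{ab}\eta$, and reading off its effects on $\operatorname{ht}_k$ and on $uM(\eta)$, to be the step to write out most carefully.
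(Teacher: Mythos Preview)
Your proof is correct and follows essentially the same approach as the paper: both arguments use Lemma~\ref{mutiplicityllemma1} to get $a\leq k$, Lemma~\ref{extkBruhatorder} to force $\eta^{-1}(a)>k$ and to reduce the relation $u'\leq_k u\eta$ to the single inequality at position $a$, and a direct cycle computation for the ``moreover'' statements. Your position-by-position check of $u'\leq_k u\eta$ and your cyclic-word computation of $t_{ab}\eta$ are somewhat more explicit than the paper's, but the content and structure are the same.
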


\begin{proof}
Since $m\geq 2$, we have $\eta(a)\neq \eta^{-1}(a)$, and both $\eta(a)$ and $\eta^{-1}(a)$
belong to $M(\eta)$. Let us proceed to check that  $a\leq k<\eta^{-1}(a)$.  Since $u\leq_k u\eta$, by Lemma  \ref{mutiplicityllemma1}, we have $a\leq k$. Suppose otherwise that $\eta^{-1}(a)\leq k$. By Lemma \ref{extkBruhatorder},
we deduce that 
\[u(a)=u\eta(\eta^{-1}(a))\geq u(\eta^{-1}(a)),\]
which, along with the choice of $a$, yields the contradiction that $a=\eta^{-1}(a)$. 
This verifies   $\eta^{-1}(a)>k$.

We next prove the equivalence in \eqref{I-1}.
By Lemma \ref{extkBruhatorder}, the right-hand side of \eqref{I-1} 
is equivalently saying that 
\begin{equation}\label{I-2}
u(a)<u(\eta^{-1}(a))\ \ \ \ \ \  (\iff u\stackrel{u(a)}\longrightarrow ut_{a\,\eta^{-1}(a)})
\end{equation}
and 
\begin{equation}\label{I-3}
u(\eta^{-1}(a))\leq u\eta(a) \ \ \text{and} \ \ u(a)\geq u\eta(\eta^{-1}(a))=u(a)\ \ \ (\iff ut_{a\,\eta^{-1}(a)}\leq u\eta).    
\end{equation}
The choice of $a$ directly implies \eqref{I-2}.
Since $\eta^{-1}(a)\neq \eta(a)$,  condition \eqref{I-3} is the same as the left-hand side of \eqref{I-1}. This concludes  \eqref{I-1}.  
 
The fact that $\eta'=t_{a \,\eta^{-1}(a)}\eta$ is an $m$-cycle follows from direct computation. 
Actually, $\eta'$ is obtained from  $\eta$ by removing   the value $\eta^{-1}(a)$. Since $\eta^{-1}(a)>k$, recalling the definition in \eqref{L-1}, we obtain that  $\operatorname{ht}_k(\eta')=\operatorname{ht}_k(\eta)$.  Moreover, it is easily checked  that
$$u'M(\eta')=u'(M(\eta)\setminus \{\eta^{-1}(a)\})
=u'M(\eta)\setminus \{u(a)\}=uM(\eta)\setminus \{u(a)\}.$$
This completes the proof. 
\end{proof}

A dual statement to Lemma \ref{mutiplicityllemma3} can be derived  by  similar arguments.

\begin{Lemma}\label{mutiplicityllemma3-2}
Let $u$, $\eta$, and $a$ be as given in Lemma \ref{mutiplicityllemma3}. 
We have the following equivalent statements:
\begin{equation}\label{AAEESS}
u(\eta^{-1}(a))>u(\eta(a))\iff u\leq_k u\eta t_{a\,\eta^{-1}(a)} \stackrel{u(a)}\longrightarrow u\eta,
\end{equation}
where $a\leq k<\eta^{-1}(a)$.
Moreover, if we write 
$u=u'$ and $u\eta t_{a\,\eta^{-1}(a)}=u'\eta'$, then
$\operatorname{ht}_k(\eta')=\operatorname{ht}_k(\eta)-1$, and 
$u'M(\eta')=uM(\eta)\setminus \{u(a)\}$. 
\end{Lemma}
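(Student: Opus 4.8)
The plan is to mirror the argument of Lemma \ref{mutiplicityllemma3}, but running the path in the opposite direction: instead of peeling off the first edge (with smallest label $u(a)$) at the \emph{start} of an increasing segment, we peel it off at the \emph{end} of a decreasing segment. First I would record, exactly as in the proof of Lemma \ref{mutiplicityllemma3}, that $m\geq 2$ forces $\eta(a)\neq\eta^{-1}(a)$, that Lemma \ref{mutiplicityllemma1} applied to (any path realizing) $u\leq_k u\eta$ gives $a\leq k$, and that $\eta^{-1}(a)>k$: the latter because if $\eta^{-1}(a)\leq k$ then Lemma \ref{extkBruhatorder} would give $u(a)=u\eta(\eta^{-1}(a))\geq u(\eta^{-1}(a))$, contradicting the minimality of $u(a)$ over $M(\eta)$ unless $a=\eta^{-1}(a)$. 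So both $a\leq k<\eta^{-1}(a)$, and the transposition $t_{a\,\eta^{-1}(a)}$ is a genuine $k$-edge candidate.

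Next I would unpack the right-hand side of \eqref{AAEESS}. Since $u\eta t_{a\,\eta^{-1}(a)}\stackrel{u(a)}\longrightarrow u\eta$ means precisely $u\eta t_{a\,\eta^{-1}(a)}(a)<u\eta t_{a\,\eta^{-1}(a)}(\eta^{-1}(a))$, i.e. $u(a)<u\eta(a)$ — which holds automatically by the choice of $a$ — and the label is $u\eta t_{a\,\eta^{-1}(a)}(a)=u\eta(\eta^{-1}(a))=u(a)$, the only real content on the right is $u\leq_k u\eta t_{a\,\eta^{-1}(a)}$. Writing $\eta'=\eta t_{a\,\eta^{-1}(a)}$, this last inequality unpacks via Lemma \ref{extkBruhatorder} to: $u(i)\leq u\eta'(i)$ for $i\leq k$ and $u(i)\geq u\eta'(i)$ for $i>k$. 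The permutation $\eta'$ agrees with $\eta$ except that it sends $a\mapsto\eta(a)$ directly (bypassing $\eta^{-1}(a)$) and fixes $\eta^{-1}(a)$; so the only positions where $u\eta'$ differs from $u\eta$ are $a$ and $\eta^{-1}(a)$, with $u\eta'(a)=u\eta(a)$ and $u\eta'(\eta^{-1}(a))=u(a)$. Checking the two inequalities at these two positions (all others follow from $u\leq_k u\eta$) reduces exactly to $u(a)\leq u(\eta^{-1}(a))$ — true by minimality — together with $u(\eta^{-1}(a))\geq u\eta(a)$, which is the left-hand side of \eqref{AAEESS}. This establishes the equivalence.

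For the "moreover" clause: $\eta'=\eta t_{a\,\eta^{-1}(a)}$ is obtained from the $(m{+}1)$-cycle $\eta$ by deleting the element $\eta^{-1}(a)$ from its cycle, hence is an $m$-cycle, by the same direct computation as in Lemma \ref{mutiplicityllemma3}. Its set of non-fixed points is $M(\eta')=M(\eta)\setminus\{\eta^{-1}(a)\}$; since $\eta^{-1}(a)>k$ this element was \emph{not} counted in $\operatorname{ht}_k(\eta)=\#\{i\leq k:i\in M(\eta)\}-1$, so removing it leaves $\#\{i\leq k:i\in M(\eta')\}$ one smaller than $\#\{i\leq k:i\in M(\eta)\}$? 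No — here is the one place care is needed, and I expect it to be the main (small) obstacle: in Lemma \ref{mutiplicityllemma3} the removed position $\eta^{-1}(a)$ lies above $k$ and the \emph{moved} conjugating index $a$ lies $\leq k$ but stays non-fixed, so $\operatorname{ht}_k$ is unchanged; in the dual situation $u'=u$, so $M(\eta')$ is being compared against $M(\eta)$ directly, and one must verify that $a$ itself remains non-fixed under $\eta'$ (it does, as $\eta'(a)=\eta(a)\neq a$ since $m\geq 2$), while $\eta^{-1}(a)>k$ becomes fixed; hence exactly one element \emph{above} $k$ leaves $M(\eta)$ and nothing $\leq k$ changes status, giving... $\operatorname{ht}_k(\eta')=\operatorname{ht}_k(\eta)$. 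I would therefore re-examine which of the two endpoints absorbs the height drop: the correct bookkeeping is that in \eqref{AAEESS} we write $u\eta t_{a\,\eta^{-1}(a)}=u'\eta'$ with $u'=u$, so $\eta'=\eta t_{a\,\eta^{-1}(a)}$ and $M(\eta')=M(\eta)\setminus\{\eta^{-1}(a)\}$ with $\eta^{-1}(a)>k$, forcing $\operatorname{ht}_k(\eta')=\operatorname{ht}_k(\eta)$ — but the lemma asserts $\operatorname{ht}_k(\eta')=\operatorname{ht}_k(\eta)-1$, so in fact the element being removed from the cycle must be $a$ (which is $\leq k$) rather than $\eta^{-1}(a)$; concretely $\eta t_{a\,\eta^{-1}(a)}$ fixes $a$, not $\eta^{-1}(a)$, so $M(\eta')=M(\eta)\setminus\{a\}$ with $a\leq k$, yielding $\operatorname{ht}_k(\eta')=\operatorname{ht}_k(\eta)-1$ and $u'M(\eta')=uM(\eta)\setminus\{u(a)\}$, as claimed. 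This sign/index-tracking at the end is the only subtle point; everything else is a routine transcription of the proof of Lemma \ref{mutiplicityllemma3} with the roles of "first edge" and "last edge" interchanged.
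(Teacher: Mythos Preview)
Your overall strategy is exactly what the paper intends (it simply says ``derived by similar arguments''), and your self-correction at the end is right: for $\eta'=\eta\,t_{a\,\eta^{-1}(a)}$ one has $\eta'(a)=\eta(\eta^{-1}(a))=a$, so the element deleted from the cycle is $a$ (which lies $\leq k$), giving $M(\eta')=M(\eta)\setminus\{a\}$ and hence $\operatorname{ht}_k(\eta')=\operatorname{ht}_k(\eta)-1$ and $u'M(\eta')=uM(\eta)\setminus\{u(a)\}$.

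The slip is that you did not propagate this correction back into the equivalence paragraph. There you still have the values swapped: you write $u\eta'(a)=u\eta(a)$ and $u\eta'(\eta^{-1}(a))=u(a)$, but in fact $u\eta'=u\eta\,t_{a\,\eta^{-1}(a)}$ just swaps the entries of $u\eta$ at positions $a$ and $\eta^{-1}(a)$, so
\[
u\eta'(a)=u\eta(\eta^{-1}(a))=u(a),\qquad u\eta'(\eta^{-1}(a))=u\eta(a)=u(\eta(a)).
\]
With your stated (swapped) values, both inequalities at $a$ and $\eta^{-1}(a)$ become automatic from minimality of $u(a)$, and the equivalence would collapse. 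With the correct values, the check at $i=a\leq k$ is the triviality $u(a)\leq u(a)$, and the check at $i=\eta^{-1}(a)>k$ is precisely $u(\eta^{-1}(a))\geq u(\eta(a))$, i.e.\ the left side of \eqref{AAEESS} (strict since $\eta(a)\neq\eta^{-1}(a)$). So your stated final condition ``$u(\eta^{-1}(a))\geq u\eta(a)$'' is correct, but it does not follow from the intermediate values you wrote; fix those two values and the paragraph is clean.
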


The last two lemmas will be used to prove the uniqueness of the unimodal path from 
$u$ to $u\eta$  in the extended $k$-Bruhat order.

\begin{Lemma}\label{mutiplicityllemma2}
Let $u$, $\eta$, and $a$ be as given in Lemma \ref{mutiplicityllemma3}. 
Assume that {  $u(a)$ appears as the first label of a path  }
    \[\gamma\colon u\stackrel{u(a)}\longrightarrow ut_{ab}\leq_k  u\eta \]
     from $u$ to $u\eta$ in the extended $k$-Bruhat order. Then we have $b=\eta^{-1}(a)$.
{ Moreover, we have $u(\eta^{-1}(a))<u(\eta(a))$ by \eqref{I-1}.}
\end{Lemma}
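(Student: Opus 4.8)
The plan is to show that if $u(a)$ occurs as the first label of a path $\gamma\colon u\stackrel{u(a)}\longrightarrow ut_{ab}\leq_k u\eta$, then the second index $b$ of the transposition $t_{ab}$ is forced to equal $\eta^{-1}(a)$. The first observation is that $u\longrightarrow ut_{ab}$ being a $k$-edge means $a\le k<b$ and $u(a)<u(b)$, so $ut_{ab}$ agrees with $u$ outside $\{a,b\}$, swapping the values $u(a)$ and $u(b)$. In particular $ut_{ab}(a)=u(b)$ and $ut_{ab}(b)=u(a)$.

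First I would use Corollary \ref{pathmovingset} applied to the subpath $ut_{ab}\leq_k u\eta$ (or to the whole of $\gamma$) to pin down that $b\in M(\eta)$: indeed $M(u^{-1}\cdot u\eta)=M(\eta)$ equals the set of indices moved along $\gamma$, which contains both $a$ and $b$. So $b$ is a non-fixed point of $\eta$, and since $a$ attains $\min\{u(i):i\in M(\eta)\}$ we have $u(a)\le u(b)$, consistent with the edge condition. Next I would compare localizations/values at the index $b$ on the two ends of the subpath $ut_{ab}\leq_k u\eta$ via Lemma \ref{extkBruhatorder}: since $b>k$, that lemma gives $ut_{ab}(b)\ge u\eta(b)$, i.e. $u(a)\ge u\eta(b)=u(\eta(b))$. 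But $\eta(b)\in M(\eta)$ as well (it is the $\eta$-image of a non-fixed point), and by minimality of $u(a)$ over $uM(\eta)$ we get $u(a)\le u(\eta(b))$. Combining the two inequalities forces $u(a)=u(\eta(b))$, hence $\eta(b)=a$, i.e. $b=\eta^{-1}(a)$. This is the crux of the argument.

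Having established $b=\eta^{-1}(a)$, the path $\gamma$ begins with the edge $u\stackrel{u(a)}\longrightarrow ut_{a\,\eta^{-1}(a)}\leq_k u\eta$, which is exactly the right-hand side of \eqref{I-1} in Lemma \ref{mutiplicityllemma3} (note $a\le k<\eta^{-1}(a)$ was already shown there). Applying the equivalence \eqref{I-1} in the reverse direction immediately yields $u(\eta^{-1}(a))<u(\eta(a))$, which is the "moreover" clause. I should double-check the degenerate case $m=1$ is excluded (here $m\ge 2$ by hypothesis, so $\eta(a)\neq\eta^{-1}(a)$ and both lie in $M(\eta)$, which is what makes the minimality comparisons meaningful); this is handled by the standing assumptions inherited from Lemma \ref{mutiplicityllemma3}.

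The main obstacle I anticipate is making the value-comparison at index $b$ fully rigorous: one must be careful that $\eta(b)$ is genuinely in $M(\eta)$ (true because $\eta$ is an $(m+1)$-cycle with $m\ge 2$, so its support is closed under $\eta$ and has no fixed points inside), and that the inequality from Lemma \ref{extkBruhatorder} is applied to the correct endpoint of the subpath — it is $ut_{ab}$, not $u$, that sits below $u\eta$, so the relevant inequality is $ut_{ab}(b)\ge (u\eta)(b)$ and it is essential that $ut_{ab}(b)=u(a)$ rather than $u(b)$. Once these bookkeeping points are nailed down the squeeze $u(a)=u(\eta(b))$ is immediate and the rest follows formally.
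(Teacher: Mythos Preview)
Your proposal is correct and follows essentially the same approach as the paper: both use Lemma~\ref{extkBruhatorder} at index $b>k$ to obtain $u(a)=ut_{ab}(b)\ge u\eta(b)=u(\eta(b))$, invoke Corollary~\ref{pathmovingset} to place $b$ (hence $\eta(b)$) in $M(\eta)$, and then squeeze via the minimality of $u(a)$ to conclude $\eta(b)=a$. Your handling of the ``moreover'' clause via the equivalence \eqref{I-1} is exactly what the paper intends.
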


\begin{proof}
By Lemma \ref{extkBruhatorder}, we have
\begin{equation}\label{P-1}
u(a)=ut_{ab}(b)\geq u\eta(b)=u(\eta(b)).
\end{equation}
On the other hand, by Corollary \ref{pathmovingset}, we have  $b\in   M(\eta)$, namely, $\eta(b)\neq b$,  implying $\eta(b)\in M(\eta)$. Together with  \eqref{P-1} and the choice of $a$, we are given   $\eta(b)=a$. 
\end{proof}

Using similar analysis to Lemma \ref{mutiplicityllemma2}, we obtain the following dual assertion. 

\begin{Lemma}\label{mutiplicityllemma2-2}
Let $u$, $\eta$, and $a$ be as given in Lemma \ref{mutiplicityllemma3}. 
Assume that {  $u(a)$ appears as the last label of a path }
\[\gamma\colon u\leq_k u\eta t_{a'b} \stackrel{u(a)}\longrightarrow u\eta\]
  from $u$ to $u\eta$ in the extended $k$-Bruhat order.
    Then we have $a'=a$ and $b=\eta^{-1}(a)$.
 Moreover, we have $u(\eta^{-1}(a))>u(\eta(a))$ 
 by \eqref{AAEESS}.
\end{Lemma}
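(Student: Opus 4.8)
The plan is to run the argument dual to the proof of Lemma~\ref{mutiplicityllemma2}, reading information off the \emph{last} edge of $\gamma$ rather than the first, and invoking the nonrecursive criterion of Lemma~\ref{extkBruhatorder} at the appropriate endpoint together with the moving-set identity of Corollary~\ref{pathmovingset}. First I would unwind the edge labeling at the terminal step: write the last edge of $\gamma$ as $v \stackrel{u(a)}\longrightarrow u\eta$ with $v = u\eta\, t_{a'b}$, where, since every edge of a path in the extended $k$-Bruhat order is a $k$-edge, we may take $a' \leq k < b$. By the definition of the labeling, the label of this edge is $v(a') = u\eta\big(t_{a'b}(a')\big) = u\eta(b)$, so $u\eta(b) = u(a)$; this forces $\eta(b) = a$, i.e.\ $b = \eta^{-1}(a)$. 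In particular $\eta^{-1}(a) = b > k$, consistent with the positions of $a$ and $\eta^{-1}(a)$ recorded in Lemma~\ref{mutiplicityllemma3}.

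Next I would pin down $a'$ by squeezing $u(a')$. On the one hand, Corollary~\ref{pathmovingset} applied to $\gamma$ gives $a' \in M(\eta)$, so minimality of $u(a)$ over $\{u(i) : i \in M(\eta)\}$ yields $u(a') \geq u(a)$. On the other hand, $u \leq_k v$ together with $a' \leq k$ and Lemma~\ref{extkBruhatorder} gives $u(a') \leq v(a') = u\eta(b) = u(a)$. Hence $u(a') = u(a)$, and since $u$ is a bijection, $a' = a$. At this stage the last edge of $\gamma$ reads $u\eta\, t_{a\,\eta^{-1}(a)} \stackrel{u(a)}\longrightarrow u\eta$, and since $\gamma$ begins at $u$ we also have $u \leq_k u\eta\, t_{a\,\eta^{-1}(a)}$; this is exactly the right-hand side of the equivalence~\eqref{AAEESS}, so its left-hand side $u(\eta^{-1}(a)) > u(\eta(a))$ follows, which finishes the proof.

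I do not anticipate a genuine obstacle here: the whole argument is a short mirror image of the proof of Lemma~\ref{mutiplicityllemma2}. The only steps requiring a little care are the bookkeeping at the terminal edge — making sure the label $v(a')$ is computed as $u\eta(b)$ rather than $u\eta(a')$, and reading off $a' \leq k < b$ from the fact that every edge is a $k$-edge — and applying Lemma~\ref{extkBruhatorder} with the inequality in the correct direction at the endpoint $a' \leq k$, namely $u(a') \leq v(a')$. Everything else is a direct transcription.
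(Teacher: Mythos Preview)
Your proposal is correct and follows essentially the same approach as the paper's own proof: both compute the last label as $u\eta\,t_{a'b}(a')=u\eta(b)=u(a)$ to get $b=\eta^{-1}(a)$, then combine the inequality $u(a')\leq u(a)$ from Lemma~\ref{extkBruhatorder} with $a'\in M(\eta)$ from Corollary~\ref{pathmovingset} and the minimality of $u(a)$ to force $a'=a$, after which the ``moreover'' clause is read off from~\eqref{AAEESS}.
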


\begin{proof}
Since $u(a)$ appears on the last edge, we have $u\eta t_{a'b}(a')=u\eta(b)=u(a)$,  and so $b=\eta^{-1}(a)$. We next verify  $a'=a$.
By  Lemma \ref{extkBruhatorder},
\begin{equation}\label{P-2}
u(a')\leq u\eta t_{a'b}(a')=u\eta(b)=u(a).
\end{equation}
By Corollary \ref{pathmovingset}, we see that  $a'\in M(\eta)$, which, together with \eqref{P-2} and the choice  of $a$, forces  $a'=a$, as desired. 
\end{proof}

We are finally ready to present a proof of Theorem \ref{mutiplicitylemma}.

\begin{proof}[Proof of Theorem \ref{mutiplicitylemma}]

In the case when $m=1$, $\eta$ is a transposition, say, $\eta=t_{ab}$ 
with $a\leq k<b$. We explain that $u\to ut_{ab}$ is the 
unique path from $u$ to $ut_{ab}$ in the extended $k$-Bruhat order. Suppose that $\gamma$ is a path  from $u$ to $ut_{ab}$ in the extended $k$-Bruhat order. Since $M(t_{ab})=\{a,b\}$, by Corollary \ref{pathmovingset}, 
  $t_{a b}$ is the only  transposition appearing in the construction of $\gamma$, and
  so $\gamma$ is exactly  the path $u\to ut_{ab}$. 
Clearly, the path $u\to ut_{ab}$ is of length one and with $\mathrm{de}(\gamma)=\operatorname{ht}_k(t_{ab})=0$. 

We next consider the case when $m\geq 2$. 
Let $a\in M(\eta)$ be the index reaching the minimum value  $\min\{u(i)\colon i\in M(\eta)\}$.  Since $m\geq 2$, we have $\eta(a)\neq \eta^{-1}(a)$.
The discussion is divided into two cases. 


Case 1.  $u(\eta^{-1}(a))<u(\eta(a))$. By Lemma \ref{mutiplicityllemma3}, we see that  $u\stackrel{u(a)}\longrightarrow ut_{a\,\eta^{-1}(a)}\leq_k u\eta$, where $a\leq k<\eta^{-1}(a)$.  Let $u'=ut_{a\,\eta^{-1}(a)}$ and $u'\eta'=u\eta$ be as defined  in   Lemma \ref{mutiplicityllemma3}. By induction on $m$, we   assume that there is a unique unimodal path $\gamma'$ from $u'$ to $u'\eta'$ of length $m-1$, which satisfies  
$\mathrm{de}(\gamma')=\operatorname{ht}_k(\eta')$. 
Consider the path
$$\gamma\colon u\stackrel{u(a)}\longrightarrow u'\to \stackrel{\gamma'}\cdots \to u'\eta'.$$
By Lemma \ref{mutiplicityllemma1}, 
the minimum label among the edges in $\gamma'$ is  
$\min\{u'(i)\colon i \in M(\eta')\}=\min\{u'M(\eta')\}$. 
By Lemma \ref{mutiplicityllemma3}, $u'M(\eta')=uM(\eta)\setminus \{u(a)\}$, and by the minimality of $u(a)$, we have 
\[\min\{u'(i)\colon i \in M(\eta')\}>
u(a).\]
Thus $\gamma$ is a  unimodal path from $u$ to $u\eta$ 
of length $m$ and
 with $\mathrm{de}(\gamma)=\mathrm{de}(\gamma')=\operatorname{ht}_k(\eta')$. By Lemma \ref{mutiplicityllemma3}, we have $\operatorname{ht}_k(\eta)=\operatorname{ht}_k(\eta')$, and so we obtain that $\mathrm{de}(\gamma)= \operatorname{ht}_k(\eta)$.

It remains to show that $\gamma$ is  unique.
Let  $\overline{\gamma}$ be any unimodal path from $u$ to $u\eta$ in the extended $k$-Bruhat order. By Lemma \ref{mutiplicityllemma1}, the minimum label in $\overline{\gamma}$ is $u(a)$. Since  $\overline{\gamma}$ is unimodal, $u(a)$  appears on the first or the last edge.
Because  $u(\eta^{-1}(a))<u(\eta(a))$, it follows from  Lemma \ref{mutiplicityllemma2}  that the label $u(a)$ must appear on the first edge, and the  transposition  corresponding to this  edge is  $t_{a\,\eta^{-1}(a)}$. 
Since the subpath $ \overline{\gamma}'$ from $u'=ut_{a\,\eta^{-1}(a)}$ to $u'\eta'=u\eta$ is still unimodal, by induction on $m$, we can assume that $ \overline{\gamma}'$ is unique. 
This enables us to conclude that  $\overline{\gamma}$   coincides with $\gamma$.  

Case 2.  $u(\eta^{-1}(a))>u(\eta(a))$. The arguments rely on Lemmas \ref{mutiplicityllemma1}, \ref{mutiplicityllemma3-2} and \ref{mutiplicityllemma2-2}, which are nearly the same as Case 1, and  so are omitted. 
\end{proof}

\subsection{Equivariant CSM MN Formula:  Theorem \ref{UUU-1}}

To attain  a proof of Theorem \ref{UUU-1}, we  establish the following Rigidity Theorem for multiplying a CSM class by a power sum symmetric function.

\begin{Th}[Rigidity Theorem]\label{noneqimplieEqhookMN}
Let  $u\in \S_n$ and  $A\subseteq [n]$. For $r\geq 1$, 
suppose  that 
\begin{equation}
c_{\mathrm{SM}}(Y(u)^\circ) \cdot p_r(x_A) = 
\sum_{w\in \S_n} d_{u,r}^w \cdot 
c_{\mathrm{SM}}(Y(w)^\circ).
\end{equation}
Then, we have
\begin{equation}\label{eq:abstractCSMMNrule}
c_{\mathrm{SM}}^T(Y(u)^\circ) \cdot p_r(x_A) = 
\sum_{w\in \S_n} d_{u,r}^w( t) \cdot 
c_{\mathrm{SM}}^T(Y(w)^\circ),
\end{equation}
where
\begin{equation}
d_{u,r}^w( t)=\begin{cases}
p_r( t_{uA}), & w=u,\\[5pt]
\sum\limits_{1\leq r'\leq r}d_{u,r'}^w\cdot  h_{r-r'}( t_{u M(u^{-1}w)}),& w\neq u. \label{99qw}
\end{cases}
\end{equation}
Here, recall that
\[u M(u^{-1}w)= \{u(i)\colon w(i)\neq u(i)\}.\]
\end{Th}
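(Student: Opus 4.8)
The plan is to mirror the proof of the Rigidity Theorem \ref{noneqimplieEqhook}, replacing the generating function $E(\q,\z,x_A)$ of hook-shape Schur polynomials with the generating function of power sums. Concretely, set
\[
p(x_A) = \sum_{a\in A}\frac{x_a}{1-\z x_a} = \sum_{r\geq 1}\z^{r-1}p_r(x_A),
\]
which is exactly the specialization of $E(\q,\z,x_A)$ as $\q\to -\z$ (up to the overall factor), as remarked after Lemma \ref{OWW-1}. From the hypothesis, packaging the coefficients into $d_u^w(\z,t)=\sum_{r\geq 1}\z^{r-1}d_{u,r}^w(t)$, one has $c_{\mathrm{SM}}^T(Y(u)^\circ)\cdot p(x_A)=\sum_w d_u^w(\z,t)\cdot c_{\mathrm{SM}}^T(Y(w)^\circ)$, so by Lemma \ref{PP-1},
\[
d_u^w(\z,t)=\mathcal{T}_{w/u}\,p(x_A)\big|_{x_i=t_i}.
\]
When $w=u$ the skew operator $\mathcal{T}_{u/u}$ is just the action of $u$, giving $d_u^u(\z,t)=p(t_{uA})$, hence $d_{u,r}^u(t)=p_r(t_{uA})$, the first case of \eqref{99qw}.

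The substantive case is $w\neq u$. Following the proof of Lemma \ref{NonimplieEquLemma}, for each $J\subseteq[\ell]$ with $w_J=u$ we may write $\nabla_J = u\,\partial_{a_1b_1}\cdots\partial_{a_mb_m}$ with $u^{-1}w = t_{a_1b_1}\cdots t_{a_mb_m}$ and $m\geq 1$. The key computational input is the identity \eqref{FFF-4},
\[
\partial_{ab}\,p(x_A) = (\delta_{a\in A}\delta_{b\notin A}-\delta_{b\in A}\delta_{a\notin A})\cdot\frac{1}{(1-\z x_a)(1-\z x_b)},
\]
which already appears in the proof of Theorem \ref{QQQ_I}. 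Iterating this together with Lemma \ref{QVrelation1} (applied with $\q=0$, so the $\Q$-factors are $1$) and Lemma \ref{nonfixinglemma}, one gets
\[
\partial_{a_1b_1}\cdots\partial_{a_mb_m}\,p(x_A)\in\mathbb{Z}[\z]\cdot\frac{1}{\Z(x_{M(u^{-1}w)})},
\]
and then applying $u$ yields $\nabla_J\,p(x_A)\in\mathbb{Z}[\z]\cdot\Z(x_{uM(u^{-1}w)})^{-1}$. Summing over all such $J$ gives
\[
d_u^w(\z,t) = g(\z)\cdot\frac{1}{\Z(t_{uM(u^{-1}w)})}
\]
for some $g(\z)\in\mathbb{Z}[\z]$. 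Setting all $t_i=0$ identifies $g(\z)=d_u^w(\z,0)=\sum_{r'\geq 1}\z^{r'-1}d_{u,r'}^w$, and then expanding $\Z(t_{uM(u^{-1}w)})^{-1}=\sum_{s\geq 0}\z^s h_s(t_{uM(u^{-1}w)})$ and comparing coefficients of $\z^{r-1}$ yields \eqref{99qw}. The whole argument is essentially the $\q\to-\z$ degeneration of Section \ref{Secc-3}, so the main obstacle is merely bookkeeping: verifying that the $\q=0$ specializations of Lemmas \ref{QVrelation1} and \ref{nonfixinglemma} go through verbatim for $p(x_A)$ (they do, since $p(x_A)=\frac{1}{\z}(\Z(x_A)^{-1}\cdot 1 - 1)$ up to the degenerate $\frac1{\q+\z}\to\frac1\z$ prefactor, i.e. it lies in the same family of rational functions $\Q(x_\bullet)/\Z(x_\bullet)$ that those lemmas control), and that the divided-difference vanishing $\partial_{a_1b_1}\cdots\partial_{a_mb_m}\frac1\z=0$ used in Lemma \ref{NonimplieEquLemma} still applies here. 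No new geometric ingredient is needed beyond Theorem \ref{recrusionCSM}, Proposition \ref{propofCSM}, and the combinatorial lemmas of Section \ref{Secc-3}.
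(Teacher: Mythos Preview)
Your overall plan matches the paper's: package into $d_u^w(\z,t)=\sum_{r\geq 1}\z^{r-1}d_{u,r}^w(t)$, apply Lemma~\ref{PP-1}, and show $\mathcal{T}_{w/u}\,p(x_A)$ lies in $\mathbb{Z}[\z]\cdot\Z(x_{uM(u^{-1}w)})^{-1}$. The $w=u$ case and the final coefficient extraction are fine. But the $w\neq u$ step has a gap: the identity $p(x_A)=\frac{1}{\z}\big(\Z(x_A)^{-1}-1\big)$ is false for $|A|\geq 2$ (with $A=\{a,b\}$ the right side has numerator $x_a+x_b-\z x_ax_b$ while $p(x_A)$ has numerator $x_a+x_b-2\z x_ax_b$). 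What you have written down is $E(0,\z,x_A)=\sum_{r\geq 1}\z^{r-1}h_r(x_A)$, not $E(-\z,\z,x_A)=p(x_A)$; the $\q=0$ and $\q=-\z$ specializations are different. So Lemma~\ref{nonfixinglemma} does not apply to $p$ directly. After one divided difference you are indeed in the $\Q/\Z$ world, but Lemma~\ref{nonfixinglemma} on the remaining $m-1$ steps only yields $M(t_{a_1b_1}\cdots t_{a_{m-1}b_{m-1}})=\{a_1,\ldots,b_{m-1}\}$, and right-multiplying by $t_{a_mb_m}$ can shrink the non-fixed set; so you have not yet shown that the pole set $\{a_1,\ldots,b_m\}$ (which may a priori depend on $J$) equals $M(u^{-1}w)$. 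What you actually need is the Claim inside the proof of Theorem~\ref{QQQ_I}: nonvanishing of the full chain on $p$ forces $t_{a_1b_1}\cdots t_{a_mb_m}$ to be an $(m{+}1)$-cycle on an $(m{+}1)$-element set, whence $M(u^{-1}w)=\{a_1,\ldots,b_m\}$. Citing that Claim (or reproving it via the short induction there, using \eqref{FFF-4} and Lemma~\ref{QVrelation1}) closes the gap.

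For comparison, the paper avoids this by taking the limit $\q\to -\z$ in Lemma~\ref{NonimplieEquLemma}: from $\mathcal{T}_{w/u}E(\q,\z,x_A)\in\frac{1}{\q+\z}\mathbb{Z}[\q,\z]\,\frac{\Q(x_{\Delta_A(u,w)})}{\Z(x_{\Sigma_A(u,w)})}$, together with $\frac{\Q(x_\Delta)}{\Z(x_\Sigma)}\big|_{\q=-\z}=\Z(x_{\Sigma\setminus\Delta})^{-1}$ and $\Sigma_A(u,w)\setminus\Delta_A(u,w)=uM(u^{-1}w)$, the denominator $\Z(t_{uM(u^{-1}w)})$ comes for free, invoking Lemma~\ref{nonfixinglemma} only in the $\Q/\Z$ setting where it is stated.
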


\begin{proof}
The proof is similar to that of  Theorem \ref{noneqimplieEqhook}, and is outlined below.
Recall that 
$$p(x_{A})=\sum_{a\in A}\frac{x_a}{1-\z x_a} = 
p_1(x_A) +p_2(x_A)\z  +\cdots.$$
Denote
$$d_u^w(z,t) = \sum_{r\ge1}d_{u,r}^w(t)z^{r-1}.$$
Then we have
$$c_{\mathrm{SM}}^T(Y(u)^\circ)\cdot p(x_{A}) = \sum_{w\in \S_n} d_u^w( z,t) \cdot c_{\mathrm{SM}}^T(Y(w)^\circ).$$
By Lemma \ref{PP-1}, we have 
\begin{align*}
d^w_{u}( z,t)=\mathcal{T}_{w/u}\big(p(x_{A})\big)\big|_{x_i= t_i}.
\end{align*}

If   $u=w$,  then $\mathcal{T}_{w/u}$ is just $u$, so $d^u_{u}( z,t)=p(t_{uA})$, which implies $d_{u,r}^u( t)=p_r(t_{uA}).$
Now we consider the case $u\neq w.$
Recall the notation $E(\q,\z,x_A)$ defined in Lemma \ref{OWW-1}:
\[E(\q,\z,x_A)=\frac{1}{\q+\z}\left(\frac{\Q(x_A)}{\Z(x_A)}-1\right).\]
By evaluating   $\lim_{q \rightarrow -z}E(\q,\z,x_A)$ using   the L’Hospital rule, 
we obtain the relation  
\[E(-\z,\z,x_{A})=\sum_{a\in A}\frac{x_a}{1-\z x_a}=p(x_{A}).\]
In Lemma \ref{NonimplieEquLemma}, we showed  that for  $A\subseteq [n]$, there exists $f(q,z)\in \mathbb{Z}[q,z]$ such that 
\begin{equation}\label{eq:prelimit}
\mathcal{T}_{w/ u} E(\q,\z,x_A)= \frac{1}{\q+\z}f(\q,\z) \frac{\Q(x_{\Cminus_A(u, w)})}{\Z(x_{\Cplus_A(u, w)})}.
\end{equation}
Notice the following  observation: for $A\subseteq B$,
\[
\left.\frac{\Q(x_A)}{\Z(x_B)}\right|_{\q=-\z} =
\frac{1}{\Z(x_{B\setminus A})}=
\sum_{j=0}^\infty h_j(x_{B\setminus A}) \z^j .
\]
Moreover, notice that
\[\Sigma_{A}(u,w)\setminus \Delta_A(u,w)=uM(u^{-1}w).\] 
Taking limit $q\to -z$ on both sides of \eqref{eq:prelimit}, we get 
$$\mathcal{T}_{w/u}\big(p(x_A)\big) =\left(\lim_{q\to -z} \frac{f(q,z)}{q+z}\right)\frac{1}{\Z(x_{uM(u^{-1}w)})},$$
and so we have 
\[d_u^w(z,t)=\left(\lim_{q\to -z} \frac{f(q,z)}{q+z}\right)\frac{1}{\Z(t_{uM(u^{-1}w)})}.\]
Letting all $t_i=0$ on both sides, we see that 
$$\lim_{q\to -z} \frac{f(q,z)}{q+z}= d_u^w(z,0),$$ 
and hence
\begin{align*}
d_u^w(z,t)
& = d_u^w(z,0)\cdot\sum_{j=0}^\infty  h_j( t_{uM(u^{-1}w)}) \z^j
 \\[3pt]
&= \sum_{i=0}^\infty d_{u,i+1}^w \z^i  \cdot \sum_{j=0}^\infty h_j(t_{uM(u^{-1}w)}) \z^j.
\end{align*}
Equating  the coefficient of $\z^{r-1}$  
  gives \eqref{99qw}, as desired. 
%
\end{proof}

Using  Theorems \ref{noneqMNruleCSM} and \ref{noneqimplieEqhookMN}, we  
reach a proof of Theorem \ref{UUU-1}. 

\begin{Th}[=Theorem \ref{UUU-1}: Equivariant CSM MN Formula]\label{MNforequivariantCSMSchur}
Let $u\in \S_n$. For $r\geq 1$, we have the following identity in $H_T^\bullet(\Fl(n))$: 
\[
c_{\mathrm{SM}}^T(Y(u)^\circ)\cdot p_{r}(x_{[k]})
=p_{r}( t_{u[k]})\cdot c_{\mathrm{SM}}^T(Y(u)^\circ)
+ \sum_{\eta\in \S_n} d_{u,r}^{u\eta}( t)\cdot  c_{\mathrm{SM}}^T(Y(u\eta)^\circ),
\]
where  the sum runs over $(r'+1)$-cycles $\eta\in \S_n$
with $1\leq r'\leq r$ such that $u\leq_k u\eta$ in the extended 
$k$-Bruhat order, and
\begin{equation}\label{eq:MNforequivariantCSMSchur}
d_{u,r}^{u\eta}( t) = (-1)^{\operatorname{ht}_k(\eta)}\cdot h_{r-r'}( t_{u M(\eta)}).
\end{equation}
\end{Th}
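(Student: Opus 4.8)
The plan is to derive Theorem \ref{MNforequivariantCSMSchur} formally from the nonequivariant CSM Murnaghan--Nakayama formula (Theorem \ref{noneqMNruleCSM}) together with the Rigidity Theorem for power sums (Theorem \ref{noneqimplieEqhookMN}), applied with $A=[k]$. No new geometric or combinatorial input is needed at this step; what remains is essentially a reindexing.

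First I would invoke Theorem \ref{noneqimplieEqhookMN} with $A=[k]$. Writing the nonequivariant expansion as
\[
c_{\mathrm{SM}}(Y(u)^\circ)\cdot p_r(x_{[k]})=\sum_{w\in\S_n}d_{u,r}^w\cdot c_{\mathrm{SM}}(Y(w)^\circ),
\]
the Rigidity Theorem upgrades it to
\[
c_{\mathrm{SM}}^T(Y(u)^\circ)\cdot p_r(x_{[k]})=p_r(t_{u[k]})\cdot c_{\mathrm{SM}}^T(Y(u)^\circ)+\sum_{u\neq w\in\S_n}d_{u,r}^w(t)\cdot c_{\mathrm{SM}}^T(Y(w)^\circ),
\]
where, for $w\neq u$,
\[
d_{u,r}^w(t)=\sum_{1\leq r'\leq r}d_{u,r'}^w\cdot h_{r-r'}\!\big(t_{uM(u^{-1}w)}\big).
\]
The separated diagonal term $p_r(t_{u[k]})\cdot c_{\mathrm{SM}}^T(Y(u)^\circ)$ is already the first term of the claimed identity, and the case $w=u$ does not reappear in the sum because $\eta=\operatorname{id}$ is not an $(r'+1)$-cycle for any $r'\geq1$.

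Next I would substitute the nonequivariant values supplied by Theorem \ref{noneqMNruleCSM}: $d_{u,r'}^w=0$ unless $\eta:=u^{-1}w$ is an $(r'+1)$-cycle with $u\leq_k u\eta$ in the extended $k$-Bruhat order, in which case $d_{u,r'}^w=(-1)^{\operatorname{ht}_k(\eta)}$. The crucial observation is that $\eta=u^{-1}w$ is one fixed permutation, so it can be an $(r'+1)$-cycle for at most one value of $r'$, namely $r'=\texttt{\#}M(\eta)-1$; hence at most one term of $\sum_{1\leq r'\leq r}$ survives. Therefore, for $w\neq u$: if $\eta=u^{-1}w$ is an $(r'+1)$-cycle with $1\leq r'\leq r$ and $u\leq_k u\eta$, then, using $M(u^{-1}w)=M(\eta)$,
\[
d_{u,r}^w(t)=(-1)^{\operatorname{ht}_k(\eta)}\cdot h_{r-r'}\!\big(t_{uM(\eta)}\big),
\]
while otherwise $d_{u,r}^w(t)=0$. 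Reindexing the remaining sum by $\eta$ (so $w=u\eta$) converts the displayed expansion into exactly the asserted formula, with $r'+1=\texttt{\#}M(\eta)$ and $d_{u,r}^{u\eta}(t)=(-1)^{\operatorname{ht}_k(\eta)}h_{r-r'}(t_{uM(\eta)})$.

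The genuine difficulties for this theorem have already been absorbed into its two inputs: Theorem \ref{noneqMNruleCSM} depends on the delicate uniqueness of unimodal paths from $u$ to $u\eta$ (Theorem \ref{mutiplicitylemma}), and Theorem \ref{noneqimplieEqhookMN} rests on the $q\to-z$ limit analysis of $E(q,z,x_A)$. Relative to those, the only points needing care here are the bookkeeping ones noted above: splitting off the diagonal term, using that a fixed $\eta$ has a single cycle length so the inner sum collapses to one term, and the identification $M(u^{-1}w)=M(\eta)$.
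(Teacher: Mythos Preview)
Your proposal is correct and follows exactly the paper's approach: the paper simply states that Theorem \ref{MNforequivariantCSMSchur} is obtained by combining Theorem \ref{noneqMNruleCSM} with the Rigidity Theorem \ref{noneqimplieEqhookMN}, and you have correctly spelled out the bookkeeping (splitting off the diagonal term, collapsing the inner sum since a fixed $\eta=u^{-1}w$ has a unique cycle length, and the identification $M(u^{-1}w)=M(\eta)$).
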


Taking the lowest degree part in Theorem \ref{MNforequivariantCSMSchur} leads to the following  MN formula for equivariant Schubert classes. 

\begin{Coro}[Equivariant Schubert MN Formula]\label{MNforequivariantSchubert}
Let $u\in \S_n$.  For $r\geq 1$, we have the following identity in $H_T^\bullet(\Fl(n))$: 
\begin{equation}
[Y(u)]_T\cdot p_{r}(x_{[k]})
=p_{r}( t_{u[k]})\cdot [Y(u)]_T
+ \sum_{\eta\in \S_n} \overline{d}_{u,r}^{u\eta}( t)\cdot [Y(w)]_T,
\end{equation}
where  the sum runs over $(r'+1)$-cycles $\eta\in \S_n$
with $1\leq r'\leq r$ such that $\ell(u\eta)=\ell(u)+r'$ and  $u\leq_k u\eta$ in the extended   $k$-Bruhat order, and $\overline{d}_{u,r}^{u\eta}( t)$ has the same expression as $d_{u,r}^{u\eta}( t)$ in \eqref{eq:MNforequivariantCSMSchur}. 
\end{Coro}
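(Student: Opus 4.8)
The plan is to deduce the statement from Theorem \ref{MNforequivariantCSMSchur} (the equivariant CSM MN formula) by passing to lowest-degree components, exactly as the remark preceding the corollary indicates. First I would recall from Proposition \ref{propofCSM}(ii) that $[Y(w)]_T$ is the lowest-degree term of $c_{\mathrm{SM}}^T(Y(w)^\circ)$, and that $\deg [Y(w)]_T = \ell(w)$ under the grading of $H_T^\bullet(\Fl(n))$ in which $x_i$ and $t_i$ have degree $1$ (so that all expressions on the right-hand side of Theorem \ref{MNforequivariantCSMSchur} are homogeneous once we track degrees). Since $p_r(x_{[k]})$ is homogeneous of degree $r$, the left-hand side $c_{\mathrm{SM}}^T(Y(u)^\circ)\cdot p_r(x_{[k]})$ has lowest-degree component equal to $[Y(u)]_T\cdot p_r(x_{[k]})$, which is homogeneous of degree $\ell(u)+r$. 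So the corollary will follow once I identify which terms on the right-hand side of Theorem \ref{MNforequivariantCSMSchur} contribute in degree exactly $\ell(u)+r$.

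The key computation is a degree count on each summand $d_{u,r}^{u\eta}(t)\cdot c_{\mathrm{SM}}^T(Y(u\eta)^\circ)$ with $\eta$ an $(r'+1)$-cycle, $1\le r'\le r$. By \eqref{eq:MNforequivariantCSMSchur}, $d_{u,r}^{u\eta}(t) = (-1)^{\operatorname{ht}_k(\eta)}h_{r-r'}(t_{uM(\eta)})$ is homogeneous of degree $r-r'$, and the lowest-degree term of $c_{\mathrm{SM}}^T(Y(u\eta)^\circ)$ is $[Y(u\eta)]_T$ of degree $\ell(u\eta)$; higher-degree terms of the CSM class only produce terms of degree $>\ell(u\eta)$. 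Hence the lowest degree occurring in this summand is $(r-r') + \ell(u\eta)$, and we must compare this with the target degree $\ell(u)+r$. That is, the summand contributes to the lowest-degree component of the product precisely when
\[
(r-r') + \ell(u\eta) \le \ell(u) + r, \quad\text{i.e.,}\quad \ell(u\eta) \le \ell(u) + r'.
\]
On the other hand, $u\le_k u\eta$ in the extended $k$-Bruhat order forces $\ell(u\eta)\ge \ell(u)+1$, and by Theorem \ref{mutiplicitylemma} the unique unimodal path from $u$ to $u\eta$ has length $r'$, so at each of its $r'$ edges the length increases by at least $1$; thus $\ell(u\eta)\ge \ell(u)+r'$ automatically. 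Combining the two inequalities gives $\ell(u\eta)=\ell(u)+r'$ exactly, and in that case only the $[Y(u\eta)]_T$-part of the CSM class contributes, with coefficient $(-1)^{\operatorname{ht}_k(\eta)}h_{r-r'}(t_{uM(\eta)}) = \overline{d}_{u,r}^{u\eta}(t)$. Finally the $w=u$ term of Theorem \ref{MNforequivariantCSMSchur}, namely $p_r(t_{u[k]})\cdot c_{\mathrm{SM}}^T(Y(u)^\circ)$, has lowest-degree part $p_r(t_{u[k]})\cdot[Y(u)]_T$, again of degree $\ell(u)+r$, so it survives intact. Collecting these contributions yields the stated formula.

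I do not expect a serious obstacle here: the only point requiring care is the degree bookkeeping together with the input that the unimodal path realizing $u\le_k u\eta$ has length exactly $r'$ (Theorem \ref{mutiplicitylemma}), which is what pins $\ell(u\eta)=\ell(u)+r'$ as both necessary (for the summand to contribute) and forced (by the order relation). One should also note that the constraint $\ell(u\eta)=\ell(u)+r'$ is equivalent to saying the unique unimodal path lies in the \emph{ordinary} $k$-Bruhat order rather than merely the extended one, which is consistent with the parallel statements Corollary \ref{EqSchubertPieri} and Corollary \ref{LRrulehookshapeSchubert}; I would remark on this equivalence to make the cycle condition in the corollary transparent.
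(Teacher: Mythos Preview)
Your proposal is correct and follows exactly the approach indicated in the paper, which simply says the corollary arises by ``taking the lowest degree part in Theorem \ref{MNforequivariantCSMSchur}''; you have merely supplied the degree bookkeeping that the paper leaves implicit, including the crucial use of Theorem \ref{mutiplicitylemma} to obtain $\ell(u\eta)\ge \ell(u)+r'$ (which is what rules out any contribution from higher-degree pieces of the CSM classes). Your closing remark about the ordinary $k$-Bruhat order is also precisely what the paper notes right after the corollary.
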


We remark that in  Corollary \ref{MNforequivariantSchubert},
the condition  $\ell(u\eta)=\ell(u)+r'$, together with Theorem \ref{mutiplicitylemma},
implies that $u\leq_k u\eta$ is actually in the
ordinary $k$-Bruhat order. 

If further taking all $t_i=0$ in Corollary \ref{MNforequivariantSchubert}, we arrive at the MN rule for 
nonequivariant  Schubert classes, as deduced by Morrison and Sottile \cite[Theorem 1]{MS2}.

We end this section with an example to illustrate Theorem \ref{MNforequivariantCSMSchur} and Corollary \ref{MNforequivariantSchubert}.

\begin{Eg}
{ With the same setting as in Example \ref{SSSCC}, let us compute 
\[
\zeta_{23154}\cdot p_3(x_1,x_2)\ \ \text{and}\ \ 
\sigma_{23154}\cdot p_3(x_1,x_2).
\]
In the first column in Table \ref{tab:cyclesfrom23154},
we list all $w\in \S_n$ such that $w=u\eta$ for some $(r'+1)$-cycle $\eta$ ($1\leq r'\leq r=3$), and $u\leq_k w$. The requirement  $u\leq_k w$  can be tested  efficiently by means of Lemma \ref{extkBruhatorder}. 
}

\begin{table}[h]\center
\begin{tabular}{c|c|c|c|c}\hline
$w$ &$\eta$ & $\operatorname{ht}_2+1$ & $d_{u,3}^{u\eta}( t)$ & $\ell(w)$\\\hline
$23\,154$ & $\texttt{-}$ & $0$& 
    $p_{3}( t_2, t_3)$ & $3$\\
$\underline{5}3\,1\underline{2}4$ & $(14)$ & $1$& 
    $h_{2}( t_{\{5,2\}})= t_2^2+ t_2 t_5+ t_5^2$ & $6$\\
$\underline{4}3\,15\underline{2}$ & $(15)$ & $1$& 
    $h_{2}( t_{\{4,2\}})= t_2^2+ t_2 t_4+ t_4^2$ & $6$\\
$2\underline{5}\,1\underline{3}4$ & $(24)$ & $1$& 
    $h_{2}( t_{\{5,3\}})= t_3^2+ t_3 t_5+ t_5^2$ & $\underline{4}$ \\
$2\underline{4}\,15\underline{3}$ & $(25)$ & $1$ & 
    $h_{2}( t_{\{4,3\}})= t_3^2+ t_3 t_4+ t_4^2$ & $\underline{4}$\\
$\underline{5}3\,1\underline{42}$ & $(145)$ & $1$ &
    $h_{1}( t_{\{5,4,2\}})= t_2+ t_4+ t_5$ & $7$\\
$\underline{35}\,1\underline{2}4$ & $(124)$& $2$ &
    $-h_{1}( t_{\{3,5,2\}})=- t_2- t_3- t_5$ & $\underline{5}$\\
$\underline{34}\,15\underline{2}$ & $(125)$& $2$ & 
    $-h_{1}( t_{\{3,4,2\}})=- t_2- t_3- t_4$ & $\underline{5}$\\
$2\underline{5}\,1\underline{43}$ & $(245)$& $1$ & 
    $h_{1}( t_{\{5,4,3\}})= t_3+ t_4+ t_5$ & $\underline{5}$\\
$\underline{54}\,1\underline{32}$ & $(1425)$& $2$ & 
    $-h_{0}( t_{\{5,4,3,2\}})=-1$ & $8$ \\
$\underline{35}\,1\underline{42}$ & $(1245)$& $2$ & 
    $-h_{0}( t_{\{3,5,4,2\}})=-1$ & $\underline{6}$\\
$\underline{45}\,1\underline{23}$ & $(1524)$& $2$ &
    $-h_{0}( t_{\{4,5,2,3\}})=-1$ & $\underline{6}$\\\hline
\end{tabular}
\caption{Computing $\zeta_{23154}\cdot p_{3}(x_1,x_2)$ and
$\sigma_{23154}\cdot p_3(x_1,x_2)$}
\label{tab:cyclesfrom23154}
\end{table}

By Theorem \ref{MNforequivariantCSMSchur}, we have 
\begin{align*}
\zeta_{23154}\cdot p_3(x_1,x_2) 
& =p_3( t_2, t_3)\cdot\zeta_{23154}\\
&\quad +( t_2^2+ t_2 t_5+ t_5^2)\cdot\zeta_{53124}
+ ( t_2^2+ t_2 t_4+ t_4^2)\cdot\zeta_{43152}\\
&\quad+ ( t_3^2+ t_3 t_5+ t_5^2)\cdot\zeta_{25134}
+( t_3^2+ t_3 t_4+ t_4^2)\cdot\zeta_{24153}\\
&\quad+ ( t_2+ t_4+ t_5)\cdot\zeta_{53142}
-( t_2+ t_3+ t_5)\cdot\zeta_{35124}\\
&\quad-( t_2+ t_3+ t_4)\cdot\zeta_{34152}
+( t_3+ t_4+ t_5)\cdot\zeta_{25143}\\
&\quad-\zeta_{54132}
-\zeta_{35142}
-\zeta_{45123}.
\end{align*}
To compute $\sigma_{23154}\cdot p_3(x_1,x_2) $, by Corollary \ref{MNforequivariantSchubert}, we need to 
pick out the permutations in the first column satisfying  that 
$\ell(w)=\ell(u)+r'$. Such $\ell(w)$ are underlined in the last column in Table \ref{tab:cyclesfrom23154}, and so we obtain that 
\begin{align*}
\sigma_{23154}\cdot p_3(x_1,x_2) 
& =p_3( t_2, t_3)\cdot\sigma_{23154}\\
&\quad + ( t_3^2+ t_3 t_5+ t_5^2)\cdot\sigma_{25134}
+( t_3^2+ t_3 t_4+ t_4^2)\cdot\sigma_{24153}\\
&\quad-( t_2+ t_3+ t_5)\cdot\sigma_{35124}-( t_2+ t_3+ t_4)\cdot\sigma_{34152}\\
&\quad
+( t_3+ t_4+ t_5)\cdot\sigma_{25143}-\sigma_{35142}
-\sigma_{45123}.
\end{align*}

\end{Eg}


\section{Grassmannian Cases}\label{GGG_I}

This section  concerns  the multiplication formulas for  CSM classes  and Schubert classes over Grassmannians. 
As will be seen, all formulas established before have parabolic analogues. We  illustrate this by two concrete  examples: parabolic versions of Theorem \ref{NoneqPierirulehook} and Corollary \ref{MNforequivariantSchubert}, both  of which can be described in terms of the combinatorics of partitions. 
The parabolic version of  Corollary \ref{MNforequivariantSchubert} will   be   used in 
Section \ref{SECT7} to investigate  the enumeration  formulas
of rim hook tableaux.

\subsection{Geometry of Grassmannians}
Denote by  $\Gr(k,n)$ the Grassmannian of $k$-planes in $\mathbb{C}^n$. 
There is a proper $T$-equivariant morphism $$\pi: \Fl(n)\longrightarrow \Gr(k,n)$$
by sending a flag $(V_\bullet)$ to its $k$-plane $V_k$. 
 This induces two morphisms: the pullback $\pi^*$ and the pushforward $\pi_*$:
\begin{gather*}
\pi^*\colon H_T^\bullet(\Gr(k,n))\to H_T^\bullet(\Fl(n))
\ \ \text{ and }\ \ 
H^\bullet(\Gr(k,n))\to H^\bullet(\Fl(n)),\\[5pt]
\pi_*\colon H_T^\bullet(\Fl(n))\to H_T^\bullet(\Gr(k,n))
\ \ \text{ and }\ \ 
H^\bullet(\Fl(n))\to H^\bullet(\Gr(k,n)). 
\end{gather*}
 Recall from Section \ref{Sec-2} that $\mathfrak{S}_{n}$ acts on $H_T^\bullet(\Fl(n))$ (resp., $H^\bullet(\Fl(n))$) by permuting the $x_i$'s. 
 It is known that
$H_T^\bullet(\Gr(k,n))$ (resp., $H^\bullet(\Gr(k,n))$) is isomorphic to the $(\mathfrak{S}_{k}\times \mathfrak{S}_{n-k})$-invariant algebra of $H_T^\bullet(\Fl(n))$ (resp., $H^\bullet(\Fl(n))$),  and $\pi^*$ coincides with the inclusion map \cite{BGG}. 

For any $w\in \S_n$, 
there is a unique decomposition 
\begin{equation}\label{wlambda}
w=w_{\lambda}\,v,
\end{equation}
where $v\in \mathfrak{S}_k\times \mathfrak{S}_{n-k}$, and $w_{\lambda}$ is a Grassmannian permutation 
associated to a partition $\lambda=(\lambda_1,\ldots, \lambda_k)$. Precisely, $w_{\lambda}$ is obtained from $w$ by rearranging the first $k$ elements and the last $n-k$ elements in increasing order, respectively. 
Formally, $\mathfrak{S}_k\times \mathfrak{S}_{n-k}$
is a maximal parabolic subgroup of $\S_n$, and 
$w_\lambda$ is the minimal coset representative of 
the left coset of  $\mathfrak{S}_k\times \mathfrak{S}_{n-k}$ with respect to $w$.
Notice that $\lambda$ is inside the $k\times(n-k)$ rectangle. We shall use $\mathrm{Gr}(w)$ to denote the partition $\lambda$. For example, for $k=4$ and 
$w=516342$, we have $w_\lambda=135624$ and $\mathrm{Gr}(w)=\lambda=(2,2,1,0)$.

The Schubert cell $Y(\lambda)^\circ$ refers to the set-theoretic image of $Y(w_{\lambda})^\circ$ under $\pi$, namely,
\[Y(\lambda)^\circ=\pi(Y(w_{\lambda})^\circ).\]
The Schubert variety $Y(\lambda)$ is the closure of $Y(\lambda)^\circ$. 
By \cite[Proposition 3.5]{AM}, if $\mathrm{Gr}(w)=\lambda$, 
\begin{equation}\label{eq:prj}
\pi_* \big(\,c_{\mathrm{SM}}^T(Y(w)^\circ) \,\big)= 
c_{\mathrm{SM}}^T(Y(\lambda)^\circ)
\quad \text{and} \quad
\pi_* \big(\,c_{\mathrm{SM}}(Y(w)^\circ) \,\big)= 
c_{\mathrm{SM}}(Y(\lambda)^\circ),
\end{equation}
see also \cite[Theorem 4.3]{MNS2}. 
This allows the multiplication formulas 
for CSM classes established in Sections \ref{Sect4} and \ref{Sect5} to be converted into
formulas involving the  associated partitions. 

For $u\in \S_n$ with $\mathrm{Gr}(u)=\lambda$ and a class $\alpha\in H_T^\bullet(\Gr(k,n))$, suppose that 
\begin{equation}\label{eq:CSMco}
c_{\mathrm{SM}}^T(Y(u)^\circ)\cdot \alpha = \sum_{w\in \S_n} c_{u,\alpha}^{w}( t)\cdot  c_{\mathrm{SM}}^T(Y(w)^\circ),
\end{equation}
where, via the pullback $\pi^*$,   $\alpha$ is also identified with a class in $H_T^\bullet(\Fl(n))$ symmetric under $\S_k\times \S_{n-k}$. 
Applying the pushforward map $\pi_*$ to the left-hand side of \eqref{eq:CSMco} and using the projection formula yields
\begin{align*}
\pi_*\big(\,c_{\mathrm{SM}}(Y(u)^\circ)\cdot\alpha\,\big)
&=\pi_*\big(\,c_{\mathrm{SM}}(Y(u)^\circ)\cdot\pi^*(\alpha)\,\big)\\
&=\pi_*\big(\,c_{\mathrm{SM}}(Y(u)^\circ)\,\big)\cdot\alpha\\
&=c_{\mathrm{SM}}(Y(\lambda)^\circ)\cdot\alpha.
\end{align*}
Applying   $\pi_*$ to the right-hand side of \eqref{eq:CSMco} and combining the above, we obtain that 
\begin{equation}\label{eq:eqPierirulehookParabolic}
c_{\mathrm{SM}}^T(Y(\lambda)^\circ)\cdot \alpha =\sum_{\mu} \left(\sum_{\begin{subarray}{c}w\in \S_n\\ \mathrm{Gr}(w)=\mu\end{subarray}}c_{u,\alpha}^{w}( t)\right) c_{\mathrm{SM}}^T(Y(\mu)^\circ).
\end{equation}
The same derivation  applies to   nonequivariant CSM classes.


We turn to  Schubert classes. It was shown in \cite{BGG} that
\begin{equation}\label{eq:preimageofSchubert}
\pi^* \big(\,[Y(\lambda)]_T\,\big)=[Y(w_{\lambda})]_T
\quad \text{and} \quad
\pi^* \big(\,[Y(\lambda)]\,\big)=[Y(w_{\lambda})].
\end{equation}
We refer to \cite[\textsection 10.8]{AF} for more information. 
As a result, multiplication formulas concerning Schubert classes, for example, Corollaries \ref{LRrulehookshapeSchubert} and \ref{MNforequivariantSchubert}, can be naturally restricted to Grassmannians in the following sense. 
For $u\in \S_n$ and a class $\alpha\in H_T^\bullet(\Gr(k,n))$, suppose that 
\begin{equation}\label{eq:Schubertco}
[Y(u)]_T\cdot \alpha = \sum_{w\in \S_n} \overline{c}_{u,\alpha}^{w}( t)\cdot  [Y(w)]_T,
\end{equation}
where  $\alpha$ is also regarded  as a class in $H_T^\bullet(\Fl(n))$. 
We  conclude that 
\begin{equation}\label{eq:eqSchubertcoParabolic}
[Y(\lambda)]_T\cdot \alpha = \sum_{\mu} \overline{c}_{w_\lambda,\alpha}^{w_\mu}( t)\cdot  [Y(\mu)]_T.
\end{equation}
Actually, by \eqref{eq:preimageofSchubert}, one can identify $[Y(\lambda)]_T$ with $[Y(w_{\lambda})]_T$ through $\pi^*$. Thus \eqref{eq:eqSchubertcoParabolic} is nothing but the case when $u=w_{\lambda}$. 


In the rest of this section, we shall pay attention 
to the parabolic versions of Theorem \ref{NoneqPierirulehook} and Corollary \ref{MNforequivariantSchubert}, both of which 
can be described explicitly  in terms of   operations on  partitions. In particular, the parabolic treatment of  Corollary \ref{MNforequivariantSchubert} will be applied  in the next section to  the enumeration of rim hook tableaux.

\subsection{Parabolic Version of Theorem \ref{NoneqPierirulehook}} 

For two distinct partitions $\lambda$ and $\mu$ inside the $k\times (n-k)$ rectangle, we write $\lambda\longrightarrow \mu$ if there exist $u, w\in \S_n$ with $\mathrm{Gr}(u)=\lambda$ and $\mathrm{Gr}(w)=\mu$ such that $u\longrightarrow w$ is a $k$-edge.   The $k$-edge $u\longrightarrow w$ is not necessarily  unique. 
In fact, it is easily checked  that the collection of 
$k$-edges generating $\lambda\longrightarrow \mu$
are of the form $uv\longrightarrow wv$, where $v\in \S_k\times \S_{n-k}$.
Such $k$-edges  enjoy the same label, say $uv\stackrel{\tau}\longrightarrow wv$, 
and    $\lambda\longrightarrow \mu$ inherits  this label, so we may write 
$$
\lambda\stackrel{\tau}\longrightarrow \mu.
$$

The partitions inside the $k\times (n-k)$ rectangle, along with the (labeled) edges $\lambda\stackrel{\tau}\longrightarrow \mu$, constitute 
a directed graph, which is called the (labeled) {\it $k$-Bruhat graph} on partitions inside the $k\times (n-k)$ rectangle. We also call the edges in this graph  $k$-edges since no confusion would arise from the context. We remark that if one ignores the edge labels, the $k$-Bruhat graph on partitions inside the $k\times (n-k)$ rectangle is exactly  the $\Lambda$-Bruaht graph on $\S_n/\mathfrak{S}_k\times \mathfrak{S}_{n-k}$ \cite[Definition 8.4]{MNS}, or the { singular Bruhat graph} on $\S_n/\mathfrak{S}_k\times \mathfrak{S}_{n-k}$ \cite[Definition 6.9]{GW}.

It was pointed out in \cite[\textsection 10.1]{MNS} that there exists a $k$-edge
$\lambda \stackrel{\tau}\longrightarrow \mu$
if and only if $\mu$ can be obtained from $\lambda$ by adding a {\it rim hook} (also called  border strip), that is, $\mu/\lambda$ is a  connected skew shape with no $2\times 2$ square. Figure \ref{parttk} displays the  $k$-Bruhat  graphs on   partitions respectively inside the $2\times 2$, $3\times1$, and $1\times 3$ rectangles, where dashed edges signify  $\lambda\stackrel{\tau}\longrightarrow \mu$ with $|\mu/\lambda|>1$.
\begin{figure}[htb]\center
$$
\def\labelstyle#1{\scriptstyle\rule[-0.25pc]{0pc}{0.9pc}\,\,#1\,\,}
\ytableausetup{mathmode, boxsize=0.5pc}
\xymatrix@!=0.5pc{
&&{\ydiagram{2,2}}
&&\\&&&
\ar@{}[dl]|{{\ydiagram{2,1}}\rule[-0.75pc]{0pc}{1.5pc}}="B"
    \kar"B";[ul]|>>>>>>{{2}}
    \\
{\ydiagram{2,0}\rule[-0.25pc]{0pc}{1.5pc}}
    \kar"B"|{{1}}
    \kkar[uurr]|{{1}}
     &&&&
{\ydiagram{1,1}\rule[-0.25pc]{0pc}{1.5pc}}
    \kar"B"|>>>>>>{{3}}
    \kkar[uull]|{{2}}\\&
\ar@{}[ur]|{\,\,\ydiagram{1}\,\,}="F"
    \kar"F";[ul]|>>>>>>>{{3}}
    \kar@<-1pt>"F";[urrr]|{{1}}
    \kkar"F";[uuur]|{{1}}\\&&
{\varnothing}
    \kkar[uull]|{{2}}
    \kkar[uurr]|{{1}}
    \kar"F"|{{2}}
    \kkar"B"|{{1}} 
}%
\qquad
%
\ytableausetup{mathmode, boxsize=0.5pc}
\xymatrix@!=1.2pc{
{\,\ydiagram{1,1,1}\,}\\
{\,\ydiagram{1,1}\,}
    \kar[u]^>>>>{{1\!\!}}\\
{\ydiagram{1}}
    \kar[u]|>>>>>{{2}}
    \kkar@/_1pc/[uu]|{{1}}\\
{\varnothing}
    \kar[u]|>>>>>>{{3}}
    \kkar@/^1pc/[uu]|{{2}}
    \kkar@/^2pc/[uuu]|{{1}}
}%
%
\qquad 
%
\ytableausetup{mathmode, boxsize=0.5pc}
\xymatrix@!=1.2pc{
{\ydiagram{3}}\\
{\ydiagram{2}}
    \kar[u]|>>>>>>{{3}}\\
{\ydiagram{1}}
    \kar[u]|>>>>>>{{2}}
    \kkar@/_1pc/[uu]|{{2}}\\
{\varnothing}
    \kar[u]|>>>>>>{{1}}
    \kkar@/^1pc/[uu]|{{1}}
    \kkar@/^2pc/[uuu]|{{1}}
}$$
\caption{The $k$-Bruhat graphs on   partitions inside   $2\times2$, $3\times1$, and $1\times3$.}\label{parttk}
\end{figure}

\begin{Lemma}\label{uniqueLiftingLemma}
Let $u\in \S_n$ be such that $\mathrm{Gr}(u)=\lambda$.  Then any path
$$\lambda=\lambda^{(0)}\stackrel{\tau_1}\longrightarrow 
\lambda^{(1)}\stackrel{\tau_2}\longrightarrow \cdots
\stackrel{\tau_m}\longrightarrow \lambda^{(m)}$$
in the $k$-Bruhat graph on partitions inside $k\times (n-k)$ admits a unique lifting 
$$u=\,w_0\,\stackrel{\tau_1}\longrightarrow 
\,w_1\,\stackrel{\tau_2}\longrightarrow \cdots
\stackrel{\tau_m}\longrightarrow \,w_m$$
in the $k$-Bruhat graph on $\S_n$ starting from $u$, where  $\mathrm{Gr}(w_i)=\lambda^{(i)}$  for $0\leq i\leq m$. 
\end{Lemma}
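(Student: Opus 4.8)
The plan is to reduce to the single-edge case $m=1$ and then iterate. So the core task is: given $u\in\S_n$ with $\mathrm{Gr}(u)=\lambda$ and a labeled $k$-edge $\lambda\stackrel{\tau}\longrightarrow\mu$ on partitions, produce a unique $w\in\S_n$ with $\mathrm{Gr}(w)=\mu$ such that $u\stackrel{\tau}\longrightarrow w$ is a $k$-edge in the $k$-Bruhat graph on $\S_n$. First I would recall, from the discussion preceding the lemma, that the $k$-edges of $\S_n$ generating $\lambda\stackrel{\tau}\longrightarrow\mu$ are exactly those of the form $u'v\stackrel{\tau}\longrightarrow w'v$ with $v\in\S_k\times\S_{n-k}$ and $\mathrm{Gr}(u')=\lambda$, $\mathrm{Gr}(w')=\mu$, and that such an edge has the form $w'v=u'v\,t_{ab}$ with $a\le k<b$ and $\tau=u'v(a)$. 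Since $u$ itself satisfies $\mathrm{Gr}(u)=\lambda$, I would observe that there is some representative $u'v_0$ of this family with $u'v_0=u$, and then $w:=u\,t_{ab}$ (with $a,b$ determined below) is the desired lift with $\mathrm{Gr}(w)=\mu$.

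The key step for \emph{existence} at the single-edge level is to pin down the indices $a$ and $b$ from the data $(u,\tau,\mu)$. Since $\tau$ is the label, $a=u^{-1}(\tau)$ is forced, and one must check $a\le k$; this holds because $\tau$ is, by construction of the edge label on partitions, the value at a position in $[k]$ that gets moved — concretely, $\tau$ is the smallest entry of the rim hook $\mu/\lambda$ read off in the Grassmannian normal form, which sits in the first $k$ positions. Then $b$ must be chosen $>k$ so that $u(a)<u(b)$ and so that, after the swap, rearranging the first $k$ and last $n-k$ entries of $ut_{ab}$ in increasing order produces exactly $w_\mu$; the combinatorics of "adding a rim hook" (no $2\times 2$ square, connected skew shape), as invoked from \cite{MNS}, guarantees there is exactly one such $b$. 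The main obstacle I expect is precisely this bookkeeping: translating the "add a rim hook $\mu/\lambda$ with bottom-left cell recorded by $\tau$" description into the unique pair $(a,b)$ of positions in $\S_n$, and verifying that the swap $t_{ab}$ indeed changes the Grassmannian shape from $\lambda$ to $\mu$ and increases length (so that it is a genuine $k$-edge). This is where one uses Lemma \ref{extkBruhatorder} or a direct one-line-notation argument to see $\ell(ut_{ab})\ge\ell(u)+1$.

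For \emph{uniqueness} at the single-edge level: any lift $w$ of $\lambda\stackrel{\tau}\longrightarrow\mu$ starting at $u$ must be of the form $ut_{ab}$ with $a\le k<b$ and $\tau=u(a)$, which forces $a=u^{-1}(\tau)$; and then $b$ is forced as above since $\mathrm{Gr}(ut_{ab})=\mu$ leaves no freedom (two different choices of $b>k$ would have to give different values $u(b)$, hence different multisets of first-$k$ entries after the swap, hence at most one can equal $w_\mu$ up to $\S_k\times\S_{n-k}$). Finally, I would assemble the general statement by induction on $m$: apply the single-edge case to $u=w_0$ and $\lambda^{(0)}\stackrel{\tau_1}\longrightarrow\lambda^{(1)}$ to get a unique $w_1$ with $\mathrm{Gr}(w_1)=\lambda^{(1)}$, then apply it again to $w_1$ and $\lambda^{(1)}\stackrel{\tau_2}\longrightarrow\lambda^{(2)}$, and so on; at each stage the lifted edge is unique, so the whole lifted path is unique. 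This yields the path $u=w_0\stackrel{\tau_1}\longrightarrow w_1\stackrel{\tau_2}\longrightarrow\cdots\stackrel{\tau_m}\longrightarrow w_m$ with $\mathrm{Gr}(w_i)=\lambda^{(i)}$, as claimed.
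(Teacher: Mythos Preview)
Your proposal is correct and takes essentially the same approach as the paper: reduce to a single edge and use that the $k$-edges generating $\lambda\stackrel{\tau}\longrightarrow\mu$ form a single right $\S_k\times\S_{n-k}$-orbit, so fixing the source $u$ pins down the target uniquely; then iterate. The paper's proof is just the two-sentence version of this, whereas you add the (valid) explicit observation that $a=u^{-1}(\tau)$ and that $b$ is forced by the requirement $\{u(1),\ldots,u(k)\}\setminus\{\tau\}\cup\{u(b)\}=w_\mu[k]$.
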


\begin{proof}
As aforementioned, the collection of 
$k$-edges generating $\lambda^{(0)}\stackrel{\tau_1}\longrightarrow 
\lambda^{(1)}$
are of the form $w_0v\stackrel{\tau_1}\longrightarrow w_1v$, where $v\in \S_k\times \S_{n-k}$.
Since  $w_0=u$ is fixed, the  permutation $w_1$ is also uniquely determined. For the same reason, the permutations $w_i$ for $i=2,\ldots, m$ are accordingly uniquely   determined. 
\end{proof}

There is  a simple combinatorial rule to determine the label $\tau$ of a $k$-edge
$\lambda \stackrel{\tau}\longrightarrow \mu$. 
 Draw the Young diagram of $\lambda$, and label the southeast boundary from bottom left to top right by $1,2,\ldots,n$ in increasing order. Note that the $k$ labels received by the $k$ rows are exactly the first $k$ elements of the corresponding Grassmannian permutation  $w_{\lambda}$. For example, for $n=9$,  $k=4$ and $\lambda=(4,2,2, 0)$, the labeling of $\lambda$ is illustrated in the left diagram of Figure \ref{eq:egofechoice2}, from which one can read off  $w_{\lambda}=145823679$.

\begin{figure}[h]\center
\setlength{\unitlength}{1.5pc}
\begin{picture}(5,5)
{\linethickness{1pt}\put(0,4){\line(1,0){5}}}
\put(0,3){\line(1,0){2}}
{\linethickness{1pt}\put(2,3){\line(1,0){2}}}
\put(0,2){\line(1,0){2}}
{\linethickness{1pt}\put(0,1){\line(1,0){2}}}
{\linethickness{1pt}\put(0,0){\line(0,1){4}}}
\put(1,1){\line(0,1){3}}
{\linethickness{1pt}\put(2,1){\line(0,1){2}}}
\put(2,3){\line(0,1){1}}
\put(3,3){\line(0,1){1}}
{\linethickness{1pt}\put(4,3){\line(0,1){1}}}
\put(-0.2,0.3){%
    \makebox[0pc][l]{\color[rgb]{1.00,1.00,1.00}%
    \rule{0.4\unitlength}{0.4\unitlength}}%
    \raisebox{0.1pc}{\makebox[0.4\unitlength]%
        {$\scriptscriptstyle\bf1$}}%
    }
\put(0.3,0.8){%
    \makebox[0pc][l]{\color[rgb]{1.00,1.00,1.00}%
    \rule{0.4\unitlength}{0.4\unitlength}}%
    \raisebox{0.1pc}{\makebox[0.4\unitlength]%
        {$\scriptscriptstyle\bf2$}}%
    }
\put(1.3,0.8){%
    \makebox[0pc][l]{\color[rgb]{1.00,1.00,1.00}%
    \rule{0.4\unitlength}{0.4\unitlength}}%
    \raisebox{0.1pc}{\makebox[0.4\unitlength]%
        {$\scriptscriptstyle\bf3$}}%
    }
\put(1.8,1.3){%
    \makebox[0pc][l]{\color[rgb]{1.00,1.00,1.00}%
    \rule{0.4\unitlength}{0.4\unitlength}}%
    \raisebox{0.1pc}{\makebox[0.4\unitlength]%
        {$\scriptscriptstyle\bf4$}}%
    }
\put(1.8,2.3){%
    \makebox[0pc][l]{\color[rgb]{1.00,1.00,1.00}%
    \rule{0.4\unitlength}{0.4\unitlength}}%
    \raisebox{0.1pc}{\makebox[0.4\unitlength]%
        {$\scriptscriptstyle\bf5$}}%
    }
\put(2.3,2.8){%
    \makebox[0pc][l]{\color[rgb]{1.00,1.00,1.00}%
    \rule{0.4\unitlength}{0.4\unitlength}}%
    \raisebox{0.1pc}{\makebox[0.4\unitlength]%
        {$\scriptscriptstyle\bf6$}}%
    }
\put(3.3,2.8){%
    \makebox[0pc][l]{\color[rgb]{1.00,1.00,1.00}%
    \rule{0.4\unitlength}{0.4\unitlength}}%
    \raisebox{0.1pc}{\makebox[0.4\unitlength]%
        {$\scriptscriptstyle\bf7$}}%
    }
\put(3.8,3.3){%
    \makebox[0pc][l]{\color[rgb]{1.00,1.00,1.00}%
    \rule{0.4\unitlength}{0.4\unitlength}}%
    \raisebox{0.1pc}{\makebox[0.4\unitlength]%
        {$\scriptscriptstyle\bf8$}}%
    }
\put(4.3,3.8){%
    \makebox[0pc][l]{\color[rgb]{1.00,1.00,1.00}%
    \rule{0.4\unitlength}{0.4\unitlength}}%
    \raisebox{0.1pc}{\makebox[0.4\unitlength]%
        {$\scriptscriptstyle\bf9$}}%
    }
\end{picture}
\qquad
\begin{picture}(5,5)
\put(2,1){\color{lightgray}\rule{\unitlength}{\unitlength}}
\put(2,2){\color{lightgray}\rule{2\unitlength}{\unitlength}}
\put(2,1.3){\makebox[\unitlength]{$*$}}
{\linethickness{1pt}\put(0,4){\line(1,0){5}}}
\put(0,3){\line(1,0){4}}
\put(0,2){\line(1,0){3}}
{\linethickness{1pt}\put(3,2){\line(1,0){1}}}
{\linethickness{1pt}\put(0,1){\line(1,0){3}}}
{\linethickness{1pt}\put(0,0){\line(0,1){4}}}
\put(1,1){\line(0,1){3}}
\put(2,1){\line(0,1){3}}
\put(3,2){\line(0,1){2}}
{\linethickness{1pt}\put(3,1){\line(0,1){1}}}
{\linethickness{1pt}\put(4,2){\line(0,1){2}}}
\put(-0.2,0.3){%
    \makebox[0pc][l]{\color[rgb]{1.00,1.00,1.00}%
    \rule{0.4\unitlength}{0.4\unitlength}}%
    \raisebox{0.1pc}{\makebox[0.4\unitlength]%
        {$\scriptscriptstyle\bf1$}}%
    }
\put(0.3,0.8){%
    \makebox[0pc][l]{\color[rgb]{1.00,1.00,1.00}%
    \rule{0.4\unitlength}{0.4\unitlength}}%
    \raisebox{0.1pc}{\makebox[0.4\unitlength]%
        {$\scriptscriptstyle\bf2$}}%
    }
\put(1.3,0.8){%
    \makebox[0pc][l]{\color[rgb]{1.00,1.00,1.00}%
    \rule{0.4\unitlength}{0.4\unitlength}}%
    \raisebox{0.1pc}{\makebox[0.4\unitlength]%
        {$\scriptscriptstyle\bf3$}}%
    }
\put(2.3,0.8){%
    \makebox[0pc][l]{\color[rgb]{1.00,1.00,1.00}%
    \rule{0.4\unitlength}{0.2\unitlength}}%
    \makebox[0pc][l]{\color{lightgray}%
    \rule[0.2\unitlength]{0.4\unitlength}{0.2\unitlength}}%
    \raisebox{0.1pc}{\makebox[0.4\unitlength]%
        {$\scriptscriptstyle\bf4$}}%
    }
\put(2.8,1.3){%
    \makebox[0pc][l]{{\color{lightgray}%
    \rule{0.2\unitlength}{0.4\unitlength}}%
                    {\color[rgb]{1.00,1.00,1.00}%
    \rule{0.2\unitlength}{0.4\unitlength}}}%
    \raisebox{0.1pc}{\makebox[0.4\unitlength]%
        {$\scriptscriptstyle\bf5$}}%
    }
\put(3.3,1.8){%
    \makebox[0pc][l]{\color[rgb]{1.00,1.00,1.00}%
    \rule{0.4\unitlength}{0.2\unitlength}}%
    \makebox[0pc][l]{\color{lightgray}%
    \rule[0.2\unitlength]{0.4\unitlength}{0.2\unitlength}}%
    \raisebox{0.1pc}{\makebox[0.4\unitlength]%
        {$\scriptscriptstyle\bf6$}}%
    }
\put(3.8,2.3){%
    \makebox[0pc][l]{{\color{lightgray}%
    \rule{0.2\unitlength}{0.4\unitlength}}%
                    {\color[rgb]{1.00,1.00,1.00}%
    \rule{0.2\unitlength}{0.4\unitlength}}}%
    \raisebox{0.1pc}{\makebox[0.4\unitlength]%
        {$\scriptscriptstyle\bf7$}}%
    }
\put(3.8,3.3){%
    \makebox[0pc][l]{\color[rgb]{1.00,1.00,1.00}%
    \rule{0.4\unitlength}{0.4\unitlength}}%
    \raisebox{0.1pc}{\makebox[0.4\unitlength]%
        {$\scriptscriptstyle\bf8$}}%
    }
\put(4.3,3.8){%
    \makebox[0pc][l]{\color[rgb]{1.00,1.00,1.00}%
    \rule{0.4\unitlength}{0.4\unitlength}}%
    \raisebox{0.1pc}{\makebox[0.4\unitlength]%
        {$\scriptscriptstyle\bf9$}}%
    }
\end{picture}
\caption{The construction of $w_{\lambda}$ and $L(\mu/\lambda)$}\label{eq:egofechoice2}
\end{figure}
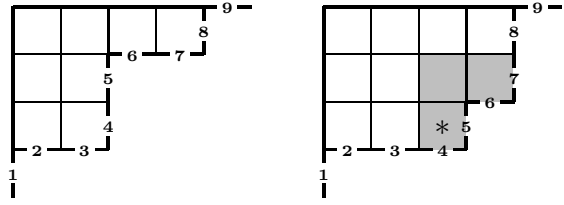

To locate $\tau$, label the southeast boundary of  $\mu$. Let  $L(\mu/\lambda)$   denote the set of labels   received by the southeast boundary of the  rim hook   $\mu/\lambda$.
For $\lambda=(4,2,2,0)$ and $\mu=(4,4,3,0)$, 
we see from the right diagram of Figure \ref{eq:egofechoice2} that $L(\mu/\lambda)=\{4,5,6,7\}$.
We have the following observation 
\[\tau=\min L(\mu/\lambda).\]
This can be understood as follows. Assume that 
 $u\stackrel{\tau}\longrightarrow w$ is a $k$-edge with $\mathrm{Gr}(u)=\lambda$ and $\mathrm{Gr}(w)=\mu$. 
 Then the set of labels received by the rows of $\lambda$ (resp., 
 $\mu$) is $\{u(1),\ldots, u(k)\}$ (resp., $\{w(1),\ldots, w(k)\}$). Notice that $\tau\in\{u(1),\ldots, u(k)\}\setminus\{w(1),\ldots, w(k)\}$, which, by the definition of $L(\mu/\lambda)$, is exactly the value $\min L(\mu/\lambda)$. With the above $\lambda$ and $\mu$, 
 we see that $\tau=4$. 
 
The  leftmost box in the bottom row of the rim hook $\mu/\lambda$ is referred to 
as the {\it tail} of $\mu/\lambda$. Clearly,  the bottom edge of the tail box is endowed with the label $\tau=\min L(\mu/\lambda)$. As depicted in  Figure \ref{eq:egofechoice2}, the  tail of $\mu/\lambda$ is marked with a star. 
With this notion, we have the following characterizations of increasing/decreasing paths in the $k$-Bruhat graph of partitions. 

\begin{Lemma}\label{uniqueLiftingLemma-II}
Assume that $\lambda$ and $\mu$ are two partitions inside 
the $k\times (n-k)$-rectangle. Then a path
$$\lambda=\lambda^{(0)}\stackrel{\tau_1}\longrightarrow 
\lambda^{(1)}\stackrel{\tau_2}\longrightarrow \cdots
\stackrel{\tau_m}\longrightarrow \lambda^{(m)}=\mu$$
from $\lambda$ to $\mu$ is increasing (resp., decreasing)
if and only if for $1\leq i\leq m-1$, 
the tail of $\lambda^{(i+1)}/\lambda^{(i)}$ is strictly 
to the right (resp., strictly below) of the tail of $\lambda^{(i)}/\lambda^{(i-1)}$. 
\end{Lemma}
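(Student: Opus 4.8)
The plan is to reduce the statement to a purely local analysis of what a single $k$-edge does to the label $\tau = \min L(\mu/\lambda)$, and then to track the relative positions of the tails along the path. First I would recall (from the discussion preceding Lemma~\ref{uniqueLiftingLemma-II}) the two basic facts: an edge $\lambda^{(i-1)}\stackrel{\tau_i}\longrightarrow\lambda^{(i)}$ corresponds to adding the rim hook $\lambda^{(i)}/\lambda^{(i-1)}$, and its label is $\tau_i = \min L(\lambda^{(i)}/\lambda^{(i-1)})$, which is exactly the label attached to the bottom edge of the tail box of that rim hook. So the whole question is about comparing $\tau_i$ with $\tau_{i+1}$, i.e.\ comparing the labels of two consecutive tail boxes. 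Since the southeast-boundary labeling assigns values along the boundary lattice path from bottom-left to top-right, the label of a horizontal boundary segment increases as one moves up-and-to-the-right, and in particular the label of the bottom edge of a box located strictly to the right (in a weakly higher row, as forced by the rim-hook/partition constraints) is strictly larger, while the label of the bottom edge of a box strictly below (in a strictly lower row) is strictly smaller.

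The key step is then: $\tau_i < \tau_{i+1}$ if and only if the tail of $\lambda^{(i+1)}/\lambda^{(i)}$ lies strictly to the right of the tail of $\lambda^{(i)}/\lambda^{(i-1)}$; and $\tau_i > \tau_{i+1}$ if and only if the later tail lies strictly below the earlier one. To make this precise I would use Lemma~\ref{uniqueLiftingLemma} to lift the given path of partitions to a path $u = w_0 \stackrel{\tau_1}\longrightarrow w_1 \stackrel{\tau_2}\longrightarrow\cdots$ in the $k$-Bruhat graph on $\S_n$ with $\mathrm{Gr}(w_i) = \lambda^{(i)}$; then $\tau_i = w_{i-1}(a_i)$ where $w_i = w_{i-1}t_{a_ib_i}$ with $a_i \le k < b_i$. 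The position of the tail of $\lambda^{(i)}/\lambda^{(i-1)}$ within the Young diagram is precisely the row $a_i$ of $\lambda^{(i-1)}$ (equivalently the row of $\lambda^{(i)}$) whose label got moved, so "tail strictly to the right" translates to "$a_{i+1}$ sits in a row receiving a larger boundary label than $a_i$", which by the monotonicity of the labeling is equivalent to $\tau_{i+1} > \tau_i$. One has to be slightly careful that the two tails never share the same bottom-edge label: this follows because after adding the rim hook $\lambda^{(i)}/\lambda^{(i-1)}$ the label $\tau_i$ is no longer available as a row-label of $\lambda^{(i)}$, so $\tau_{i+1}\ne\tau_i$ automatically, hence the trichotomy collapses to exactly the two geometric alternatives "strictly right" / "strictly below."

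With this dictionary in hand, the proof is immediate: the path is increasing, i.e.\ $\tau_1 < \tau_2 < \cdots < \tau_m$, iff for every $1\le i\le m-1$ we have $\tau_i<\tau_{i+1}$, iff for every such $i$ the tail of $\lambda^{(i+1)}/\lambda^{(i)}$ is strictly to the right of the tail of $\lambda^{(i)}/\lambda^{(i-1)}$; and symmetrically for the decreasing case with "strictly below." The main obstacle I anticipate is not conceptual but bookkeeping: one must pin down exactly how the southeast-boundary labeling of $\lambda^{(i)}$ relates to that of $\lambda^{(i-1)}$ and $\lambda^{(i+1)}$ when boxes are added, since the labeling is intrinsic to each partition and the labels of a given physical edge shift as the diagram grows. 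I would handle this by working entirely with the row indices $a_i$ of the lifted permutations $w_i$ (which are canonical and independent of which partition we view them in) and invoking the fact, already used implicitly in the paper, that the $k$ row-labels of $\lambda^{(i)}$ are exactly $\{w_i(1),\dots,w_i(k)\}$ together with the monotonicity $w_i(1)<\cdots<w_i(k)$; comparing $a_i$ versus $a_{i+1}$ in this list then directly yields the comparison of $\tau_i$ versus $\tau_{i+1}$ and simultaneously the "right versus below" dichotomy for the tails.
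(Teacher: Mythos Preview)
The paper states this lemma without proof, treating it as immediate from the preceding discussion of boundary labelings and tails. Your plan is sound and would yield a correct argument, but it is more elaborate than necessary, and the detour through Lemma~\ref{uniqueLiftingLemma} introduces a small wrinkle you did not fully resolve.

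The cleanest route bypasses lifting entirely. From the boundary labeling one checks directly that the bottom edge of the box in row $q$ and column $c$ (whenever that edge lies on the boundary) receives the label $c + (k - q)$, independently of which partition's boundary one is labeling. Hence $\tau_i = c_i + k - q_i$, where $(q_i, c_i)$ is the position of the tail of $\lambda^{(i)}/\lambda^{(i-1)}$; in particular your anticipated ``main obstacle'' (that labels shift as the diagram grows) does not arise. A short case split on $q_{i+1}$ versus $q_i$ then finishes: if $q_{i+1} > q_i$ then $c_{i+1} = \lambda^{(i)}_{q_{i+1}} + 1 = \lambda^{(i-1)}_{q_{i+1}} + 1 \le \lambda^{(i-1)}_{q_i} + 1 = c_i$, so $\tau_{i+1} < \tau_i$; if $q_{i+1} \le q_i$ then $c_{i+1} = \lambda^{(i)}_{q_{i+1}} + 1 \ge \lambda^{(i)}_{q_i} + 1 > c_i$, so $\tau_{i+1} > \tau_i$. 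Thus ``strictly below'' $\Leftrightarrow$ $q_{i+1} > q_i$ $\Leftrightarrow$ $\tau_{i+1} < \tau_i$, and ``strictly to the right'' $\Leftrightarrow$ $c_{i+1} > c_i$ $\Leftrightarrow$ $\tau_{i+1} > \tau_i$, and the two alternatives are exhaustive.

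Regarding your lifting approach: the assertion that $w_i(1) < \cdots < w_i(k)$ holds only when $w_i$ is Grassmannian, but the permutations produced by Lemma~\ref{uniqueLiftingLemma} from an arbitrary starting $u$ need not be. You can repair this by lifting each single edge $\lambda^{(i-1)} \to \lambda^{(i)}$ separately from the Grassmannian representative $w_{\lambda^{(i-1)}}$, but at that point you are essentially recomputing the formula $\tau_i = c_i + k - q_i$ by other means.
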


Now, Theorem \ref{NoneqPierirulehook}, formula  \eqref{eq:eqPierirulehookParabolic}, and  Lemmas \ref{uniqueLiftingLemma} and \ref{uniqueLiftingLemma-II} together lead to 
the following multiplication formula over $H^\bullet(\operatorname{\mathcal{G}\it r}(k,n))$.

\begin{Th}
Let $\lambda$ be a partition inside the $k\times (n-k)$ rectangle. For a hook shape partition $\Gamma=(1+\alpha,1^{\beta})$, we have the following identity in $H^\bullet(\operatorname{\mathcal{G}\it r}(k,n))$:
\begin{equation*}
c_{\mathrm{SM}}(Y(\lambda)^\circ)\cdot s_{\Gamma}(x_{[k]}) = \sum_\mu c^{\mu}_{\lambda,\Gamma}\cdot c_{\mathrm{SM}}(Y(\mu)^\circ),
\end{equation*}
where the sum ranges over partitions   inside the $k\times (n-k)$ rectangle, and the coefficient $c^{\mu}_{\lambda,\Gamma}$ is equal to the number of
paths 
\[\lambda=\lambda^{(0)}\longrightarrow 
\lambda^{(1)}\longrightarrow \cdots
\longrightarrow \lambda^{(\alpha+\beta+1)}=\mu\]
 such that for $1\leq i\leq \beta$, the   tail
 of   $\lambda^{(i+1)}/\lambda^{(i)}$ is   strictly below the tail of $\lambda^{(i)}/\lambda^{(i-1)}$, and for $\beta+1\leq j\leq \alpha+\beta$, the   tail of $\lambda^{(j+1)}/\lambda^{(j)}$ is strictly to the   right of the tail of $\lambda^{(j)}/\lambda^{(j-1)}$. 
\end{Th}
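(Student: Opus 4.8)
The plan is to deduce this Grassmannian statement directly from the non-equivariant flag-variety Pieri formula (Theorem~\ref{NoneqPierirulehook}) by pushing forward along $\pi\colon\Fl(n)\to\Gr(k,n)$, exactly as in the general derivation leading to \eqref{eq:eqPierirulehookParabolic}. First I would fix $u\in\S_n$ with $\mathrm{Gr}(u)=\lambda$ and apply the non-equivariant version of \eqref{eq:eqPierirulehookParabolic} (with $\alpha=s_\Gamma(x_{[k]})$, which is $\S_k\times\S_{n-k}$-invariant since it involves only the symmetric polynomial in $x_1,\dots,x_k$): this gives
\[
c_{\mathrm{SM}}(Y(\lambda)^\circ)\cdot s_{\Gamma}(x_{[k]})=\sum_{\mu}\Bigl(\sum_{\mathrm{Gr}(w)=\mu}c_{u,\Gamma}^{w}\Bigr)c_{\mathrm{SM}}(Y(\mu)^\circ),
\]
where $c_{u,\Gamma}^{w}$ is the coefficient appearing in Theorem~\ref{NoneqPierirulehook}, namely the number of peakless paths $\gamma$ from $u$ to $w$ in the extended $k$-Bruhat order with $\mathrm{in}(\gamma)=\alpha$, $\mathrm{de}(\gamma)=\beta$. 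So $c_{\lambda,\Gamma}^{\mu}$ equals the total number of peakless paths of length $\alpha+\beta+1$ in the $k$-Bruhat graph on $\S_n$ that start at \emph{some} $w$ with $\mathrm{Gr}(w)=\lambda$, end at \emph{some} $w'$ with $\mathrm{Gr}(w')=\mu$, and have $\mathrm{in}=\alpha$, $\mathrm{de}=\beta$.

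The second step is a bijection between such flag-variety paths and paths in the $k$-Bruhat graph on partitions inside the $k\times(n-k)$ rectangle. Here Lemma~\ref{uniqueLiftingLemma} is the key tool: every path $\lambda=\lambda^{(0)}\to\cdots\to\lambda^{(m)}$ on partitions lifts uniquely, once the starting permutation is fixed, to a path $u=w_0\to\cdots\to w_m$ on $\S_n$ with $\mathrm{Gr}(w_i)=\lambda^{(i)}$, and this lifting preserves all edge labels $\tau_i$. Consequently, for a fixed choice of representative $u$ with $\mathrm{Gr}(u)=\lambda$, the map $\gamma\mapsto(\mathrm{Gr}(w_0)\to\cdots\to\mathrm{Gr}(w_m))$ is a bijection from paths on $\S_n$ starting at $u$ onto paths on partitions starting at $\lambda$, and it preserves the label sequence $(\tau_1,\dots,\tau_m)$ hence the peakless/increasing/decreasing structure and the statistics $\mathrm{in}(\gamma),\mathrm{de}(\gamma)$. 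I should note that although for a fixed target partition $\mu$ there are many $w$ with $\mathrm{Gr}(w)=\mu$, the bijection is between paths-from-$u$ and paths-from-$\lambda$, so the count $\sum_{\mathrm{Gr}(w)=\mu}c_{u,\Gamma}^{w}$ is precisely the number of peakless paths from $\lambda$ to $\mu$ on partitions with $\mathrm{in}=\alpha$, $\mathrm{de}=\beta$; this requires checking that two lifted paths with the same partition sequence but different representatives $u$ don't get double-counted, which holds because we fixed one $u$ once and for all.

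The third step is to translate the combinatorial conditions ``peakless with $\mathrm{in}=\alpha$, $\mathrm{de}=\beta$'' into the tail-position language of the statement, using Lemma~\ref{uniqueLiftingLemma-II}. A peakless path of length $\alpha+\beta+1$ is by definition one where $\tau_1>\cdots>\tau_{i}<\cdots<\tau_{\alpha+\beta+1}$ for some index $i$, with the decreasing segment having $\beta$ steps (so $\mathrm{de}=\beta$) and the increasing segment $\alpha$ steps (so $\mathrm{in}=\alpha$); equivalently $\tau_1>\cdots>\tau_{\beta+1}$ and $\tau_{\beta+1}<\cdots<\tau_{\alpha+\beta+1}$. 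By Lemma~\ref{uniqueLiftingLemma-II}, $\tau_i<\tau_{i+1}$ on the partition side means the tail of $\lambda^{(i+1)}/\lambda^{(i)}$ is strictly to the right of the tail of $\lambda^{(i)}/\lambda^{(i-1)}$, and $\tau_i>\tau_{i+1}$ means it is strictly below; so the decreasing-then-increasing label condition becomes exactly: for $1\le i\le\beta$ the tail of $\lambda^{(i+1)}/\lambda^{(i)}$ is strictly below that of $\lambda^{(i)}/\lambda^{(i-1)}$, and for $\beta+1\le j\le\alpha+\beta$ it is strictly to the right of the previous tail. This is verbatim the condition in the theorem, so the proof concludes.

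The main obstacle I anticipate is bookkeeping around the passage from ``sum over $w$ with $\mathrm{Gr}(w)=\mu$'' to ``paths on partitions'': one must be careful that Lemma~\ref{uniqueLiftingLemma} is applied with a single fixed representative $u$ of $\lambda$ (the coset-minimal Grassmannian permutation $w_\lambda$ is the natural choice), and that the non-equivariant Pieri coefficient $c_{u,\Gamma}^{w}$ is independent of which representative of $\lambda$ one picks --- this follows because right-multiplication by $v\in\S_k\times\S_{n-k}$ is an automorphism of the $k$-Bruhat graph preserving labels, carrying peakless paths from $u$ to $w$ bijectively onto peakless paths from $uv$ to $wv$ with the same $\mathrm{in},\mathrm{de}$. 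Everything else is a direct citation of the already-established Theorem~\ref{NoneqPierirulehook}, formula~\eqref{eq:eqPierirulehookParabolic}, and Lemmas~\ref{uniqueLiftingLemma} and~\ref{uniqueLiftingLemma-II}.
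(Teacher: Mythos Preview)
Your proposal is correct and follows essentially the same approach as the paper: the paper's proof is the one-line remark that the theorem follows from Theorem~\ref{NoneqPierirulehook}, formula~\eqref{eq:eqPierirulehookParabolic}, and Lemmas~\ref{uniqueLiftingLemma} and~\ref{uniqueLiftingLemma-II}, and you have simply spelled out how these pieces fit together. Your bookkeeping about fixing a single representative $u$ and the label-preserving bijection between $k$-Bruhat paths on $\S_n$ starting at $u$ and partition paths starting at $\lambda$ is exactly the intended mechanism.
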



\begin{Eg}%
\definecolor{a}{rgb}{0.8,0.8,0.8}%
\definecolor{b}{rgb}{0.4,0.4,0.4}%
\definecolor{c}{rgb}{0.0,0.0,0.0}%
 \definecolor{b}{rgb}{1,0.5,0.5}%
 \definecolor{a}{rgb}{1,1,0.5}%
 \definecolor{c}{rgb}{0.5,0.5,1}%
Write $\zeta_{\lambda}=c_{\mathrm{SM}}(Y(\lambda)^\circ)$. 
Let $k=3, n=7$, and $\lambda=(3,2,0)$. 
Consider  the cases  when $\alpha+\beta+1=3$. 
We use the colors ${\color{a}\blacksquare}$, ${\color{b}\blacksquare}$, ${\color{c}\blacksquare}$ 
to indicate  the first, the second   and the third added rim hooks, respectively. 
For $\alpha=0$ and $\beta=2$, we have $s_\Gamma(x_1, x_2, x_3)=e_3(x_1, x_2, x_3)$.  
The  decreasing paths inside the $3\times 4$ rectangle starting from $\lambda$ are
$$\ytableausetup{mathmode, boxsize=0.5pc}
\begin{array}{c}
    \begin{ytableau}
     {} & {} & {} &*(a)\\
     {} & {} &*(b)\\
    *(c)
    \end{ytableau}
\quad
    \begin{ytableau}
     {} & {} & {} &*(a)\\
     {} & {} &*(b)\\
    *(c)&*(c)
    \end{ytableau}
\quad
    \begin{ytableau}
     {} & {} & {} &*(a)\\
     {} & {} &*(b)\\
    *(c)&*(c)&*(c)
    \end{ytableau}
\quad
    \begin{ytableau}
     {} & {} & {} &*(a)\\
     {} & {} &*(b)&*(b)\\
    *(c)
    \end{ytableau}
\quad
    \begin{ytableau}
     {} & {} & {} &*(a)\\
     {} & {} &*(b)&*(b)\\
    *(c)&*(c)
    \end{ytableau}
\quad
    \begin{ytableau}
     {} & {} & {} &*(a)\\
     {} & {} &*(b)&*(b)\\
    *(c)&*(c)&*(c)
    \end{ytableau}
\quad
    \begin{ytableau}
     {} & {} & {} &*(a)\\
     {} & {} &*(b)&*(c)\\
    *(c)&*(c)&*(c)&*(c)
    \end{ytableau}
\quad
    \begin{ytableau}
     {} & {} & {} &*(a)\\
     {} & {} &*(b)&*(b)\\
    *(c)&*(c)&*(c)&*(c)
    \end{ytableau}
\end{array}$$
So we have 
\begin{align*}\zeta_{(3,2,0)}\cdot e_3(x_1, x_2, x_3) =&
\zeta_{(4,3,1)}+\zeta_{(4,3,2)}+\zeta_{(4,3,3)}+\zeta_{(4,4,1)}\\
&\ +\zeta_{(4,4,2)}+\zeta_{(4,4,3)}+2\zeta_{(4,4,4)}.
\end{align*}
For $\alpha=2$ and $\beta=0$, we have $s_\Gamma(x_1, x_2, x_3)=h_3(x_1, x_2, x_3)$, and the increasing paths inside the $3\times 4$ rectangle starting from $\lambda$ are
$$\ytableausetup{mathmode, boxsize=0.5pc}
\begin{array}{l}
    \begin{ytableau}
     {} & {} & {} &\none\\
     {} & {} &*(c)\\
    *(a)&*(b)
    \end{ytableau}
\quad
    \begin{ytableau}
     {} & {} & {} &\none\\
     {} & {} &*(c)\\
    *(a)&*(b)&*(c)
    \end{ytableau}
\quad
    \begin{ytableau}
     {} & {} & {} &*(c)\\
     {} & {} \\
    *(a)&*(b)
    \end{ytableau}
\quad
    \begin{ytableau}
     {} & {} & {} &*(c)\\
     {} & {} &*(b)\\
    *(a)
    \end{ytableau}
\quad
    \begin{ytableau}
     {} & {} & {} &*(c)\\
     {} & {} &*(b)\\
    *(a)&*(a)
    \end{ytableau}
\quad
    \begin{ytableau}
     {} & {} & {} &*(c)\\
     {} & {} &*(b)\\
    *(a)&*(b)&*(b)
    \end{ytableau}
\quad
    \begin{ytableau}
     {} & {} & {} &*(c)\\
     {} & {} &*(b)\\
    *(a)&*(a)&*(b)
    \end{ytableau}
\quad
    \begin{ytableau}
     {} & {} & {} &*(c)\\
     {} & {} &*(b)&*(b)\\
    *(a)
    \end{ytableau}
\quad
    \begin{ytableau}
     {} & {} & {} &*(c)\\
     {} & {} &*(c)&*(c)\\
    *(a)&*(b)
    \end{ytableau}
\\[1.5pc]
    \begin{ytableau}
     {} & {} & {} &*(c)\\
     {} & {} &*(b)&*(c)\\
    *(a)&*(a)
    \end{ytableau}
\quad
    \begin{ytableau}
     {} & {} & {} &*(c)\\
     {} & {} &*(c)&*(c)\\
    *(a)&*(b)&*(c)
    \end{ytableau}
\quad
    \begin{ytableau}
     {} & {} & {} &*(c)\\
     {} & {} &*(b)&*(c)\\
    *(a)&*(b)&*(b)
    \end{ytableau}
\quad
    \begin{ytableau}
     {} & {} & {} &*(c)\\
     {} & {} &*(b)&*(c)\\
    *(a)&*(a)&*(b)
    \end{ytableau}
\quad
    \begin{ytableau}
     {} & {} & {} &*(c)\\
     {} & {} &*(b)&*(c)\\
    *(a)&*(b)&*(b)&*(c)
    \end{ytableau}
\quad
    \begin{ytableau}
     {} & {} & {} &*(b)\\
     {} & {} &*(b)&*(b)\\
    *(a)&*(b)&*(b)&*(c)
    \end{ytableau}
\quad
    \begin{ytableau}
     {} & {} & {} &*(c)\\
     {} & {} &*(b)&*(c)\\
    *(a)&*(a)&*(b)&*(c)
    \end{ytableau}
\quad
    \begin{ytableau}
     {} & {} & {} &*(b)\\
     {} & {} &*(b)&*(b)\\
    *(a)&*(a)&*(b)&*(c)
    \end{ytableau}
\end{array}$$
This gives 
\begin{align*}
\zeta_{(3,2,0)}\cdot h_3(x_1, x_2, x_3) =& \zeta_{(3,3,2)}+\zeta_{(3,3,3)}+\zeta_{(4,2,2)}+\zeta_{(4,3,1)}+\zeta_{(4,3,2)}+2\zeta_{(4,3,3)}\\
&\ +\zeta_{(4,4,1)}+2\zeta_{(4,4,2)}+3\zeta_{(4,4,3)}+4\zeta_{(4,4,4)}.
\end{align*}
For $\alpha=1$ and $\beta=1$, we have $s_\Gamma(x_1, x_2, x_3)=s_{(2,1)}(x_1, x_2, x_3)$. In this case,
\begin{align*}
\zeta_{(3,2,0)}\cdot s_{(2,1)}(x_1, x_2, x_3) =&
\zeta_{(3,3,2)}+2\zeta_{(3,3,3)}+\zeta_{(4,2,2)}+2\zeta_{(4,3,1)}+2\zeta_{(4,3,2)}+3\zeta_{(4,3,3)}\\
&\ +\zeta_{(4,4,0)}+2\zeta_{(4,4,1)}+3\zeta_{(4,4,2)}+6\zeta_{(4,4,3)}+8\zeta_{(4,4,4)},
\end{align*}
with the corresponding peakless paths  
$$\ytableausetup{mathmode, boxsize=0.5pc}
\begin{array}{l}
    \begin{ytableau}
     {} & {} & {} &\none\\
     {} & {} &*(a)\\
    *(b)&*(c)
    \end{ytableau}
\quad
    \begin{ytableau}
     {} & {} & {} &\none\\
     {} & {} &*(a)\\
    *(b)&*(c)&*(c)
    \end{ytableau}
\quad
    \begin{ytableau}
     {} & {} & {} &\none\\
     {} & {} &*(a)\\
    *(b)&*(b)&*(c)
    \end{ytableau}
\quad
    \begin{ytableau}
     {} & {} & {} &*(a)\\
     {} & {} \\
    *(b)&*(c)
    \end{ytableau}
\quad
    \begin{ytableau}
     {} & {} & {} &*(c)\\
     {} & {} &*(a)\\
    *(b)
    \end{ytableau}
\quad
    \begin{ytableau}
     {} & {} & {} &*(a)\\
     {} & {} &*(c)\\
    *(b)
    \end{ytableau}
\quad
    \begin{ytableau}
     {} & {} & {} &*(c)\\
     {} & {} &*(a)\\
    *(b)&*(b)
    \end{ytableau}
\quad
    \begin{ytableau}
     {} & {} & {} &*(a)\\
     {} & {} &*(c)\\
    *(b)&*(b)
    \end{ytableau}
\\[1.5pc]
    \begin{ytableau}
     {} & {} & {} &*(c)\\
     {} & {} &*(a)\\
    *(b)&*(b)&*(b)
    \end{ytableau}
\quad
    \begin{ytableau}
     {} & {} & {} &*(a)\\
     {} & {} &*(c)\\
    *(b)&*(c)&*(c)
    \end{ytableau}
\quad
    \begin{ytableau}
     {} & {} & {} &*(a)\\
     {} & {} &*(c)\\
    *(b)&*(b)&*(c)
    \end{ytableau}
\quad
    \begin{ytableau}
     {} & {} & {} &*(a)\\
     {} & {} &*(b)&*(c)\\
    \none
    \end{ytableau}
\quad
    \begin{ytableau}
     {} & {} & {} &*(c)\\
     {} & {} &*(a)&*(c)\\
    *(b)
    \end{ytableau}
\quad
    \begin{ytableau}
     {} & {} & {} &*(a)\\
     {} & {} &*(c)&*(c)\\
    *(b)
    \end{ytableau}
\quad
    \begin{ytableau}
     {} & {} & {} &*(c)\\
     {} & {} &*(a)&*(c)\\
    *(b)&*(b)
    \end{ytableau}
\quad
    \begin{ytableau}
     {} & {} & {} &*(a)\\
     {} & {} &*(a)&*(a)\\
    *(b)&*(c)
    \end{ytableau}
\\[1.5pc]
    \begin{ytableau}
     {} & {} & {} &*(a)\\
     {} & {} &*(c)&*(c)\\
    *(b)&*(b)
    \end{ytableau}
\quad
    \begin{ytableau}
     {} & {} & {} &*(c)\\
     {} & {} &*(a)&*(c)\\
    *(b)&*(b)&*(b)
    \end{ytableau}
\quad
    \begin{ytableau}
     {} & {} & {} &*(a)\\
     {} & {} &*(a)&*(a)\\
    *(b)&*(c)&*(c)
    \end{ytableau}
\quad
    \begin{ytableau}
     {} & {} & {} &*(a)\\
     {} & {} &*(a)&*(a)\\
    *(b)&*(b)&*(c)
    \end{ytableau}
\quad
    \begin{ytableau}
     {} & {} & {} &*(a)\\
     {} & {} &*(c)&*(c)\\
    *(b)&*(c)&*(c)
    \end{ytableau}
\quad
    \begin{ytableau}
     {} & {} & {} &*(a)\\
     {} & {} &*(c)&*(c)\\
    *(b)&*(b)&*(c)
    \end{ytableau}
\quad
    \begin{ytableau}
     {} & {} & {} &*(a)\\
     {} & {} &*(b)&*(c)\\
    *(b)&*(b)&*(b)
    \end{ytableau}
\quad
    \begin{ytableau}
     {} & {} & {} &*(c)\\
     {} & {} &*(a)&*(c)\\
    *(b)&*(c)&*(c)&*(c)
    \end{ytableau}
\\[1.5pc]
    \begin{ytableau}
     {} & {} & {} &*(c)\\
     {} & {} &*(a)&*(c)\\
    *(b)&*(b)&*(c)&*(c)
    \end{ytableau}
\quad
    \begin{ytableau}
     {} & {} & {} &*(c)\\
     {} & {} &*(a)&*(c)\\
    *(b)&*(b)&*(b)&*(c)
    \end{ytableau}
\quad
    \begin{ytableau}
     {} & {} & {} &*(a)\\
     {} & {} &*(a)&*(a)\\
    *(b)&*(c)&*(c)&*(c)
    \end{ytableau}
\quad
    \begin{ytableau}
     {} & {} & {} &*(a)\\
     {} & {} &*(a)&*(a)\\
    *(b)&*(b)&*(c)&*(c)
    \end{ytableau}
\quad
    \begin{ytableau}
     {} & {} & {} &*(a)\\
     {} & {} &*(a)&*(a)\\
    *(b)&*(b)&*(b)&*(c)
    \end{ytableau}
\quad
    \begin{ytableau}
     {} & {} & {} &*(a)\\
     {} & {} &*(b)&*(c)\\
    *(b)&*(b)&*(b)&*(c)
    \end{ytableau}
\quad
    \begin{ytableau}
     {} & {} & {} &*(a)\\
     {} & {} &*(b)&*(b)\\
    *(b)&*(b)&*(b)&*(c)
    \end{ytableau}
\end{array}$$
\end{Eg}
\bigbreak



\subsection{Parabolic Version of  Corollary \ref{MNforequivariantSchubert}}

Adopting the notion in \cite[\textsection 7.17]{stanley2},  define the {\it height}  $\operatorname{ht}(\mu/\lambda)$ of a rim hook $\mu/\lambda$ to be one less than its number of rows.

\begin{Lemma}\label{Ctildeforpartition} 
Let $\lambda$ and $\mu$ be two partitions  inside the $k\times (n-k)$ rectangle such that $\mu/\lambda$ is a rim hook. Then
\[\operatorname{ht}(\mu/\lambda)=
\operatorname{ht}_k(w_\lambda^{-1}w_\mu )
\quad \text{and}\quad L(\mu/ \lambda) =w_\lambda M(w_\lambda^{-1}w_\mu).\]
\end{Lemma}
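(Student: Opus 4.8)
The plan is to unwind the definitions on both sides and match them directly, using the combinatorial picture of the southeast boundary labeling described just above the statement. Write $\lambda = \mathrm{Gr}(u)$, $\mu = \mathrm{Gr}(w)$ for a $k$-edge $u \stackrel{\tau}{\longrightarrow} w$ generating $\lambda \stackrel{\tau}{\longrightarrow} \mu$; since both sides of the claimed identity depend only on the cosets, it suffices to work with the minimal representatives $u = w_\lambda$, $w = w_\mu$. The edge $u \to w$ has the form $w = u t_{ab}$ with $a \le k < b$, and $\tau = u(a)$. The key combinatorial fact already recorded in the excerpt is that the rows of $\lambda$ receive the labels $\{u(1),\dots,u(k)\}$ on the southeast boundary, the rows of $\mu$ receive $\{w(1),\dots,w(k)\}$, and $L(\mu/\lambda)$ is exactly the set of boundary labels of the cells of the rim hook $\mu/\lambda$.

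First I would establish the set equality $L(\mu/\lambda) = w_\lambda M(w_\lambda^{-1} w_\mu)$. Note $M(w_\lambda^{-1} w_\mu) = \{ i : w_\lambda(i) \ne w_\mu(i)\}$, so $w_\lambda M(w_\lambda^{-1}w_\mu) = \{ w_\lambda(i) : w_\lambda(i) \ne w_\mu(i)\}$, which is the symmetric-difference-type set of values that move. On the geometric side, the rim hook $\mu/\lambda$ is the region between the two staircase paths; its boundary labels are precisely the integers $j$ such that $j$ labels a vertical step of one boundary but a horizontal step of the other — equivalently, $j \in \{w_\lambda(1),\dots,w_\lambda(k)\} \mathbin{\triangle} \{w_\mu(1),\dots,w_\mu(k)\}$ together with the label of the tail's bottom edge. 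I would make this precise by recalling that the $k$ labels on the rows of a partition are exactly the first $k$ values of the associated Grassmannian permutation, and that the remaining labels (on the columns, i.e. the vertical boundary segments below rows) are the last $n-k$ values; then $j \in L(\mu/\lambda)$ iff $j$ switches sides, which is exactly $j \in \{w_\lambda(i)\} \setminus \{w_\mu(i)\}$ for some $i \le k$ or $j$ is a last-$n-k$ value of $w_\lambda$ that becomes a first-$k$ value of $w_\mu$ — and in either case $j$ is a value on which $w_\lambda$ and $w_\mu$ disagree, so $j \in w_\lambda M(w_\lambda^{-1}w_\mu)$. The reverse inclusion is the same bookkeeping read backwards.

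Second I would prove $\operatorname{ht}(\mu/\lambda) = \operatorname{ht}_k(w_\lambda^{-1}w_\mu)$. By definition $\operatorname{ht}_k(\eta) = \#\{ i \le k : \eta(i) \ne i\} - 1$ with $\eta = w_\lambda^{-1} w_\mu$; this counts (one less than) the number of rows of $\lambda$ whose boundary label gets reassigned when passing to $\mu$, i.e. the number of rows of $\lambda$ that meet the rim hook $\mu/\lambda$. On the other hand $\operatorname{ht}(\mu/\lambda)$ is one less than the number of rows of the skew shape $\mu/\lambda$, i.e. one less than the number of rows $\lambda$ already has that acquire new cells plus possibly new rows — but for a rim hook inside the rectangle this is exactly the number of rows of $\mu$ that properly contain a cell of $\mu/\lambda$, which in turn equals the number of indices $i \le k$ with $w_\lambda(i) \ne w_\mu(i)$. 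Matching these two counts gives the claim; I would phrase the argument in terms of the staircase boundary, where the number of rows of $\mu/\lambda$ is visibly the number of vertical unit steps in the portion of $\partial\mu$ bounding the rim hook, and each such step corresponds to exactly one $i \le k$ at which the boundary label changes.

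The main obstacle I anticipate is the careful translation between the "rows versus columns of the Young diagram" picture and the "first $k$ versus last $n-k$ values of the Grassmannian permutation" picture, especially handling the tail edge and making sure the $-1$ in $\operatorname{ht}_k$ matches the $-1$ in $\operatorname{ht}$ without an off-by-one error; the set equality for $L$ and the height equality are really the same fact counted two ways (cardinality of the moving-value set versus number of its "vertical runs"), so once the dictionary between boundary labels and permutation values is nailed down, both statements should follow in parallel. I would also double-check the edge case where $\mu/\lambda$ is a single box (so $\eta$ is a transposition, $\operatorname{ht}_k(\eta) = 0$, $\operatorname{ht}(\mu/\lambda) = 0$, $L(\mu/\lambda)$ a single label) to confirm the normalization.
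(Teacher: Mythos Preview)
Your overall approach via the boundary-labeling dictionary is the same as the paper's, and your argument for the height identity is essentially correct: for $i \le k$ one has $w_\nu(i) = \nu_{k+1-i} + i$, so $w_\lambda(i) \ne w_\mu(i)$ if and only if $\lambda_{k+1-i} \ne \mu_{k+1-i}$, i.e., if and only if row $k+1-i$ meets the rim hook; counting these and subtracting $1$ gives both $\operatorname{ht}_k(w_\lambda^{-1}w_\mu)$ and $\operatorname{ht}(\mu/\lambda)$.

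However, your argument for $L(\mu/\lambda) = w_\lambda M(w_\lambda^{-1} w_\mu)$ contains a genuine error. You claim that $L(\mu/\lambda)$ consists of the labels that ``switch sides'' --- a vertical step for one boundary and a horizontal step for the other. This is false: $L(\mu/\lambda)$ is the set of \emph{all} $\mu$-boundary labels along the rim hook, and interior labels need not switch direction. In the paper's own example $\lambda = (4,2,2,0)$, $\mu = (4,4,3,0)$ with $k=4$, $n=9$, one has $L(\mu/\lambda) = \{4,5,6,7\}$, yet step $5$ is vertical and step $6$ is horizontal in \emph{both} boundaries; only $4$ and $7$ actually switch. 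Your proposed description (symmetric difference of row-label sets, plus the tail label) yields $\{4,7\}$, not $\{4,5,6,7\}$. Your single-box sanity check should already have flagged this: when $\mu/\lambda$ is a single cell, $L(\mu/\lambda)$ has \emph{two} labels (the two unit edges at the cell's southeast corner), matching $|M(t_{ab})| = 2$, not one.

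The correct dictionary is by \emph{position}, not direction. The value $w_\nu^{-1}(j)$ records which row or column of $\nu$ carries label $j$, so $w_\lambda^{-1}(j) = w_\mu^{-1}(j)$ exactly when the $j$-th boundary step sits at the same geometric location in both staircases. This fails precisely on the consecutive interval of labels where the two staircases separate, which is $L(\mu/\lambda)$; hence $L(\mu/\lambda) = \{j : w_\lambda^{-1}(j) \ne w_\mu^{-1}(j)\} = w_\lambda M(w_\lambda^{-1}w_\mu)$. (Incidentally, your setup $w_\mu = w_\lambda t_{ab}$ is also only valid when $|\mu/\lambda| = 1$; in general $w_\lambda^{-1} w_\mu$ is an $(r'+1)$-cycle for a rim hook of size $r'$, though your main argument does not actually rely on that assumption.)
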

   
\begin{proof}
 By \eqref{L-1}, $\operatorname{ht}_k(w_\lambda^{-1}w_\mu )=\texttt{\#}\{i\le k\colon w_\mu(i)\neq w_\lambda(i)\}-1$.
Recall that for $i\le k$, $w_\lambda(i)$ equals the label of  row $k+1-i$ in $\lambda$, and for $i>k$, $w_\lambda(i)$ equals the
label of   column $i-k$ in $\lambda$. It is easily seen that 
 $\{i\le k\colon w_\mu(i)\neq w_\lambda(i)\}$   consists exactly  of the  labels of rows in the  rim hook  $\mu/\lambda$, and so we have  $\operatorname{ht}(\mu/\lambda)=
\operatorname{ht}_k(w_\mu w_\lambda^{-1})$.
Moreover,  $w_\lambda M(w_\lambda^{-1}w_\mu)=\{w_\lambda(i)\colon w_\lambda(i)\neq w_\mu(i)\}$, which is exactly the set $L(\mu/\lambda)$. 
\end{proof}


Comparing the classical MN rule for Schur polynomials with the MN rule for Schubert polynomials  \cite{MS2},
$\mu/\lambda$  is a rim hook of size $r$ if and only if there exists an $(r+1)$-cycle $\eta$ such that $w_{\mu}=w_{\lambda}\eta$ and $w_{\lambda}\leq_k w_{\mu}$
in the ordinary $k$-Bruhat order.  
Hence, combining  Corollary \ref{MNforequivariantSchubert}, \eqref{eq:eqSchubertcoParabolic} and  Lemma \ref{Ctildeforpartition}, we arrive at the following equivariant MN rule over $H^\bullet(\operatorname{\mathcal{G}\it r}(k,n))$. 

\begin{Th}\label{MNforequivariantSchur}
Let $\lambda$ be a partition inside the $k\times (n-k)$ rectangle. For $r\geq 1$, we have the following identity in $H^\bullet_T(\operatorname{\mathcal{G}\it r}(k,n))$:
\begin{equation}\label{eq:MNforequivariantSchur}
[Y(\lambda)]_T\cdot p_{r}(x_{[k]})
=p_{r}( t_{w_{\lambda}[k]})\cdot [Y(\lambda)]_T
+ \sum_{\mu} (-1)^{\operatorname{ht}(\mu/ \lambda)}\cdot h_{r-r'}( t_{L(\mu/\lambda)})\cdot  [Y({\mu})]_T,
\end{equation}
where  $\mu$ ranges over partitions   inside the $k\times (n-k)$ rectangle such that $\mu/\lambda$ is a rim hook of size $r'$   with $1\leq r'\leq r$. 
\end{Th}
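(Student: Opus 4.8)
\textbf{Proof proposal for Theorem \ref{MNforequivariantSchur}.}
The plan is to deduce the statement from the already-established equivariant CSM MN formula (Corollary \ref{MNforequivariantSchubert}) by pushing forward along $\pi\colon \Fl(n)\to\Gr(k,n)$, exactly in the style of the passage from \eqref{eq:Schubertco} to \eqref{eq:eqSchubertcoParabolic}. First I would fix a partition $\lambda$ inside the $k\times(n-k)$ rectangle and take $u=w_\lambda$, the minimal coset representative, so that $\mathrm{Gr}(u)=\lambda$. Since $p_r(x_{[k]})$ is symmetric in $x_1,\dots,x_k$ (indeed symmetric under $\S_k\times\S_{n-k}$), it descends to a class in $H_T^\bullet(\Gr(k,n))$, and by \eqref{eq:preimageofSchubert} we may identify $[Y(\lambda)]_T$ with $[Y(w_\lambda)]_T=\pi^*[Y(\lambda)]_T$. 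Applying Corollary \ref{MNforequivariantSchubert} with $u=w_\lambda$ gives
\[
[Y(w_\lambda)]_T\cdot p_r(x_{[k]})
= p_r(t_{w_\lambda[k]})\cdot [Y(w_\lambda)]_T
+\sum_{\eta} \overline d_{w_\lambda,r}^{\,w_\lambda\eta}(t)\cdot [Y(w_\lambda\eta)]_T,
\]
where $\eta$ runs over $(r'+1)$-cycles with $1\le r'\le r$, $\ell(w_\lambda\eta)=\ell(w_\lambda)+r'$, and $w_\lambda\le_k w_\lambda\eta$ in the ordinary $k$-Bruhat order, and $\overline d_{w_\lambda,r}^{\,w_\lambda\eta}(t)=(-1)^{\operatorname{ht}_k(\eta)}h_{r-r'}(t_{w_\lambda M(\eta)})$.

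Next I would identify, among the cosets $w_\lambda\eta$ appearing in this sum, exactly those that are again minimal coset representatives — equivalently, I would restrict \eqref{eq:eqSchubertcoParabolic} to this situation. The key combinatorial input is the correspondence, recalled just before the theorem, between rim hooks and cycles: $\mu/\lambda$ is a rim hook of size $r'$ if and only if there is an $(r'+1)$-cycle $\eta$ with $w_\mu=w_\lambda\eta$ and $w_\lambda\le_k w_\mu$ in the ordinary $k$-Bruhat order (this is the MN rule for Schubert polynomials of Morrison–Sottile \cite{MS2} compared with the classical MN rule). Because by Theorem \ref{mutiplicitylemma} a cycle $\eta$ with $w_\lambda\le_k w_\lambda\eta$ automatically has the length-additivity $\ell(w_\lambda\eta)=\ell(w_\lambda)+r'$ once we are in the ordinary $k$-Bruhat order, and because the $k$-edges generating $\lambda\longrightarrow\mu$ all have the form $w_\lambda v\longrightarrow w_\mu v$ with $v\in\S_k\times\S_{n-k}$, the terms surviving after $\pi_*$ are precisely indexed by partitions $\mu$ with $\mu/\lambda$ a rim hook of size $r'\le r$, each occurring with multiplicity one in the coset sense.

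Finally I would translate the coefficient $(-1)^{\operatorname{ht}_k(\eta)}h_{r-r'}(t_{w_\lambda M(\eta)})$ into partition language using Lemma \ref{Ctildeforpartition}: writing $\eta=w_\lambda^{-1}w_\mu$, that lemma gives $\operatorname{ht}_k(\eta)=\operatorname{ht}(\mu/\lambda)$ and $w_\lambda M(\eta)=L(\mu/\lambda)$, so the coefficient becomes $(-1)^{\operatorname{ht}(\mu/\lambda)}h_{r-r'}(t_{L(\mu/\lambda)})$; and the diagonal term coefficient $p_r(t_{w_\lambda[k]})$ is already in the desired form since $w_\lambda[k]$ is the set of row-labels of $\lambda$. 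Combining, and using $\pi^*[Y(\mu)]_T=[Y(w_\mu)]_T$ together with $\pi_*\pi^*=\mathrm{id}$ on $H_T^\bullet(\Gr(k,n))$ via the projection formula (as in the derivation of \eqref{eq:eqSchubertcoParabolic}), yields \eqref{eq:MNforequivariantSchur}. The main obstacle I anticipate is the bookkeeping in the middle step: one must check carefully that distinct cosets $w_\mu$ are not identified, that every rim-hook term in Corollary \ref{MNforequivariantSchubert} does descend (i.e.\ the endpoint $w_\lambda\eta$ really is the minimal representative $w_\mu$ of its coset, which is where the ordinary $k$-Bruhat order and the length-additivity from Theorem \ref{mutiplicitylemma} are essential), and that nothing new is created by the pushforward; once that matching is pinned down, the rest is a direct substitution.
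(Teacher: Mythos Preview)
Your proposal is correct and follows essentially the same route as the paper: apply Corollary \ref{MNforequivariantSchubert} with $u=w_\lambda$, invoke the rim hook/cycle correspondence from Morrison--Sottile to see that the terms appearing are exactly the $w_\mu$ with $\mu/\lambda$ a rim hook, and then use Lemma \ref{Ctildeforpartition} to rewrite $\operatorname{ht}_k(\eta)$ and $w_\lambda M(\eta)$ in partition language. The only minor simplification available is that for Schubert classes one can bypass the $\pi_*$ bookkeeping you worry about: since $p_r(x_{[k]})$ and $[Y(w_\lambda)]_T$ both lie in the injective image of $\pi^*$, so does their product, hence its Schubert expansion already involves only Grassmannian classes $[Y(w_\mu)]_T$---this is precisely the content of \eqref{eq:eqSchubertcoParabolic}, and it makes the ``nothing new is created by the pushforward'' step automatic.
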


\begin{Eg}
Let $k=4$, $n=9$ and $\lambda=(4,2,2, 0)$.  Denote $\sigma_{\lambda}=[Y({\lambda})]_T$. 
For $r=3$, by Theorem \ref{MNforequivariantSchur}, we have 
\begin{align*}
\sigma_{(4,2,2, 0)}\cdot p_3(x_{[4]})
&=p_3( t_1, t_4, t_5, t_8) \cdot\sigma_{(4,2,2,0)} + ( t_8^2+ t_8 t_9+ t_9^2) \cdot\sigma_{(5,2,2,0)} \\
&\quad  
+ ( t_5^2+ t_5 t_6+ t_6^2) \cdot\sigma_{(4,3,2,0)}
+ ( t_1^2+ t_1 t_2+ t_2^2) \cdot\sigma_{(4,2,2,1)}\\
&\quad 
+( t_5+ t_6+ t_7) \cdot\sigma_{(4,4,2,0)}
-( t_4+ t_5+ t_6) \cdot\sigma_{(4,3,3,0)}
\\
&\quad 
+( t_1+ t_2+ t_3) \cdot\sigma_{(4,2,2,2)}-\sigma_{(4,4,3,0)}.
\end{align*}
The corresponding $\mu$'s appearing in \eqref{eq:MNforequivariantSchur}  are illustrated below:
$$\ytableausetup{mathmode, boxsize=0.5pc}
\begin{ytableau}\none\\
{}&{}&{}&{} \\
{}&{}\\
{}&{}\\
\end{ytableau}\quad
\begin{ytableau}\none\\
{}&{}&{}&{}&*(gray)\\
{}&{}\\
{}&{}
\end{ytableau}\quad
\begin{ytableau}\none\\
{}&{}&{}&{}\\
{}&{}&*(gray)\\
{}&{}
\end{ytableau}\quad
\begin{ytableau}\none\\
{}&{}&{}&{}\\
{}&{}\\
{}&{}\\
*(gray)
\end{ytableau}\quad
\begin{ytableau}\none\\
{}&{}&{}&{}\\
{}&{}&*(gray)&*(gray)\\
{}&{}
\end{ytableau}\quad
\begin{ytableau}\none\\
{}&{}&{}&{}\\
{}&{}&*(gray)\\
{}&{}&*(gray)
\end{ytableau}\quad
\begin{ytableau}\none\\
{}&{}&{}&{}\\
{}&{}\\
{}&{}\\
*(gray)&*(gray)
\end{ytableau}\quad
\begin{ytableau}\none\\
{}&{}&{}&{}\\
{}&{}&*(gray)&*(gray)\\
{}&{}&*(gray)
\end{ytableau}
$$
\end{Eg}

\section{Localization and Rim Hook Tableaux}\label{SECT7}

The purpose of this section is to apply  Theorem \ref{MNforequivariantSchur} to establish a relationship connecting  the localization of Schubert classes and the number of standard rim hook tableaux. We   discuss 
how to utilize   this connection to deduce formulas for the number of   standard rim hook tableaux, 
including the formulas obtained  by Alexandersson, Pfannerer,   Rubey  and   Uhlin \cite{APRU} and 
  Fomin and   Lulov \cite{FL}.

Throughout this section,  assume that $r$ and $d$ are fixed positive integers. Let $\Lambda/\lambda$ be a 
skew shape of size $rd$. A {\it standard $r$-rim hook tableau} of shape  $\Lambda/\lambda$ may be thought of as a sequence 
\[
\lambda=\lambda^{(0)}\subset \lambda^{(1)}\subset\cdots\subset \lambda^{(d)}=\Lambda 
\]
of partitions such that for $1\leq i\leq d$, $\lambda^{(i)}/\lambda^{(i-1)}$ is a rim hook of size $r$. A  rim hook of size $r$ is also called an $r$-rim hook. Usually, one assigns each box in the $r$-rim hook $\lambda^{(i)}/\lambda^{(i-1)}$ with the integer  $i$,
so that a standard $r$-rim hook tableau may be intuitively viewed as a filling of $\Lambda/\lambda$ 
with integers $1,2,\ldots, d$. For $r=2$, $\lambda=(1)$ and $\Lambda=(4,4,1)$, there are 
$4$ standard $2$-rim hook tableaux (also called domino tableaux) of shape  $\Lambda/\lambda$, as listed below:
\begin{align*}\label{domino} 
\ytableausetup{mathmode, boxsize=1pc}
\begin{ytableau}
\none & 1 & 1 & 4\\
2 & 3 & 3 & 4\\
2
\end{ytableau}\quad
\begin{ytableau}
\none & 2 & 2 & 4 \\
1 & 3 & 3 & 4\\
1
\end{ytableau}\quad
\begin{ytableau}
\none & 2 & 3 & 3 \\
1 & 2 & 4 & 4\\
1
\end{ytableau}\quad
\begin{ytableau}
\none & 2 & 3 & 4 \\
1 & 2 & 3 & 4\\
1
\end{ytableau}
\end{align*}
In the case $r=1$, a standard $r$-rim hook tableau of shape $\Lambda/ \lambda$ becomes a standard Young tableau
of shape $\Lambda/ \lambda$.
Let $\operatorname{RHT}^r(\Lambda/ \lambda)$ stand for the set of standard $r$-rim hook tableaux of shape $\Lambda/ \lambda$.

As before, for a partition $\Lambda$ inside the $k\times (n-k)$-rectangle, denote by $w_\Lambda\in \S_n$ the Grassmannian permutation associated to $\Lambda$ with descent at $k$.
Let $\phi_{\Lambda}\in \Gr(k,n)$ represent the image of the fixed point $\phi_{w_{\Lambda}}$ under the natural projection $\Fl(n)\to \Gr(k,n)$. 
Consider  the localization morphism  
\begin{equation*}
-|_{\Lambda}: H_T^\bullet(\Gr(k,n)) \to  H_T^\bullet(\phi_{\Lambda})
\cong \mathbb{Q}[ t_1,\ldots, t_n].
\end{equation*}
It is easy to see that for a class $f(x, t)$ in $H_T^\bullet(\Gr(k,n))$,
\begin{equation*}
f(x, t)|_{\Lambda} = f(w_{\Lambda} t, t). 
\end{equation*}
We shall pay attention  to the localization  
 \begin{equation*}
[Y(\lambda)]_T\big|_{\Lambda}=[Y(w_\lambda)]_T\big|_{w_\Lambda}.
\end{equation*}
A   combinatorial formula for $[Y(\lambda)]_T\big|_{\Lambda}$ in terms of excited diagrams  was given by Ikeda and Naruse \cite{INaruse} and Kreiman \cite{VK}, see also Morales,   Pak and   Panova \cite{Pak-1}.
For   general permutations $u,w\in\S_n$, the localization  $[Y(u)]_T\big|_{w}$ can be computed by    Billey's formula \cite{Billey}.   

\subsection{Localization and Rim Hook Tableaux}

Denote by $o(1)$ the space of rational functions in $\mathbb{Q}(z)$ vanishing at one (thus all) of the primitive $r$-th roots of unity. 
Following the convention of analysis, instead of writing $f(z)\in g(z)+o(1)$, we will use the notation  
$f(z)=g(z)+o(1)$. 
For example, $z^r=1+ o(1)$. 

The localization $\left.[Y({\lambda})]_T\right|_{{\Lambda}}$ is a polynomial in $t_1,\ldots, t_n$. After the specialization $ t_i=z^{i}$, $\left.[Y({\lambda})]_T\right|_{{\Lambda}}$ becomes a 
polynomial in $z$, which is denoted 
\[Y_{\lambda, \Lambda}(z)=\left.(\left.[Y({\lambda})]_T\right|_{{\Lambda}})\right|_{t_i=z^i} .\]
For a  rim hook tableau $T\in \operatorname{RHT}^r(\Lambda/\lambda)$, let $\operatorname{ht}(T)$  denote  the total sum of   heights of $r$-rim hooks appearing in  $T$ (Recall that the height of a rim hook is one less than its number of rows). It is known that for distinct $T, T'\in \operatorname{RHT}^r(\Lambda/\lambda)$, $\operatorname{ht}(T)$ and $\operatorname{ht}(T')$ have the same parity, 
see for example \cite[Lemmas 28 and 29]{APRU}. So one may define
\[\text{sgn}(\Lambda/\lambda)=(-1)^{\operatorname{ht}(T)},\] where $T$ is any rim hook tableau in $  \operatorname{RHT}^r(\Lambda/\lambda)$.
In the case that $\Lambda/\lambda$ does not admit any $r$-rim hook tableau, $\text{sgn}(\Lambda/\lambda)$
is understood as zero.

Our main theorem   is the following Laurant expansion.


\begin{Th}\label{RimHooktableaux}
For a    skew shape $\Lambda/\lambda$    of size $rd$,
we have
\begin{equation}\label{OO-II}
\frac{Y_{\lambda, \Lambda}(z)}{Y_{\Lambda, \Lambda}(z)}
= \frac{1}{(z^r-1)^{d}}\left(\mathrm{sgn}(\Lambda/\lambda)\frac{ \texttt{\#}\operatorname{RHT}^r(\Lambda/ \lambda)}{r^d d!}+o(1)\right).
\end{equation} 
\end{Th}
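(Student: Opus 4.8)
The plan is to iterate the equivariant Murnaghan--Nakayama rule over the Grassmannian, Theorem \ref{MNforequivariantSchur}, and then extract the leading term of the Laurent expansion at a primitive $r$-th root of unity. First I would observe that applying \eqref{eq:MNforequivariantSchur} repeatedly to $[Y(\lambda)]_T\cdot p_r(x_{[k]})^d$ and localizing at $\Lambda$ gives
\[
p_r(t_{w_\Lambda[k]})^d\cdot Y_{\lambda,\Lambda}(z)\big/Y_{\Lambda,\Lambda}(z)
\]
as an alternating sum over chains $\lambda=\lambda^{(0)}\subset\cdots\subset\lambda^{(d)}=\Lambda$ of rim hooks, the term for each chain being a product of $h_{0}(t_{L(\lambda^{(i)}/\lambda^{(i-1)})})$ factors with signs $(-1)^{\operatorname{ht}(\lambda^{(i)}/\lambda^{(i-1)})}$; here one uses that after specialization $t_i=z^i$ one must isolate the $r'=r$ (pure $r$-rim-hook) contributions, since the complete homogeneous pieces $h_{r-r'}$ with $r'<r$ contribute only to $o(1)$. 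More precisely, I would argue term by term: a chain that uses at least one rim hook of size $r'<r$ contributes a factor $h_{r-r'}(t_{L})$ that, at $t_i=z^i$, stays bounded (is in $o(1)$ relative to the dominant scaling) while the denominator $p_r(t_{w_\Lambda[k]})^d$ at $t_i=z^i$ blows up like a nonzero constant times $(z^r-1)^{-d}$ after suitable normalization --- this is where the $o(1)$ on the right of \eqref{OO-II} enters.

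Next I would compute the dominant asymptotics. After the substitution $t_i=z^i$, for a rim hook $\mu/\lambda$ of size exactly $r$ one has $h_0(t_{L(\mu/\lambda)})=1$ and the sign $(-1)^{\operatorname{ht}(\mu/\lambda)}$, so a pure rim-hook chain contributes $\prod_{i=1}^d (-1)^{\operatorname{ht}(\lambda^{(i)}/\lambda^{(i-1)})}=\mathrm{sgn}(\Lambda/\lambda)$ (independent of the chain, by the parity fact cited from \cite{APRU}), and the number of such chains is exactly $\texttt{\#}\operatorname{RHT}^r(\Lambda/\lambda)$. It remains to evaluate $p_r(t_{w_\Lambda[k]})^d$ at $t_i=z^i$ near a primitive $r$-th root of unity $\zeta$: writing $t_i=z^i$, we get $p_r(t_{w_\Lambda[k]})=\sum_{j\in w_\Lambda[k]} z^{rj}$, and since $|w_\Lambda[k]|=k$ and $z^{r}=1+o(1)$, this equals $k+o(1)$ --- but that cannot be the whole story, so the key computation here is to track the precise vanishing order: the correct normalization must produce the factor $(z^r-1)^d\, r^d\, d!$, and I expect the $r^d$ to come from the local behaviour of $p_r$ near $\zeta$ together with the $d$-fold iteration, and the $d!$ from the fact that the $d$ rim hooks in a standard tableau are \emph{ordered}, so that un-ordering them (i.e.\ passing from the iterated product $p_r^d$ back to a single symmetric-function statement) divides by $d!$.

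The main obstacle, and the step I would spend the most care on, is making the asymptotic bookkeeping in \eqref{OO-II} rigorous: precisely identifying what ``$o(1)$'' means termwise, showing that only pure $r$-rim-hook chains survive to leading order, and nailing the constant $r^d d!$. Concretely I would (i) fix $\zeta$ a primitive $r$-th root of unity and expand everything as a Laurent series in $(z-\zeta)$; (ii) show $p_r(t_{w_\Lambda[k]})|_{t_i=z^i}$ has a zero of the right order at $\zeta$ coming from the structure of $w_\Lambda[k]$ modulo $r$ --- here one needs that the residues mod $r$ of the elements of $w_\Lambda[k]$ behave correctly, which should follow because $Y_{\Lambda,\Lambda}(z)$ itself, being a product of $(z^{w_\Lambda(i)}-z^{w_\Lambda(j)})$-type factors from Billey's or the excited-diagram formula, has a controlled order of vanishing; (iii) combine with the $d$-fold iteration to get $(z^r-1)^d$ and the numerical factor. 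I would also double-check the $r=1$ case against the classical hook-length/skew standard Young tableaux count and the $r=2$ domino case as sanity checks. Finally, the passage from the iterated identity (which a priori involves $p_r^d$ acting on $[Y(\lambda)]_T$) to the stated ratio uses that localization at $\Lambda$ is a ring homomorphism and that $[Y(\Lambda)]_T|_\Lambda = Y_{\Lambda,\Lambda}$ is the unique ``diagonal'' term surviving when $\lambda^{(d)}=\Lambda$, so dividing by it is legitimate; this is routine given Corollary \ref{MNforequivariantSchubert} and \eqref{eq:eqSchubertcoParabolic}.
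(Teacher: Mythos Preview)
Your high-level plan---iterate the equivariant MN rule, localize at $\Lambda$, and isolate the leading term in $(z^r-1)$---is the right one, but the mechanism you propose for producing the factor $(z^r-1)^d\cdot r^d d!$ is incorrect, and you yourself flag the inconsistency (``that cannot be the whole story''). The quantity $p_r(t_{w_\Lambda[k]})|_{t_i=z^i}=\sum_{j\in w_\Lambda[k]}z^{rj}$ is indeed $k+o(1)$ and does \emph{not} vanish at a primitive $r$-th root of unity; so your scheme of dividing by $p_r(t_{w_\Lambda[k]})^d$ cannot account for the pole of order $d$ in \eqref{OO-II}. Likewise, your explanation of $d!$ as coming from ``un-ordering'' the rim hooks is not how the constant arises.

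The paper's fix is to localize the single-step MN identity \eqref{eq:MNforequivariantSchur} at $\Lambda$ and move the $\mu=\lambda$ term to the left, obtaining
\[
Y_{\lambda,\Lambda}\cdot\bigl(p_r(t_{w_\Lambda[k]})-p_r(t_{w_\lambda[k]})\bigr)\Big|_{t_i=z^i}
=\sum_{\mu}(-1)^{\operatorname{ht}(\mu/\lambda)}h_{r-r'}(t_{L(\mu/\lambda)})\big|_{t_i=z^i}\,Y_{\mu,\Lambda}.
\]
The crucial computation (Claim~B in the proof of Theorem~\ref{III-I}) is that the \emph{difference} on the left satisfies
\[
\bigl(p_r(t_{w_\Lambda[k]})-p_r(t_{w_\lambda[k]})\bigr)\big|_{t_i=z^i}=(z^r-1)\bigl(|\Lambda/\lambda|+o(1)\bigr),
\]
which is where the single factor of $z^r-1$ and the factor $|\Lambda/\lambda|$ come from at each step. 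Iterating this recursion from $\lambda$ up to $\Lambda$ through a chain of $r$-rim hooks gives the denominator $\prod_{i=0}^{d-1}|\Lambda/\lambda^{(i)}|=\prod_{i=0}^{d-1}r(d-i)=r^d d!$ and the pole $(z^r-1)^{-d}$; the error terms are controlled inductively via Corollary~\ref{gllambda}. Your bound on the $r'<r$ contributions (via $h_{r-r'}(t_L)|_{t_i=z^i}\in o(1)$, proved as Lemma~\ref{primitiverootlemma1}) is correct and is exactly what the paper uses, but the missing piece in your argument is this difference-of-power-sums step.
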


The rest of this subsection is devoted to a proof of 
  Theorem \ref{RimHooktableaux}.

\begin{Lemma}\label{primitiverootlemma1}
Let   $1\leq \ell\leq r$, and let $\zeta$ be a primitive $r$-th root of unity. 
For $r-\ell<i<r$ and distinct integers $a_1,\ldots,a_\ell\in \{0,1,\ldots, r-1\}$, we have
$
h_{i}(\zeta^{a_1},\ldots,\zeta^{a_{\ell}}) = 0,
$
namely, $h_i(z^{a_1},\ldots,z^{a_\ell})= o(1)$.
\end{Lemma}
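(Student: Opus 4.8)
\textbf{Proof strategy for Lemma \ref{primitiverootlemma1}.}

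The plan is to exploit the generating function identity for the complete homogeneous symmetric polynomials, namely
\[
\sum_{i\geq 0} h_i(y_1,\ldots,y_\ell)\, u^i = \prod_{j=1}^\ell \frac{1}{1-y_j u},
\]
and substitute $y_j = \zeta^{a_j}$ where $\zeta$ is a primitive $r$-th root of unity and $a_1,\ldots,a_\ell$ are distinct residues in $\{0,1,\ldots,r-1\}$. Since the $\zeta^{a_j}$ are $\ell$ distinct $r$-th roots of unity, each factor $1-\zeta^{a_j}u$ divides $1-u^r = \prod_{b=0}^{r-1}(1-\zeta^b u)$; hence $\prod_{j=1}^\ell (1-\zeta^{a_j}u)$ divides $1-u^r$ in $\mathbb{C}[u]$, and we may write $1-u^r = P(u)\prod_{j=1}^\ell(1-\zeta^{a_j}u)$ for some polynomial $P(u)$ of degree $r-\ell$. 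Consequently
\[
\sum_{i\geq 0} h_i(\zeta^{a_1},\ldots,\zeta^{a_\ell})\,u^i = \prod_{j=1}^\ell\frac{1}{1-\zeta^{a_j}u} = \frac{P(u)}{1-u^r}.
\]

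Now I expand the right-hand side: $\frac{1}{1-u^r} = \sum_{m\geq 0} u^{rm}$, so $\frac{P(u)}{1-u^r} = P(u)\sum_{m\geq 0}u^{rm}$, which is a power series whose coefficients are periodic with period $r$, determined by the coefficients of $P(u)$ (a polynomial of degree $r-\ell$). Reading off coefficients: for $0\leq i\leq r-1$, the coefficient of $u^i$ equals the coefficient of $u^i$ in $P(u)$, which vanishes whenever $i > r-\ell$, i.e. for $r-\ell < i < r$. This gives exactly $h_i(\zeta^{a_1},\ldots,\zeta^{a_\ell}) = 0$ for $r-\ell < i < r$. Translating into the $o(1)$ notation of the paper: the polynomial $h_i(z^{a_1},\ldots,z^{a_\ell})$ in $z$ vanishes at the primitive $r$-th root $\zeta$, hence lies in $o(1)$, which is precisely the claim $h_i(z^{a_1},\ldots,z^{a_\ell}) = o(1)$.

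There is no serious obstacle here; the only point requiring a little care is the divisibility step — verifying that $\prod_{j=1}^\ell(1-\zeta^{a_j}u)$ genuinely divides $1-u^r$, which relies precisely on the hypothesis that the $a_j$ are \emph{distinct} modulo $r$ so that the linear factors $1-\zeta^{a_j}u$ are pairwise coprime. I would state this explicitly, note that $\deg P = r-\ell \geq 0$ since $\ell \leq r$, and then the coefficient extraction is immediate. One could alternatively argue by induction on $\ell$ using the recursion $h_i(y_1,\ldots,y_\ell) = \sum_{s=0}^{i} y_\ell^s\, h_{i-s}(y_1,\ldots,y_{\ell-1})$ together with $y_\ell^r = 1$, but the generating-function argument is cleaner and avoids bookkeeping.
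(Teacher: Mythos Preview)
Your proof is correct and is essentially the same as the paper's: both use the generating function $\sum_{i\ge 0} h_i(\zeta^{a_1},\ldots,\zeta^{a_\ell})u^i=\prod_j(1-\zeta^{a_j}u)^{-1}$, multiply through by $1-u^r$ to obtain the degree-$(r-\ell)$ polynomial $\prod_{b\notin\{a_1,\ldots,a_\ell\}}(1-\zeta^b u)$, and then read off that the coefficient of $u^i$ vanishes for $r-\ell<i<r$. The only cosmetic difference is that the paper writes this as $(1-z^r)f(z)=\text{polynomial}$ and extracts coefficients directly, whereas you phrase it as $f(u)=P(u)/(1-u^r)$ and expand the geometric series; the underlying identity and coefficient extraction are identical.
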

\begin{proof}

Let 
\[f(z)=\frac{1}{1-z\zeta^{a_1}}\cdots 
\frac{1}{1-z\zeta^{a_\ell}} =\sum_{j\geq 0}h_{j}(\zeta^{a_1},\ldots,\zeta^{a_{\ell}}) z^j \]
denote the generating function of $h_{j}(\zeta^{a_1},\ldots,\zeta^{a_{\ell}})$.
Since  $\zeta$ is a  primitive $r$-th root of unity,
we see that
\[f(z)-z^rf(z)=\frac{1-z^r}{(1-z\zeta^{a_1})\cdots 
(1-z\zeta^{a_\ell})}=\prod
_{\begin{subarray}{c}
0\le j\le r-1\\
j\neq a_1,\ldots,a_\ell
\end{subarray}}
(1-z\zeta^j).\]
Extract  the coefficient of $z^i$ for $r-\ell<i<r$ on both sides. Clearly, the left-hand side contributes  $h_{i}(\zeta^{a_1},\ldots,\zeta^{a_{\ell}})$.
Since   the right-hand side   is a polynomial of degree $r-\ell$, the coefficient of $z^i$ is zero.
This verifies the lemma. 
\end{proof}

\begin{Th}\label{III-I}
Let $\Lambda/\lambda$ be a nontrivial skew shape.  Then
\begin{align}
 Y_{\lambda, \Lambda}(z) 
=& \frac{1}{z^r-1}\left(\sum_{|\mu/ \lambda|=r} \frac{(-1)^{\operatorname{ht}(\mu/ \lambda)}}{|\Lambda/\lambda|}  Y_{\mu, \Lambda}(z) 
+
\sum_{1\leq |\mu'/ \lambda|\le r} o(1)\cdot  Y_{\mu', \Lambda}(z)\right),\label{eq:MNatzeta2}
\end{align}
where the first sum is over  $\mu\subseteq\Lambda$ such that $\mu/\lambda$ is a rim hook of size $r$, and the second sum is over  $\mu'\subseteq\Lambda$ such that $\mu'/\lambda$ is a rim hook of size $r'$ with $1\le r'\le r$. 
\end{Th}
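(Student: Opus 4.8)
The plan is to derive \eqref{eq:MNatzeta2} by localizing the equivariant Murnaghan--Nakayama identity (Theorem \ref{MNforequivariantSchur}) at the torus fixed point $\phi_\Lambda$ and then specializing $t_i=z^i$. Fix $k,n$ so that $\Lambda$ (hence every $\mu\subseteq\Lambda$) lies inside the $k\times(n-k)$ rectangle, and apply the localization ring homomorphism $-|_\Lambda$, which sends $p_r(x_{[k]})$ to $p_r(t_{w_\Lambda[k]})$, to both sides of
\[
[Y(\lambda)]_T\cdot p_r(x_{[k]})=p_r(t_{w_\lambda[k]})\cdot[Y(\lambda)]_T+\sum_\mu(-1)^{\operatorname{ht}(\mu/\lambda)}h_{r-r'}(t_{L(\mu/\lambda)})\cdot[Y(\mu)]_T .
\]
Rearranging and using the standard vanishing that $[Y(\mu)]_T|_\Lambda=0$ unless $\mu\subseteq\Lambda$ (a support consideration, e.g.\ via Billey's formula) to trim the sum, I obtain
\[
\bigl(p_r(t_{w_\Lambda[k]})-p_r(t_{w_\lambda[k]})\bigr)\cdot[Y(\lambda)]_T|_\Lambda=\sum_{\mu\subseteq\Lambda}(-1)^{\operatorname{ht}(\mu/\lambda)}h_{r-r'}(t_{L(\mu/\lambda)})\cdot[Y(\mu)]_T|_\Lambda ,
\]
the sum over $\mu\subseteq\Lambda$ with $\mu/\lambda$ a rim hook of size $r'$, $1\le r'\le r$. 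After $t_i=z^i$ the right side is $\sum_{\mu\subseteq\Lambda}(-1)^{\operatorname{ht}(\mu/\lambda)}h_{r-r'}(z^{L(\mu/\lambda)})\,Y_{\mu,\Lambda}(z)$.

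Next I would evaluate the left-hand scalar. From $w_\lambda(k-i+1)=\lambda_i+(k-i+1)$ one gets $\sum_{j=1}^k w_\lambda(j)=|\lambda|+\binom{k+1}{2}$, hence $\sum_{a\in w_\Lambda[k]}a-\sum_{a\in w_\lambda[k]}a=|\Lambda/\lambda|$. Since $z^{rm}-1=(z^r-1)(1+z^r+\cdots+z^{r(m-1)})$ and the second factor equals $m$ at every primitive $r$-th root of unity, one has $z^{rm}=1+(z^r-1)(m+o(1))$; summing over the two $k$-element index sets (whose equal cardinalities cancel the constant terms) yields
\[
p_r(t_{w_\Lambda[k]})\big|_{t_i=z^i}-p_r(t_{w_\lambda[k]})\big|_{t_i=z^i}=\sum_{a\in w_\Lambda[k]}z^{ra}-\sum_{a\in w_\lambda[k]}z^{ra}=(z^r-1)\bigl(|\Lambda/\lambda|+o(1)\bigr).
\]
For the right side, the key combinatorial fact is that $L(\mu/\lambda)$ is a block of $r'+1$ consecutive integers: since $\mu/\lambda$ is connected with no $2\times2$ square it occupies a contiguous portion of the southeast staircase of $\Lambda$ (equivalently $|L(\mu/\lambda)|=|M(w_\lambda^{-1}w_\mu)|=r'+1$ by Lemma \ref{Ctildeforpartition}). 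When $r'<r$ these $r'+1\le r$ integers have pairwise distinct residues mod $r$, so at a primitive $r$-th root $\zeta$ they give $r'+1$ distinct powers of $\zeta$; since $r-(r'+1)<r-r'<r$, Lemma \ref{primitiverootlemma1} forces $h_{r-r'}(z^{L(\mu/\lambda)})|_{z=\zeta}=0$, i.e.\ $h_{r-r'}(z^{L(\mu/\lambda)})\big|_{t_i=z^i}\in o(1)$, while for $r'=r$ the coefficient is just $h_0=1$.

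Finally I would divide the localized identity by $(z^r-1)\bigl(|\Lambda/\lambda|+o(1)\bigr)$; this is legitimate because $\Lambda/\lambda$ is nontrivial, so $|\Lambda/\lambda|\ne0$ and $\tfrac1{|\Lambda/\lambda|+o(1)}=\tfrac1{|\Lambda/\lambda|}+o(1)$. Separating the size-$r$ rim hooks (coefficient $\tfrac{(-1)^{\operatorname{ht}(\mu/\lambda)}}{|\Lambda/\lambda|}$ plus an $o(1)$ part) from the size-$(<r)$ ones (coefficient in $o(1)$), and bundling all $o(1)$-coefficient terms into a single sum over $\mu'\subseteq\Lambda$ with $1\le|\mu'/\lambda|\le r$, gives exactly \eqref{eq:MNatzeta2}. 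The main obstacle is the bookkeeping rather than any deep point: correctly identifying $L(\mu/\lambda)$ with an interval of integers, controlling the $o(1)$ error terms through the division, and matching the summation ranges in the target formula; all the substantive content is carried by Theorem \ref{MNforequivariantSchur} and Lemma \ref{primitiverootlemma1}.
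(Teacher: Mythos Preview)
Your proof is correct and follows essentially the same approach as the paper: localize Theorem \ref{MNforequivariantSchur} at $\Lambda$, establish the two key estimates (the paper's Claims A and B), and divide through by $(z^r-1)(|\Lambda/\lambda|+o(1))$. Your direct computation of $p_r(t_{w_\Lambda[k]})-p_r(t_{w_\lambda[k]})$ via $z^{rm}=1+(z^r-1)(m+o(1))$ and $\sum_{a\in w_\Lambda[k]}a-\sum_{a\in w_\lambda[k]}a=|\Lambda/\lambda|$ is a mild streamlining of the paper's induction on $|\Lambda/\lambda|$ for the same estimate.
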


\begin{proof}
Localizing both sides of \eqref{eq:MNforequivariantSchur} at $\Lambda$,  we get 
\begin{align*}
&\left.[Y({\lambda})]_T\right|_{{\Lambda}} \cdot (p_r( t_{w_{\Lambda}[k]})-p_r( t_{w_{\lambda}[k]}))\\[5pt]
&\quad=
\sum_{|\mu/ \lambda|=r} (-1)^{\operatorname{ht}(\mu/ \lambda)}\cdot\left. [Y({\mu})]_T\right|_{{\Lambda}} 
+\sum_{1\leq |\mu'/ \lambda|<r} (-1)^{\operatorname{ht}(\mu'/ \lambda)}\cdot  h_{r-r'}( t_{L(\mu'/\lambda)})\cdot \left.[Y({\mu'})]_T\right|_{{\Lambda}} .
\end{align*} 
To evaluate both sides by setting $t_i=z^i$, we 
need the following two claims. 

\newtheorem*{ClaimA}{Claim A}
\begin{ClaimA}
For   $\mu$ such that $\mu/\lambda$ is  a rim hook of size $r'$   with $1\leq r'<r$, we have 
\[\left.h_{r-r'}( t_{L(\mu/\lambda)})\right|_{t_i=z^i} = o(1).\]
\end{ClaimA}
 
Notice that $L(\mu/\lambda)$ consists of $r'+1$ consecutive integers, say, $m,\ldots,m+r'$,
which are distinct in $\mathbb{Z}/r\mathbb{Z}$ since $r'<r$. By Lemma \ref{primitiverootlemma1}, since $r-(r'+1)<r-r'<r$, we   conclude that 
\[
\left.h_{r-r'}( t_{L(\mu/\lambda)})\right|_{t_i=z^i} = h_{r-r'}(z^{m},\ldots,z^{m+r'})=o(1).
\]
  
\newtheorem*{ClaimB}{Claim B}
\begin{ClaimB}We have 
\[\left.(p_r( t_{w_{\Lambda}[k]})-p_r( t_{w_{\lambda}[k]}))\right|_{t_i=z^i} = (z^r-1)(\,|\Lambda/  
\lambda | + o(1)\,).\]
\end{ClaimB}

This claim can be proved  by induction on $|\Lambda/\lambda|$. Let us first check that case when  $|\Lambda/\lambda|=1$. Assume that the single box $\Lambda/\lambda$ is in row $k+1-i$ and column $j$.
Then we see that
\[\left.(p_r( t_{w_{\Lambda}[k]})-p_r( t_{w_{\lambda}[k]}))\right|_{t_i=z^i}
= (z^{n-i+j})^r-(z^{n-i+j-1})^r=(z^r-1)z^{r(n-i+j-1)}.\]
Clearly,
\[z^{r(n-i+j-1)}=1+o(1),\]
and so the claim follows. We next consider the case 
 $|\Lambda/\lambda|>1$. Take $\lambda'$ such that $\lambda\subsetneq \lambda'\subsetneq\Lambda$. By induction, we have
\[
\left.(p_r( t_{w_{\lambda'}[k]})-p_r( t_{w_{\lambda}[k]}))\right|_{t_i=z^i} = (z^r-1)(\,|\lambda'/  
\lambda | + o(1)\,)
\]
and
\[\left.(p_r( t_{w_{\Lambda}[k]})-p_r( t_{w_{\lambda'}[k]}))\right|_{t_i=z^i} = (z^r-1)(\,|\Lambda/  
\lambda'| + o(1)\,).\]
Adding them together completes the proof of Claim B.

By Claim A and Claim B, we obtain that 
\begin{align}\label{bcq}
&(z^r-1)\cdot (|\Lambda/\lambda|+o(1))\cdot Y_{\lambda, \Lambda}(z)\nonumber \\
&= \sum_{|\mu/ \lambda|=r} (-1)^{\operatorname{ht}(\mu/ \lambda)} \cdot Y_{\mu, \Lambda}(z) + 
\sum_{1\leq |\mu'/ \lambda|<r} o(1)\cdot Y_{\mu', \Lambda}(z).
\end{align}
Since $|\Lambda/\lambda|\neq 0$, we have 
$$\frac{1}{|\Lambda/\lambda|+o(1)}=\frac{1}{|\Lambda/\lambda|}+o(1). $$
So, dividing both sides of \eqref{bcq} by $(z^r-1)\cdot(|\Lambda/\lambda|+o(1))$ yields  that
\begin{align*}
Y_{\lambda, \Lambda}(z)
&= \sum_{|\mu/ \lambda|=r} \left(\frac{(-1)^{\operatorname{ht}(\mu/\lambda)}}{|\Lambda/\lambda|(z^r-1)}+\frac{o(1)}{z^r-1}\right)\cdot Y_{\mu, \Lambda}(z) \\[5pt]
&\ \ \ \ \ \  + 
\sum_{1\leq |\mu'/\lambda|<r}\frac{o(1)}{z^r-1}\cdot Y_{\mu', \Lambda}(z),
\end{align*}
which is the same as \eqref{eq:MNatzeta2}. 
\end{proof}

\begin{Coro}\label{gllambda}
For any skew shape $\Lambda/\lambda$, we have the following estimation
\[
\frac{Y_{\lambda,\Lambda}(z)}{Y_{\Lambda,\Lambda}(z)}= \frac{o(1)}{(z^r-1)^{m+1}},
\]
where  $m$ is the minimum nonnegative integer such that $(m+1)r> |\Lambda/\lambda|$. 
\end{Coro}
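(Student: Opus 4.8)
The plan is to prove Corollary \ref{gllambda} by induction on $m$, the minimum nonnegative integer with $(m+1)r > |\Lambda/\lambda|$. The base case is $m=0$, i.e.\ $|\Lambda/\lambda| < r$. Here I would observe that if $|\Lambda/\lambda|$ is not a multiple of $r$ (which it cannot be when $0 < |\Lambda/\lambda| < r$, unless $\Lambda=\lambda$), then there is no $r$-rim hook tableau of shape $\Lambda/\lambda$, and more pointedly, the localization relation can be iterated down to $\lambda$. Actually the cleanest route for the base case: if $\lambda = \Lambda$ the ratio is $1 = o(1)/(z^r-1)^0$ is false — so one must be careful. Let me reconsider: when $\Lambda/\lambda$ is empty, $|\Lambda/\lambda| = 0$, so $m = 0$ works only if $r > 0$, and $Y_{\lambda,\Lambda}/Y_{\Lambda,\Lambda} = 1$, which is \emph{not} $o(1)/(z^r-1)$. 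So I suspect the corollary implicitly assumes $\Lambda/\lambda$ nontrivial; I would state that hypothesis explicitly (matching the convention of Theorem \ref{III-I}, which requires $\Lambda/\lambda$ nontrivial). With that, the base case $0 < |\Lambda/\lambda| < r$ follows because in Theorem \ref{III-I} applied to $\Lambda/\lambda$ the first sum (over size-$r$ rim hooks $\mu/\lambda \subseteq \Lambda$) is empty, so $Y_{\lambda,\Lambda}(z) = \frac{1}{z^r-1}\sum_{1 \le |\mu'/\lambda| \le r} o(1)\cdot Y_{\mu',\Lambda}(z)$; each $\mu'$ satisfies $\mu' \subsetneq \Lambda$ or $\mu' = \Lambda$, and dividing by $Y_{\Lambda,\Lambda}(z)$ and applying induction on $|\Lambda/\mu'|$ (an inner induction, or just noting $\mu' \ne \lambda$ so the skew shape strictly shrinks) gives $Y_{\lambda,\Lambda}/Y_{\Lambda,\Lambda} = \frac{o(1)}{z^r-1}$.

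For the inductive step, suppose the estimate holds for all skew shapes $\Lambda/\mu$ with $m_{\mu} < m$, and let $\Lambda/\lambda$ have parameter $m$, i.e.\ $mr \le |\Lambda/\lambda| < (m+1)r$. Apply Theorem \ref{III-I}:
\[
Y_{\lambda,\Lambda}(z) = \frac{1}{z^r-1}\left(\sum_{|\mu/\lambda|=r} \frac{(-1)^{\operatorname{ht}(\mu/\lambda)}}{|\Lambda/\lambda|}Y_{\mu,\Lambda}(z) + \sum_{1\le|\mu'/\lambda|\le r} o(1)\cdot Y_{\mu',\Lambda}(z)\right).
\]
Divide through by $Y_{\Lambda,\Lambda}(z)$. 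In the first sum, $|\Lambda/\mu| = |\Lambda/\lambda| - r$, so $(m-1+1)r = mr \le |\Lambda/\lambda| $ need not hold — wait, I need $m_\mu$, the minimal nonnegative integer with $(m_\mu+1)r > |\Lambda/\mu| = |\Lambda/\lambda| - r$. Since $|\Lambda/\lambda| - r < (m+1)r - r = mr$, we get $m_\mu \le m-1 < m$, so the inductive hypothesis applies and $Y_{\mu,\Lambda}/Y_{\Lambda,\Lambda} = o(1)/(z^r-1)^{m_\mu+1}$ with $m_\mu + 1 \le m$, hence $= o(1)/(z^r-1)^{m}$ after absorbing extra factors of $(z^r-1)$ into... hmm, actually $o(1)/(z^r-1)^{j}$ with $j \le m$ is not obviously $o(1)/(z^r-1)^m$. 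The correct accounting: each term of the first sum divided by $Y_{\Lambda,\Lambda}$ contributes $\frac{1}{z^r-1}\cdot\frac{o(1)}{(z^r-1)^{m_\mu+1}}$ with $m_\mu + 1 \le m$, giving at worst $\frac{o(1)}{(z^r-1)^{m+1}}$; the second sum similarly contributes $\frac{1}{z^r-1}\cdot o(1)\cdot\frac{o(1)}{(z^r-1)^{m_{\mu'}+1}}$, and here $|\Lambda/\mu'| \ge |\Lambda/\lambda| - r$ so again $m_{\mu'}+1 \le m$, giving $\frac{o(1)^2}{(z^r-1)^{m+1}} = \frac{o(1)}{(z^r-1)^{m+1}}$. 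I would need the elementary closure facts that $o(1) \cdot o(1) = o(1)$ and $o(1) + o(1) = o(1)$ and $\frac{o(1)}{(z^r-1)^j} \subseteq \frac{o(1)}{(z^r-1)^{j'}}$ for $j \le j'$ (multiply numerator and denominator by $(z^r-1)^{j'-j}$, noting $(z^r-1)^{j'-j}$ vanishes at primitive roots so stays $o(1)$) — all of which I would record as a short preliminary remark.

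The one genuine subtlety, and the step I expect to be the main obstacle, is the bookkeeping of which $\mu'$ appear in the second sum of Theorem \ref{III-I} and whether they genuinely have strictly smaller $m$-parameter. When $1 \le |\mu'/\lambda| \le r$, we have $|\Lambda/\mu'| = |\Lambda/\lambda| - |\mu'/\lambda|$, which lies in $[|\Lambda/\lambda| - r, |\Lambda/\lambda| - 1]$; since $|\Lambda/\lambda| < (m+1)r$ we get $|\Lambda/\mu'| < (m+1)r$ so $m_{\mu'} \le m$, and then $m_{\mu'}+1 \le m+1$ — but I want $\le m$ to close cleanly. This is where the factor $o(1)$ attached to each second-sum term rescues the argument: $\frac{1}{z^r-1}\cdot o(1) \cdot \frac{o(1)}{(z^r-1)^{m_{\mu'}+1}}$ with $m_{\mu'}+1 \le m+1$ gives $\frac{o(1)}{(z^r-1)^{m+1}}$ directly since the leading $o(1)$ cancels one bad power. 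For the first sum there is no such extra $o(1)$, but there $m_\mu + 1 \le m$ strictly (since $|\Lambda/\mu| = |\Lambda/\lambda|-r < mr$), so $\frac{1}{z^r-1}\cdot\frac{o(1)}{(z^r-1)^{m_\mu+1}}$ lands in $\frac{o(1)}{(z^r-1)^{m+1}}$ as well. I would lay out exactly these two inequalities, verify the edge case where $|\Lambda/\lambda| = mr$ (so there could be a genuine $r$-rim hook tableau and the first sum is nonempty and the count is exact rather than $o(1)$ — but Corollary \ref{gllambda} only claims an $o(1)$ bound, which is weaker than and implied by Theorem \ref{RimHooktableaux} in that exact case, so there is no conflict), and conclude. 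In short, the proof is a clean strong induction whose only risk is mismatched exponents, handled by the auxiliary lemma on $o(1)$-arithmetic.
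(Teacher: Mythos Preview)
Your approach---apply Theorem \ref{III-I} and induct---is exactly the paper's. But there are two problems with how you set it up.

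First, the trivial case $\Lambda=\lambda$ is \emph{not} a counterexample. Recall that $o(1)$ is the space of rational functions vanishing at primitive $r$-th roots of unity; since $z^r-1$ itself lies in $o(1)$, we have $1 = (z^r-1)/(z^r-1) \in o(1)/(z^r-1)$. So the corollary holds as stated for $|\Lambda/\lambda|=0$ with $m=0$, and there is no need to add a nontriviality hypothesis.

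Second, and more importantly, your induction on $m$ does not close. In the inductive step you write $Y_{\mu',\Lambda}/Y_{\Lambda,\Lambda} = o(1)/(z^r-1)^{m_{\mu'}+1}$ for the terms of the second sum, but your hypothesis only covers skew shapes with $m$-parameter strictly less than $m$, and you only know $m_{\mu'}\le m$. Concretely, if $r=3$ and $|\Lambda/\lambda|=4$ (so $m=1$), a rim hook $\mu'/\lambda$ of size $1$ gives $|\Lambda/\mu'|=3$, hence $m_{\mu'}=1=m$, and you cannot invoke the hypothesis. The extra $o(1)$ factor you appeal to only helps with the \emph{exponent bookkeeping} after the estimate for $\mu'$ is in hand; it does nothing to supply that estimate. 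The paper fixes this by inducting on $|\Lambda/\lambda|$ rather than on $m$: since every $\mu,\mu'$ in Theorem \ref{III-I} satisfies $\mu,\mu'\supsetneq\lambda$, the size strictly drops, the hypothesis applies to all of them, and your exponent analysis (which is otherwise correct) then goes through unchanged.
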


\begin{proof}
The proof is by induction on the size $|\Lambda/\lambda|$.
If $|\Lambda/\lambda|=0$, then $m=0$ and the estimation  trivially holds. Now assume $|\Lambda/\lambda|>0$. 
Dividing both sides of  \eqref{eq:MNatzeta2}  by $Y_{\Lambda,\Lambda}(z)$, we obtain that
\begin{align}\label{yylambda}
\frac{Y_{\lambda,\Lambda}(z)}{Y_{\Lambda,\Lambda}(z)}
=& \frac{1}{z^r-1}\left(\sum_{|\mu/ \lambda|=r} \frac{(-1)^{\operatorname{ht}(\mu/ \lambda)}}{|\Lambda/\lambda|} \frac{Y_{\mu,\Lambda}(z)}{Y_{\Lambda,\Lambda}(z)}
+
\sum_{1\leq |\mu'/ \lambda|\le r} o(1)\cdot \frac{Y_{\mu',\Lambda}(z)}{Y_{\Lambda,\Lambda}(z)}\right).
\end{align}
By induction,   
\[\frac{Y_{\mu,\Lambda}(z)}{Y_{\Lambda,\Lambda}(z)}=\frac{o(1)}{(z^r-1)^{m}}\]
and  
\[o(1)\cdot\frac{Y_{\mu',\Lambda}(z)}{Y_{\Lambda,\Lambda}(z)}=o(1)\cdot\frac{o(1)}{(z^r-1)^{m+1}}=\frac{o(1)}{(z^r-1)^{m}}.\] 
Putting the above into \eqref{yylambda}, we get 
\[
\frac{Y_{\lambda,\Lambda}(z)}{Y_{\Lambda,\Lambda}(z)}=\frac{1}{z^r-1}\frac{o(1)}{(z^r-1)^m}=\frac{o(1)}{(z^r-1)^{m+1}}.\qedhere
\] 
\end{proof}

Now we can give a proof of Theorem \ref{RimHooktableaux}.

\begin{proof}[Proof of Theorem \ref{RimHooktableaux}]
Since $r$ divides $|\Lambda/\lambda|$, for any $\lambda\subsetneq\mu\subseteq\Lambda$, we have $dr>|\Lambda/\mu|$. By Corollary \ref{gllambda},
the second summation  of \eqref{yylambda} in the bracket  belongs to  $o(1)\cdot\frac{o(1)}{(z^r-1)^{d}}=\frac{o(1)}{(z^r-1)^{d-1}}$.  By induction, the first summation  of \eqref{yylambda} in the bracket contributes
\begin{align*}
&\sum_{|\mu/\lambda|=r}\frac{(-1)^{\operatorname{ht}(\mu/\lambda)}}{rd}\frac{1}{(z^r-1)^{d-1}}\left(\sum_{T\in \operatorname{RHT}^r(\Lambda/\mu)}\frac{(-1)^{\operatorname{ht}(T)}}{r^{d-1}(d-1)!}+o(1)\right)\\[5pt]
&\quad=\frac{1}{(z^r-1)^{d-1}} \left(\sum_{T\in \operatorname{RHT}^r(\Lambda/\lambda)} \frac{(-1)^{\operatorname{ht}(T)}}{r^dd!}+o(1)\right)\\[5pt]
&\quad=\frac{1}{(z^r-1)^{d-1}} \left(\mathrm{sgn}(\Lambda/\lambda)\frac{ \texttt{\#}\operatorname{RHT}^r(\Lambda/ \lambda)}{r^d d!}+o(1)\right).
\end{align*}
Plugging the above into \eqref{yylambda}
leads to  \eqref{OO-II}, as required. 
\end{proof}

\subsection{Connections with Previous Work}

Still, let $\zeta$ be a  primitive $r$-th root of unity. 
As a direct consequence of  Theorem \ref{RimHooktableaux},
we obtain that 

\begin{Coro} 
For a skew shape $\Lambda/\lambda$ 
of size $rd$,
\begin{equation}\label{eq:hookformulae1}
\texttt{\#}\operatorname{RHT}^r(\Lambda/ \lambda) =\mathrm{sgn}(\Lambda/\lambda)\, r^d d! \cdot  \lim_{z\to \zeta} \frac{Y_{\lambda,\Lambda}(z)}{Y_{\Lambda,\Lambda}(z)}(z^r-1)^{d}.
\end{equation}
\end{Coro}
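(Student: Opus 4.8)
The plan is to deduce the identity \eqref{eq:hookformulae1} directly from the expansion \eqref{OO-II} of Theorem \ref{RimHooktableaux} by clearing the factor $(z^r-1)^d$ and passing to the limit $z\to\zeta$.

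First I would multiply both sides of \eqref{OO-II} by $(z^r-1)^d$ to obtain the identity of rational functions
\[
\frac{Y_{\lambda,\Lambda}(z)}{Y_{\Lambda,\Lambda}(z)}\,(z^r-1)^d
= \mathrm{sgn}(\Lambda/\lambda)\,\frac{\texttt{\#}\operatorname{RHT}^r(\Lambda/\lambda)}{r^d d!} + o(1).
\]
By the definition of the symbol $o(1)$ as the space of rational functions in $\mathbb{Q}(z)$ vanishing at the primitive $r$-th roots of unity, the $o(1)$ term is regular at $\zeta$ with value $0$ there; hence the right-hand side, and therefore the left-hand side, has a finite limit as $z\to\zeta$ equal to $\mathrm{sgn}(\Lambda/\lambda)\,\texttt{\#}\operatorname{RHT}^r(\Lambda/\lambda)/(r^d d!)$. (Note that $Y_{\Lambda,\Lambda}(z)$ is a nonzero polynomial, so the quotient is a genuine element of $\mathbb{Q}(z)$; the bare ratio may have a pole at $\zeta$, but after multiplication by $(z^r-1)^d$ it does not.)

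It remains to rearrange. If $\mathrm{sgn}(\Lambda/\lambda)=\pm1$, multiplying through by $\mathrm{sgn}(\Lambda/\lambda)\,r^d d!$ and using $\mathrm{sgn}(\Lambda/\lambda)^2=1$ gives \eqref{eq:hookformulae1}. If $\mathrm{sgn}(\Lambda/\lambda)=0$, then by convention $\operatorname{RHT}^r(\Lambda/\lambda)=\varnothing$ and both sides of \eqref{eq:hookformulae1} vanish, the right-hand side because the limit computed above is then $0$. There is essentially no obstacle here: the substance of the argument is entirely contained in Theorem \ref{RimHooktableaux}, and the only points needing care are the interpretation of $o(1)$ so that $\lim_{z\to\zeta}$ annihilates it, and the degenerate case where $\Lambda/\lambda$ admits no $r$-rim hook tableau.
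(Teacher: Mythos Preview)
Your proof is correct and matches the paper's approach exactly: the paper simply states the corollary ``as a direct consequence of Theorem \ref{RimHooktableaux}'' without further argument, and you have spelled out precisely those routine details (multiplying through by $(z^r-1)^d$, taking the limit, and handling the degenerate $\mathrm{sgn}=0$ case).
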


We reformulate the right-hand side of
\eqref{eq:hookformulae1} based on the following lemma. 

\begin{Lemma}\label{factorialidentity}
We have
\[\lim_{z\to \zeta} \frac{1}{(z^r-1)^d}\prod_{i=1}^{dr}(z^i-1)=(-1)^{rd-d}r^dd!. 
\]
\end{Lemma}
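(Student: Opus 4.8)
The plan is to compute the limit $\lim_{z\to\zeta}\frac{1}{(z^r-1)^d}\prod_{i=1}^{dr}(z^i-1)$ by carefully tracking which of the $dr$ factors $z^i-1$ vanish at a primitive $r$-th root of unity $\zeta$. First I would observe that $z^i-1$ vanishes at $z=\zeta$ precisely when $r\mid i$, and among $i=1,\dots,dr$ there are exactly $d$ such indices, namely $i=r,2r,\dots,dr$. Thus $\prod_{i=1}^{dr}(z^i-1)$ has a zero of order exactly $d$ at $z=\zeta$, which matches the denominator $(z^r-1)^d$, so the limit exists and is finite and nonzero. The computation then splits as a product of two pieces: the contribution of the $d$ vanishing factors divided by $(z^r-1)^d$, and the contribution of the remaining $dr-d$ nonvanishing factors evaluated directly at $\zeta$.

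For the first piece, for each $j=1,\dots,d$ I would write $z^{jr}-1=(z^r-1)(z^{r(j-1)}+z^{r(j-2)}+\cdots+1)$, so that
\[
\frac{\prod_{j=1}^d(z^{jr}-1)}{(z^r-1)^d}=\prod_{j=1}^d\bigl(z^{r(j-1)}+\cdots+z^r+1\bigr).
\]
Evaluating at $z=\zeta$, each inner sum has $j$ terms each equal to $1$ (since $\zeta^r=1$), giving $\prod_{j=1}^d j = d!$. For the second piece, I would evaluate $\prod_{\substack{1\le i\le dr\\ r\nmid i}}(\zeta^i-1)$. Writing $i=qr+s$ with $0\le q\le d-1$ and $1\le s\le r-1$, we have $\zeta^i=\zeta^s$, so this product equals $\prod_{q=0}^{d-1}\prod_{s=1}^{r-1}(\zeta^s-1)=\Bigl(\prod_{s=1}^{r-1}(\zeta^s-1)\Bigr)^d$. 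The standard identity $\prod_{s=1}^{r-1}(x-\zeta^s)=\frac{x^r-1}{x-1}=1+x+\cdots+x^{r-1}$, evaluated at $x=1$, gives $\prod_{s=1}^{r-1}(1-\zeta^s)=r$; hence $\prod_{s=1}^{r-1}(\zeta^s-1)=(-1)^{r-1}r$, and the second piece is $\bigl((-1)^{r-1}r\bigr)^d=(-1)^{(r-1)d}r^d=(-1)^{rd-d}r^d$.

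Multiplying the two pieces yields $(-1)^{rd-d}r^d\,d!$, as claimed. This argument is essentially elementary, so I do not anticipate a genuine obstacle; the only point requiring slight care is the bookkeeping of exactly which factors vanish and to which order, ensuring the cancellation with $(z^r-1)^d$ is exact (neither an unremoved pole nor a spurious extra zero). I would double-check the edge case $r=1$, where there are no nonvanishing "middle" factors (the sign becomes $(-1)^0=1$) and the formula reads $d!$, consistent with $\prod_{i=1}^d(z^i-1)/(z-1)^d\to d!$ as $z\to 1$.
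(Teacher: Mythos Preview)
Your proposal is correct and follows essentially the same approach as the paper: both separate the vanishing factors $z^{jr}-1$ from the nonvanishing ones, use the identity $\prod_{s=1}^{r-1}(1-\zeta^s)=r$ for the latter, and compute the ratio $\frac{z^{jr}-1}{z^r-1}\to j$ for the former. The only cosmetic difference is that the paper groups the product into $d$ blocks of $r$ consecutive indices and applies L'Hospital's rule to each block, whereas you sort globally by divisibility and use the algebraic factorization $z^{jr}-1=(z^r-1)(1+z^r+\cdots+z^{r(j-1)})$ instead.
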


\begin{proof}
We evaluate the left-hand side by grouping the factors. 
Observe  that for   $0\leq a\leq d-1$, 
\[\lim_{z\to \zeta} \frac{1}{z^r-1}\prod_{i=1}^r (z^{ar+i}-1)
= 
(\zeta-1)\cdots (\zeta^{r-1}-1)\cdot\lim_{z\to \zeta} \frac{z^{(a+1)r}-1}{z^r-1}. 
\]
Notice that $(\zeta-1)\cdots (\zeta^{r-1}-1)=(-1)^{r-1}r$, which can be deduced  by substituting $z=1$ in the following identity 
\[
\prod_{i=1}^{r-1}(z-\zeta^i) = \frac{z^r-1}{z-1} = 1+\cdots+z^{r-1}.\]
Moreover,  applying L'Hospital's rule gives
\[
\lim_{z\to \zeta} \frac{z^{(a+1)r}-1}{z^r-1}
= \left.\frac{(a+1)r\cdot z^{(a+1)r-1}}{r\cdot z^{r-1}}\right|_{z=\zeta}=a+1. 
\]
Hence, the limit on the left-hand side in the lemma can
be expressed as 
\[\prod_{a=0}^{d-1} \lim_{z\to \zeta} \frac{1}{z^r-1}\prod_{i=1}^r (z^{ar+i}-1)  = (-1)^{rd-d}(r\cdot 1)\cdots (r\cdot d)=(-1)^{rd-d}r^dd!.\qedhere\]
\end{proof}

By Lemma \ref{factorialidentity},  we may reformulate \eqref{eq:hookformulae1}   as
\begin{equation}\label{rhty}
\texttt{\#}\operatorname{RHT}^r(\Lambda/ \lambda) = \mathrm{sgn}(\Lambda/\lambda)(-1)^{rd-d} \lim_{z\to \zeta} \frac{Y_{\lambda,\Lambda}(z)}{Y_{\Lambda,\Lambda}(z)}(z-1)\cdots (z^{|\Lambda/\lambda|}-1).
\end{equation}
It was deduced  in \cite[Equation (4.10)]{Pak-1} 
that for a skew shape $\Lambda/\lambda$,
\begin{equation}\label{AAA_B}
\frac{Y_{\lambda,\Lambda}(z)}{Y_{\Lambda,\Lambda}(z)}={(-1)^{|\Lambda/\lambda|}}z^{-g(\Lambda)+g(\lambda)}s_{\Lambda/ \lambda}(1,z,z^2,\ldots),
\end{equation}
where, for a partition $\mu=(\mu_1,\ldots, \mu_k)$ inside the $k\times (n-k)$
rectangle,
\[
g(\mu)=\sum_{i=1}^k \binom{\mu_i+d+1-i}{2}.
\]
Moreover, by Stanley \cite[Proposition 7.19.11]{stanley2},
{ \begin{align*}
s_{\Lambda/ \lambda}(1,z,z^2,\ldots)=\frac{1}{(1-z)\cdots(1-z^{|\Lambda/\lambda|})}\sum_{T\in \operatorname{SYT}(\Lambda/\lambda)}z^{{\rm maj}(T)},
\end{align*}}%
where $\operatorname{SYT}(\Lambda/\lambda)$ 
denotes the set of standard Young tableaux  of shape $\Lambda/\lambda$,  and the major index $\text{maj}(T)$ of $T$ is defined to be the   sum of $i$ such that $i+1$ appears in a lower row of $T$ than $i$. 

Combining the above and noting that $|\Lambda/\lambda|=rd$,   \eqref{rhty} can be rewritten as
\begin{align}
\texttt{\#}\operatorname{RHT}^r(\Lambda/ \lambda) &= \mathrm{sgn}(\Lambda/\lambda)(-1)^{rd-d}  \lim_{z\to \zeta}z^{-g(\Lambda)+g(\lambda)} \sum_{T\in \operatorname{SYT}(\Lambda/\lambda)}z^{{\rm maj}(T)}
\nonumber\\[5pt]
&=\mathrm{sgn}(\Lambda/\lambda)(-1)^{rd-d} \frac{1}{\zeta^{g(\Lambda)-g(\lambda)}}
\sum_{T\in \operatorname{SYT}(\Lambda/\lambda)} \zeta^{{\rm maj}(T)}.\label{rhty-I}
\end{align}
By \cite[Proposition 4.7]{Pak-1}, 
\[g(\Lambda)-g(\lambda)=k\cdot|\Lambda/\lambda|+\sum_{\square\in \Lambda/\lambda}c(\square),\]
where $c(\square)=j-i$ for a box $\square$ in row $i$ and column $j$. Since $r$ divides $ |\Lambda/\lambda|$, we get $\zeta^{k\cdot|\Lambda/\lambda|}=1$. 
We next evaluate 
\begin{equation}\label{UUUP-I}
\zeta^{\sum_{\square\in \Lambda/\lambda}c(\square)}.
\end{equation}
Notice that
the integers  $c(\square)$, where $\square$ runs over boxes of any $r$-rim hook in $\Lambda/\lambda$, are distinct in $\mathbb{Z}/r\mathbb{Z}$. So each $r$-rim hook  in $\Lambda/\lambda$ contributes to \eqref{UUUP-I} a value
$\zeta^{1+2+\ldots+r}=\zeta^{\frac{r(r+1)}{2}}$, which is easily checked to be 
$(-1)^{r-1}$. Hence the value in \eqref{UUUP-I}
equals $(-1)^{|\Lambda/\lambda|-d}=(-1)^{rd-d}$, and so
$\zeta^{g(\Lambda)-g(\lambda)}=(-1)^{{rd-d}}$. Putting this into \eqref{rhty-I}, 
we recover  the following formula  deduced by Alexandersson, Pfannerer,   Rubey  and   Uhlin \cite[Corollary 30]{APRU} based on the character theory.

\begin{Th}[{\cite[Corollary 30]{APRU}}]\label{skewrrimhookformula}
Let $\Lambda/\lambda$ be a skew shape of size $rd$, and $\zeta$ be a primitive   $r$-th root of unity. Then   
\begin{align*}
\texttt{\#} \operatorname{RHT}^r(\Lambda/ \lambda) 
= \mathrm{sgn}(\Lambda/\lambda)
\sum_{T\in \operatorname{SYT}(\Lambda/\lambda)} \zeta^{\operatorname{maj}(T)}.
\end{align*}
\end{Th}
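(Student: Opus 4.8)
The statement is Theorem~\ref{skewrrimhookformula}, which I claim follows by simply assembling the identities derived immediately before it in the excerpt. The strategy is to start from the reformulation \eqref{rhty-I}, namely
\[
\texttt{\#}\operatorname{RHT}^r(\Lambda/ \lambda)
=\mathrm{sgn}(\Lambda/\lambda)(-1)^{rd-d}\frac{1}{\zeta^{g(\Lambda)-g(\lambda)}}\sum_{T\in\operatorname{SYT}(\Lambda/\lambda)}\zeta^{\operatorname{maj}(T)},
\]
and to show that the prefactor $(-1)^{rd-d}\zeta^{-(g(\Lambda)-g(\lambda))}$ collapses to $1$. So the entire content reduces to the claim $\zeta^{g(\Lambda)-g(\lambda)}=(-1)^{rd-d}$.

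\textbf{Key steps, in order.} First, I would invoke the content-sum expansion from \cite[Proposition 4.7]{Pak-1}: $g(\Lambda)-g(\lambda)=k\cdot|\Lambda/\lambda|+\sum_{\square\in\Lambda/\lambda}c(\square)$, where $c(\square)=j-i$ for a box in row $i$, column $j$. Second, since $r\mid|\Lambda/\lambda|=rd$, the first term contributes $\zeta^{k\cdot rd}=1$, so it remains to evaluate $\zeta^{\sum_{\square\in\Lambda/\lambda}c(\square)}$. Third, here is the one genuine combinatorial observation needed: pick any standard $r$-rim hook tableau of $\Lambda/\lambda$ (if none exists, $\mathrm{sgn}(\Lambda/\lambda)=0$ and both sides vanish, so we may assume one exists); this partitions $\Lambda/\lambda$ into $d$ rim hooks of size $r$, and within a single $r$-rim hook the contents $c(\square)$ run over $r$ consecutive integers, hence are a complete residue system mod $r$. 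Therefore each $r$-rim hook contributes $\zeta^{0+1+\cdots+(r-1)}=\zeta^{r(r-1)/2}$ to the product; and $\zeta^{r(r-1)/2}=(-1)^{r-1}$ because $\zeta^{r/2}=-1$ when $r$ is even and $r(r-1)/2$ is a multiple of $r$ when $r$ is odd. (The excerpt writes $\zeta^{1+\cdots+r}=\zeta^{r(r+1)/2}=(-1)^{r-1}$, which is the same value since $\zeta^r=1$; I would use whichever normalization of contents is cleanest.) Fourth, multiplying over the $d$ rim hooks gives $\zeta^{\sum c(\square)}=(-1)^{d(r-1)}=(-1)^{rd-d}$, so $\zeta^{g(\Lambda)-g(\lambda)}=(-1)^{rd-d}$. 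Fifth, substituting back into \eqref{rhty-I}, the factor $(-1)^{rd-d}\cdot(-1)^{-(rd-d)}=1$, leaving exactly the claimed formula.

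\textbf{Where the work actually sits.} Honestly, most of the hard analysis is already done upstream: Theorem~\ref{RimHooktableaux} (the Laurent expansion) rests on the equivariant MN rule Theorem~\ref{MNforequivariantSchur} plus the vanishing Lemma~\ref{primitiverootlemma1}, and the passage from \eqref{eq:hookformulae1} to \eqref{rhty-I} uses the Pak--Morales--Panova product formula \eqref{AAA_B} and Stanley's principal-specialization formula for $s_{\Lambda/\lambda}$. Given all of that, the proof of Theorem~\ref{skewrrimhookformula} itself is short. The one point requiring a little care — and thus the ``main obstacle,'' such as it is — is the justification that the contents of a single $r$-rim hook form a complete residue system modulo $r$ (equivalently that $r$-rim hooks are exactly the connected skew shapes with no $2\times 2$ box on which this holds); I would state this as a one-line lemma with the standard argument that moving from a box to its neighbor along the rim changes the content by $\pm 1$ and the rim hook has exactly $r$ boxes spanning $r$ consecutive diagonals. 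I would also flag the degenerate case $\operatorname{RHT}^r(\Lambda/\lambda)=\varnothing$ separately, as noted above, since then $\mathrm{sgn}(\Lambda/\lambda)=0$ and the identity is trivially $0=0$.
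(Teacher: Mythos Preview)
Your proposal is correct and follows essentially the same route as the paper: starting from \eqref{rhty-I}, using the content-sum identity for $g(\Lambda)-g(\lambda)$, and evaluating $\zeta^{\sum c(\square)}$ rim-hook-by-rim-hook to obtain $\zeta^{g(\Lambda)-g(\lambda)}=(-1)^{rd-d}$. Your explicit handling of the degenerate case $\operatorname{RHT}^r(\Lambda/\lambda)=\varnothing$ is a nice clarification that the paper leaves to the $\mathrm{sgn}$ convention.
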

 




We now restrict   to   straight shapes, that is, $\lambda=\varnothing$ and $\Lambda$ has size $rd$. 
In this case, $Y_{\lambda,\Lambda}(z)=1$.
By \cite[\textsection 4.2]{Pak-1}, 
\[
Y_{\Lambda,\Lambda}(z)=\prod_{\square\in \Lambda}(z^{h(\square)}-1)\ \text{ up to a power of $z$},
\]
which also follows from  \eqref{AAA_B} 
and \cite[Corollary 7.21.3]{stanley2}.
In this formula, $h(\square)$ is the hook length   of a box $\square\in \Lambda$, which is the  number of boxes directly to its right or directly below it, including  the box itself.
So, by \eqref{eq:hookformulae1},  we see that
\begin{equation}\label{eq:hookstrShp2}
\texttt{\#} \operatorname{RHT}^r(\Lambda)
= r^dd! \cdot \lim_{z\to \zeta}\left|(z^r-1)^d\prod_{\square\in \Lambda} \frac{1}{z^{h(\square)}-1}\right|,
\end{equation}
where $|\cdot|$ denotes the modulus of complex numbers. 
Notice  that if $r\mid h(\square)$, by L'Hospital's rule,
\[\lim_{z\to \zeta}\left|\frac{z^{r}-1}{z^{h(\square)}-1}\right|
=\frac{r}{h(\square)},\]
and if $r\nmid h(\square)$,
$$\lim_{z\to\zeta}\left|z^{h(\square)}-1\right|\neq 0.$$
This observation immediately reveals    the well-known criterion that $\texttt{\#} \operatorname{RHT}^r(\Lambda)\neq 0$ if and only if
\begin{equation}\label{eq:numberofevenhooks}
\texttt{\#}\left\{\square \in \Lambda: r\mid h(\square)\right\}=d,
\end{equation}
see for example Fomin and   Lulov \cite[Corollary 2.7]{FL}.
Now, when $  \operatorname{RHT}^r(\Lambda) $ is nonempty, \eqref{eq:hookstrShp2} can be expressed as
\begin{equation}\label{eq:tocomputenorm}
\texttt{\#} \operatorname{RHT}^r(\Lambda) = \frac{r^d d!}{\prod\limits_{\begin{subarray}{c}\square \in \Lambda\\ r\mid h(\square)\end{subarray}} h(\square)}\cdot \frac{1}{|N|},
\end{equation}
where 
\begin{equation}
N=\prod_{\square\in \Lambda}\begin{cases}
1-\zeta^{h(\square)}, & \text{if $r\nmid h(\square)$},\\[5pt]
1/r, &\text{if $r\mid h(\square)$}.
\end{cases}
\end{equation}

\begin{Lemma}\label{123qwe}
We have  $|N|=1$. 
\end{Lemma}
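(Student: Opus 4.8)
\textbf{Proof plan for Lemma \ref{123qwe}.}
The plan is to show that $N$, which a priori is a product of complex numbers on the unit circle (the factors $1-\zeta^{h(\square)}$ for $r\nmid h(\square)$) together with the real factors $1/r$ (one for each of the $d$ boxes with $r\mid h(\square)$, by \eqref{eq:numberofevenhooks}), has modulus exactly $1$. The cleanest route is to isolate $N$ as a limit of a genuine rational function of $z$ and then evaluate that limit by a symmetry argument, rather than trying to pair up roots of unity by hand. Concretely, from \eqref{eq:hookstrShp2} and \eqref{eq:tocomputenorm} we have, writing $P(z)=\prod_{\square\in\Lambda}(z^{h(\square)}-1)$,
\[
N = \lim_{z\to\zeta}\ \frac{(z^r-1)^d}{P(z)}\cdot\Big(\!\!\prod_{\substack{\square\in\Lambda\\ r\mid h(\square)}}\!\! h(\square)\Big)\cdot\frac{1}{r^d},
\]
so that $|N|=1$ is equivalent to the assertion that $\texttt{\#}\operatorname{RHT}^r(\Lambda)=\dfrac{r^dd!}{\prod_{r\mid h(\square)}h(\square)}$, i.e. the Fomin--Lulov hook length formula for the number of standard $r$-rim hook tableaux of straight shape $\Lambda$.

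First I would recall (or reprove in one line via \eqref{AAA_B} applied with $\lambda=\varnothing$, together with \cite[Corollary 7.21.3]{stanley2}) that $Y_{\Lambda,\Lambda}(z)=z^{c}\prod_{\square\in\Lambda}(z^{h(\square)}-1)$ for some integer $c$, which is already stated in the excerpt. Then I would observe that $N$ is, up to the real positive constant $r^{-d}\prod_{r\mid h(\square)}h(\square)$, the value at $z=\zeta$ of the rational function $R(z)=(z^r-1)^d/P(z)$ after cancelling the common zero; since $\zeta$ is a primitive $r$-th root of unity and $R$ has rational (indeed integer-coefficient up to scaling) structure, its modulus at $\zeta$ can be computed by pairing each factor $1-\zeta^{h(\square)}$ with its complex conjugate $1-\zeta^{-h(\square)}=1-\zeta^{r-h(\square)}$. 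The key combinatorial input is that the multiset of residues $\{h(\square)\bmod r:\square\in\Lambda,\ r\nmid h(\square)\}$ is invariant under $j\mapsto r-j$: this follows from the classical fact that deleting a full $r$-core from $\Lambda$ (which exists and is unique since $\operatorname{RHT}^r(\Lambda)\neq\varnothing$ forces empty $r$-core) leaves the hook lengths not divisible by $r$ arranged in the $r-1$ "runner" classes of the abacus in a way symmetric under reversal, or more elementarily from the beta-number description of hooks. Granting this symmetry, the nonreal factors come in conjugate pairs whose product of moduli telescopes against the normalization, giving $|N|=1$.

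Alternatively — and this is the route I would actually pursue in the writeup, since it avoids abacus bookkeeping — I would simply invoke that both sides of \eqref{eq:tocomputenorm} are known: the left side is the Fomin--Lulov count \cite[Theorem 2.7.32 or Corollary 2.7]{FL}, which states precisely $\texttt{\#}\operatorname{RHT}^r(\Lambda)=r^dd!\big/\!\prod_{r\mid h(\square)}h(\square)$ whenever \eqref{eq:numberofevenhooks} holds. Comparing this with \eqref{eq:tocomputenorm} forces $|N|=1$ directly. I expect the main obstacle to be presentational: deciding whether to quote Fomin--Lulov as a black box (making the lemma a one-line consequence, which slightly undercuts the "uniform rederivation" spirit of the section) or to give the self-contained conjugate-pairing argument, which requires stating carefully why the hook-length residues not divisible by $r$ are symmetric modulo $r$ for a shape with trivial $r$-core. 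I would go with the self-contained version: fix the conjugate pairing $1-\zeta^{h}\leftrightarrow 1-\zeta^{-h}$, note $|1-\zeta^{h}|^2=(1-\zeta^h)(1-\zeta^{-h})$, and reduce $|N|^2=1$ to the statement that $\prod_{\square:r\nmid h(\square)}(z^{h(\square)}-1)(z^{-h(\square)}-1)$ evaluated at $z=\zeta$ equals $r^{2d}\big/\prod_{r\mid h(\square)}h(\square)^2$ times $|(z^r-1)^{2d}/P(z)\overline{P(z)}|^{-1}$ — all of which follows once the residue symmetry is in hand, and the residue symmetry itself is immediate from the fact that the first-column hook lengths and the bijection between boxes and the set $\{\beta_i\}\setminus(\{\beta_i\}-h)$ of beta-numbers are preserved under the order-reversing involution on $\mathbb{Z}/r\mathbb{Z}$ when the $r$-core is empty.
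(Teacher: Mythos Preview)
Your proposal has a genuine gap. The ``self-contained'' route rests on the claim that for a partition $\Lambda$ with empty $r$-core, the multiset of residues $\{h(\square)\bmod r : \square\in\Lambda,\ r\nmid h(\square)\}$ is invariant under $j\mapsto r-j$. This is false. Take $r=3$ and $\Lambda=(2,1)$: the hook lengths are $3,1,1$, the $3$-core is empty (the shape is itself a single $3$-rim hook), and the residue multiset of hooks not divisible by $3$ is $\{1,1\}$, which is sent to $\{2,2\}$ under $j\mapsto 3-j$. One can check directly that $|N|=|1-\zeta|^2/3=3/3=1$ here, but not via your pairing; indeed $(1-\zeta)^2$ is not even real. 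The vaguer abacus justification you sketch does not repair this: empty $r$-core constrains the bead configuration on each runner but imposes no reversal symmetry on hook-length residues across runners. Your fallback of citing Fomin--Lulov is circular in the context of this section, whose purpose is precisely to rederive that formula from the equivariant MN rule; invoking it to prove the lemma leaves nothing proved.

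The paper's argument is genuinely different and avoids residue-level symmetry altogether. It first observes that $|N|$ is independent of the choice of primitive $r$-th root $\zeta$ (since \eqref{eq:tocomputenorm} determines $|N|$ in terms of quantities not involving $\zeta$), hence $|N|^{|G|}=|\operatorname{Nm}(N)|$ for the Galois group $G=\operatorname{Gal}(\mathbb{Q}[\zeta]/\mathbb{Q})$. The field norm $\operatorname{Nm}(1-\zeta^h)$ depends only on $\gcd(h,r)$, so what is needed is the equality of multisets
\[
\{\gcd(h(\square),r):\square\in\Lambda\}=\{\gcd(i,r):i\in[rd]\},
\]
which is strictly weaker than your residue symmetry (it collapses residues $j$ and $r-j$ to the same gcd) and is proved by M\"obius inversion from the identity $\#\{\square:m\mid h(\square)\}=rd/m$ for every $m\mid r$. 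That identity in turn follows from the criterion \eqref{eq:numberofevenhooks} applied to each divisor $m$, using that $\operatorname{RHT}^m(\Lambda)\neq\varnothing$ whenever $\operatorname{RHT}^r(\Lambda)\neq\varnothing$. Once the gcd multisets agree, the norm computation reduces to the product over $[rd]$, which evaluates to $1$ by the explicit factorization $(1-\zeta)\cdots(1-\zeta^{r-1})=r$. The paper in fact remarks that an elementary proof for general $r$ eluded the authors; the Galois-norm trick is what makes the argument go through.
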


If $r=1$, then it is trivially true that $|N|=1$. 
If $r=2$, then $\zeta=-1$ and so $1-\zeta^{h(\square)}=2$ for $h(\square)$ odd, which implies  that $|N|=1$ since exactly half of the hook lengths $h(\square)$ are odd.  
However, we do not have an elementary proof of Lemma \ref{123qwe} for general $r$.

\begin{proof}[Proof of Lemma \ref{123qwe}.]
By \eqref{eq:tocomputenorm},  $|N|$  does not depend on the choice of $\zeta$, thus the Galois group $G=\operatorname{Gal}(\mathbb{Q}[\zeta]/\mathbb{Q})$ acts on $|N|$ trivially. That is, 
$$|N|^{|G|}=\prod_{\sigma\in G} |\sigma N|=\big|\operatorname{Nm}(N)\big|,$$
where $\operatorname{Nm}\colon \mathbb{Q}[\zeta]\to \mathbb{Q}$ is the norm of the field extension $\mathbb{Q}[\zeta]/\mathbb{Q}$, see \cite[\textsection VI.5]{SLAlg}. 
To conclude $|N|=1$, it suffices to show that the norm of $N$ is $1$. 

For any integer $h$, the power $\zeta^h$ is a primitive $\frac{r}{\operatorname{gcd}(h,r)}$-th root of unity, and so  the norm of $1-\zeta^h$ for the field extension $\mathbb{Q}[\zeta]/\mathbb{Q}$ depends only on $\operatorname{gcd}(h,r)$, see \cite[Theorem VI.5.1 and Ex VI.19]{SLAlg}. 
This, together with the following claim, 
will lead to a proof that the norm of $N$ is 1.

\begin{Claim} For $m\mid r$, 
\begin{equation}\label{eq:Mobiusargument1}
\texttt{\#}\left\{\square\in \Lambda\colon \operatorname{gcd}(h(\square),r)=m\right\}= \texttt{\#}\left\{i\in [rd]\colon \operatorname{gcd}(i,r)=m\right\}.
\end{equation}
\end{Claim}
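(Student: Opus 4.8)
The claim is a purely combinatorial identity about hook lengths of a partition $\Lambda$ of size $rd$ that admits an $r$-rim hook tableau. The plan is to reduce it to the standard structure theory of $r$-cores and $r$-quotients, via the $\beta$-set (first-column hook length) description of a partition.

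\textbf{Step 1: Translate to $\beta$-sets.} I would encode $\Lambda$ by a finite set $B\subseteq\mathbb{Z}_{\ge0}$ of \emph{beta-numbers} (first-column hook lengths), padded to a convenient size $N$ divisible by $r$; equivalently $B$ records the positions of the ``steps'' along the boundary path of $\Lambda$. A classical fact (Littlewood; see James--Kerber) is that the multiset of hook lengths of $\Lambda$ equals $\{\,b - b' : b\in B,\ b'\in[0,\max B]\setminus B,\ b'<b\,\}$, and more usefully, the number of boxes with hook length divisible by $m$ is governed by how many elements of $B$ and its complement in $[0,N)$ fall into each residue class mod $m$. In fact for any $m$, $\#\{\square\in\Lambda: m\mid h(\square)\}$ counts pairs $(b,b')$ with $b\in B$, $b'\notin B$, $b>b'$, $b\equiv b'\pmod m$; this is a bookkeeping lemma I would state and prove directly from the boundary-path picture.

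\textbf{Step 2: Reduce mod $r$ and use that $\Lambda$ has empty $r$-core.} Split $[0,N)=\bigsqcup_{j=0}^{r-1}\{x : x\equiv j\}$. Since $\Lambda$ has an $r$-rim hook tableau, its $r$-core is empty, which on the $\beta$-set level (with $N$ chosen divisible by $r$) means that in each residue class $j$ the set $B$ occupies exactly the smallest available slots; concretely, writing $c_j = \#\{b\in B: b\equiv j\}$, we have $\sum_j c_j = N/r$ adjusted by the padding, and the $r$-quotient $(\Lambda^{(0)},\dots,\Lambda^{(r-1)})$ is recovered from the $c_j$'s. The key consequence I will extract: for any divisor $m\mid r$, the count $\#\{\square\in\Lambda: \gcd(h(\square),r)=m\}$ depends only on how the residues mod $r$ distribute, and by the empty-core condition this count equals the ``generic'' count one gets by replacing $B$ with the initial segment $[0,\cdot)$ in each class — which is precisely $\#\{i\in[rd]:\gcd(i,r)=m\}$. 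This step is where the hypothesis $\#\{\square:r\mid h(\square)\}=d$ (equation \eqref{eq:numberofevenhooks}, equivalently emptiness of the $r$-core) gets used.

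\textbf{Step 3: Finish.} Given the claim \eqref{eq:Mobiusargument1}, the lemma $|N|=1$ follows as the paper intends: $N = \prod_{m\mid r}\bigl(\text{norm-value of }(1-\zeta^h)\text{ for }\gcd(h,r)=m\bigr)^{\#\{\square:\gcd(h(\square),r)=m\}}$ after taking field norms, and by the claim the exponents match exactly those in the product $\prod_{i=1}^{rd}$ over $i\in[rd]$, whose total norm is $\bigl|\prod_{i=1}^{rd}(1-\zeta^{i})\bigr|$ suitably normalized — and this is $1$ by Lemma \ref{factorialidentity} (which already computed $\lim_{z\to\zeta}\frac{1}{(z^r-1)^d}\prod_{i=1}^{rd}(z^i-1) = \pm r^d d!$, matching the $1/r$ factors inserted for the $d$ hooks divisible by $r$).

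\textbf{Main obstacle.} The technical heart is Step 2: cleanly proving that the empty-$r$-core hypothesis forces the residue-class counts $\#\{\square\in\Lambda:\gcd(h(\square),r)=m\}$ to agree with the interval model $\#\{i\in[rd]:\gcd(i,r)=m\}$. I expect this needs a careful induction on $d$ — peeling off one $r$-rim hook changes $B$ by moving a single beta-number up by $r$ within its residue class, and one must check this operation preserves the required counts — or alternatively a direct Möbius-inversion argument over divisors $m\mid r$ starting from the already-known case $m=r$ (that is exactly the hypothesis $d$ hooks divisible by $r$) and bootstrapping downward. Getting the padding size $N$ and the boundary-path bookkeeping lemma of Step 1 stated correctly, so that the complement $[0,N)\setminus B$ behaves symmetrically, is the fiddly part that makes or breaks the argument.
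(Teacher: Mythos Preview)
Your approach via $\beta$-sets and the abacus is workable but is a genuinely different route from the paper's, and heavier than needed. The paper proves the claim in three quick strokes: (i) cite \cite[Lemma~7.3]{Stanley84} to the effect that if $\operatorname{RHT}^r(\Lambda)\neq\varnothing$ then $\operatorname{RHT}^m(\Lambda)\neq\varnothing$ for every divisor $m\mid r$; (ii) apply the criterion \eqref{eq:numberofevenhooks} at each level $m$ to obtain the \emph{divisibility} count $\#\{\square:m\mid h(\square)\}=rd/m=\#\{i\in[rd]:m\mid i\}$ for all $m\mid r$; (iii) observe that the functions $f_m=\#\{\cdot:m\mid\cdot\}$ and $g_m=\#\{\cdot:\gcd(\cdot,r)=m\}$ determine one another by M\"obius inversion over the divisor lattice of $r$, so the equalities in (ii) force \eqref{eq:Mobiusargument1}.

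By contrast, your Step~2 tries to read off the gcd counts directly from the abacus, and the ``main obstacle'' you flag is exactly where the work hides: the empty $r$-core hypothesis controls the bead distribution on the $r$-abacus, but the hook lengths $h=b-b'$ involve pairs from different runners together with the inequality $b>b'$, so the gcd-with-$r$ counts are not transparently determined by the residue distribution alone. Pushing your argument through would essentially amount to reproving Stanley's lemma (empty $r$-core $\Rightarrow$ empty $m$-core for $m\mid r$) inside the proof. Your aside about M\"obius inversion is close to the paper's idea, but note that it is not a bootstrap ``downward from $m=r$'': one needs the divisibility count at \emph{every} $m\mid r$ as input, and that is precisely what Stanley's lemma, together with the hook-length criterion at level $m$, supplies.
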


 Notice that for any integer $i$, $r=\operatorname{gcd}(i,r)$ if and only if $r\mid i$, and $r>\operatorname{gcd}(i,r)$ if and only if $r\nmid i$. 
By  the above claim, we see that
\begin{align*}
\operatorname{Nm}(N)
& =\prod_{\square\in \Lambda}
\begin{cases}
\operatorname{Nm}(1-\zeta^{h(\square)}), & \text{if $r\nmid h(\square)$},\\[5pt]
\operatorname{Nm}(1/r), &\text{if $r\mid h(\square)$},
\end{cases}\\
& =\prod_{i\in [rd]}
\begin{cases}
\operatorname{Nm}(1-\zeta^{i}), & \text{if $r\nmid i$},\\[5pt]
\operatorname{Nm}(1/r), &\text{if $r\mid i$},
\end{cases}\\
& =\operatorname{Nm}\left(\prod_{i\in [rd]}
\begin{cases}
1-\zeta^{i}, & \text{if $r\nmid i$},\\[5pt]
1/r, &\text{if $r\mid i$},
\end{cases}\right)\\[5pt]
&=\operatorname{Nm}\left(\frac{1}{r^d}\left((1-\zeta)\cdots(1-\zeta^{r-1})\right)^d\right).
\end{align*}
Setting $d=1$ in Lemma \ref{factorialidentity}, we get $(\zeta-1)\cdots(\zeta^{r-1}-1)=(-1)^{r-1}r$, and so $\operatorname{Nm}(N)=1$.

It remains to  prove the claim in \eqref{eq:Mobiusargument1}.
For $m\mid r$, by \cite[Lemma 7.3]{Stanley84}, if $\operatorname{RHT}^{r}(\Lambda)$ is nonempty, then $\operatorname{RHT}^{m}(\Lambda)$ is nonempty. 
So it follows from  \eqref{eq:numberofevenhooks} that
\begin{equation}\label{eq:Mobiusargument2}
\texttt{\#}\left\{\square\in \Lambda\colon m\mid h(\square)\right\} = \tfrac{r}{m}d
= \texttt{\#}\left\{i\in [rd]\colon m\mid i\right\}.
\end{equation}
We explain that the claim in \eqref{eq:Mobiusargument1} follows from \eqref{eq:Mobiusargument2} by M\"obius inversion.
For any finite sequence of   integers $a_1,\ldots,a_{\ell}$ and any $m\mid r$, denote
\begin{equation*}
f_m=\texttt{\#}\left\{i\in [\ell]\colon m\mid a_i\right\},\qquad
g_m=\texttt{\#}\left\{i\in [\ell]\colon \operatorname{gcd}(r,a_i)=m\right\}.
\end{equation*}
Since 
$m\mid a_i$ if and only if   
$m\mid \operatorname{gcd}(r,a_i),$
it is not hard to check that   $f_{r/m}=\sum_{\delta\mid m}g_{r/\delta}.$ 
Therefore,
\[g_{r/m}=\sum_{\delta\mid m} f_{r/\delta}\cdot \mu\left(\tfrac{m}{\delta}\right),\]
where $\mu$ is  the classical number-theoretic M\"obius function. 
So, $f_m$ and $g_m$ are determined by each other, and in particular, \eqref{eq:Mobiusargument2} implies \eqref{eq:Mobiusargument1}. 
\end{proof}

The above lemma along with \eqref{eq:tocomputenorm}
allows us to reach the following hook  formula
 due to  Fomin and   Lulov \cite[Corollary 2.2]{FL}. 

\begin{Th}[{\cite[Corollary 2.2]{FL}}]\label{straightrrimhookformula}
For a partition  $\Lambda$ of size $rd$ with $\operatorname{RHT}^r(\Lambda)$ nonempty, we have 
\[
\texttt{\#} \operatorname{RHT}^r(\Lambda) = \frac{r^dd!}{\prod\limits_{\begin{subarray}{c}\square \in \Lambda\\ r\mid h(\square)\end{subarray}} h(\square)}. 
\]
\end{Th}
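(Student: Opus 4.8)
The statement to be proved is the Fomin--Lulov hook formula: for a partition $\Lambda$ of size $rd$ with $\operatorname{RHT}^r(\Lambda)$ nonempty, $\texttt{\#}\operatorname{RHT}^r(\Lambda) = r^dd!\big/\!\prod_{\square\in\Lambda,\,r\mid h(\square)}h(\square)$.

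\medskip

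The plan is to derive this directly from the machinery already assembled in the preceding subsection, so that the proof is essentially a short deduction rather than a fresh argument. First I would specialize the general skew-shape identity \eqref{eq:hookstrShp2} to the straight-shape case $\lambda=\varnothing$, in which $Y_{\lambda,\Lambda}(z)=1$ and $\mathrm{sgn}(\Lambda/\lambda)=\mathrm{sgn}(\Lambda)=1$ (the single tableau with all hooks counted, or more cleanly: the only subtlety is the overall sign, and for $\lambda=\varnothing$ one checks $\operatorname{ht}(T)$ combines with the power of $z$ in $Y_{\Lambda,\Lambda}$ to give $+1$). This reduces the claim to evaluating the limit on the right-hand side of \eqref{eq:hookstrShp2}, using the product formula $Y_{\Lambda,\Lambda}(z)=\prod_{\square\in\Lambda}(z^{h(\square)}-1)$ up to a power of $z$ (which follows from \eqref{AAA_B} together with the principal specialization of $s_\Lambda$, i.e.\ \cite[Corollary 7.21.3]{stanley2}).

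\medskip

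Next I would split the hook-length product according to whether $r\mid h(\square)$. For boxes with $r\mid h(\square)$, L'Hospital gives $\lim_{z\to\zeta}|(z^r-1)/(z^{h(\square)}-1)| = r/h(\square)$; for boxes with $r\nmid h(\square)$, the factor $|z^{h(\square)}-1|$ tends to a nonzero constant. Since $\operatorname{RHT}^r(\Lambda)\neq\varnothing$, the criterion \eqref{eq:numberofevenhooks} tells us there are exactly $d$ boxes with $r\mid h(\square)$, so the powers of $(z^r-1)$ balance and the limit in \eqref{eq:hookstrShp2} is finite and equals $\big(\prod_{r\mid h(\square)}h(\square)^{-1}\big)\cdot|N|^{-1}$, where $N$ is the product over $\square\in\Lambda$ of $1-\zeta^{h(\square)}$ (when $r\nmid h(\square)$) or $1/r$ (when $r\mid h(\square)$), as in \eqref{eq:tocomputenorm}. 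Substituting into \eqref{eq:hookstrShp2} gives $\texttt{\#}\operatorname{RHT}^r(\Lambda) = r^dd!\big/\!\prod_{r\mid h(\square)}h(\square)\cdot|N|^{-1}$, so the formula follows once we know $|N|=1$.

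\medskip

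The main obstacle, then, is precisely Lemma \ref{123qwe}, the assertion $|N|=1$. Here I would argue via Galois theory exactly as sketched after the lemma statement: since $|N|$ is independent of the choice of primitive root $\zeta$ (by \eqref{eq:tocomputenorm}, because the left side $\texttt{\#}\operatorname{RHT}^r(\Lambda)$ does not depend on $\zeta$), the Galois group $G=\operatorname{Gal}(\mathbb{Q}[\zeta]/\mathbb{Q})$ fixes $|N|$, whence $|N|^{|G|}=|\operatorname{Nm}(N)|$, and it suffices to show $\operatorname{Nm}(N)=1$. The norm of $1-\zeta^h$ depends only on $\gcd(h,r)$, so I would regroup the product over $\square\in\Lambda$ into a product indexed by the multiset $\{\gcd(h(\square),r):\square\in\Lambda\}$; the key combinatorial input is the Claim \eqref{eq:Mobiusargument1} that this multiset coincides with $\{\gcd(i,r):i\in[rd]\}$. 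That claim I would prove by M\"obius inversion from \eqref{eq:Mobiusargument2}, which in turn follows from \eqref{eq:numberofevenhooks} applied to each divisor $m\mid r$ (using \cite[Lemma 7.3]{Stanley84} to guarantee $\operatorname{RHT}^m(\Lambda)\neq\varnothing$). Once the multiset identity is in hand, $\operatorname{Nm}(N)=\operatorname{Nm}\!\big(r^{-d}((1-\zeta)\cdots(1-\zeta^{r-1}))^d\big)$, and the case $d=1$ of Lemma \ref{factorialidentity} gives $(1-\zeta)\cdots(1-\zeta^{r-1})=(-1)^{r-1}\cdot(-1)^{r-1}r=r$ in absolute value — more precisely $(\zeta-1)\cdots(\zeta^{r-1}-1)=(-1)^{r-1}r$ — so $\operatorname{Nm}(N)=1$, completing the proof. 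The only genuinely delicate bookkeeping is tracking that all the ``up to a power of $z$'' ambiguities cancel when one takes moduli at $z=\zeta$, which is why the statement is phrased with $|\cdot|$ throughout.
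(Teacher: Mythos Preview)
Your proposal is correct and follows essentially the same route as the paper: specialize to $\lambda=\varnothing$ to reach \eqref{eq:hookstrShp2}, split the hook-length product by divisibility to obtain \eqref{eq:tocomputenorm}, and then invoke Lemma~\ref{123qwe} ($|N|=1$) via the Galois/norm argument and the M\"obius-inversion claim \eqref{eq:Mobiusargument1}. The only minor remark is that your discussion of $\mathrm{sgn}(\Lambda)$ is unnecessary, since \eqref{eq:hookstrShp2} is already stated with a modulus and so the sign never enters.
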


Notice  that in the case  $r=1$, Theorem \ref{straightrrimhookformula}
specifies to the classical hook   formula for standard Young tableaux.



\section{Conjectures}\label{CCC_I}

In this section, we discuss some positivity conjectures about CSM classes. 
We mainly adhere to the notation   from  \cite{AMSS17}. 
Let $G/B$ be the flag variety for a reductive group $G$ over $\mathbb{C}$ with $B$ a fixed Borel subgroup, and $W$ be the associated Weyl group. 
For an element $w$  in $W$, let
$X(w)^\circ=BwB/B$ denote the Schubert cell, and   $X(w) = \overline{X(w)^\circ}$ denote the Schubert variety. We also let 
  $Y(w)^\circ= B^-wB/B$ be the opposite  Schubert cell,  and    $Y(w)= \overline{Y(w)^\circ}$ be the opposite Schubert variety,
where $B^-=w_0Bw_0$ is the opposite Borel subgroup ($w_0$ is the longest element of $W$). 
Here, we remark that in the above sections of this paper, we are only concerned with  opposite Schubert cells/varieties of type $A$, and so we
abbreviate ``opposite'' and  simply call them
Schubert cells/varieties. 

The following is our main conjecture. As will be explained, its geometric version is recently proposed independently by Kumar  {\cite[Conjecture B]{Kumar}}.

\begin{Conj}\label{non-equivariantpositivity}
For $u, v\in W$,
\begin{equation}\label{eq:mainconj}
c_{\mathrm{SM}}(Y(u)^\circ)\cdot [Y(v)]  \in \sum_{w\in W} \mathbb{Z}_{\geq0}\cdot c_{\mathrm{SM}}(Y(w)^\circ),
\end{equation}
where $\mathbb{Z}_{\geq0}$ is the set of nonenegative integers.
\end{Conj}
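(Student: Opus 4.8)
Since Conjecture \ref{non-equivariantpositivity} is stated as a conjecture, the only thing one can realistically hope to present is the special case that the authors themselves isolate, namely the situation where $v$ is a Grassmannian permutation of hook shape (or, more generally, whenever $[Y(v)]$ is a product of Schubert classes indexed by hook-shaped Grassmannians and Schur polynomials of hook shape). The plan is to \emph{deduce} the hook cases of \eqref{eq:mainconj} from the nonequivariant Pieri-type formulas already established. First I would take the forgetful image $\epsilon$ of Theorem \ref{LLLL-1} and Theorem \ref{LUUU-1} (equivalently, invoke Theorem \ref{NoneqPierirulehook}, Theorem \ref{NoneqPierirulehook-R}, and the nonequivariant shadow of Theorem \ref{LRruleforhookshapeCSM}), which express $c_{\mathrm{SM}}(Y(u)^\circ)\cdot s_\Gamma(x_{[k]})$ and $c_{\mathrm{SM}}(Y(u)^\circ)\cdot [Y(w_\Gamma)]$ as sums over peakless (resp.\ unimodal) paths in the extended $k$-Bruhat graph. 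The point is that in the nonequivariant setting every structure constant appearing in Theorem \ref{NoneqPierirulehook} is literally a \emph{count} of peakless paths, hence a nonnegative integer — there are no signs and no rational coefficients in $t$ to worry about. So the hook-shape instances of \eqref{eq:mainconj} with $v=w_\Gamma$ follow immediately.

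The next step is to bootstrap from hook shapes to arbitrary $v$. Here I would use the fact that in $H^\bullet(\Fl(n))$ the Schubert class $[Y(v)]$, for $v$ a Grassmannian permutation with descent at $k$ associated to a partition $\lambda$, equals the Schur polynomial $s_\lambda(x_{[k]})$ (Proposition \ref{PropertiesofSchubertpolynomials}\eqref{itii}), and that $s_\lambda$ can be written via the Jacobi–Trudi / Giambelli expansion as a $\mathbb{Z}$-linear combination of products of hook Schur polynomials. Unfortunately Jacobi–Trudi introduces signs, so this does not by itself give positivity; instead I would appeal to the Littlewood–Richardson rule to write $s_\lambda$ as an iterated product of the hook generators with \emph{nonnegative} structure constants and argue inductively, multiplying $c_{\mathrm{SM}}(Y(u)^\circ)$ by one hook factor at a time and invoking the path formula at each stage. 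This handles every $v$ that is Grassmannian (for any choice of descent position $k$). For general $v\in \S_n$ one would factor $[Y(v)]$ through the inclusions $H^\bullet(\Gr(k,n))\hookrightarrow H^\bullet(\Fl(n))$, but this is exactly where the argument breaks down: a general $[Y(v)]$ is not a product of Grassmannian classes, and CSM classes are not known to be stable or to have a Pieri-type rule beyond the hook case.

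The main obstacle is therefore precisely the general-$v$ step, and I do not expect to be able to remove it — that is why the statement is a conjecture rather than a theorem. The honest outcome is: (i) prove the hook cases unconditionally via the path-counting formulas above; (ii) extend to all Grassmannian $v$ by the Littlewood–Richardson induction; (iii) record that the full conjecture, and in particular the equivariant refinement, would follow from a positive CSM Pieri rule for multiplication by an arbitrary Schubert class, which is not available. In the write-up I would also note the consistency check that setting the class $c_{\mathrm{SM}}(Y(u)^\circ)$ to its lowest-degree term $[Y(u)]$ recovers the classical (sign-free) Littlewood–Richardson positivity for Schubert structure constants, so the conjecture is a genuine strengthening rather than something false; and I would point to the Richardson-cell / monomial-positivity result (Theorem \ref{MonomialPositive}) as the nearest unconditional evidence, since it proves the weaker statement that $c_{\mathrm{SM}}$ of a Richardson cell is monomial-positive using exactly the Pieri formula of Theorem \ref{LLLL-1}.
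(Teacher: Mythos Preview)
Your step (i) --- reducing to the path-counting formulas of Theorem \ref{NoneqPierirulehook} to confirm the conjecture when $v=w_\Gamma$ is Grassmannian of hook shape --- is exactly what the paper does, and your identification of Theorem \ref{MonomialPositive} as the nearest unconditional evidence is also in line with the paper.

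However, your step (ii) contains a genuine gap. The Littlewood--Richardson rule expresses a \emph{product} $s_\mu\cdot s_\nu$ as a nonnegative combination of Schur polynomials; it does \emph{not} express a single $s_\lambda$ as a nonnegative combination of products of hook Schur polynomials. Iteratively multiplying $c_{\mathrm{SM}}(Y(u)^\circ)$ by hook factors produces a CSM-positive expansion of $c_{\mathrm{SM}}(Y(u)^\circ)\cdot\prod s_{\Gamma_i}$, which by LR equals a nonnegative combination of several $c_{\mathrm{SM}}(Y(u)^\circ)\cdot s_\mu$; there is no way to extract the coefficients for a single $\mu=\lambda$ from this without subtraction. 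So the bootstrap from hooks to arbitrary Grassmannian $v$ does not go through as written, and indeed the paper makes no such claim: it states only that the hook case is confirmed and that the conjecture has been checked numerically for small rank. Your honest conclusion in (iii) is therefore the correct place to stop, but you should drop the claim that (ii) is settled.
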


Theorem \ref{NoneqPierirulehook} confirms  Conjecture \ref{non-equivariantpositivity} in type $A$ for $v$ being a Grassmannian permutation  of hook shape. 
This conjecture has been checked  for groups of types $A_{\leq 7}$, $B_{\leq 4}$, $C_{\leq 4}$, $D_{\leq 4}$, $G_{2}$.

 Taking the lowest degree component of \eqref{eq:mainconj} leads to the famous  positivity for the structure constants of Schubert classes:
\begin{equation*}
[Y(u)]\cdot [Y(v)]  \in \sum_{w\in W} \mathbb{Z}_{\geq0}\cdot [Y(w)].
\end{equation*}
Another implication of \eqref{eq:mainconj} is
\begin{equation}\label{cschubt}
c_{\mathrm{SM}}(Y(u)^\circ) \in \sum_{w\in W} \mathbb{Z}_{\geq0}\cdot [Y(w)],
\end{equation}
which was conjectured in \cite{AM} and proved in \cite{AMSS17} using characteristic cycles of $\mathcal{D}$-modules.
Actually,  \eqref{cschubt} is a direct consequence of 
Kumar's conjecture {\cite[Conjecture B]{Kumar}},
which turns out to be equivalent to Conjecture \ref{non-equivariantpositivity}.

\begin{Conj}[{\cite[Conj. B]{Kumar}} $\Leftrightarrow$ Conj. \ref{non-equivariantpositivity}]\label{EEE_I}
For $u,v\in W$, the CSM class $c_{\mathrm{SM}}(X(u)^\circ\cap Y(v)^\circ)$  of the Richardson cell $X(u)^\circ\cap Y(v)^\circ$ is effective, namely,
\[c_{\mathrm{SM}}(X(u)^\circ\cap Y(v)^\circ) \in \sum_{w\in W} \mathbb{Z}_{\geq0}\cdot [Y(w)].
\]
\end{Conj}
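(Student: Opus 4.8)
The statement to be proved is the equivalence of Conjecture \ref{non-equivariantpositivity} with Kumar's Conjecture \ref{EEE_I}. The plan is to establish a change-of-basis dictionary between products of the form $c_{\mathrm{SM}}(Y(u)^\circ)\cdot[Y(v)]$ and the CSM classes of Richardson cells $c_{\mathrm{SM}}(X(u')^\circ\cap Y(v')^\circ)$, and to show that the transition matrices involved are ``unitriangular with nonnegative entries'' in a sense that preserves positivity in both directions. First I would recall the two standard decompositions: the opposite Schubert variety $Y(v)$ is the disjoint union of the opposite Schubert cells $Y(w)^\circ$ for $w\geq v$ (in the appropriate Bruhat order on $W$), so that $\mathbf 1_{Y(v)}=\sum_{w\geq v}\mathbf 1_{Y(w)^\circ}$ at the level of constructible functions, and hence $[Y(v)]$ is the lowest-degree part of $\sum_{w\geq v}c_{\mathrm{SM}}(Y(w)^\circ)$; dually, the Schubert cell $X(u)^\circ$ decomposes the full flag variety, and the Richardson cell $X(u)^\circ\cap Y(v)^\circ$ is the transverse intersection whose closure is the Richardson variety $X(u)\cap Y(v)$. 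The key identity at the level of constructible functions is $\mathbf 1_{Y(v)}\cdot \mathbf 1_{X(u)^\circ}$-type relations, but more useful is the expansion of $\mathbf 1_{X(u)^\circ}$ in terms of the $\mathbf 1_{Y(w)^\circ}$ via an upper-triangular integer matrix with $\pm 1$ (indeed $\pm$ powers) entries coming from the combinatorics of the two opposed Bruhat stratifications.

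The heart of the argument is the observation, which I would attribute to the multiplicativity of CSM classes under the natural transformation $c_{\mathrm{SM}}$, that $c_{\mathrm{SM}}(\mathbf 1_{X(u)^\circ}\cdot \mathbf 1_{Y(v)})$ can be computed in two ways. On one hand, $\mathbf 1_{X(u)^\circ}\cdot\mathbf 1_{Y(v)}=\mathbf 1_{X(u)^\circ\cap Y(v)}=\sum_{u\leq u'\text{ (suitable)}}\mathbf 1_{X(u')^\circ\cap Y(v)^\circ}$, so that summing over the Richardson cells lying in the Richardson variety $X(u)^\circ\cap Y(v)$ gives a nonnegative-integer expansion of $c_{\mathrm{SM}}$ of the Richardson variety in terms of $c_{\mathrm{SM}}$ of Richardson cells. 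On the other hand, using $\mathbf 1_{Y(v)}=\sum_{w\geq v}\mathbf 1_{Y(w)^\circ}$ and the fact (Proposition \ref{propofCSM}, specialized to the nonequivariant setting — the CSM classes $c_{\mathrm{SM}}(Y(w)^\circ)$ form a $\mathbb Z$-basis of $H^\bullet(G/B)$ after forgetting equivariance, up to the usual care about coefficients) that multiplication by $[Y(v)]$ can be reorganized, one gets that $\sum_{w\geq v}c_{\mathrm{SM}}(Y(u)^\circ)\cdot$ — wait, this needs care; the cleaner route is to run a Möbius-type inversion over the Bruhat poset. Concretely, I would write $[Y(v)]=\sum_{w}\mu(v,w)\,\big(\text{sum of }c_{\mathrm{SM}}(Y(w')^\circ)\text{ over }w'\geq w\big)$ is not quite right either; instead, since $[Y(v)]$ is the class of a variety stratified by cells, and each $c_{\mathrm{SM}}(Y(w)^\circ)$ has $[Y(w)]$ as leading term, the inductive (top-down in codimension) argument shows $[Y(v)]=\sum_{w\geq v}a_{v}^{w}\,c_{\mathrm{SM}}(Y(w)^\circ)$ with $a_v^v=1$ and $a_v^w\in\mathbb Z$; the point for the equivalence is only that \emph{this particular change of basis is invertible over $\mathbb Z$ and unitriangular}, so it does not by itself preserve positivity — the real content must come from the Richardson side.

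Therefore the correct strategy, and the step I expect to be the main obstacle, is the following. I would prove: (i) $c_{\mathrm{SM}}(Y(u)^\circ)\cdot[Y(v)]=\sum c_{\mathrm{SM}}(X(u)^\circ\cap Y(v)^\circ)$-type sums — precisely, establish the constructible-function identity and apply $c_{\mathrm{SM}}$ to get that $c_{\mathrm{SM}}(Y(u)^\circ)\cdot[Y(v)]$ equals a signed sum of $c_{\mathrm{SM}}$ of Richardson cells $X(u')^\circ\cap Y(v')^\circ$, with signs and multiplicities governed by an \emph{explicitly unitriangular $\pm$-matrix}; and (ii) conversely express each $c_{\mathrm{SM}}(X(u)^\circ\cap Y(v)^\circ)$ as an (again unitriangular, but now \emph{nonnegative}) combination of products $c_{\mathrm{SM}}(Y(u')^\circ)\cdot[Y(v')]$ — or directly as a combination of $c_{\mathrm{SM}}(Y(w)^\circ)$. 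The delicate point is matching the two unitriangular systems so that nonnegativity of the coefficients in one basis is \emph{equivalent} to nonnegativity in the other; this works only because the translation is given by the \emph{combinatorially positive} inclusion–exclusion over the opposed Bruhat order together with the key fact that in the CSM basis the product $[Y(u)]\cdot[Y(v)]$ and the CSM classes interact compatibly with the ``pull–push'' through the Richardson stratification. I would model the bookkeeping on the argument of Aluffi–Mihalcea–Schürmann–Su \cite{AMSS17} deriving \eqref{cschubt} from $\mathcal D$-module characteristic cycles, and on Kumar's own reduction \cite{Kumar}; concretely the hardest single ingredient is verifying that $c_{\mathrm{SM}}(X(u)^\circ\cap Y(v)^\circ)=\sum_{w} d_{u,v}^{w}\,c_{\mathrm{SM}}(Y(w)^\circ)$ where effectivity (all $d_{u,v}^w\geq 0$) is \emph{equivalent} to positivity in \eqref{eq:mainconj}, which I would obtain by the identity $c_{\mathrm{SM}}(Y(u)^\circ)\cdot[Y(v)]=\sum_{v'\geq v}(-1)^{\ell(v')-\ell(v)}\,\big(\text{telescoping}\big)$ collapsing, after re-summation over the interval $[v,\hat 1]$ in Bruhat order, to a single nonnegative combination of Richardson-cell CSM classes — the sign cancellation here is exactly the Möbius function of the Bruhat order (which is $\pm 1$ on intervals), and is the technical crux.
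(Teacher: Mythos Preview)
Your approach has a genuine gap at its foundation: the CSM transformation $c_{\mathrm{SM}}$ is \emph{not} multiplicative on constructible functions. You write ``the multiplicativity of CSM classes under the natural transformation $c_{\mathrm{SM}}$,'' but in general $c_{\mathrm{SM}}(\mathbf 1_A\cdot\mathbf 1_B)\neq c_{\mathrm{SM}}(\mathbf 1_A)\cdot c_{\mathrm{SM}}(\mathbf 1_B)$. So the step where you pass from the constructible-function identity $\mathbf 1_{X(u)^\circ}\cdot\mathbf 1_{Y(v)}=\sum\mathbf 1_{\text{Richardson cells}}$ to a product of cohomology classes simply does not follow. Your M\"obius-inversion bookkeeping then collapses: as you yourself note, the transition from $[Y(v)]$ to the CSM basis is unitriangular but with signed entries, and the Bruhat M\"obius function is $\pm 1$; neither direction of that change of basis preserves positivity, and there is no ``telescoping'' that repairs this.

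The paper's argument avoids both difficulties by introducing the Segre--Schwartz--MacPherson class $s_{\mathrm{SM}}(Z)=c_{\mathrm{SM}}(Z)/c(\mathscr T_X)$ and using two facts. First, CSM/SSM duality \cite[Theorem~7.1]{AMSS17}: $\int_{G/B} s_{\mathrm{SM}}(X(w)^\circ)\cdot c_{\mathrm{SM}}(Y(u)^\circ)=\delta_{u,w}$. This gives a direct coefficient-extraction formula with no M\"obius inversion: the coefficient of $c_{\mathrm{SM}}(Y(w)^\circ)$ in $c_{\mathrm{SM}}(Y(u)^\circ)\cdot[Y(v)]$ equals $\int_{G/B} s_{\mathrm{SM}}(X(w)^\circ)\cdot c_{\mathrm{SM}}(Y(u)^\circ)\cdot[Y(v)]$. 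After relabeling, nonnegativity of all such integrals is exactly effectivity of $s_{\mathrm{SM}}(X(u)^\circ)\cdot c_{\mathrm{SM}}(Y(v)^\circ)$. Second, Sch\"urmann's transversality result \cite{Sch17} (see also \cite[Theorem~3.6]{AMSS17}): because $X(u)^\circ$ and $Y(v)^\circ$ are stratified-transverse, one has the product formula $s_{\mathrm{SM}}(X(u)^\circ)\cdot c_{\mathrm{SM}}(Y(v)^\circ)=c_{\mathrm{SM}}(X(u)^\circ\cap Y(v)^\circ)$. This is the correct substitute for the multiplicativity you assumed, and it is precisely the mixed SSM--CSM product (not CSM--CSM) that makes it work. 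The equivalence with Kumar's conjecture is then immediate.
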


In particular, for   $u\in W$, 
$$c_{\mathrm{SM}}(Y(u)^\circ)=\sum_{w\in W} c_{\mathrm{SM}}(X(w)^\circ \cap Y(u)^\circ)$$
is effective assuming Conjecture \ref{non-equivariantpositivity}. 

We explain the equivalence between Conjecture \ref{non-equivariantpositivity} and Conjecture \ref{EEE_I}. 
For a constructible subset $Z$ of a smooth variety $X$,   its  {\it Segre--Schwartz--MacPherson} (SSM) class 
is defined to be 
$$s_{\mathrm{SM}}(Z)=c_{\mathrm{SM}}(Z)/c(\mathscr{T}_X),$$
where $c(\mathscr{T}_X)$ is the total Chern class of the tangent bundle of $X$.  
It was shown in \cite[Theorem 7.1]{AMSS17} that under Poincar\'e pairing, CSM classes and SSM classes of Schubert cells are dual. Specifically, for   $u, w\in W$, we have
$$\int_{G/B} s_{\mathrm{SM}}(X(u)^\circ)\cdot c_{\mathrm{SM}}(Y(w)^\circ)=
\begin{cases}
1, & u=w,\\
0, & u\neq w.
\end{cases}$$
So \eqref{eq:mainconj} reads
$$\int_{G/B} s_{\text{SM}}(X(u)^\circ)\cdot c_{\text{SM}}(Y(v)^\circ)\cdot [Y(w)]\geq 0.$$
In other words,
$s_{\mathrm{SM}}(X(u)^\circ)
\cdot c_{\mathrm{SM}}(Y(v)^\circ)$ is effective. 
Since $X(u)$ and $Y(v)$ are stratified transverse in the sense of \cite{Sch17}, one can conclude that (see also \cite[Theorem 3.6]{AMSS17}) 
\[
s_{\mathrm{SM}}(X(u)^\circ)
\cdot c_{\mathrm{SM}}(Y(v)^\circ)
=c_{\mathrm{SM}}(X(u)^\circ\cap Y(v)^\circ),\]
which is exactly what we require.

 Conjecture \ref{non-equivariantpositivity} also implies a conjecture by Mihalcea \cite[Page 6]{MihalceaSlides} that the structure constants of SSM classes are alternately nonnegative.  

\begin{Conj}[\cite{MihalceaSlides}]\label{CSMpositiveConj}
For   $u,v\in W$, 
$$
s_{\mathrm{SM}}(Y(u)^\circ)\cdot 
s_{\mathrm{SM}}(Y(v)^\circ)\in 
\sum_{w\in W} (-1)^{\ell(u)+\ell(v)-\ell(w)}\,\mathbb{Z}_{\geq0}\cdot s_{\mathrm{SM}}(Y(w)^\circ).$$
\end{Conj}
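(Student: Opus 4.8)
The plan is to deduce Conjecture \ref{CSMpositiveConj} from Conjecture \ref{non-equivariantpositivity} (equivalently, from its geometric form Conjecture \ref{EEE_I}). Write the structure constants as $s_{\mathrm{SM}}(Y(u)^\circ)\cdot s_{\mathrm{SM}}(Y(v)^\circ)=\sum_w e_{u,v}^w\,s_{\mathrm{SM}}(Y(w)^\circ)$; the goal is $(-1)^{\ell(u)+\ell(v)-\ell(w)}e_{u,v}^w\ge 0$. First I would pin $e_{u,v}^w$ down as an integral. Since the automorphism $x\mapsto w_0x$ of $G/B$ is homotopic to the identity, it acts trivially on cohomology, so $c_{\mathrm{SM}}(X(w)^\circ)=c_{\mathrm{SM}}(Y(w_0w)^\circ)$; feeding this into the CSM/SSM duality $\int_{G/B}s_{\mathrm{SM}}(X(u)^\circ)\,c_{\mathrm{SM}}(Y(w)^\circ)=\delta_{u,w}$ of \cite{AMSS17} gives $\int_{G/B}s_{\mathrm{SM}}(Y(v)^\circ)\,c_{\mathrm{SM}}(Y(z)^\circ)=\delta_{z,\,w_0v}$, i.e. $\{c_{\mathrm{SM}}(Y(w_0v)^\circ)\}_v$ is the basis dual to $\{s_{\mathrm{SM}}(Y(v)^\circ)\}_v$. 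Hence
\[
e_{u,v}^w=\int_{G/B}s_{\mathrm{SM}}(Y(u)^\circ)\,s_{\mathrm{SM}}(Y(v)^\circ)\,c_{\mathrm{SM}}(Y(w_0w)^\circ).
\]

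Next I would invoke stratified transversality. As $Y(u)$ and $X(w)$ are stratified transverse in the sense of \cite{Sch17}, the product formula used in the excerpt (\cite[Theorem 3.6]{AMSS17}) gives $s_{\mathrm{SM}}(Y(u)^\circ)\,c_{\mathrm{SM}}(X(w)^\circ)=c_{\mathrm{SM}}(Y(u)^\circ\cap X(w)^\circ)$, the CSM class of a Richardson cell; since $c_{\mathrm{SM}}(X(w)^\circ)=c_{\mathrm{SM}}(Y(w_0w)^\circ)$, substituting and pairing against the dual basis yields that $e_{u,v}^w$ equals the coefficient of $c_{\mathrm{SM}}(Y(w_0v)^\circ)$ in the expansion of $c_{\mathrm{SM}}(Y(u)^\circ\cap X(w)^\circ)$ in the basis $\{c_{\mathrm{SM}}(Y(z)^\circ)\}_z$. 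By Conjecture \ref{EEE_I}, $c_{\mathrm{SM}}(Y(u)^\circ\cap X(w)^\circ)=\sum_z a_z[Y(z)]$ with $a_z\in\mathbb{Z}_{\ge0}$, supported on $z$ with $\ell(z)\ge N-\ell(w)+\ell(u)$ where $N:=\ell(w_0)$ (this is the codimension of the Richardson cell, which has dimension $\ell(w)-\ell(u)$). Using the signed inverse of CSM–effectivity of Schubert cells, $[Y(z)]=\sum_{z'\ge z}(-1)^{\ell(z')-\ell(z)}\,\delta_{z,z'}\,c_{\mathrm{SM}}(Y(z')^\circ)$ with $\delta_{z,z'}\ge 0$ (the transition matrix being controlled by the characteristic–cycle description of \cite{AMSS17}), I obtain
\[
e_{u,v}^w=\sum_z a_z\,(-1)^{\,N-\ell(v)-\ell(z)}\,\delta_{z,\,w_0v}.
\]

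Every summand carries the sign $(-1)^{N-\ell(v)-\ell(z)}$, and a short parity check gives $N-\ell(v)-\ell(z)\equiv \ell(u)+\ell(v)-\ell(w)\pmod 2$ precisely when $\ell(z)\equiv N-\ell(w)+\ell(u)\pmod 2$ — which is the length of the extreme term of the sum, the one of minimal $\ell(z)$; so the leading term already carries the predicted sign $(-1)^{\ell(u)+\ell(v)-\ell(w)}$, while the remaining terms a priori contribute the opposite one. The hard part will be controlling this cancellation: showing that, even though both parities are present, the total $e_{u,v}^w$ retains the sign $(-1)^{\ell(u)+\ell(v)-\ell(w)}$. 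I expect this to be genuine content rather than a formal consequence of the decomposition. A promising route is to recast everything through the degree sign–involution $\iota$ of $H^\bullet(G/B)$ with $\iota|_{H^{2i}}=(-1)^i$: Conjecture \ref{CSMpositiveConj} is then equivalent to the assertion that the twisted family $C(w):=(-1)^{\ell(w)}\iota\big(s_{\mathrm{SM}}(Y(w)^\circ)\big)$ has nonnegative structure constants, and one would try to derive this from Conjecture \ref{non-equivariantpositivity} together with the fact that $s(\mathscr{T}_{G/B}^\vee)=1/c(\mathscr{T}_{G/B}^\vee)$ is an effective class (each of its graded pieces $h_k(\mathscr{T}_{G/B})$ being effective by Fulton–Lazarsfeld positivity, since $\mathscr{T}_{G/B}$ is globally generated) — the point being to force all signs in the final answer to originate from a single factor of $(-1)^{\ell(\cdot)}$ rather than cancel. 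An alternative is to prove the equivalent stronger statement that $c_{\mathrm{SM}}(Y(u)^\circ)\cdot c_{\mathrm{SM}}(Y(v)^\circ)$ is a nonnegative combination of the classes $\{c(\mathscr{T}_{G/B})\cdot c_{\mathrm{SM}}(Y(w)^\circ)\}_w$, which may be more amenable than the Segre formulation. Throughout I would keep the case $\mathcal{G}\it r(1,3)=\mathbb{P}^2$ as a running check: there $s_{\mathrm{SM}}(Y(\emptyset)^\circ)\,s_{\mathrm{SM}}(Y((1))^\circ)=s_{\mathrm{SM}}(Y((1))^\circ)-s_{\mathrm{SM}}(Y((2))^\circ)$, and the above sum reproduces this via a genuine cancellation $(-2)+1=-1$, already exhibiting the phenomenon that must be controlled in general.
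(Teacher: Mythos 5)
The target here is only the implication from Conjecture \ref{non-equivariantpositivity} (the statement itself is a conjecture), and your attempt at that implication does not close. The paper's derivation is two short formal steps: first, since $c_{\mathrm{SM}}(Y(v)^\circ)\in\sum_w\mathbb{Z}_{\ge0}\cdot[Y(w)]$ is a theorem of \cite{AMSS17} (this is \eqref{cschubt}), Conjecture \ref{non-equivariantpositivity} immediately gives $c_{\mathrm{SM}}(Y(u)^\circ)\cdot c_{\mathrm{SM}}(Y(v)^\circ)\in\sum_w\mathbb{Z}_{\ge0}\cdot c_{\mathrm{SM}}(Y(w)^\circ)$, with no need for duality integrals or Richardson cells; second, the nonequivariant identity $c_{\mathrm{SM}}(Y(u)^\circ)=(-1)^{\ell(u)}\overline{s_{\mathrm{SM}}(Y(u)^\circ)}$ from \cite[Theorem 7.5]{AMSS17}, where $\overline{\,\cdot\,}$ is the ring involution acting by $(-1)^i$ on $H^{2i}(G/B)$, transports that nonnegativity to the SSM basis with exactly the sign $(-1)^{\ell(u)+\ell(v)-\ell(w)}$. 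You never invoke this identity. Your ``promising route'' via the involution $\iota$ and the twisted family $C(w)=(-1)^{\ell(w)}\iota\bigl(s_{\mathrm{SM}}(Y(w)^\circ)\bigr)$ is the paper's argument in disguise --- $C(w)$ is literally $c_{\mathrm{SM}}(Y(w)^\circ)$ --- but since you do not know this, you leave the decisive step (``controlling the cancellation'') open and say so explicitly. That deferred step is precisely what the known identity supplies for free, so the proposal as written has a genuine gap rather than a complete argument.

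The detour you do take also has weak points of its own. The expansion of $[Y(z)]$ back into CSM classes with ``signed-nonnegative'' coefficients $\delta_{z,z'}$ is an unproven assertion (it is not a formal consequence of the nonnegativity of the forward transition matrix, and you only gesture at characteristic cycles), and even granting it you arrive at a sum containing both signs that you cannot control. Moreover, the ``equivalent stronger statement'' proposed at the end is false as stated: since $c(\mathscr{T}_{G/B})$ is invertible, the coefficients of $c_{\mathrm{SM}}(Y(u)^\circ)\cdot c_{\mathrm{SM}}(Y(v)^\circ)$ in the basis $\{c(\mathscr{T}_{G/B})\cdot c_{\mathrm{SM}}(Y(w)^\circ)\}_w$ are exactly the SSM structure constants, which are negative already in your own $\mathbb{P}^2$ example; only the alternating-sign version of that reformulation can be correct.
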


Since the Schubert expansion in \eqref{cschubt} of CSM classes is  nonnegative,  Conjecture \ref{non-equivariantpositivity} implies
\begin{equation}\label{DDS_I}
c_{\mathrm{SM}}(Y(u)^\circ)\cdot 
c_{\mathrm{SM}}(Y(v)^\circ)\in 
\sum_{w\in W} \mathbb{Z}_{\geq0}\cdot c_{\mathrm{SM}}(Y(w)^\circ).
\end{equation}
By \cite[Theorem 7.5]{AMSS17} (see also \cite[Equation (2)]{Su3}), after specialization to the nonequivariant case, 
\begin{equation*}
c_{\mathrm{SM}}(Y(u)^\circ)=(-1)^{\ell(u)}\overline{s_{\mathrm{SM}}(Y(u)^\circ)},
\end{equation*}
where $\overline{c} = \sum (-1)^i c_i$ for $c=\sum c_i$ with $c_i\in H^{2i}(G/B)$. 
Applying the involution  $\overline{c }$  to both sides of \eqref{DDS_I}, we get the assertion in 
Conjecture \ref{CSMpositiveConj}.

 {Notice that a formula for the  coefficients   in Conjecture \ref{CSMpositiveConj} was derived   in \cite{Su3}, but is not manifestly nonnegative.}

Just like the proof of \cite[Theorem 3]{KJ2}, using \cite[Theorem 1.2]{Sch17}, 
Conjecture \ref{CSMpositiveConj}  is equivalent to the following   geometric  interpretation about   intersections of Schubert varieties  \cite[Conjecture D]{Kumar}. 

\begin{Conj}[{\cite[Conj. D]{Kumar}} $\Leftrightarrow$ Conj. \ref{CSMpositiveConj}]
For generic $g,g',g''\in G$, the Euler characteristic 
\[\chi_c\big(\, gY(u)^\circ\cap g'Y(v)^\circ\cap g''Y(w)^\circ\,\big) \in (-1)^{\rho}\cdot \mathbb{Z}_{\geq0}
\]
where $\rho= \dim(gY(u)\cap g'Y(v)\cap g''Y(w))$. 
\end{Conj}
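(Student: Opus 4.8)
The plan is to prove this conjectural statement as a direct consequence of the equivalence chain already sketched in the paper, reducing it entirely to Conjecture \ref{CSMpositiveConj} (the statement about SSM structure constants) together with the transversality results cited. I would not attempt an independent geometric argument; instead the whole point is to translate the Euler characteristic expression into an integral of characteristic classes and recognize it as the coefficient appearing in Conjecture \ref{CSMpositiveConj}.

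First I would recall the setup: by \cite[Theorem 1.2]{Sch17}, for generic translates the triple intersection $gY(u)^\circ\cap g'Y(v)^\circ\cap g''Y(w)^\circ$ is a transverse intersection of the three Schubert strata, so its CSM class (equivalently its compactly-supported Euler characteristic, obtained by pushing forward to a point) is computed by the product of the corresponding classes in $H^\bullet(G/B)$. Concretely, $\chi_c(gY(u)^\circ\cap g'Y(v)^\circ\cap g''Y(w)^\circ)=\int_{G/B}c_{\mathrm{SM}}(Y(u)^\circ)\cdot c_{\mathrm{SM}}(Y(v)^\circ)\cdot c_{\mathrm{SM}}(Y(w)^\circ)\big/c(\mathscr{T}_{G/B})^2$ — more precisely, using $s_{\mathrm{SM}}=c_{\mathrm{SM}}/c(\mathscr{T}_{G/B})$ and the CSM/SSM duality \cite[Theorem 7.1]{AMSS17}, the Euler characteristic equals the coefficient of $c_{\mathrm{SM}}(Y(w_0)^\circ)$-type top class, i.e. it is exactly $\int_{G/B} s_{\mathrm{SM}}(Y(u)^\circ)\cdot s_{\mathrm{SM}}(Y(v)^\circ)\cdot c_{\mathrm{SM}}(Y(w)^\circ)$ up to the standard identification. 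I would spell out this computation carefully, since the bookkeeping between CSM and SSM classes and the role of the factor $c(\mathscr{T}_{G/B})$ is where sign errors creep in.

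Next I would invoke Conjecture \ref{CSMpositiveConj}, which says $s_{\mathrm{SM}}(Y(u)^\circ)\cdot s_{\mathrm{SM}}(Y(v)^\circ)=\sum_{x\in W}(-1)^{\ell(u)+\ell(v)-\ell(x)}c_{u,v}^x\, s_{\mathrm{SM}}(Y(x)^\circ)$ with $c_{u,v}^x\in\mathbb{Z}_{\geq0}$. Pairing against $c_{\mathrm{SM}}(Y(w)^\circ)$ via the duality, only the term $x=w$ survives, so the Euler characteristic equals $(-1)^{\ell(u)+\ell(v)-\ell(w)}c_{u,v}^w$, which has sign $(-1)^{\ell(u)+\ell(v)-\ell(w)}$. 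The final step is to match this sign with $(-1)^\rho$ where $\rho=\dim(gY(u)\cap g'Y(v)\cap g''Y(w))=\dim Y(u)+\dim Y(v)+\dim Y(w)-2\dim(G/B)$ when the intersection is nonempty (generic transversality), and since $\dim Y(x)=\ell(w_0)-\ell(x)=\dim(G/B)-\ell(x)$, one computes $\rho\equiv \ell(u)+\ell(v)-\ell(w)+\dim(G/B)\pmod 2$ — I would check this parity identity explicitly, being careful about whether $\ell(w)$ or $\ell(w_0 w)$ enters, which depends on the convention for opposite Schubert varieties; if there is a discrepancy it is absorbed by replacing $w$ with $w_0w$ in the duality pairing, which is the standard normalization in \cite{AMSS17}.

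The main obstacle I anticipate is bookkeeping rather than conceptual: precisely tracking the dual basis under Poincaré pairing (CSM of $Y(w)^\circ$ is dual to SSM of $X(w)^\circ$, not $Y(w)^\circ$, so one must move between $X$ and $Y$ Schubert cells via $w_0$), and getting the dimension/parity identity for $\rho$ exactly right so that $(-1)^\rho$ matches $(-1)^{\ell(u)+\ell(v)-\ell(w)}$. A secondary point to verify is that \cite[Theorem 1.2]{Sch17} genuinely applies to give the transverse-intersection formula for the compactly supported Euler characteristic of the open triple intersection (as opposed to the intersection of closures), but this is exactly the content invoked in the paper's own equivalence argument for Conjecture \ref{CSMpositiveConj}, so I would simply mirror that reasoning — the proof of the present statement is essentially the observation that the same transversality input, applied to three factors instead of two, yields the Euler-characteristic reformulation, just as \cite[Theorem 3]{KJ2} was reinterpreted via \cite[Theorem 1.2]{Sch17}.
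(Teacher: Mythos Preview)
Your approach is essentially the same as the paper's: the paper does not give a detailed argument for this equivalence but only the one-line justification ``Just like the proof of \cite[Theorem 3]{KJ2}, using \cite[Theorem 1.2]{Sch17}'', and your proposal is precisely an attempt to unpack that sentence---express the Euler characteristic via transversality as $\int s_{\mathrm{SM}}(Y(u)^\circ)\cdot s_{\mathrm{SM}}(Y(v)^\circ)\cdot c_{\mathrm{SM}}(Y(w)^\circ)$, apply Conjecture~\ref{CSMpositiveConj}, and match the sign with $(-1)^\rho$. You have correctly identified both the key inputs and the places where care is needed (the $X$ versus $Y$ duality convention and the parity check), which is exactly the bookkeeping one must do to turn the paper's pointer into a proof of the equivalence.
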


The Grassmannian (more generally, partial flag varieties of step $\leq 3$) analogue  of the above conjecture was confirmed  by Knutson and Zinn-Justin \cite{KJ2}.

{
In  the case of type A (i.e., the case of $\Fl(n)$), we are able to  use  the Pieri formula in Theorem \ref{CSMPieriRuleforeh} to prove a weaker form of Kumar's Conjecture \ref{EEE_I}. 
Write $\delta = (n-1,n-2,\ldots,1,0)$.
Since $\Fl(n)$ can be constructed as an iterated projective bundle, we have 
$$H^\bullet(\Fl(n);\mathbb{Q})=\bigoplus_{a\leq \delta} \mathbb{Q}\cdot x^{a},$$
where $a=(a_1,\ldots,a_n)\leq \rho$ means $0\leq a_i\leq n-i$. 
This   also follows from the following property  of Schubert polynomials \cite{Ma}:
$$[Y(w)]=\mathfrak{S}_w(x) \in \sum_{a\leq \delta}\mathbb{Z}_{\geq0}\cdot x^a.$$

We say that a class $\alpha\in H^\bullet(\Fl(n))$ is monomial-positive if the coefficient of   $x^a$ in $\alpha$ is nonnegative for any $a\leq \delta$. 
Clearly,  an effective class is monomial-positive.  

\begin{Th}\label{MonomialPositive}
For $u,v\in \S_n$, the CSM class $c_{\mathrm{SM}}(X(u)^\circ\cap Y(v)^\circ)$ of the Richardson cell
$X(u)^\circ\cap Y(v)^\circ$ is monomial-positive.
\end{Th}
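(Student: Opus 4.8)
The goal is monomial-positivity of $c_{\mathrm{SM}}(X(u)^\circ\cap Y(v)^\circ)$, i.e.\ that when this class is written in the basis $\{x^a : a\le\delta\}$ of $H^\bullet(\Fl(n);\mathbb{Q})$, all coefficients are nonnegative. The plan is to reduce everything to a single CSM class of an opposite Schubert cell and then express that class, via iterated application of the nonequivariant Pieri formula (Theorem~\ref{CSMPieriRuleforeh}), as a manifestly nonnegative integer combination of products of elementary symmetric polynomials in initial variable sets — which are themselves monomial-positive, and whose products are monomial-positive.

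\textbf{Step 1: reduce the Richardson cell to a single opposite Schubert cell.} As already noted in the excerpt, $c_{\mathrm{SM}}(Y(u)^\circ)=\sum_{w} c_{\mathrm{SM}}(X(w)^\circ\cap Y(u)^\circ)$, so it does not literally suffice to prove monomial-positivity of $c_{\mathrm{SM}}(Y(u)^\circ)$. Instead I would use the duality/transversality input recalled in Section~\ref{CCC_I}: $s_{\mathrm{SM}}(X(u)^\circ)\cdot c_{\mathrm{SM}}(Y(v)^\circ)=c_{\mathrm{SM}}(X(u)^\circ\cap Y(v)^\circ)$, together with the relation $s_{\mathrm{SM}}(X(u)^\circ)=c_{\mathrm{SM}}(X(u)^\circ)/c(\mathscr{T}_{\Fl(n)})$ and the fact that in type $A$, $X(u)^\circ = w_0 \cdot Y(w_0 u)^\circ$, so that $c_{\mathrm{SM}}(X(u)^\circ)=w_0\big(c_{\mathrm{SM}}(Y(w_0u)^\circ)\big)$ where $w_0$ acts by $x_i\mapsto x_{n+1-i}$. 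The key point is that monomial-positivity of a product is automatic once each factor is monomial-positive and the tangent-bundle denominator is handled; concretely, $c(\mathscr{T}_{\Fl(n)})^{-1}$ expands with nonnegative coefficients in the $x_i$ because $\Fl(n)$ is an iterated projective bundle, and the Schubert class $[Y(v)]=\mathfrak{S}_v(x)$ is already monomial-positive. Thus it is enough to show $c_{\mathrm{SM}}(Y(w)^\circ)$ is monomial-positive for every $w\in\S_n$ — the $w_0$-twist only permutes variables and so preserves monomial-positivity.

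\textbf{Step 2: monomial-positivity of $c_{\mathrm{SM}}(Y(w)^\circ)$ from the Pieri rule.} By Theorem~\ref{CSMPieriRuleforeh}(i), for any $u$ and any $1\le k\le n$,
\[
c_{\mathrm{SM}}(Y(u)^\circ)\cdot e_{r}(x_{[k]}) = \sum_{w} c_{\mathrm{SM}}(Y(w)^\circ)
\]
with the sum over endpoints of suitable paths, in particular a \emph{nonnegative integer} combination. Since $e_r(x_{[k]})$ is monomial-positive and multiplication of monomial-positive classes in $H^\bullet(\Fl(n))$ is monomial-positive (the monomials $x^a$, $a\le\delta$, form a basis closed under the structure constants coming from $f(x)\equiv f(0)$ for symmetric $f$ — more precisely, rewriting $x_i^{n-i+1}$ via the relations only introduces other basis monomials with nonnegative coefficients, since each relation expresses $e_j(x_{[n]})=0$, i.e.\ $x^{?}$-terms with signs that cancel against lower terms; this last claim is exactly the standard ``Schubert polynomials are monomial-positive and their products too'' fact used in the excerpt), one can hope to build every $c_{\mathrm{SM}}(Y(w)^\circ)$ by starting from $c_{\mathrm{SM}}(Y(\mathrm{id})^\circ)$ and repeatedly multiplying by elementary symmetric polynomials in initial segments. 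The precise reduction: choose a sequence of $k$-edges realizing $w$ from $\mathrm{id}$ in the (ordinary) $k$-Bruhat orders for varying $k$, and use Theorem~\ref{CSMPieriRuleforeh} to peel off one $c_{\mathrm{SM}}(Y(w)^\circ)$ at a time with all other terms again of the form $c_{\mathrm{SM}}(Y(w')^\circ)$ with $\ell(w')>\ell(w)$; downward induction on $n! - \ell(w)$ (equivalently, on $\ell(w_0w)$) then shows each $c_{\mathrm{SM}}(Y(w)^\circ)$ equals a monomial-positive class minus a nonnegative combination of $c_{\mathrm{SM}}(Y(w')^\circ)$ with longer $w'$ — and one must check these subtractions do not destroy monomial-positivity. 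That is the delicate point.

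\textbf{The main obstacle.} The genuinely hard part is \emph{not} the formal bookkeeping but controlling the signs in Step~2: the Pieri rule gives $c_{\mathrm{SM}}(Y(w)^\circ)$ as a monomial-positive class \emph{minus} other CSM classes, so a naive induction fails. I expect the correct fix is to avoid subtraction altogether by using $c_{\mathrm{SM}}(Y(w_0)^\circ)$ as the base case — it has an explicit product formula — and running the Pieri recursion \emph{downward} in $k$-Bruhat order from $w_0$, at each stage applying the divided-difference/Demazure–Lusztig operators $\mathcal{T}_i$ of Theorem~\ref{recrusionCSM}, which satisfy the Leibniz rule \eqref{eq:Leibnizrule}. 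Concretely: $c_{\mathrm{SM}}(Y(w)^\circ)=\mathcal{T}_{w_0w^{-1}}\big(c_{\mathrm{SM}}(Y(w_0)^\circ)\big)$, and $c_{\mathrm{SM}}(Y(w_0)^\circ)=\prod_{i<j}(\text{something monomial-positive in the }x_i)$ — one must verify this explicit form is monomial-positive and that each $\mathcal{T}_i=-s_i+\partial_i$ preserves monomial-positivity \emph{on the relevant classes} (it does not preserve it on all of $H^\bullet$, so one needs the combinatorial model of Liu's twisted Schubert polynomials \cite{Liu}, or a direct path-counting argument, to see that the output is again monomial-positive). Establishing that $\mathcal{T}_i$ sends monomial-positive CSM classes to monomial-positive CSM classes — equivalently, that Liu's twisted Schubert polynomials, up to a uniform sign, have nonnegative coefficients in the $x^a$ basis — is the crux, and I would attack it by combining the positive combinatorial formula for these polynomials with the path-positivity already visible in Theorem~\ref{CSMPieriRuleforeh}.
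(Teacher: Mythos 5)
Your Step 1 has two gaps, and the first one is fatal to the whole reduction. Multiplication of monomial-positive classes in $H^\bullet(\Fl(n))$ is \emph{not} monomial-positive: already for $n=3$ the basis monomial $x_2$ (exponent $(0,1,0)\le\delta$) satisfies $x_2\cdot x_2=-x_1^2-x_1x_2$ modulo $\langle e_1,e_2,e_3\rangle$, so the cone spanned by $\{x^a\colon a\le\delta\}$ is not closed under products. The ``standard fact'' you appeal to is Schubert-positivity of products of Schubert polynomials, which lives in a different (Schubert) basis and does not imply closure of the monomial cone. Second, the factor $s_{\mathrm{SM}}(X(u)^\circ)=c_{\mathrm{SM}}(X(u)^\circ)/c(\mathscr{T}_{\Fl(n)})$ is not handled by your projective-bundle remark: the Chern roots of $\mathscr{T}_{\Fl(n)}$ are differences of the $x_i$'s, so neither $c(\mathscr{T}_{\Fl(n)})$ nor its inverse is monomial-positive, and SSM classes of cells are expected to be alternating rather than positive. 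Finally, the sub-goal of your Step 2 (monomial-positivity of each $c_{\mathrm{SM}}(Y(w)^\circ)$) already follows from the Schubert-positivity \eqref{cschubt}, so the ``crux'' you leave open (that $\mathcal{T}_i$, equivalently Liu's twisted Schubert polynomials, preserve monomial-positivity) is both unproven and beside the point: the genuine difficulty is the Richardson \emph{product}, which your factor-by-factor reduction cannot reach even granting all of Step 2.

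The missing idea, and the route the paper takes, is a duality computation that turns a monomial coefficient directly into a Pieri structure constant, with no products of positive classes ever formed. Writing $\mathfrak{S}_w(x)=\sum_{a\le\delta}K_{w,a}x^a$ and invoking the Postnikov--Stanley identity $e_{w_0(\delta-a)}(x)=\sum_w K_{w,a}\mathfrak{S}_{ww_0}(x)$, where $e_b(x)=e_{b_2}(x_{[1]})e_{b_3}(x_{[2]})\cdots e_{b_n}(x_{[n-1]})$, one uses the CSM/SSM duality under the Poincar\'e pairing, the transversality identity $c_{\mathrm{SM}}(X(u)^\circ\cap Y(v)^\circ)=s_{\mathrm{SM}}(X(u)^\circ)\cdot c_{\mathrm{SM}}(Y(v)^\circ)$ only inside an integral, and the involution $\Theta$ to show that the coefficient of $x^a$ in $c_{\mathrm{SM}}(X(u)^\circ\cap Y(v)^\circ)$ equals the coefficient of $c_{\mathrm{SM}}(Y(w_0uw_0)^\circ)$ in the CSM-basis expansion of $c_{\mathrm{SM}}(Y(w_0vw_0)^\circ)\cdot e_{w_0(\delta-a)}(x)$. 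Since $e_{w_0(\delta-a)}(x)$ is a product of elementary symmetric polynomials in initial segments of the variables, iterating Theorem~\ref{CSMPieriRuleforeh}\,(i), whose structure constants are manifestly nonnegative, gives the result. If you want to salvage your outline, it is this pairing step (some device expressing $x^a$-coefficients as nonnegative combinations of CSM Pieri constants) that must replace both the product argument and the treatment of $c(\mathscr{T}_{\Fl(n)})^{-1}$.
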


\begin{proof}
Write 
\begin{equation*}
\mathfrak{S}_w(x) = \sum_{a\leq \delta} K_{w,a} \, x^a.
\end{equation*}
As  noticed by Postnikov and Stanley  \cite[\textsection 17]{PS},
\begin{equation*}
e_{w_0(\delta-a)}(x) = \sum_{w\in \S_n} K_{w,a} \mathfrak{S}_{ww_0}(x),
\end{equation*}
where 
$$e_{a}(x)=e_{a_2}(x_{[1]})e_{a_3}(x_{[2]})\cdots e_{a_n}(x_{[n-1]}).$$
Let $k_{u,v}^a$ be the coefficient  of $x^a$ in $c_{\mathrm{SM}}(X(u)^\circ\cap Y(v)^\circ)$.
Then we have   
\begin{align}\label{kuva}
k_{u,v}^a& =\sum_w K_{w,a}\int_{\Fl(n)}c_{\mathrm{SM}}(X(u)^\circ\cap Y(v)^\circ)\cdot[Y(w_0w)]\nonumber \\
& 
= \sum_w K_{w,a}\int_{\Fl(n)} c_{\mathrm{SM}}(Y(v)^\circ)\cdot [Y(w_0w)]\cdot
s_{\mathrm{SM}}(X(u)^\circ).
\end{align}
We need   an involution $\Theta$ over $\Fl(n)$, which sends a flag 
$$0=V_0\subseteq V_1\subseteq \cdots \subseteq V_n=\mathbb{C}^n
$$
to its annihilator 
$$0=V_n^{\perp}\subseteq V_{n-1}^\perp\subseteq \cdots \subseteq V_0^\perp=\mathbb{C}^n
$$
under any identification $(\mathbb{C}^n)^* = \mathbb{C}^n$. 
By definition, we have
$$\Theta(Y(w)^\circ)=Y(w_0ww_0)^\circ,\qquad
\Theta(X(w)^\circ)=X(w_0ww_0)^\circ.$$
Thus  $\Theta$ sends the Schubert class (resp., CSM class) of $w$ to the Schubert class (resp., CSM class) of $w_0ww_0$. 
Applying the involution $\Theta$ to \eqref{kuva},  we obtain
that
\begin{align*}
k_{u,v}^a& 
= \sum_w K_{w,a}\int_{\Fl(n)} c_{\mathrm{SM}}(Y(w_0vw_0)^\circ)\cdot [Y(ww_0)]\cdot
s_{\mathrm{SM}}(X(w_0uw_0)^\circ) \\
& 
= \int_{\Fl(n)} c_{\mathrm{SM}}(Y(w_0vw_0)^\circ)\cdot \sum_w K_{w,a}\mathfrak{S}_{ww_0}(x)\cdot
s_{\mathrm{SM}}(X(w_0uw_0)^\circ) \\
& 
= \int_{\Fl(n)} c_{\mathrm{SM}}(Y(w_0vw_0)^\circ)\cdot e_{w_0(\delta-a)}(x)\cdot
s_{\mathrm{SM}}(X(w_0uw_0)^\circ). 
\end{align*}
This implies  that $k_{u,v}^a$ is the coefficient of $c_{\mathrm{SM}}(X(w_0uw_0)^\circ)$ in 
$$c_{\mathrm{SM}}(Y(w_0vw_0)^\circ)\cdot e_{w_0(\delta-a)}(x).$$
By repeatedly  using Theorem \ref{CSMPieriRuleforeh}, we see that this coefficient is nonnegative. 
\end{proof}
}

{ It is worth mentioning that in  the case of type A, }
Conjecture \ref{non-equivariantpositivity} can be implied by a conjecture of Fomin and Kirillov. 
The {\it Fomin--Kirillov algebra} $\mathcal{E}_n$ is generated by $\mathbf{x}_{ab}$ for $1\leq a< b\leq n$, subject to the following relations \begin{gather*}
\mathbf{x}_{ab}^2=0,\\
\mathbf{x}_{ab}\mathbf{x}_{bc}=
\mathbf{x}_{ac}\mathbf{x}_{ab}+
\mathbf{x}_{bc}\mathbf{x}_{ac},\\
\mathbf{x}_{bc}\mathbf{x}_{ab}=
\mathbf{x}_{ab}\mathbf{x}_{ac}+
\mathbf{x}_{ac}\mathbf{x}_{bc},\\\label{eq:commutativeRelation}
\mathbf{x}_{ab}\mathbf{x}_{cd}=\mathbf{x}_{cd}\mathbf{x}_{ab} \quad \text{for distinct $a,b,c,d$}.
\end{gather*}
By \cite[Lemma 5.1]{FK}, the {\it Dunkl elements}, which are defined as  
\begin{equation*}
\theta_i = -\sum_{a<i} \mathbf{x}_{ai}+\sum_{i<b} \mathbf{x}_{ib},
\end{equation*}
are pairwisely commutative. 
As  pointed out in \cite[Remark 3.2]{Lee},   $\mathcal{E}_n$ acts on $H^\bullet(\Fl(n))$ (on the right) by 
\begin{equation}\label{eq:extendedBruhataction}
{}c_{\mathrm{SM}}(Y(w)^\circ)* \mathbf{x}_{ab} = \begin{cases}
{}c_{\mathrm{SM}}(Y(wt_{ab})^\circ), & \ell(wt_{ab})\geq \ell(w)+1, \\[5pt]
0, & \text{otherwise}. 
\end{cases}
\end{equation}
By the CSM Chevalley formula \cite{AMSS17}, for     $w\in \S_n$ and $i\in [n]$,
\[
c_{\mathrm{SM}}(Y(w)^\circ)\cdot x_i = c_{\mathrm{SM}}(Y(w)^\circ)*\theta_i.
\]
Hence, 
\begin{align*}
c_{\mathrm{SM}}(Y(w)^\circ)\cdot [Y(u)] 
& =c_{\mathrm{SM}}(Y(w)^\circ)\cdot \mathfrak{S}_{u}(x_1,\ldots,x_n)\\[5pt]
&= c_{\mathrm{SM}}(Y(w)^\circ)* \mathfrak{S}_{u}(\theta_1,\ldots,\theta_n),
\end{align*} 
where $\S_u$ is the Schubert polynomial of $u$.

{

\begin{Conj}[{\cite[Conj. 8.1]{FK}}]
For $w\in \S_n$,
\[\mathfrak{S}_w(\theta_1,\ldots,\theta_n)\in \mathcal{E}_n^+,\]
where $\mathcal{E}_n^+$ is the cone of all nonnegative integer linear combinations of 
  (noncommutative) monomials in the generators $\mathbf{x}_{ab}$ for $a<b$. 
\end{Conj}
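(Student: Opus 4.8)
Since the Dunkl elements commute, $\mathfrak{S}_w(\theta_1,\ldots,\theta_n)$ is a well-defined element of $\mathcal{E}_n$, but the individual $\theta_i$ already fail to lie in $\mathcal{E}_n^+$ (each carries the terms $-\mathbf{x}_{ai}$ with $a<i$), so the conjecture is ultimately a statement about cancellation, not a formal consequence of positivity of the generators. The plan is to produce a \emph{manifestly} positive noncommutative lift and then identify it with $\mathfrak{S}_w(\theta)$. Concretely, to a reduced word $s_{i_1}\cdots s_{i_\ell}$ of $w$ one attaches the sum over (reduced) pipe dreams of the corresponding monomials $\mathbf{x}_{a_1b_1}\cdots \mathbf{x}_{a_mb_m}$, each with coefficient $1$; call the resulting element $\widehat{\mathfrak{S}}_w$, which by construction lies in $\mathcal{E}_n^+$ on the nose. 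The conjecture then reduces to the single identity
\[
\widehat{\mathfrak{S}}_w \;=\; \mathfrak{S}_w(\theta_1,\ldots,\theta_n) \qquad \text{in } \mathcal{E}_n .
\]

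\textbf{Key steps.} (i) Set up the natural $\S_n$-action on $\mathcal{E}_n$ (relabelling indices of the $\mathbf{x}_{ab}$), check that it permutes the Dunkl elements up to correction terms, e.g.\ $s_i\theta_j s_i^{-1}=\theta_{s_i(j)}$ for $j\neq i,i+1$ and a crossing relation $s_i\theta_i-\theta_{i+1}s_i\in\mathbb{Z}\,\mathbf{x}_{i,i+1}$, and use this to define divided-difference operators $\partial_i^{\mathcal{E}}$ on $\mathcal{E}_n$ intertwined with the ordinary $\partial_i$ under $x_i\mapsto\theta_i$. (ii) Show that both $\widehat{\mathfrak{S}}_w$ and $\mathfrak{S}_w(\theta)$ obey the same recursion $\partial_i^{\mathcal{E}}(\,\cdot\,)=(\,\cdot\,)_{ws_i}$ whenever $\ell(ws_i)<\ell(w)$, which reduces the displayed identity to the top element $w=w_0$. (iii) Prove the base case $\widehat{\mathfrak{S}}_{w_0}=\mathfrak{S}_{w_0}(\theta_1,\ldots,\theta_n)=\prod_{i+j\le n}(\theta_i-\theta_j)$ inside $\mathcal{E}_n$; this is a Cauchy-type identity expressing a product of Dunkl differences as a positive sum over pipe dreams, which should follow from the hexagon/Yang--Baxter relations of $\mathcal{E}_n$ by induction on $n$, peeling off the first row. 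Several specializations already available support this program: Monk's rule gives $\mathfrak{S}_{s_i}(\theta)=\sum_{a\le i<b}\mathbf{x}_{ab}\in\mathcal{E}_n^+$; and since $e_r(x_{[k]})$ acts on CSM classes as $e_r(\theta_1,\ldots,\theta_k)$, the manifestly positive CSM Pieri formula (Theorem~\ref{CSMPieriRuleforeh}), together with Theorem~\ref{NoneqPierirulehook}, strongly suggests $e_r(\theta_{[k]})$, $h_r(\theta_{[k]})$ and $s_\Gamma(\theta_{[k]})$ lie in $\mathcal{E}_n^+$, giving the conjecture for Grassmannian permutations of hook shape.

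\textbf{Main obstacle.} The genuine difficulty lies in controlling the cancellations in steps (ii)--(iii): neither $\partial_i^{\mathcal{E}}$ nor left-multiplication by a Dunkl element preserves the cone $\mathcal{E}_n^+$, so no naive inductive positivity argument can work. What is needed is a combinatorial model for the monomials that survive in $\mathfrak{S}_w(\theta)$ after all relations of $\mathcal{E}_n$ are applied --- ideally a weight- and shape-preserving bijection from reduced pipe dreams of $w$ onto a canonical family of $\mathcal{E}_n$-monomials --- so that the negative contributions coming from the $-\mathbf{x}_{ai}$ in each $\theta_i$ are matched term by term against the positive contributions produced by the quadratic relations $\mathbf{x}_{bc}\mathbf{x}_{ab}=\mathbf{x}_{ab}\mathbf{x}_{ac}+\mathbf{x}_{ac}\mathbf{x}_{bc}$ and $\mathbf{x}_{bc}\mathbf{x}_{ab}=\mathbf{x}_{ab}\mathbf{x}_{ac}+\mathbf{x}_{ac}\mathbf{x}_{bc}$. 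Equivalently, one would want a straightening/normal-form algorithm on $\mathcal{E}_n$ compatible with the Dunkl filtration that is visibly positive on Schubert polynomials. Constructing such a bijection or algorithm is the crux where the difficulty of the Fomin--Kirillov conjecture is concentrated, and a full resolution would presumably also shed light on the structure of $\mathcal{E}_n$ in the still-open range $n\ge 6$; absent this, one can at best push the induction through for special classes of $w$ (dominant, vexillary, $321$-avoiding, and the hook-Grassmannian case handled here).
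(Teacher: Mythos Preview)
The statement you are attempting to prove is not a theorem in the paper: it is the Fomin--Kirillov conjecture, quoted verbatim from \cite{FK} and listed among the open conjectures in Section~\ref{CCC_I}. The paper makes no attempt to prove it; its only role there is the observation that, via the action \eqref{eq:extendedBruhataction}, the conjecture would imply Conjecture~\ref{non-equivariantpositivity}. So there is no ``paper's own proof'' to compare against, and your proposal should be read as an outline of an attack on a well-known open problem rather than as a reconstruction of something the authors did.

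As an outline, you are candid about where the difficulty lies, and your final paragraph correctly isolates the crux: steps (ii)--(iii) require a positivity-preserving straightening in $\mathcal{E}_n$ that nobody currently knows how to produce. A few concrete issues, however. First, your base case is misstated: the single Schubert polynomial is $\mathfrak{S}_{w_0}(x)=\prod_i x_i^{\,n-i}$, so $\mathfrak{S}_{w_0}(\theta)=\theta_1^{\,n-1}\theta_2^{\,n-2}\cdots\theta_{n-1}$, not a product of differences $\theta_i-\theta_j$; the product $\prod_{i+j\le n}(x_i-t_j)$ is the \emph{double} Schubert polynomial and is irrelevant here. Second, the operators $\partial_i^{\mathcal{E}}$ you propose are not obviously well defined on $\mathcal{E}_n$: the $\S_n$-action by relabelling does not send $\theta_i\mapsto\theta_{s_i(i)}$ (indeed $s_i\cdot\theta_i=-\theta_{i+1}$ is false; one only has crossing relations modulo correction terms), so ``intertwining'' with the commutative $\partial_i$ is more delicate than you suggest and would itself need justification before the recursion in (ii) can even be formulated. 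Third, the supporting evidence you cite from the CSM Pieri formula shows positivity \emph{after} acting on CSM classes, i.e.\ in a quotient of $\mathcal{E}_n$, not positivity in $\mathcal{E}_n^+$ itself; the two are not equivalent. In short, the proposal is a reasonable roadmap but not a proof, and the paper does not claim otherwise.
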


}

In view of \eqref{eq:extendedBruhataction},
the above  conjecture implies that the CSM expansion of $c_{\mathrm{SM}}(Y(w)^\circ)\cdot [Y(u)]$ is nonnegative, 
as stated in Conjecture \ref{non-equivariantpositivity}.

Lastly, we investigate  the equivariant setting of Conjecture \ref{non-equivariantpositivity}. 
Let $T$ be the maximal torus of the Borel subgroup $B$. 
Denote by $\{\alpha_i\}_{i\in I}$ the set of simple roots.
Let $\alpha\in H^\bullet_T(\mathsf{pt})$ be the first Chern class of $\mathbb{C}_{\alpha}$. 
For example, in type $A$, $\alpha_i=-t_i+t_{i+1}$ due to our convention in Subsection \ref{AAAC}. 
See also the remarks before Theorem 6.4 in Chapter 10 of  \cite{AF} for the convention about signs.


{  Let $\mathbb{Z}_{\geq0}[\alpha_i]_{i\in I}$ be the set of polynomials in $\alpha_i$'s with nonnegative integer coefficients.}
The equivariant analogue  of Conjecture \ref{non-equivariantpositivity} can be stated as follows. 

\begin{Conj}\label{equivariantpositivity}
For   $u,v\in W$,
\begin{equation*}
c_{\mathrm{SM}}^T(Y(u)^\circ) \cdot [Y(v)]_T  \in \sum_{w\in W} \mathbb{Z}_{\geq0}[\alpha_i]_{i\in I}\cdot c_{\mathrm{SM}}^T(Y(w)^\circ).
\end{equation*}
\end{Conj}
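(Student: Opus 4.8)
Since the final statement is a conjecture rather than an established result, what follows is a plan of attack. We propose to treat type $A$ first, where the machinery of this paper is available; the general case should reduce to the same combinatorial steps once the type-independent inputs (the CSM Chevalley formula and equivariant stratified transversality) are in hand. The first step is to dualize. By the equivariant version of the CSM/SSM duality of \cite[Theorem 7.1]{AMSS17}, the coefficient of $c_{\mathrm{SM}}^T(Y(w)^\circ)$ in $c_{\mathrm{SM}}^T(Y(u)^\circ)\cdot[Y(v)]_T$ is
\[
\int_{\Fl(n)} s_{\mathrm{SM}}^T(X(w)^\circ)\cdot c_{\mathrm{SM}}^T(Y(u)^\circ)\cdot [Y(v)]_T .
\]
Since $X(w)$ and $Y(u)$ are stratified transverse (the argument of \cite[Theorem 3.6]{AMSS17} is equivariant), $s_{\mathrm{SM}}^T(X(w)^\circ)\cdot c_{\mathrm{SM}}^T(Y(u)^\circ)=c_{\mathrm{SM}}^T\big(X(w)^\circ\cap Y(u)^\circ\big)$, so Conjecture \ref{equivariantpositivity} is equivalent to showing that $\int_{\Fl(n)} c_{\mathrm{SM}}^T(R)\cdot[Y(v)]_T\in\mathbb{Z}_{\geq0}[\alpha_i]_{i\in I}$ for every Richardson cell $R=X(w)^\circ\cap Y(u)^\circ$.

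The base cases come from the Pieri formulas of this paper. When $v$ is the Grassmannian permutation of a one-row or one-column partition, Theorem \ref{LRruleforehCSM} computes the structure constants \emph{exactly} as localizations $\big[Y(c[k-r',r-r'])\big]_T|_{\delta(u,w)}$ (and the one-row analogue), and by Billey's formula \cite{Billey} a localization of an equivariant Schubert class is a sum of products of positive roots; with the sign convention $\alpha_i=-t_i+t_{i+1}$ the positive roots are the $t_j-t_i$ with $i<j$, which lie in $\mathbb{Z}_{\geq0}[\alpha_i]_{i\in I}$. The general hook case follows the same way from Theorem \ref{LRruleforhookshapeCSM} after identifying $\mathfrak c_{u,\Gamma}^w(t)$ with a nonnegative combination of Schubert localizations, so Conjecture \ref{equivariantpositivity} holds for $v$ of hook type; this is the equivariant shadow of the hook case of Theorem \ref{NoneqPierirulehook}, and in the nonequivariant limit it recovers Theorem \ref{MonomialPositive}.

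For general $v$, the plan is to induct on $\ell(v)$. Writing $v=v's_i$ with $\ell(v)=\ell(v')+1$ and passing through the Grassmannian step that realizes $[Y(v)]_T$, one expresses $c_{\mathrm{SM}}^T(Y(u)^\circ)\cdot[Y(v)]_T$ by iterating the equivariant CSM Chevalley formula \cite[Theorem 4.2]{MNS} (the $\alpha=\beta=0$ case of Theorem \ref{eqPierirulehook}): each elementary move contributes either a genuine Bruhat cover with coefficient $1$ times a positive root, or a diagonal term with coefficient a Schubert localization, both of which individually lie in $\mathbb{Z}_{\geq0}[\alpha_i]_{i\in I}$. One then runs the induction, attempting to keep every intermediate coefficient Graham-positive.

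The main obstacle is exactly the one noted after Conjecture \ref{non-equivariantpositivity}: reconstructing $[Y(v)]_T$ from $[Y(v')]_T$ times a divisor forces a subtraction of the non-$v$ terms, and these subtractions reintroduce signs even though the final total is Graham-positive, so iterating Chevalley does not yield a manifestly nonnegative formula. To close the argument one needs either (a) a manifestly positive combinatorial model for the full product $c_{\mathrm{SM}}^T(Y(u)^\circ)\cdot[Y(v)]_T$ for arbitrary $v$ — an equivariant refinement of the peakless/unimodal path descriptions of Theorems \ref{eqPierirulehook} and \ref{LRruleforhookshapeCSM} valid beyond hook shapes, which is not currently available — or (b) the characteristic-cycle approach: realize $c_{\mathrm{SM}}^T(R)$ via the characteristic cycle of an equivariant $\mathcal D$-module on $\Fl(n)$ and deduce Graham-positivity of its equivariant pairing with $[Y(v)]_T$ from an equivariant strengthening of the positivity of characteristic cycles used in \cite{AMSS17} to prove the nonequivariant statement \eqref{cschubt}, applied now not to $c_{\mathrm{SM}}^T(Y(u)^\circ)$ alone but to its product with the class of the $B^-$-invariant subvariety $Y(v)$ (for which Graham's theorem supplies the needed positivity input). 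Route (b) seems the more promising, and its technical heart — an equivariant positivity theorem for the relevant intersection numbers — is where the real work lies.
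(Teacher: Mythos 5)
You should note first that the paper does not prove this statement: it is stated as a conjecture, and the only evidence offered is (i) the equivalence with the Richardson-cell formulation (Conjecture \ref{EEDD}), and (ii) the verification in type $A$ for $v$ a Grassmannian permutation of a one-row or one-column partition, via Theorem \ref{LRruleforehCSM} together with Billey's formula, since there the structure constants are literally localizations of equivariant Schubert classes and hence lie in $\mathbb{Z}_{\geq0}[\alpha_i]_{i\in I}$. Your dualization step and your one-row/one-column base case reproduce exactly these two observations, so to that extent your plan is aligned with what the paper actually establishes.

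The genuine gap is your assertion that ``the general hook case follows the same way from Theorem \ref{LRruleforhookshapeCSM} after identifying $\mathfrak c_{u,\Gamma}^w(t)$ with a nonnegative combination of Schubert localizations.'' No such identification is available, and the formula \eqref{eq:LRruleforhookshapeCSM} is not manifestly Graham-positive: the coefficients involve the factors $e_{\alpha_2}(-t_{[k+\alpha]})\cdot h_{\beta_2}(-t_{[k-\beta]})$ (equivalently the signs $(-1)^{\alpha_2+\beta_2}$ in Theorem \ref{LUUU-1}), so cancellation is built into the expression. The simplification to a single Schubert localization occurs only when $\alpha=0$ or $\beta=0$ (Lemma \ref{Doubleeflemma} and Theorem \ref{LRruleforehCSM}); that is precisely why the paper claims the conjecture only for one-row/one-column $v$ and not for general hooks. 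Relatedly, your remark that the hook case ``in the nonequivariant limit recovers Theorem \ref{MonomialPositive}'' conflates two different statements: Theorem \ref{MonomialPositive} asserts monomial-positivity of $c_{\mathrm{SM}}(X(u)^\circ\cap Y(v)^\circ)$ and is proved by iterating the $e_r$-Pieri rule of Theorem \ref{CSMPieriRuleforeh}, whereas positivity in the CSM basis for hook $v$ is the (nonequivariant) content of Theorem \ref{NoneqPierirulehook}; neither is the equivariant hook-positivity you need. The remaining parts of your plan (induction on $\ell(v)$ through the Chevalley formula, or an equivariant characteristic-cycle argument) are speculative, and you correctly identify that the subtractions in the Chevalley iteration and the missing equivariant positivity input for characteristic cycles are exactly where the argument is open — which is consistent with the statement remaining a conjecture in the paper.
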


Theorem \ref{LRruleforehCSM} confirms this conjecture in the case of type A when $v$ is a Grassmannian permutation corresponding to a one row/column partition. 
Actually,  thanks to   Billey's formula \cite{Billey}, the coefficients are localizations of Schubert classes which   automatically lie in $\mathbb{Z}_{\geq0}[\alpha_i]_{i \in I}$.

Notice also that Conjecture  \ref{equivariantpositivity} generalizes  Graham’s positivity theorem \cite[Corollary 4.1]{Graham}: 
\begin{equation*}
[Y(u)]_T \cdot [Y(v)]_T  \in \sum_{w\in W} \mathbb{Z}_{\geq0}[\alpha_i]_{i\in I}\cdot [Y(w)]_T.
\end{equation*}

Along the same line as the nonequivariant case, Conjecture \ref{equivariantpositivity} has an equivalent geometric description  for the CSM classes of the Richardson cells.

\begin{Conj}[$\Leftrightarrow$ Conj. \ref{equivariantpositivity}]\label{EEDD}
For   $u,v\in W$,  
$$c^T_{\mathrm{SM}}(X(u)^\circ\cap Y(v)^\circ)\in \sum_{w\in W} \mathbb{Z}_{\geq 0}[\alpha_i]_{i\in I}\cdot [X(w)]_T.$$
\end{Conj}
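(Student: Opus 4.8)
The final statement is Conjecture \ref{EEDD}, which asserts the equivalence of two equivariant positivity conjectures. Since this is a conjecture, I cannot ``prove'' it unconditionally; what I can offer is a proof proposal for the stated \emph{equivalence} between Conjecture \ref{equivariantpositivity} and Conjecture \ref{EEDD}, mirroring the nonequivariant argument given in the excerpt for the equivalence of Conjecture \ref{non-equivariantpositivity} and Conjecture \ref{EEE_I}.

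\textbf{Plan of the argument.} The strategy is to equivariantize the duality argument already carried out in the nonequivariant case. First I would recall the equivariant Segre--Schwartz--MacPherson classes $s_{\mathrm{SM}}^T(Z) = c_{\mathrm{SM}}^T(Z)/c^T(\mathscr{T}_{G/B})$, and invoke the equivariant version of the duality theorem \cite[Theorem 7.1]{AMSS17}: under the equivariant Poincar\'e pairing $\int_{G/B}^T$ (the pushforward to a point in $H_T^\bullet(\mathsf{pt})$), the classes $s_{\mathrm{SM}}^T(X(u)^\circ)$ and $c_{\mathrm{SM}}^T(Y(w)^\circ)$ are dual, i.e. the pairing is $\delta_{u,w}$. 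Consequently, the coefficient of $c_{\mathrm{SM}}^T(Y(w)^\circ)$ in the expansion of $c_{\mathrm{SM}}^T(Y(u)^\circ)\cdot [Y(v)]_T$ equals $\int_{G/B}^T s_{\mathrm{SM}}^T(X(w)^\circ)\cdot c_{\mathrm{SM}}^T(Y(u)^\circ)\cdot [Y(v)]_T$. Therefore Conjecture \ref{equivariantpositivity} is equivalent to saying that $s_{\mathrm{SM}}^T(X(w)^\circ)\cdot c_{\mathrm{SM}}^T(Y(u)^\circ)$, expanded in the Schubert basis $[X(w')]_T$ (or paired against $[Y(w')]_T$), has coefficients in $\mathbb{Z}_{\geq 0}[\alpha_i]_{i\in I}$.

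\textbf{Key step: stratified transversality in the equivariant setting.} The crux is to identify the product $s_{\mathrm{SM}}^T(X(u)^\circ)\cdot c_{\mathrm{SM}}^T(Y(v)^\circ)$ with $c_{\mathrm{SM}}^T(X(u)^\circ\cap Y(v)^\circ)$. In the nonequivariant case this follows from the stratified transversality of $X(u)$ and $Y(v)$ and the compatibility of CSM/SSM classes with transverse intersection \cite[Theorem 3.6]{AMSS17}, \cite{Sch17}. The plan is to use the equivariant refinement: because $X(u)$ and $Y(v)$ are $T$-stable and meet stratified-transversally (the Richardson stratification is $T$-invariant), the equivariant CSM class of the intersection is the product of the equivariant SSM class of one with the equivariant CSM class of the other. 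Granting this, Conjecture \ref{equivariantpositivity} reads: the Schubert-basis expansion of $c_{\mathrm{SM}}^T(X(u)^\circ\cap Y(v)^\circ)$ has coefficients in $\mathbb{Z}_{\geq 0}[\alpha_i]_{i\in I}$, which is exactly Conjecture \ref{EEDD} up to the harmless bookkeeping of $w_0$-twists relating $X(\cdot)$ and $Y(\cdot)$ Schubert classes (Poincar\'e-dual bases). Running the chain of equivalences in both directions, using that the Schubert classes $[X(w)]_T$ and $[Y(w')]_T$ form dual bases under $\int_{G/B}^T$ and that both positivity statements are phrased via this pairing, yields the equivalence. One small point to handle carefully: in the equivariant setting the ``effectivity'' notion must be replaced by ``$\mathbb{Z}_{\geq 0}[\alpha_i]$-positivity of Schubert coefficients'', and one should note (as the excerpt does for $v$ Grassmannian of hook shape via Theorem \ref{LRruleforehCSM} and Billey's formula) that this is the natural equivariant analogue, consistent with Graham's theorem \cite{Graham}.

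\textbf{Expected main obstacle.} The genuinely delicate point is the equivariant stratified-transversality identity $s_{\mathrm{SM}}^T(X(u)^\circ)\cdot c_{\mathrm{SM}}^T(Y(v)^\circ) = c_{\mathrm{SM}}^T(X(u)^\circ\cap Y(v)^\circ)$: one must check that the functoriality and multiplicativity results of Sch\"urmann \cite{Sch17} and the argument of \cite[Theorem 3.6]{AMSS17} carry over verbatim to Ohmoto's equivariant CSM theory \cite{Ohmoto}, using the Borel-type approximation of $ET\times_T (G/B)$ by finite-dimensional smooth varieties and the compatibility of CSM with proper pushforward and smooth pullback in families. This is expected to be routine but technically involved; once it is in place, the remaining steps (duality pairing, change of basis via $w_0$, unwinding the positivity statements) are formal and parallel to the nonequivariant proof already sketched in the excerpt. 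I would therefore present the argument as: (i) state the equivariant duality and transversality inputs; (ii) deduce the coefficient formula; (iii) conclude the equivalence, noting that Theorem \ref{LRruleforehCSM} gives the known cases in type $A$.
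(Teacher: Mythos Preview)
Your proposal is correct and follows exactly the approach the paper indicates: the paper does not give a proof of this equivalence at all, saying only ``Along the same line as the nonequivariant case, Conjecture \ref{equivariantpositivity} has an equivalent geometric description for the CSM classes of the Richardson cells,'' and then stating Conjecture \ref{EEDD}. Your elaboration---equivariant CSM/SSM duality from \cite[Theorem 7.1]{AMSS17}, then the equivariant stratified-transversality identity $s_{\mathrm{SM}}^T(X(u)^\circ)\cdot c_{\mathrm{SM}}^T(Y(v)^\circ)=c_{\mathrm{SM}}^T(X(u)^\circ\cap Y(v)^\circ)$---is precisely the intended argument, and you correctly flag the equivariant transversality input as the point requiring care. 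One small remark: no $w_0$-twist is actually needed; since $\{[X(w)]_T\}$ and $\{[Y(v)]_T\}$ are already dual under $\int_{G/B}^T$, the expansion of $c_{\mathrm{SM}}^T(X(u)^\circ\cap Y(v)^\circ)$ in the $[X(w)]_T$ basis falls out directly, matching the statement of Conjecture \ref{EEDD} on the nose.
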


 Conjecture \ref{EEDD} implies the following conjecture of Aluffi and Mihalcea \cite[Conj. 2]{AM}:   
$$
c_{\mathrm{SM}}^T(X(u)^\circ)
\in \sum_w \mathbb{Z}_{\geq0}[\alpha_i]_{i\in I}\cdot  [X(v)]_T.$$


\end{document}